\documentclass{amsart}
\usepackage{amssymb,amsmath}
\usepackage{graphicx}
\usepackage{subfigure}
\usepackage{upgreek}
\usepackage{tikz}
\usepackage{tikz-cd}
\usepackage{hyperref}
\usetikzlibrary{matrix}
\usepackage{cmap}
\usepackage[T1]{fontenc}

\usepackage[all]{xy}
\usepackage{optparams,eurosym}
\usepackage{amsthm}

\title[Mayer-Vietoris property for relative symplectic cohomology ]{Mayer-Vietoris property for relative symplectic cohomology }
\author{Umut Varolgunes}

\numberwithin{equation}{subsection}

\newcommand\paperbody%
        {}

\newtheorem{lemma}{Lemma}[subsection]
\newtheorem{proposition}[lemma]{Proposition}
\newtheorem{corollary}[lemma]{Corollary}
\newtheorem{theorem}[lemma]{Theorem}
\newtheorem{non-theorem}{Non-Theorem}

\newtheorem{claim}{Claim}

\newtheorem{definition}{Definition}
\newtheorem{remark}[lemma]{Remark}

\newtheorem{example}{Example}

\newcommand\abs[1]{|#1|}

\numberwithin{equation}{subsubsection}

\begin{document}
\begin{abstract}
In this paper, we construct a Hamiltonian Floer theory based invariant called relative symplectic cohomology, which assigns a module over the Novikov ring to compact subsets of closed symplectic manifolds. We show the existence of restriction maps, and prove some basic properties. Our main contribution is to identify natural geometric conditions in which relative symplectic cohomology of two subsets satisfies the Mayer-Vietoris property. These conditions involve certain integrability assumptions involving geometric objects called barriers - roughly, a one parameter family of rank 2 coisotropic submanifolds. The proof uses a deformation argument in which the topological energy zero (i.e. constant) Floer solutions are the main actors.
\end{abstract}
\maketitle
\section{Introduction}\label{c1}

\subsection{Motivation}

In \cite{S1}, using considerations coming from mirror symmetry, Seidel suggested the following. Let $M$ be a symplectic manifold which admits a Lagrangian torus fibration $M\to B$, possibly with certain nice singularities. Then, it might be possible to define Floer theoretic invariants of certain subsets of the base $B$ with sheaf like properties, such that the global sections of this sheaf are equal to classical Floer theoretic invariants of $M$. 

More specifically, Seidel constructs a candidate invariant for a convex neighborhood of a point in the base of $T^*T^n\to \mathbb{R}^n$ using first a standard ``wrapping'' procedure near the boundary, and then adding certain formal sums to the resulting Floer chain complex. We define a new invariant for compact subsets of a closed symplectic manifold, called relative symplectic cohomology, which extends Seidel's construction in the context of closed string theory. The theory could  be extended to open strings and also to open manifolds that are tame at infinity, but we do not explore these in this paper to stay focused on the new ideas we put forth.

We give a certain implementation of Seidel's suggestion as a Mayer-Vietoris sequence for relative symplectic cohomology. The context of our statements are more general than Lagrangian torus fibrations, but a shadow of integrability still remains in the picture. 

\subsection{Relative symplectic cohomology}\label{c1sprop}

Let $\Lambda:=\{\sum_{i\geq 0} a_iT^{\alpha_i}\mid a_i\in\mathbb{Q}, \alpha_i\in\mathbb{R}, \text{ where } a_i\to\infty \text{, as } i\to\infty\}$ denote the Novikov field. The Novikov ring $\Lambda_{\geq 0}\subset \Lambda$ is the ring consisting  of the formal power series with $\alpha_i\geq 0$ for every $i\geq 0$.

Let $M$ be a closed symplectic manifold. Relative symplectic cohomology $SH_M(K)$ is a $\mathbb{Z}_2$-graded $\Lambda_{\geq 0}$-module assigned to each compact $K\subset M$. $SH_M(K)$ is defined as the homology of a chain complex which depends on additional data, as it often happens in Floer theory.

Relative symplectic cohomology satisfies the following properties.
\begin{itemize}
\item (coordinate independence) Let $\phi:M\to M$ be a symplectomorphism, then there exists a canonical relabeling isomorphism $SH_M(K)\to SH_M(\phi(K))$.
\item (global sections) $SH_M(M)=H(M,\mathbb{Z})\otimes \Lambda_{> 0}$ as $\mathbb{Z}_2$-graded ${\Lambda_{\geq 0}}$-modules, where $\Lambda_{> 0}$ is the maximal ideal of $\Lambda_{\geq 0}$.
\item (empty set) $SH_M(\varnothing)=0$.
\item (restriction maps) For any $K'\subset K$, there are canonical graded module maps, called restriction maps: \begin{align}
SH_M(K)\to SH_M(K').\end{align}
Moreover, if $K''\subset K'\subset K$, the map $SH_M(K)\to SH_M(K'')$ is equal to the composition $SH_M(K)\to SH_M(K')\to SH_M(K'')$.
\end{itemize}
We construct $SH_M(K)$ and prove the properties above in this paper. For further properties (and their proofs), including:
\begin{itemize}
\item (Hamiltonian isotopy invariance of restriction maps) Let $\phi_t:M\to M$, $t\in [0,1]$, be a Hamiltonian isotopy such that $\phi_t(K)\subset K'$ for all $t$. We have a commutative diagram
\begin{align}
\xymatrix{ 
SH_M(K')\ar[r]\ar[dr]& SH_M(\phi_1(K))\ar[d]^{\phi_1^{-1}}\\ &SH_M(K).}
\end{align}
\item (displaceability condition) Let $K\subset M$ be displaceable by a Hamiltonian diffeomorphism, then $SH_M(K)\otimes_{\Lambda_{\geq 0}}\Lambda=0$;
\end{itemize}
as well as a lengthy motivational and historical discussion we refer the reader to author's thesis \cite{U}. 

Let us briefly discuss invariants similar to $SH_M(K)$ from the literature. In their seminal paper, Floer and Hofer constructed an invariant that they called symplectic homology for (bounded!) open subsets of $(\mathbb{R}^{2n},\omega_{st})$ \cite{FH}. This was generalized to aspherical manifolds with contact (or no) boundary in \cite{CFH}. In a more explicit construction, Viterbo defined an intrinsic invariant in the contact boundary case that only depends on the completion of the domain \cite{V}. More recently, Cieliebak-Oancea generalized Viterbo's construction to Liouville cobordisms \cite{CO}. 

Cieliebak et al. also commented that their constructions could be generalized to non-aspherical manifolds by the use of Novikov parameters in Section 5 of \cite{CFH}. It appears that the first time in the literature this was picked up again was in Groman \cite{G}. Groman's definition of reduced symplectic cohomology is very similar to ours, but it is not the same. The invariants of  \cite{Ve} and \cite{Mc} also follow similar patterns. The reader will find more detailed discussions of these references along with the appropriate comparisons in the aforementioned thesis.

%
%

\subsection{Mayer-Vietoris property}\label{c1smayer}

The main task of this paper is to analyze the question: does $SH_M(\cdot)$ satisfy the Mayer-Vietoris property, i.e. for $K_1,K_2$ compact subsets of $M$, is there an exact sequence
\begin{align}\label{c1emv}
\xymatrix{
SH_M(K_1\cup K_2)\ar[r]&SH_M(K_1)\oplus SH_M(K_2)\ar[dl]\\ SH_M(K_1\cap K_2)\ar[u]^{[1]}},
\end{align}where the degree preserving maps are the restriction maps (up to sign)?

A Mayer-Vietoris sequence for their version of symplectic homology, when $K_1$ and  $K_2$ are Liouville cobordisms inside a Liouville domain $M$ satisying a number of conditions (one of them being that their union and intersection is also a Liouville cobordism) was established by Cieliebak-Oancea in Theorem 7.17 of \cite{CO}. The most rudimentary version of our results Theorem \ref{c5tbasic} can be seen as a generalization of theirs. As far as we know this is the first investigation of a symplectic Mayer-Vietoris property where the boundaries of the domains under question intersect non-trivially. 

Mayer-Vietoris property does not hold in general. In Figure \ref{c1fexample}, we see examples of pairs of subsets inside the two sphere that do and do not satisfy Mayer-Vietoris property.

\begin{figure}
\includegraphics[width=\textwidth]{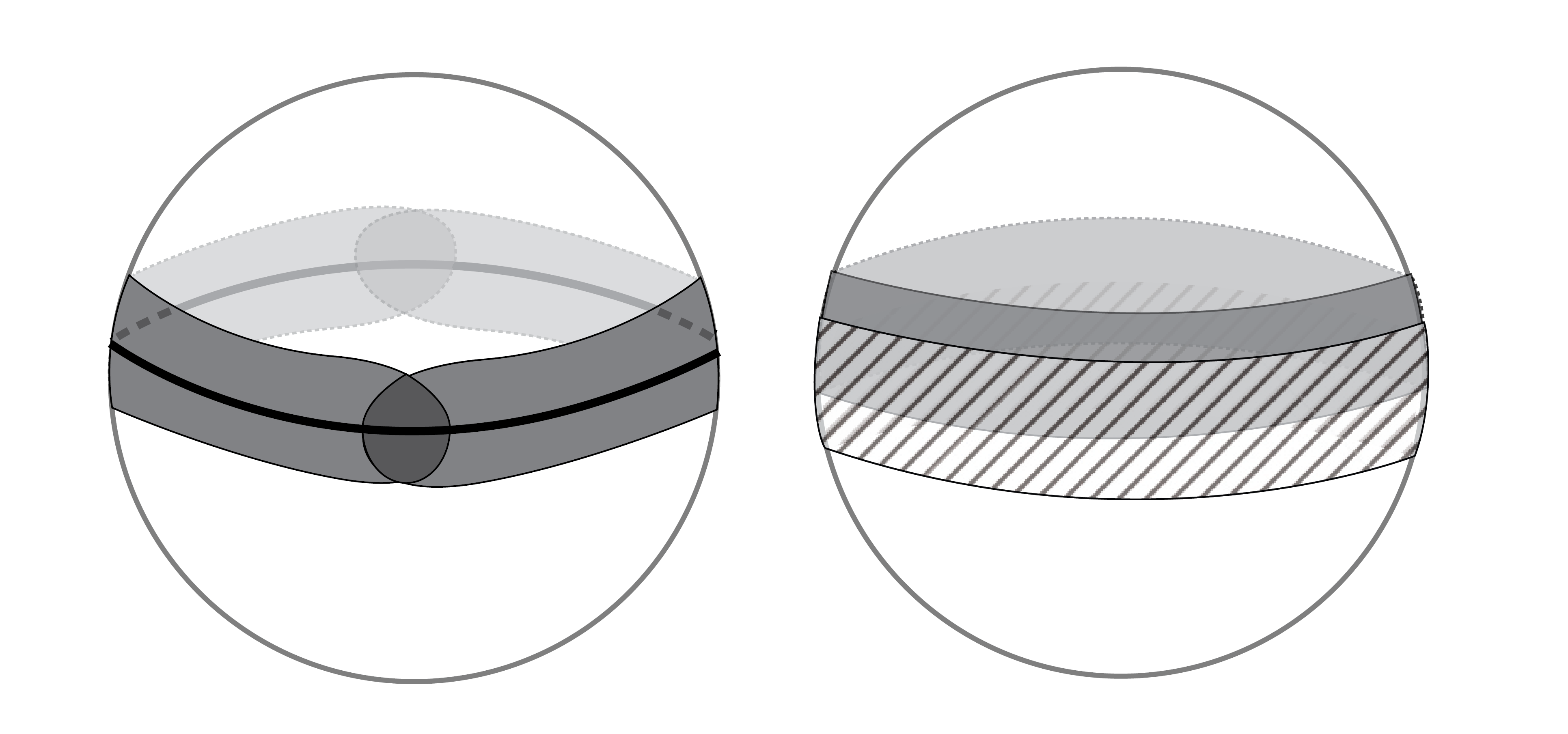}
\caption{On the left there are two subsets that cannot satisfy Mayer-Vietoris, and on the right are two that do. The thick circle on the left divides the sphere into equal areas.}
\label{c1fexample}
\end{figure}

One piece of good news is that we can measure the failure of the Mayer-Vietoris property to hold.  Recall that $SH_M(K)$ is the homology of a chain complex $SC_M(K)$, whose definition depends on additional data. Similarly, given compact subsets $K_1,\dots ,K_n$, we define a chain complex $SC_M(K_1,\dots,K_n)$ as follows. The underlying $\Lambda_{\geq 0}$-module of $SC_M(K_1,\ldots K_n)$ is $$\bigoplus_{I\subset [n]} SC_M\left(\bigcap_{i\in I} K_i\right).$$ Here $[n]=\{1,\ldots ,n\}$ and the empty $I$ means that we take the union of all $K_i$. Consider the $\mathbb{Z}$-grading given by the number of elements in $I$ corresponding to each of the summands. The differential of $SC_M(K_1,\ldots K_n)$ is of the form $$d=d_0+d_1+d_2+\ldots, $$  where $d_i$ increases the degrees by $i$.

The differential $d$ is defined roughly (in particular up to certain signs!) as follows. $d_0$ is the direct sum of the differentials of the summands $SC_M\left(\bigcap_{i\in I} K_i\right)$. $d_1$ is the direct sum of the chain level restriction maps $SC_M\left(\bigcap_{i\in I} K_i\right)\to SC_M\left(\bigcap_{i\in J} K_i\right)$, for every $I\subset J\subset [n]$ with $\abs{J}=\abs{I}+1$. Let us exhibit a way of visualizing this to explain the higher degree terms of $d$.

The summands of $\bigoplus_{I\subset [n]} SC_M\left(\bigcap_{i\in I} K_i\right)$ are placed at the vertices of an $n$-dimensional cube $\{(x_1,\ldots ,x_n)\mid x_j\in [0,1]\}\subset \mathbb{R}^n$, where the summand corresponding to $I\subset [n]$ is at the vertex $(a_1,\ldots a_n)$ of the cube with $a_i=1$ if and only if $i\in I$. We can then think of $d_0$ as a sum over the $0$-dimensional faces of the cube, and $d_1$ as a sum over the $1$-dimensional ones (i.e. edges). Similarly, $d_2$ is a sum over the $2$-dimensional faces. Namely for each $2$ dimensional face $F$, we have two different ways of going from the vertex of $F$ with the minimum number of $0$'s to the one with maximum number of $0$ using the edges of $F$. This corresponds to two different compositions of restrictions maps, and Hamiltonian Floer theory provides us with a chain homotopy between them. This homotopy map is the map we associate to $F$ in the sum defining $d_2$. For every, $i\geq 0$, $d_i$ is similarly a sum of higher homotopical coherence maps over the $i$-dimensional faces of the cube.



We will prove that the homology of $SC_M(K_1,\ldots K_n)$ only depends on $K_1,K_2,\ldots K_n$, therefore the following definition makes sense.

\begin{definition} $K_1,K_2,\ldots K_n$ satisfy \textbf{descent}, if $SC_M(K_1,\ldots K_n)$ is acyclic.
\end{definition} 

Satisfying descent implies the existence of a convergent spectral sequence:\begin{align} \label{c1espectral}
\bigoplus_{\varnothing\neq I\subset [n]} SH_M\left(\bigcap_{i\in I} K_i\right) \Rightarrow SH_M\left(\bigcup_{i=1}^n K_i\right),
\end{align}
which produces a Mayer-Vietoris sequence for $n=2$ as in Equation \ref{c1emv} above.

\begin{remark}
In fact, our results show that under the descent condition there exists a canonical such spectral sequence (up to equivalence of spectral sequences), and, in particular, a canonical connecting homomorphism in our Mayer-Vietoris sequences. This is a byproduct of the well-definedness of relative symplectic cohomology for multiple subsets. Let us also remark that, conversely, for the statements asserting merely the existence of a Mayer-Vietoris sequence (which we focus on for simplicity), the discussion of relative symplectic cohomology for two or more subsets is unnecessary, see Remark \ref{c5rmultiple}.
\end{remark}

\begin{definition}
Let $Z^{2n-2}$ be a closed manifold. We define a \textbf{barrier} to be an embedding $Z\times [-\epsilon,\epsilon]\to M^{2n}$, for some $\epsilon >0$, where $Z\times \{a\}\to M$ is a coisotropic for all $a\in [-\epsilon,\epsilon]$. We call the image of $Z\times\{0\}$ the \textbf{center} of the barrier, and the vector field obtained by pushing forward $\partial_{\epsilon}\in\Gamma(Z\times \{0\},T(Z\times (-\epsilon,\epsilon))\mid_{Z\times \{0\}}) $  to $M$ the \textbf{direction} of the barrier.
\end{definition}

We use the phrase \textbf{compact domain} to mean a compact submanifold with boundary of codimension $0$.

\begin{theorem}\label{c1tmv}
(Mayer-Vietoris sequence) Let $K_1, K_2\subset M$ be compact domains. Assume that $\partial K_1$ and $\partial K_2$ transversally intersect along a rank 2 coisotropic which, if non-empty, is the center of a barrier whose direction points out of $K_1$ and $K_2$. Then, $K_1$ and $K_2$ satisy descent. Therefore, we have an exact sequence:
\begin{align}
\xymatrix{
SH_M(K_1\cup K_2)\ar[r]&SH_M(K_1)\oplus SH_M(K_2)\ar[dl]\\ SH_M(K_1\cap K_2)\ar[u]^{[1]}},
\end{align}
where the degree preserving maps are the restriction maps (up to signs).
\end{theorem}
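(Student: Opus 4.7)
The plan is to prove descent, i.e.\ acyclicity of the total complex $SC_M(K_1,K_2)$, by a deformation argument that isolates the topological-energy-zero (constant) Floer contributions, shows they form an acyclic combinatorial complex, and then transports the vanishing to the full complex by $T$-adic completeness.

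\emph{Step 1: Adapted Floer data from the barrier.} Near $C=\partial K_1\cap\partial K_2$, use the barrier embedding $Z\times[-\epsilon,\epsilon]\hookrightarrow M$ and the coisotropic foliation of each leaf to set up a normal form in which two smooth signed coordinates, one normal to $\partial K_1$ and one normal to $\partial K_2$, can be chosen compatibly. Pick acceleration families of Hamiltonians $H^{K_1\cup K_2}, H^{K_1}, H^{K_2}, H^{K_1\cap K_2}$ that are small on small neighborhoods of the respective subsets, rise monotonically to $+\infty$ across their boundaries, and are mutually compatible: on the locus where two of them are simultaneously small, they coincide up to cutoff. Choose almost complex structures adapted to the coisotropic leaves of the barrier so that an integrated maximum principle governs Floer solutions wishing to cross $C$.

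\emph{Step 2: $T$-adic reduction.} Equip $SC_M(K_1,K_2)$ with the $T$-adic filtration given by the Novikov valuation. Using Step 1 and the Floer energy identity, show that every continuation or higher-homotopy solution contributing to the non-$d_0$ part of $d=d_0+d_1+d_2+\cdots$ that is not itself constant is divisible by a positive power of $T$. Consequently, the $T=0$ reduction of $SC_M(K_1,K_2)$ is the totalization of a cubical Morse complex for the cover $\{K_1,K_2\}$ of $K_1\cup K_2$, with edge maps the honest inclusions between constant-orbit Morse complexes. This totalization is the \v{C}ech--Morse Mayer-Vietoris double complex of the cover and is acyclic.

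\emph{Step 3: Completion.} The Floer chain groups are $T$-adically complete $\Lambda_{\geq 0}$-modules, and the differential is continuous. Acyclicity of the $T=0$ reduction then propagates to acyclicity of the full complex via a standard Mittag--Leffler argument on the tower of quotients $SC_M(K_1,K_2)/T^{\lambda}$. This establishes descent and therefore produces the Mayer-Vietoris exact sequence in the statement.

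The main obstacle is the energy divisibility underlying Step 2, and in particular its validity for Floer solutions interacting with $C$. A non-constant solution whose Hamiltonian footprint crosses the barrier outward picks up positive symplectic area from the outward direction of the barrier; a solution attempting to cross $C$ inward (against the barrier direction) must be excluded by an integrated maximum principle adapted to the coisotropic leaf foliation of $Z\times[-\epsilon,\epsilon]$. The rank 2, one-parameter, outward-direction hypotheses on the barrier are precisely the geometric input needed to construct the barrier function and $J$-invariance that make this maximum principle work. Carrying the maximum principle through uniformly for every higher-homotopy piece of the cubical differential $d$ is the technical heart of the argument.
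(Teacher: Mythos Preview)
Your overall architecture---reduce mod $T$, check acyclicity there, lift by completeness---matches the paper's, but the mechanism you propose for the key step is not the one that actually works, and it has a genuine gap.

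The paper does \emph{not} use any maximum principle on Floer solutions near the barrier. Rank~2 coisotropics carry no convexity or contact-type structure, so there is no integrated maximum principle available to block pseudo-holomorphic curves from crossing them; your proposed ``integrated maximum principle adapted to the coisotropic leaf foliation'' does not exist in this generality. Instead, the barrier is used purely as a \emph{dynamical} obstruction for Hamiltonian \emph{orbits}: one constructs the cofinal Hamiltonians $H_i^X$, $H_i^Y$ so that their vector fields are (almost) tangent to the barrier, which forces every $1$-periodic orbit to stay on one side (Lemma~\ref{c5lalmostcompatible}). This is the orbit condition~(3) of Proposition~\ref{c5pmax}.

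The second piece you are missing is the specific algebraic structure that makes the $T=0$ complex acyclic. The paper takes $H^{K_1\cup K_2}=\min(H^{K_1},H^{K_2})$ and $H^{K_1\cap K_2}=\max(H^{K_1},H^{K_2})$ on the nose. With the orbit condition in place, the zero-energy continuation maps are precisely ``identity on common orbits'' (Lemma~\ref{c5lzero}), and the $T=0$ reduction of each $2$-cube slice decomposes explicitly into elementary acyclic pieces (proof of Proposition~\ref{c5pmax}). These Hamiltonians are large, with many non-constant orbits in the mixing regions; the $T=0$ complex is \emph{not} a \v{C}ech--Morse complex of constant orbits as you claim. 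What makes it acyclic is the $\min/\max$ combinatorics of common orbits, which your setup does not provide.
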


We made the assumption that $K_1, K_2\subset M$ are domains purely for the sake of keeping the statement simple. For the actual statement see Theorem \ref{c5tmv}. Note that in dimension $2$, the condition is equivalent to boundaries not intersecting, as a point in a surface can never be coisotropic (see Figure \ref{c1fexample}). In dimension $4$, it implies that the intersection is a disjoint union of Lagrangian tori, but unfortunately being outward pointing is an extra condition in this case, see Corollary \ref{c5ctorus}. 

%
%

\begin{definition} An \textbf{involutive map} is a smooth map $\pi: M\to B$ to a smooth manifold $B$, such that for any $f,g\in C^{\infty}(B)$, we have $\{f\circ\pi,g\circ\pi\}=0$
\end{definition}
\begin{remark}
The most studied examples of involutive maps are Lagrangian fibrations. These correspond to the case where the non-empty smooth fibers of $\pi$ has half the dimension of $M$ (which is the least they can be).
\end{remark}
\begin{theorem}\label{c1tinvolutive}
Let $\pi:M\to B$ be an involutive map, and $X_1,\ldots X_n$ be closed subsets of $B$. Then $\pi^{-1}(X_1),\ldots \pi^{-1}(X_n)$ satisfy descent.
\end{theorem}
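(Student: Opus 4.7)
The plan is to reduce the statement to a multi-subset extension of Theorem \ref{c1tmv} via an approximation of the closed subsets $X_i \subset B$ by compact domains in general position, and to derive the required rank-$2$ coisotropic intersections and outward-pointing barriers directly from the involutive structure.

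First, I would approximate each $X_i$ by a decreasing nested family of closed subsets $X_i^{(\epsilon)} \subset B$ with smooth boundary (for instance $X_i^{(\epsilon)} = \{f_i \leq \epsilon\}$ for a nonnegative smooth $f_i$ vanishing exactly on $X_i$), chosen so that for a generic decreasing sequence of $\epsilon$ the collection $\{\partial X_i^{(\epsilon)}\}$ is in general position in $B$ and lies inside the regular locus of $\pi$. The central algebraic observation is that if $Y \subset B$ is cut out locally by functions $f_1,\ldots ,f_k$, then $\pi^{-1}(Y)$ is cut out by $f_a \circ \pi$, and $\{f_a\circ \pi, f_b\circ \pi\}=0$ by the involutive hypothesis; hence $\pi^{-1}(Y)$ is coisotropic whenever it is a smooth submanifold. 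In particular $\partial \pi^{-1}(X_i^{(\epsilon)})$ is rank-$1$ coisotropic, and $\pi^{-1}(\partial X_i^{(\epsilon)}\cap \partial X_j^{(\epsilon)})$ is a rank-$2$ coisotropic, as needed.

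Next I would produce the barriers required by the hypothesis of Theorem \ref{c1tmv}. For each pair $(i,j)$, pick in $B$ a vector field transverse to $\partial X_i^{(\epsilon)}\cap \partial X_j^{(\epsilon)}$ whose direction points simultaneously out of $X_i^{(\epsilon)}$ and $X_j^{(\epsilon)}$; flowing along it gives a one-parameter family of codimension-$2$ submanifolds of $B$. Pulling back by $\pi$ produces a family of codimension-$2$ coisotropics in $M$, i.e.\ a barrier whose direction points out of $\pi^{-1}(X_i^{(\epsilon)})$ and $\pi^{-1}(X_j^{(\epsilon)})$. Applying the multi-subset version of Theorem \ref{c1tmv}, whose hypothesis is precisely such a pairwise barrier condition combined with appropriate transversality at higher faces of the cube (guaranteed here by general position of the $\partial X_i^{(\epsilon)}$ in $B$), yields that $\pi^{-1}(X_1^{(\epsilon)}),\ldots ,\pi^{-1}(X_n^{(\epsilon)})$ satisfy descent.

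Finally, to pass to the limit $\epsilon\to 0$ I would use a continuity property of the total complex $SC_M(K_1,\ldots ,K_n)$ for nested decreasing families, analogous to the telescope-type continuity for $SH_M$ under decreasing intersections of compact sets. Once the approximations are known to be acyclic, this should imply that $SC_M(\pi^{-1}(X_1),\ldots ,\pi^{-1}(X_n))$ is acyclic, which is exactly descent. The main obstacle, beyond the bookkeeping involved in the multi-subset barrier theorem, is establishing this continuity property for the multi-subset complex: unlike the single-subset case, one has to control all of $d_0,d_1,d_2,\ldots $ simultaneously in the limit. A secondary technical obstacle is handling the critical locus of $\pi$, which requires a Sard-type genericity argument ensuring that the approximating boundaries $\partial X_i^{(\epsilon)}$ can indeed be arranged inside the regular locus of $\pi$.
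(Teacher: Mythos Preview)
Your approach has two genuine gaps, both of which the paper's argument sidesteps by taking a different route.

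First, the paper does \emph{not} use a multi-subset version of Theorem~\ref{c1tmv}. Instead it reduces descent for $n$ subsets to descent for pairs by a purely algebraic argument (Appendix~\ref{appb}): if every pair $X,Y$ in the lattice generated by $K_1,\ldots,K_n$ satisfies descent, then so does the $n$-tuple. This eliminates the need for any ``higher-face transversality'' hypothesis or multi-subset barrier statement, which you would otherwise have to formulate and prove.

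Second, and more seriously, for the pair case the paper does \emph{not} invoke Theorem~\ref{c1tmv} (the barrier version) but rather Theorem~\ref{c5tmv2}, which is phrased directly in terms of functions: one only needs $f_i^X,f_i^Y:M\to\mathbb{R}$ whose sublevel sets approximate $X,Y$ and which Poisson-commute along the preimage of some curve through the origin in $\mathbb{R}^2$. For an involutive map, taking $f_i^X=\tilde f_i^X\circ\pi$ and $f_i^Y=\tilde f_i^Y\circ\pi$ for suitable functions on $B$ gives this immediately, with \emph{no} regularity hypothesis on $\pi$ whatsoever. Your route, by contrast, needs $\pi^{-1}(\partial X_i^{(\epsilon)}\cap\partial X_j^{(\epsilon)})$ to be a smooth codimension-$2$ submanifold of $M$, which requires the differentials of the two defining functions to be linearly independent along it. An involutive map need not be a submersion anywhere (e.g.\ $\pi$ could factor through a lower-dimensional manifold, or have $\dim B>\frac{1}{2}\dim M$), so your ``Sard-type genericity argument ensuring the approximating boundaries lie in the regular locus of $\pi$'' may simply be impossible.

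Finally, because the paper's functions $f_i^X,f_i^Y$ already approximate the original sets $X,Y$ (not smoothed versions), no limiting or continuity property of $SC_M(K_1,\ldots,K_n)$ is needed. The obstacle you flagged is real for your strategy but is entirely avoided by the paper's.
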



We obtain Theorem \ref{c1tinvolutive} as a corollary of Theorem \ref{c5tmv2}. 

The following corollary of Theorem \ref{c1tinvolutive} (generally referred to as the Stem theorem) was first proven by Entov-Polterovich using a completely different set of tools \cite{EP}. 

\begin{theorem}\label{stem}
Any involutive map admits at least one fiber that is not displaceable by Hamiltonian isotopy.
\end{theorem}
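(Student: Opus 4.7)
The plan is to argue by contradiction: assume every fiber of $\pi\colon M\to B$ is Hamiltonian displaceable, and derive a contradiction from the descent spectral sequence provided by Theorem \ref{c1tinvolutive}. The first step is to upgrade fiberwise displaceability to displaceability of preimages of closed neighborhoods in $B$. If $\phi$ is a Hamiltonian diffeomorphism with $\phi(\pi^{-1}(b))\cap \pi^{-1}(b)=\varnothing$, then compactness of $\pi^{-1}(b)$ gives an open neighborhood $V\subset M$ of $\pi^{-1}(b)$ still displaced by $\phi$; continuity of $\pi$ then produces an open $U_b\ni b$ in $B$ with $\pi^{-1}(\overline{U_b})\subset V$, so that $\pi^{-1}(\overline{U_b})$ is displaceable.

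Next, since $M$ is closed, $\pi(M)\subset B$ is compact, so I can extract a finite subcover $U_{b_1},\ldots,U_{b_n}$ of $\pi(M)$. Setting $X_i:=\overline{U_{b_i}}$ yields closed subsets of $B$ with $\bigcup_i X_i\supseteq \pi(M)$ and with each $K_i:=\pi^{-1}(X_i)$ Hamiltonian displaceable; in particular $\bigcup_i K_i = M$. By Theorem \ref{c1tinvolutive}, $K_1,\ldots,K_n$ satisfy descent, so there is a convergent spectral sequence
\[
\bigoplus_{\varnothing\neq I\subset[n]} SH_M\Bigl(\bigcap_{i\in I}K_i\Bigr) \Rightarrow SH_M\Bigl(\bigcup_{i=1}^n K_i\Bigr)=SH_M(M).
\]

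Finally, I would apply the localization $-\otimes_{\Lambda_{\geq 0}}\Lambda$, which preserves the spectral sequence since $\Lambda$ is a localization of $\Lambda_{\geq 0}$ (inverting all $T^{\alpha}$, $\alpha>0$) and hence flat. For each nonempty $I\subset[n]$ the intersection $\bigcap_{i\in I}K_i$ lies inside the displaceable set $K_{i_0}$ for any $i_0\in I$, so it is itself displaceable; by the displaceability property the localized $E_1$-page vanishes. On the other hand, the global sections property gives $SH_M(M)\otimes_{\Lambda_{\geq 0}}\Lambda\cong H(M;\mathbb{Z})\otimes_{\mathbb Z}\Lambda$, which is nonzero since $M$ is closed (for instance $H^0$ contributes). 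This contradicts convergence, proving that some fiber is not displaceable.

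Everything beyond the openness of displaceability (which is elementary) is packaged into the hypotheses: Theorem \ref{c1tinvolutive} does all the geometric work, and the remainder is a formal argument from the displaceability and global sections properties of $SH_M$. The only place that requires mild care is verifying that the spectral sequence is stable under the flat base change from $\Lambda_{\geq 0}$ to $\Lambda$ and that the empty set of indices in \ref{c1espectral} corresponds to $\bigcup_i K_i$, which here equals $M$.
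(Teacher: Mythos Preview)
Your proof is correct and follows essentially the same approach as the paper's: both argue by contradiction, use openness of displaceability to pass from fibers to closed neighborhoods, extract a finite cover by compactness of $\pi(M)$, and then combine the descent spectral sequence from Theorem \ref{c1tinvolutive} (tensored with $\Lambda$) with the displaceability condition and the global sections property to reach a contradiction. Your write-up is slightly more explicit about the flat base change to $\Lambda$ and the fact that subsets of displaceable sets are displaceable, but the logic is the same.
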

\begin{proof}
We refer to the properties of $SH_M(K)$ by the names given to them in Section \ref{c1sprop}. Let $\bigcup C_i$ be any finite cover of the image of $M$ inside $B$ by compact subsets. Theorem \ref{c1tinvolutive}, and the global sections property (which in particular implies $SH_M(M)\otimes\Lambda\neq 0$) shows that $SH_M(\pi^{-1}(\bigcap_{J}C_i)\otimes\Lambda\neq 0$, for some non-empty $J\subset [n]$, by the spectral sequence \ref{c1espectral}. For the reader unfamiliar with spectral sequences, we note that this conclusion can also be reached by assuming the contrary, and using the Mayer-Vietoris sequence iteratively to reach a contradiction to $SH_M(M)\otimes\Lambda\neq 0$. 

Hence, by the displaceability condition, $\pi^{-1}(C_i)$ is not displaceable for some $i$. Now assuming that each fiber is displaceable easily leads to a contradiction, as it implies that a sufficiently small open neighborhood of the fiber is also displaceable, and hence provides a finite cover $\bigcup C_i$ by compact subsets such that each $\pi^{-1}(C_i)$ is displaceable, using compactness of the image of $\pi$.
\end{proof}

\begin{remark}
Even though the tools are different, the logic of our proof is similar to \cite{EP} as the experts will notice. We also refer the reader to \cite{EP} for a more detailed exposition of the corollary above including many interesting examples.
\end{remark}

\subsection{A remark on relative open string invariants}

Let $L\subset M$ be a closed aspherical Lagrangian (one can be a lot less restrictive, but we choose to be brief here). Replacing Hamiltonian Floer theory of closed orbits wth Lagrangian Floer theory of chords with endpoints on $L$, we immediately obtain a relative invariant $HF_L(K)$, for any compact subset $K$.  We leave the discussion of this invariant to an upcoming paper, but we would like to advertise one result:
\begin{theorem}
Any involutive map admits at least one fiber that is not displaceable from $L$ by Hamiltonian isotopy.
\end{theorem}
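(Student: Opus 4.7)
The plan is to replicate the argument of Theorem \ref{stem} verbatim, with the closed-string invariant $SH_M$ replaced by the open-string invariant $HF_L$. Concretely, the proof requires open-string analogues of three properties: (i) a displaceability condition --- if $K$ is displaceable from $L$ by a Hamiltonian isotopy, then $HF_L(K)\otimes_{\Lambda_{\geq 0}}\Lambda=0$; (ii) a non-vanishing global sections computation $HF_L(M)\otimes\Lambda\neq 0$, which for an aspherical $L$ should reduce, via Floer's theorem, to $H^*(L;\Lambda)\neq 0$; and (iii) descent for preimages of closed subsets under an involutive map, i.e.\ the $HF_L$-analogue of Theorem \ref{c1tinvolutive}.

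Granting these, the argument mirrors the one for Theorem \ref{stem}. Assume for contradiction that every fiber of $\pi:M\to B$ is displaceable from $L$. Openness of the displaceability condition together with compactness of $\pi(M)$ yield a finite cover of $\pi(M)$ by compact subsets $C_1,\ldots,C_n$ such that each $\pi^{-1}(C_i)$ --- and hence every intersection $\pi^{-1}(\bigcap_{i\in I}C_i)$ for non-empty $I\subset[n]$ --- is still displaceable from $L$. By property (i),
\begin{align}
HF_L\!\left(\pi^{-1}\!\left(\bigcap_{i\in I}C_i\right)\right)\otimes\Lambda=0
\qquad \text{for all }\varnothing\neq I\subset[n].
\end{align}
The descent property (iii) then provides a convergent spectral sequence with vanishing $E_1$-page (after tensoring with $\Lambda$) abutting to $HF_L(M)\otimes\Lambda$, which therefore vanishes, contradicting (ii). Equivalently, one can bypass the spectral sequence and apply an open-string Mayer--Vietoris sequence iteratively to the cover, exactly as in the alternative phrasing of the proof of Theorem \ref{stem}.

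The formal argument above is a direct translation; the real work lies in the construction of $HF_L$ and the verification of (i)--(iii). Properties (i) and (ii) should follow from standard Lagrangian Floer theoretic arguments under the asphericity hypothesis on $L$ (which is what makes the Novikov-coefficient Floer complex well-defined and identifies its homology with singular cohomology). The genuinely hard input is (iii): one must adapt the deformation and barrier argument underlying Theorem \ref{c5tmv2} to the Floer theory of strips with boundary on $L$, ensuring that the outward-pointing barriers used to isolate the constant (topological energy zero) solutions remain compatible with the Lagrangian boundary condition and with the asymptotic chord ends. This adaptation, and with it the construction of $HF_L(K)$ itself, is the content of the announced forthcoming paper on relative open-string invariants.
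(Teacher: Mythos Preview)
Your proposal is correct and matches the paper's approach exactly: the paper states that the proof ``only notationally differs from the one of Theorem \ref{stem},'' and you have carried out precisely that notational translation, correctly identifying the three open-string properties (displaceability, global sections, descent) that are to be supplied by the announced forthcoming paper.
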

This open string version of the Stem theorem seems to be new. Its proof only notationally differs from the one of Theorem \ref{stem}.

\subsection{Outline of the paper}\label{c1soutline}

In Section \ref{c2}, we introduce the algebraic framework that will be used the later sections. \ref{c2shomotopical} is a lengthy subsection, where we discuss the homotopical algebra of certain diagrams composed of cube-shaped building blocks. We discuss the cone, telescope and composition operations in an entirely explicit fashion. In the sequel \ref{c2srays}, we prove some homology level statements regarding telescopes. In \ref{c2scompletion}, we recall the notions of completion and completeness for modules over the Novikov ring, and discuss their interaction with taking homology of chain complexes. We end with a short list of results which combine ingredients from the previous subsections in \ref{c2sacyclic}.

In Section \ref{c3}, first, we list our conventions for Hamiltonian Floer homology in \ref{c3sconventions}, and review Hamiltonian Floer theory in \ref{c3shamiltonian}. It takes us some effort to spell out our statements regarding ``contractibility" of the space of Hamiltonians, when a certain monotonicity property is imposed on the allowed families. In \ref{c3sconstruction}, we define relative symplectic cohomology, and show its basic properties as listed in the Introduction. In the last subsection (\ref{c3smultiple}), we introduce relative symplectic cohomology of multiple compact subsets.

Section \ref{c5} is where we discuss the Mayer-Vietoris/descent properties. We focus on the homology level statement for two subsets (i.e. the Mayer-Vietoris sequence) until the last subsection for better readability. In \ref{c5szero}, we reduce the problem to showing the existence of a sequence of (pairs of) Hamiltonians that can be used as cofinal sequences for our subsets, which satisfy a dynamical property. In \ref{c5sboundary}, we explain a controlled way of choosing acceleration data, and immediately show the Mayer-Vietoris property for two domains with non-intersecting boundary in \ref{c5snon}. Subsections \ref{c5sbarriers} and \ref{c5snond} introduce and motivate barriers and some relevant notions. In \ref{c5sproof} we prove our main theorem (Theorem \ref{c5tmv}). In \ref{c5sinstances}, we give examples of barriers. In the last subsection (\ref{c5sinvolutive}), after generalizing the main theorem slightly (Theorem \ref{c5tmv2}), we show the descent result for multiple subsets that are preimages of involutive maps. 

In Appendix \ref{appa}, we establish the easy translation from Pardon's simplicial diagrams to our cubical ones. Finally, in Appendix \ref{appb}, we show that the descent property for $n>2$ subsets follows from the descent property of a number of pairs of subsets.

\subsection{Acknowledgements} The first and foremost thanks go to my PhD advisor Paul Seidel, for suggesting the problem, and numerous enlightening discussions. I thank Francesco Lin, Mark McLean, and John Pardon for helpful conversations. I also thank the anonymous referee for carefully reading the paper, and suggesting lots of improvements to increase its readability. This work was partially supported by NSF grant 1500954 and the Simons Foundation (through a Simons Investigator award).

\section{Algebra preparations}\label{c2}

It is quite likely that all of the ideas, statements, or constructions that appear in this section are known to the experts (with the potential exception of explicit signs in the formulas of Section \ref{c2shomotopical}). That being said, we were not able to locate sources where the material in this section is presented in a fashion that we could reuse for our purposes in the later sections.

Throughout this section, we assume that our chain complexes are $\mathbb{Z}/2$-graded. However, whenever there is a $\mathbb{Z}/N$ or $\mathbb{Z}$-grading available, our statements can be modified to take into account those gradings without a problem. In fact, the reader might find it helpful to think of a $\mathbb{Z}/2$-graded chain complex as a $2$-periodic $\mathbb{Z}$-graded chain complex in order to translate statements from the homological algebra literature, which are usually stated in the $\mathbb{Z}$-graded context.

Let $A$ be set. An $m$-\textbf{tuple} of elements of $A$ is a sequence of length $m$ consisting of elements of $A$. We will denote an $m$-tuple either as a vector $(a_1,\ldots a_n)$ or as a word $a_1a_2\ldots a_n$. A \textbf{subtuple} of a tuple is simply a subsequence of the sequence that defines the tuple.

\subsection{Homotopical constructions}\label{c2shomotopical}
In this section, we explain a way of dealing with homotopy coherent diagrams that Floer theory naturally produces. We restrict ourselves to diagrams that are that are made out of $n$-cubes (Definition \ref{cubedef}) in very restricted ways, rather than introducing the machinery to deal with diagrams which are indexed by arbitrary cubical or simplicial sets. The reason is simply that the latter route did not seem to help much for our actual goals.

Constructions involving $n$-cubes were also used in the celebrated work \cite{KM} of Kronheimer-Mrowka to obtain their spectral sequences (see their Theorem 6.8 and the surrounding discussion.) Specifically, the contents of our Sections \ref{c2sspositive} and \ref{c2sscones} are related to their work.

\subsubsection{Cubes}\label{c2sscubes}

 Consider the standard unit cube: $Cube^n:=\{(x_1,\ldots ,x_n)\mid x_j\in [0,1]\}\subset \mathbb{R}^n$. We stress that the ordering of the coordinates will play an important role in what follows. For $0\leq k\leq n$, a $k$-\textbf{dimensional face} of $Cube^n$ is any subset of $Cube^n$ given by setting $n-k$ of the coordinates to either $0$ or $1$. Therefore, the faces of $Cube^n$ are in one-to-one correspondence with the set of $n$-tuples of elements of $\{0,1,-\}$: \begin{align}
\{(i_1,\ldots ,i_n)\mid i_k\in\{0,1,-\}\},
\end{align}where $-$ represents the coordinates that vary in the face. Let us denote this assignment by $F\mapsto \mu(F)$, for $F$ a face of $Cube^n$. 

Let us call a vertex (i.e. a $0$-dimensional face of $Cube^n$) contained in a face the \textbf{initial} vertex if it has the maximum number of zeros, and \textbf{terminal} if it has the maximum number of ones in its coordinates. We denote the initial vertex of a face $F$ by $\nu_{in(F)}$, and its terminal vertex by $\nu_{ter(F)}$.

Let us call two faces $F'$ and $F''$ \textbf{adjacent} if the terminal vertex of $F'$ equals the inital vertex of $F''$. We denote this relationship by $F'>F''$.  We say that two adjacent faces form a \textbf{boundary of a face} $F$ if $F$ is the smallest face that contains both $F'$ and $F''$. In this case, we define $v(F',F)$ to be the sub-tuple of $\mu(\nu_{ter(F')})-\mu(\nu_{in(F')})$ (as an $n$-tuple of $0$'s and $1$'s) corresponding to the $dim(F)$-entries given by the $-$'s in $\mu(F)$. 

For example, if $\mu(F')=0-10-$ and $\mu(F'')=-1101$, then they are adjacent at the vertex $(0,1,1,0,1)$, and they form a boundary of $\mu(F)=--10-$. Moreover, $\mu(\nu_{ter(F')})-\mu(\nu_{in(F')})=01001$, and finally, $v(F',F)=011$.

Let $A$ be a set. If $w$ and $v$ are tuples of elements of $A$, define $\#(w,v)$ to be the number of sub-tuples of $v$ that are equal to $w$. For example for $A=\{0,1\}$, $\#(11,101)=1$.

\begin{definition}\label{cubedef}Let $R$ be a commutative ring. We define an \textbf{$n$-cube (of chain complexes)} over $R$ in the following way. To each $0$-dimensional face $\nu$ (i.e. vertices) of $Cube^n$ we associate a $\mathbb{Z}/2$-graded $R$-module $C^{\nu}$, and for any $k$-dimensional face (including $k=0$) $F$ we give maps $f_F: C^{\nu_{in(F)}}\to C^{\nu_{ter(F)}}$ from its initial vertex to its terminal vertex, of degree $dim(F)+1$ modulo $2$. 

These maps are required to satisfy the following relations. For each face $F$ we have:
\begin{align}\label{c2ecube}
\sum_{F'>F'' \text{is a bdry of }F}(-1)^{*_{F',F}} f_{F''}f_{F'}=0,
\end{align}
where $*_{F',F}=\#(1,v(F',F))+\#(01,v(F',F))$. 
\end{definition}

Note that for any $k$-dimensional face $F$ of $Cube^n$, we canonically obtain a $k$-cube by only remembering the data of the $n$-cube corresponding to the faces contained inside $F$.

\begin{remark}
In the $\mathbb{Z}$-graded context the only difference in the definition would be that the chain complexes are $\mathbb{Z}$-graded and maps have degree $dim(F)+1$.
\end{remark}

In Figure \ref{c2fcube} we present a $3$-cube to illustrate the definition. At the corners there are chain complexes (differential is a degree $1$ map), at the edges chain maps (degree $0$), at the square faces homotopies between the two different ways of going between the initial and terminal vertices of that square (degree $1$), and lastly at the codimension 0 face we have one map $H$ (degree $0$) that satisfies:
\begin{align}\label{c2e3cube}
-g^{100}+g^{010}-g^{001}+g^{011}-g^{101}+g^{110}-dH+Hd=0,
\end{align}
where $g^{100}$ is the composition $C^{000}\to C^{100}\to C^{111}$ (the second map is the homotopy) etc.
\begin{figure}[!h]\label{c2fcube}
\centering
\begin{tikzpicture}
  \matrix (m) [matrix of math nodes, row sep=3em,
    column sep=3em]{
    & C^{000} & & C^{100}  \\
    C^{010}  & & C^{110}  & \\
    & C^{001}  & & C^{101}  \\
    C^{011}  & & C^{111}  & \\};
  \path[-stealth]
    (m-1-2) edge (m-1-4) edge (m-2-1)
            edge [densely dotted] (m-3-2) edge (m-2-3)
            edge [densely dotted] (m-4-1) 
            edge [densely dotted] (m-3-4)
            edge [densely dotted] (m-4-3)
    (m-1-4) edge (m-3-4) edge (m-2-3) edge (m-4-3)
    
    (m-2-1) edge [-,line width=6pt,draw=white] (m-2-3)
            edge (m-2-3) edge (m-4-1) edge (m-4-3)
    (m-3-2) edge [densely dotted] (m-3-4)
            edge [densely dotted] (m-4-1)
            edge [densely dotted] (m-4-3)
    (m-4-1) edge (m-4-3)
    (m-3-4) edge (m-4-3)
    (m-2-3) edge [-,line width=6pt,draw=white] (m-4-3)
            edge (m-4-3);
\end{tikzpicture}
\caption{A $3$-cube}
\end{figure}
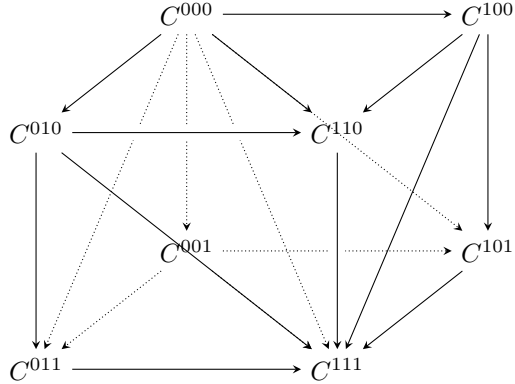

\subsubsection{Maps between $n$-cubes}\label{c2ssmaps} 
A \textbf{partially defined} $n$-cube is one where we have chain complexes at the vertices of $Cube^n$, and maps for some of the faces specified so that whenever it makes sense Equation \ref{c2ecube} is satisfied. An $n$-cube that agrees with a \textbf{partially defined} $n$-cube wherever they are both defined is called a \textbf{filling}.

We define a \textbf{map} between two $n$-cubes to be a filling of the partially defined $(n+1)$-cube where the $n$-dimensional faces $\{x_{n+1}=0\}$ and $\{x_{n+1}=1\}$ are the given $n$-cubes. Here we are using the data of the ordering of the coordinates of $Cube^{n+1}$. Given $n$-cubes $\mathcal{C}$ and $\mathcal{C}'$, a map between them will simply be represented as follows:
\begin{align}\mathcal{C}\to\mathcal{C}'\end{align}
We stress that this diagram is an $n+1$-cube, even though it is drawn as a $1$-cube, and also that $\mathcal{C}$, and $\mathcal{C}'$ represent the $n$-cubes which correspond to the faces $\{x_{n+1}=0\}$, and $\{x_{n+1}=1\}$, respectively. In this situation, we would say that the new coordinate is added as the last or $(n+1)$th coordinate. 

\begin{lemma}
Let $\mathcal{C}$ be an $n$-cube. Then, the following defines an $(n+1)$-cube: the $n$-dimensional faces $\{x_{n+1}=0\}$ and $\{x_{n+1}=1\}$ are both $\mathcal{C}$, these two $n$-cubes are connected to each other at the edges with identity maps, and all the homotopies are defined to be zero.
\end{lemma}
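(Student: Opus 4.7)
The plan is to verify relation (\ref{c2ecube}) on every face $F$ of $Cube^{n+1}$. I would first split faces into two classes based on whether $\mu(F)_{n+1} \in \{0,1\}$ (call these \emph{non-crossing}) or $\mu(F)_{n+1} = -$ (call these \emph{crossing}); these cases are handled separately.

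For a non-crossing face $F$, I would observe that every boundary decomposition $F'>F''$ consists of non-crossing faces (the shared vertex has a fixed last coordinate), and that projection forgetting the last coordinate identifies $F$ with a face of $Cube^n$ whose data is, by construction, exactly that of $\mathcal{C}$. Since the $-$'s in $\mu(F)$ all sit in the first $n$ coordinates, the sub-tuple $v(F',F)$ is read from only the first $n$ entries of $\mu(\nu_{ter(F')})-\mu(\nu_{in(F')})$ and therefore agrees with the sub-tuple computed for the projected face in $Cube^n$. Consequently $*_{F',F}$ is unchanged, and the relation on $F$ becomes nothing but the $n$-cube relation on the projected face, which holds by assumption on $\mathcal{C}$.

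For a crossing face $F$ of dimension $k$, the plan is as follows. First, I would show that in any boundary decomposition $F' > F''$ exactly one of $F', F''$ crosses: if neither crossed then the smallest containing face would be non-crossing, and if both crossed then the terminal of $F'$ (last coordinate $1$) could not equal the initial of $F''$ (last coordinate $0$). Since the map carried by a crossing face of dimension $\geq 2$ is zero, only decompositions where the crossing face is an edge contribute. For $k=1$ the two such decompositions are $(\nu_{in(F)}, F)$ and $(F, \nu_{ter(F)})$, both producing $d_v$. For $k \geq 2$, writing $G$ for the $(k-1)$-face of $Cube^n$ obtained by deleting the $-$ in coordinate $n+1$ of $\mu(F)$, the two contributing decompositions are $F' = G \times \{0\}$ followed by the crossing edge at $\nu_{ter(G)}$, and the crossing edge at $\nu_{in(G)}$ followed by $F'' = G \times \{1\}$; each produces $f_G$ as either $\mathrm{id}\circ f_G$ or $f_G\circ \mathrm{id}$ under the identification of the two slabs with $\mathcal{C}$.

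The main (and only genuine) obstacle is the sign bookkeeping: one must verify that the two contributing terms in each crossing case carry opposite signs. For $k=1$ a direct computation gives the length-one sub-tuples $v(F',F)=0$ and $v(F',F)=1$, hence $*_{F',F}=0$ and $1$. For $k \geq 2$, the first decomposition has $v(F',F) = (1,\ldots,1,0)$ with $k-1$ ones, giving $*_{F',F} = (k-1) + 0 = k-1$; the second has $v(F',F) = (0,\ldots,0,1)$ with $k-1$ zeros, giving $*_{F',F} = 1 + (k-1) = k$. The relation thus reads $(-1)^{k-1}f_G + (-1)^k f_G = 0$ (respectively $d_v - d_v = 0$ when $k=1$), as required. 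Everything else is direct unpacking of definitions.
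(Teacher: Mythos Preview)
Your proof is correct and follows essentially the same approach as the paper's. The paper's proof is terser: it notes that it suffices to check the equation on the top-dimensional face of $Cube^{n+1}$ (implicitly because any lower crossing face is itself the top face of a smaller identity cube, and non-crossing faces inherit the relations of $\mathcal{C}$), and then computes the two relevant sign exponents from the tuples $(0,\ldots,0,1)$ and $(1,\ldots,1,0)$ exactly as you do for general $k$. Your version simply unpacks this reduction explicitly, handling each dimension $k$ directly rather than appealing to the restriction-to-faces observation; the core sign computation is identical.
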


\begin{proof}
It suffices to check the equation corresponding to the top dimensional face of $Cube^{n+1}$. Let $f$ be the map in $\mathcal{C}$ associated to the top dimensional face of $Cube^{n}$. Then the equation that we want to verify is $$(-1)^af+(-1)^bf=0,$$ where $a$, and $b$ are the sum of the number of $1$ and $01$ subtuples in $00\ldots 01$, and $11\ldots 10$, respectively (both tuples have $n+1$ elements). This is easy to check.
\end{proof}

We call a map as in the Lemma the $id$ map of $\mathcal{C}$: $$\mathcal{C}\overset{id}{\to} \mathcal{C}.$$

\begin{remark}\label{idremark} Even if we added the new coordinate with the identity maps in a different place than the last in the ordering, we would get an $(n+1)$-cube (exercise for the reader), but we would not call this a map from $\mathcal{C}$ to $\mathcal{C}$.
\end{remark}
A \textbf{homotopy} of two maps of $n$-cubes is a filling of the partially defined $(n+2)$-cube where the faces $\{x_{n+1}=0\}$ and $\{x_{n+1}=1\}$ are the given maps of $n$-cubes; and  the faces $\{x_{n+2}=0\}$ and $\{x_{n+2}=1\}$ are the identity maps for the given $n$-cubes:
\begin{align}
\xymatrix{ 
\mathcal{C}\ar[r]\ar[d]^{id}\ar[dr]& \mathcal{C}'\ar[d]^{id}\\ \mathcal{C}\ar[r] &\mathcal{C}'.}
\end{align}

Here the diagonal arrow represents the filling that is in the definition. Let us call an $(n+2)$-cube of such form an $(n+2)$-\textbf{slit}. Note that if there is a homotopy from one map to the other, then there is also a homotopy in the other direction given by negating all the maps in the filling.

A \textbf{triangle} of maps of $n$-cubes is a filling of the partially defined $(n+2)$-cube given by a triple of $n$-cubes $\mathcal{C}$, $\mathcal{C}'$ and $\mathcal{C}''$; and maps $\mathcal{C}\overset{f}{\to}\mathcal{C}'$, $\mathcal{C}'\overset{f'}{\to}\mathcal{C}''$, and $\mathcal{C}\overset{g}{\to}\mathcal{C}''$ as below (again the diagonal arrow represents the filling):  
\begin{align}
\xymatrix{ 
\mathcal{C}\ar[r]^{id}\ar[d]_f\ar[dr]& \mathcal{C}\ar[d]^{g}\\ \mathcal{C}'\ar[r]_{f'} &\mathcal{C}'',}
\end{align} the faces $\{x_{n+1}=0\}$ and $\{x_{n+1}=1\}$ are $f$ and $g$; and $\{x_{n+2}=0\}$ and $\{x_{n+2}=1\}$ are $id$ and $f'$. Let us call an $(n+2)$-cube of such form an $(n+2)$-\textbf{triangle}.

We now give examples of these definitions in low dimensions. 

Let $\mathcal{C}:=C_0\overset{c}{\to} C_1$ and $\mathcal{C}':=C_0'\overset{c'}{\to} C_1'$ be $1$-cubes. A map $\mathcal{C}\to \mathcal{C}'$ is the data of $f_0,f_1, h$ below: \begin{align}\label{c2dcube}
\xymatrix{ 
C_0\ar[r]^{c}\ar[d]_{f_0}\ar[dr]^h& C_1\ar[d]^{f_1}\\ C'_0\ar[r]_{c'} &C'_1}
\end{align} such that $f_0$, $f_1$ are chain maps, and $c'f_0-f_1c+hd+dh=0$. In other words, we have a $2$-cube such that the coordinate of the vertex of $C_1$ is $(1,0)$.

Let $f'_0,f'_1, h'$ be another such map. Then a homotopy from the first triple to the second one (primed ones) would be given by $F_0$, a chain homotopy between $f_0$ and $f'_0$, similarly $F_1$, and also an $H$ that satisfies the equation that is associated to the maximal face of the $3$-cube:
\begin{align}
c'F_0-F_1c+h'-h-dH+Hd=0,
\end{align} as a special case of Equation \ref{c2e3cube}.

Finally consider the following homotopy commutative triangle:
\begin{align}
\xymatrix{ 
C_0\ar[r]^{f_0}\ar[dr]_g& C_1\ar[d]^{f_1}\\ &C_2,}
\end{align}
and $h:C_0\to C_2$ such that\begin{align}
g-f_1f_0+dh+hd=0.
\end{align}

We are thinking of this data as the following $2$-cube (with $C_1$ sitting at the vertex with coordinates $(1,0)$):
\begin{align}
\xymatrix{ 
C_0\ar[d]^{id}\ar[r]^{f_0}\ar[dr]^h& C_1\ar[d]^{f_1}\\ C_0\ar[r]^{g}&C_2.}
\end{align}

\subsubsection{$n$-cubes with positive signs}\label{c2sspositive}

We now introduce a slight variant of $n$-cubes, only to help us define the cone operation on $n$-cubes in the next section.

A \textbf{$n$-cube with positive signs} is defined exactly as an $n$-cube was defined with one exception: the signs in the Equation \ref{c2ecube} are all +1, in other words for every face $F$ of $Cube^n$, we now have: \begin{align}
\sum_{F'>F'' \text{is a bdry of }F} f_{F''}f_{F'}=0.
\end{align}

\begin{lemma}\label{c2lsigns}
Consider an $n$-cube defined by the $\mathbb{Z}/2$-graded modules $C^{\nu}$, and maps $f_F$ as in the Definition \ref{cubedef}. There exists a canonical way of multiplying each map $f_F$ by $+1$ or $-1$, i.e. $f_F\mapsto (-1)^{n(F)}f_F$, so that $C^{\nu}$ and $(-1)^{n(F)}f_F$ define an $n$-cube with positive signs.
\end{lemma}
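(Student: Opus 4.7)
The plan is to translate the lemma into a combinatorial condition on the signs and then produce $n$ explicitly. Substituting $\tilde f_G := (-1)^{n(G)} f_G$ into the face-$F$ equation
$$\sum_{F' > F'' \text{ bdry of } F} (-1)^{*_{F',F}} f_{F''} f_{F'} = 0$$
yields $(-1)^{c(F)} \sum \tilde f_{F''} \tilde f_{F'} = 0$, which is the positive-sign relation multiplied by an overall $F$-dependent sign (hence still $=0$), provided that
$$n(F') + n(F'') + *_{F',F} \pmod 2$$
depends only on $F$ and not on the boundary pair. The problem is thus reduced to producing an $n$ satisfying this $F$-by-$F$ independence.

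For fixed $F$, the boundary pairs of $F$ are in bijection with the vertices $\nu$ of $F$: given $\nu$, one defines $F'(\nu)$ (resp.\ $F''(\nu)$) to have $-$-positions exactly the $-$-positions of $F$ where $\nu$ has coordinate $1$ (resp.\ $0$). A short computation from the definitions then gives $v(F'(\nu),F) = \bar\nu$, the restriction of $\nu$ to the $-$-positions of $F$, so that $*_{F'(\nu),F} = |\bar\nu| + \#(01,\bar\nu)$. I would take as candidate
$$n(F) := \#\{(i,j): i<j,\ \mu(F)_i = 1,\ \mu(F)_j = -\} + (\text{correction depending only on } \#0(F),\ \#1(F),\ \dim F),$$
and verify directly that $n(F'(\nu)) + n(F''(\nu)) + *_{F'(\nu),F} \pmod 2$ is $\nu$-independent. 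The uncorrected first summand already contributes $n_1(F) + |\bar\nu|\cdot\dim F \pmod 2$ (using $\#(01,\bar\nu) + \#(10,\bar\nu) = |\bar\nu|(\dim F - |\bar\nu|)$ and $|\bar\nu|^2 \equiv |\bar\nu|$), so for $\dim F$ even the independence is automatic, and for $\dim F$ odd the residual $|\bar\nu|$ is killed by a correction such as $\#0(F)\cdot\#1(F)$ or a suitable binomial in $\dim F$.

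The main obstacle I anticipate is choosing the correction cleanly so that it handles every face and every parity of dimension simultaneously. If the explicit formula becomes fiddly, I would instead argue by induction on $n$: split $Cube^n$ into the two $(n-1)$-subcubes $\{x_n=0\}$ and $\{x_n=1\}$, apply the induction hypothesis to rescale each to positive signs, and extend to the remaining faces (those with $-$ in the last coordinate) by solving the edge-equations in the $n$-th coordinate, with consistency secured by the positive-sign relations already verified on the two subcubes. Canonicality then comes from the fact that the only remaining ambiguity is the obvious $\pm 1$-rescaling freedom on individual $f_F$'s, which is inherent in the statement of the lemma.
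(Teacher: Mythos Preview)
Your reduction is exactly right and matches the paper: one must produce $n(F)$ so that $n(F')+n(F'')+*_{F',F}$ is independent of the boundary pair, and your parametrisation of boundary pairs by $\bar\nu$ together with $*_{F'(\nu),F}=|\bar\nu|+\#(01,\bar\nu)$ is correct.

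The gap is that you never actually produce a working $n(F)$. Your computation for the first summand $n_1(F)=\#\{(i,j):i<j,\ \mu(F)_i=1,\ \mu(F)_j=-\}$ is fine: one gets
\[
n_1(F'(\nu))+n_1(F''(\nu))+*_{F'(\nu),F}\equiv n_1(F)+|\bar\nu|\cdot\dim F\pmod 2,
\]
so a correction is needed when $\dim F$ is odd. But neither of your proposed corrections kills the residual $|\bar\nu|$. For $\mathrm{corr}(F)=\#0(F)\cdot\#1(F)$ a direct check gives
\[
\mathrm{corr}(F')+\mathrm{corr}(F'')\equiv \dim F\cdot\#1(F)+|\bar\nu|\,(n-\dim F)\pmod 2,
\]
so the total still carries a term $|\bar\nu|\cdot n$, which fails for odd $n$. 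A binomial $\binom{\dim F}{2}$ likewise leaves a stray $|\bar\nu|$. So ``1 before $-$'' is the wrong seed, and the corrections you name do not repair it.

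The paper instead takes $n(F)=\#(0-,\mu(F))+\#(0,\mu(F))$, i.e.\ zeros before dashes plus the number of zeros. With this choice the two halves of $*_{F',F}$ are cancelled separately: $\#(0-,\cdot)$ produces exactly $\#(01,\bar\nu)$, and $\#(0,\cdot)$ produces exactly $\dim F+|\bar\nu|$, so
\[
n(F')+n(F'')+*_{F',F}\equiv \#(0-,\mu(F))+\dim F\pmod 2,
\]
which depends only on $F$. That is the whole proof.

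Your inductive fallback is also not a proof as written: ``solving the edge-equations in the $n$-th coordinate'' does not specify a sign on the interior faces, and without an explicit rule there is nothing canonical about the outcome.
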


\begin{proof}
We define $$n(F):=\#(0-,\mu(F))+\#(0,\mu(F)),$$ for every face $F$. 

Let $F'>F''$ be a boundary of $F$. We will show that the parity of $$n(F')+n(F'')+*_{F',F}$$ only depends on $F$ (and not on $F'$ or $F''$), where $*_{F',F}$ was defined in Definition \ref{cubedef}. This clearly proves the claim as the overall sign can be cancelled from the equation associated to $F$.

 Let $S\subset \{1,2,\ldots ,n\}$ be the set of entries of $\mu(F)$ that are equal to $-$. Then, there is a subset $S'\subset S$ such that $\mu(F')$ is obtained by changing the entries of $S$ corresponding to $S'$ to $0$, and $\mu(F'')$ is obtained by changing the entries corresponding to $S-S'$ to $1$.

It follows from the discussion in the previous paragraph that the parity of $\#(0-,\mu(F))+\#(0-,\mu(F'))+\#(0-,\mu(F''))$ is equal to $\#(01,v(F',F))$. It also follows that $\#(1,v(F',F))+\#(0,\mu(F'))+\#(0,\mu(F''))$ has the same parity as $dimF$.

This shows that the parity of $n(F')+n(F'')+*_{F',F}$ is equal to the parity of $\#(0-,\mu(F))+dimF$, finishing the proof.
\end{proof}

The operation described in the lemma, which inputs an $n$-cube and outputs an $n$-cube with positive signs, clearly admits an inverse given by the same formula.
 
\subsubsection{Cones of $n$-cubes}\label{c2sscones} Recall that the usual cone operation  takes a chain map (i.e. a $1$-cube) between two chain complexes, and outputs a single chain complex (a $0$ cube):\begin{align}
\Bigg((C,d)\xrightarrow{f} (C',d')\Bigg) \mapsto \Bigg[C[1]\oplus C',\begin{pmatrix} 
-d & 0 \\
f   & d'
\end{pmatrix}\Bigg].
\end{align}

This can be generalized to all cubes. First, given an $n$-cube with positive signs and one of the $n$ directions, we explain how to construct an $(n-1)$-cube with positive signs with the cone construction.  

If $w$ is an $(n-1)$-tuple of elements from a set $S$, $1\leq i\leq n$ an integer, and $a\in S$, we let $(w,i,a)$ be the $n$-tuple with $a$ inserted as the $i$th entry to $w$. For example if $S=\{0,1,2\}$, then $((0,2,2),2,1)=(0,1,2,2)$.

Let $(\{C^{\nu}\},\{f_F\})$ be an $n$-cube with positive signs, and $1\leq i\leq n$ an integer. The cone of $(\{C^{\nu}\},\{f_F\})$ in direction $i$ is defined by:\begin{align}
C^w=C^{(w,i,0)}[1]\oplus C^{(w,i,1)},
\end{align} for every vertex $w$ of $Cube^{n-1}$, and $f_F:C_{in(F)}\to C_{ter(F)}$ is given by the matrix,
\begin{align}
\begin{pmatrix} 
f_{(\mu(F),i,0)} & 0 \\
f_{(\mu(F),i,-)}   & f_{(\mu(F),i,1)}
\end{pmatrix},
\end{align} for every face $F$ of $Cube^{n-1}$. It is readily seen that this defines an $(n-1)$-cube with positive signs.

If $\mathcal{C}=(\{C^{\nu}\},\{f_F\})$ is an $n$-cube, and $1\leq i\leq n$ an integer. The \textbf{cone} of $\mathcal{C}$ in direction $i$: $$cone^i(\mathcal{C})$$ is defined by the composition of operations:\begin{align}
n-\text{cube} \to n-\text{cube with p. signs}\to (n-1)-\text{cube with p. signs}\to (n-1)-\text{cube}
\end{align}
Here the first arrow is the operation described in Lemma \ref{c2lsigns}, the second one is the cone operation on cubes with positive signs we just introduced, and the third one is the inverse to the operation in Lemma \ref{c2lsigns}. We stress that the signs in the formulas will be different for different directions. 

Unless otherwise stated, when we do a cone operation, we mean that it is applied to an $n$-cube. Occasionally, if we apply a cone operation in a certain direction $i$, we will say that direction $i$ is \textbf{contracted}.

\begin{lemma}\label{c2lcones}
\begin{enumerate}
\item Let $\mathcal{C}$ be an $n$-cube, and let $1\leq i<j \leq n$ be integers. First contracting the $i$th direction and then the $(j-1)$th direction results in a chain complex that is canonically identified with the chain complex obtained by contracting first the $j$th and then the $i$th direction. Note that after $i$ is contracted the $j$th direction becomes the $(j-1)$th direction for the $(n-1)$-cube $cone^i(\mathcal{C})$.
\item The cone operation in a direction $d$ other than the last one sends a map $\mathcal{C}\xrightarrow{} \mathcal{C}$ (considered as an $n$-cube) to a map $cone^d(\mathcal{C})\xrightarrow{} cone^d(\mathcal{C})$
\item The cone operation in a direction $d$ other than the last one sends the map $\mathcal{C}\xrightarrow{id} \mathcal{C}$ to $cone^d(\mathcal{C})\xrightarrow{id} cone^d(\mathcal{C})$
\item Cones in directions except the last two sends $(n+2)$-slits to $(n+1)$-slits, and $(n+2)$-triangles to $(n+1)$-triangles.
\end{enumerate}
\end{lemma}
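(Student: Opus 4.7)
The plan is to prove all four parts by first reducing, wherever possible, to the language of $n$-cubes with positive signs, where cone is a clean block-matrix construction, and only at the end to transfer back using the sign correspondence of Lemma \ref{c2lsigns}.

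For part (1), I first observe that for an $n$-cube with positive signs, contracting direction $i$ replaces each vertex $w$ of $Cube^{n-1}$ by $C^{(w,i,0)}[1]\oplus C^{(w,i,1)}$ and each face map $f_F$ by the upper-triangular $2\times 2$ block matrix whose entries are the original $f$'s indexed by $(\mu(F),i,a)$ for $a\in\{0,1,-\}$. Iterating with the new direction $j-1$ (which is the original $j$th direction) produces at each vertex $w$ of $Cube^{n-2}$ a four-fold direct sum indexed by $\{0,1\}^2$ of insertions at positions $i$ and $j$, together with an upper-triangular $4\times 4$ block matrix of maps $f_{(\mu(F),\{i,j\},\{a,b\})}$. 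Reversing the order gives exactly the same four-fold sums and the same block matrix after a canonical reordering of rows and columns that interchanges the two middle summands. For ordinary $n$-cubes, I then trace how the prefactors $n(F)$ defined in Lemma \ref{c2lsigns} transform: doing things in the other order changes $\mu(F)$ as an $(n-2)$-tuple in the same way, so the sign corrections applied on the way in and on the way out agree, giving the asserted canonical identification.

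For parts (2) and (3), the key remark is that a map $\mathcal{C}\to\mathcal{C}'$ is by definition an $(n+1)$-cube in which the last coordinate is distinguished, and a cone in a direction $d\leq n$ only affects coordinates $1,\ldots,n$. Under the block-matrix recipe, every face $F$ of $Cube^n$ that sits in $\{x_{n+1}=0\}$ is coned to the analogous face of $\mathcal{C}$ after contraction, every face sitting in $\{x_{n+1}=1\}$ is coned to the corresponding face of $\mathcal{C}'$ after contraction, and every face crossing between the two slabs yields a block matrix whose entries are the crossing maps. This shows that $cone^d$ of a map is again a map. For the identity map, whose crossing edges are $\mathrm{id}$ and whose crossing higher homotopies vanish, the block-matrix recipe produces block-diagonal $\mathrm{id}$ on crossing edges and zero higher homotopies, so the cone of an identity map is an identity map; the sign-twist steps in Lemma \ref{c2lsigns} do not disturb this because the signs they introduce cancel between the input and output passes.

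For part (4), slits and triangles are $(n+2)$-cubes whose last two coordinates carry prescribed data: two identities in the slit case and one identity together with a composable triple in the triangle case. Since we cone in a direction $d\leq n$, the last two coordinates are untouched, and the recipe of parts (2)--(3) applied to each of the two face slices gives identity maps where there were identities before and the appropriate compositions where those were before. Hence slits go to slits and triangles go to triangles.

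The main obstacle I expect is the bookkeeping in part (1): making sure the signed version of cone, obtained as a sandwich of Lemma \ref{c2lsigns} around the unsigned block-matrix cone, really is symmetric in the two chosen directions once one writes out $n(F)$ as an explicit function of $\mu(F)$ and keeps track of what $\mu(F)$ means before and after each contraction. Once this parity calculation is in place the remaining three parts are essentially formal consequences of the block-matrix description.
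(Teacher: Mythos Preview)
Your approach is the same as the paper's: reduce to cubes with positive signs (where cone is a clean block operation that obviously behaves well) and then track the sign correspondence of Lemma~\ref{c2lsigns}. Two comments on how your execution compares.

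For part (1), the paper's argument is slightly cleaner than yours. Rather than tracing $n(F)$ through two contractions, it simply observes that the signed cone is a three-step sandwich $(\text{sign change})\circ(\text{positive cone})\circ(\text{inverse sign change})$, so iterating gives
\[
(\text{sign})\circ(\text{pcone}_i)\circ(\text{sign}^{-1})\circ(\text{sign})\circ(\text{pcone}_{j-1})\circ(\text{sign}^{-1})
=(\text{sign})\circ(\text{pcone}_i)\circ(\text{pcone}_{j-1})\circ(\text{sign}^{-1}),
\]
because the two middle sign changes are literally inverse to each other. Commutativity then reduces to the positive-sign case, which (as you note) is obvious. Your version, which tracks $n(F)$ on the final $(n-2)$-cube, is not wrong, but it leaves implicit why the intermediate signs don't cause trouble.

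For parts (2) and (3), your phrase ``the signs they introduce cancel between the input and output passes'' is where your argument is genuinely incomplete. The input pass applies $n(F)$ computed from an $(n{+}1)$-tuple $\mu(F)$; the output pass applies $n(G)$ computed from an $n$-tuple $\mu(G)$. These are different functions and do not cancel automatically. What is actually needed (and what the paper verifies) are two specific parity statements about $n(F)=\#(0-,\mu(F))+\#(0,\mu(F))$: for (2), that if $\tilde F$ is obtained from $F$ by changing the last entry from $0$ to $1$ then $n(F)=n(\tilde F)+1$; and for (3), that if $\mu(F)$ has $0$'s and $1$'s in its first $n$ entries and $-$ in the last then $n(F)$ is even. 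Both are one-line checks from the definition, but they are the content of the proof, not a formality. Once you add them, your argument matches the paper's.
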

\begin{proof}

\begin{enumerate}
\item First, note that this statement would be obvious for cone operation on the $n$-cubes with positive signs. The statement then follows from observing that if we want to compose two cone operations, we can instead apply the operation of Lemma \ref{c2lsigns}, then compose the two cone operations on cubes with positive signs, and apply the inverse of Lemma \ref{c2lsigns}. In other words, the two sign changes in the end of the first cone, and in the beginning of the second cone cancel each other.
\item It is enough to prove that for any $(n+1)$-cube $\mathcal{D}$, the sign change operation of Lemma \ref{c2lsigns} satisfies the following property:
\begin{itemize}
\item Let $F$ be a face of $Cube^{n+1}$ so that the last entry of $\mu(F)$ is zero, and let $\tilde{F}$ be the face with that last entry changed to $1$. Then $n(F)=n(\tilde{F})+1$.
\end{itemize}This follows immediately from the definition. Using it in the first and third steps (the ones where signs change) of the construction of the cone we finish the proof.
\item In addition to (2), we also need:
\begin{itemize}\item Let $F$ be an edge of $Cube^{n+1}$ parallel to the $(n+1)$-direction, i.e. $\mu(F)$ has $0$'s and $1$'s in its first $n$ entries and a $-$ in the last one. Then $n(F)$ is even.\end{itemize}
This is also easy to check.
\item Follows from (3).\end{enumerate}\end{proof}

We will refer to the fact that the cone operation turns an $n$-cube into an $(n-1)$-cube, in such way that the properties (2), (3) and (4) of Lemma \ref{c2lcones} holds, the \textbf{functoriality of the cone operation}.

Let us do an example. There are two cones of the $2$-cube in Diagram \ref{c2dcube} (called $\mathcal{C}$): one that contracts the direction parallel to $f$'s $cone^f(\mathcal{C})$, and the one that contracts $c$'s $cone^c(\mathcal{C})$. Let us write them down explicitly.

\begin{align}cone^f(C)=\Bigg[C_0[1]\oplus C_0', \begin{pmatrix} 
-d & 0 \\
-f_1   & d
\end{pmatrix}\Bigg] \xrightarrow{{\begin{pmatrix} 
-c & 0 \\
h   & c'
\end{pmatrix}}} \Bigg[C_1[1]\oplus C_1', \begin{pmatrix}
-d & 0 \\
f_2   & d
\end{pmatrix}\Bigg]
\end{align}

\begin{align}cone^c(C)=\Bigg[C_0[1]\oplus C_1, \begin{pmatrix} 
-d & 0 \\
c   & d
\end{pmatrix}\Bigg] \xrightarrow{{\begin{pmatrix} 
f_1 & 0 \\
h   & f_2
\end{pmatrix}}} \Bigg[C_0'[1]\oplus C_1', \begin{pmatrix}
-d & 0 \\
c   & d
\end{pmatrix}\Bigg]
\end{align}

In both cases, taking the cone in the remaining direction results in $C_0\oplus(C_1[1]\oplus C_0'[1])\oplus C_1'$ with differential: \begin{align}\begin{pmatrix} 
d & 0& 0 &0 \\
-c   & -d &0&0\\
f_1& 0& -d&0\\
h & c'& f & d
\end{pmatrix}.\end{align}

We now make a final point about the cone operation. As long as one remembers the direct sum decompositions of the chain complexes the cone operation does not actually lose any information. Let us explain this further.

Let $(\{C^{\nu}\},\{f_F\})$ be as in Definition \ref{cubedef}, but we do not require them to satisfy the Equations \ref{c2ecube}. Let us call this an $n$-\textbf{precube}. Using the the same formulas as above (the ones for $n$-cubes) we can define the cone of an $n$-precube in any direction $1\leq i\leq n$ as an $(n-1)$-precube.

Let us also say that an $n$-precube is in \textbf{coniform} if for every vertex $\nu$ of $Cube^n$, we are given a direct sum decomposition $C^{\nu}=C^{(\nu,i,0)}[1]\oplus C^{(\nu,i,1)}$ such that $f_F$ is lower triangular for every face $F$ of $Cube^n$.

The following lemma is true by construction.
\begin{lemma}\label{decone} Consider the operation $cone^i$ as a map from the set of $n$-precubes to the set of $(n-1)$-precubes in coniform, for $1\leq i\leq n$. Then, the following are true.
\begin{itemize}
\item $cone^i$ is a bijection.
\item An $n$-precube $\mathcal{C}$ is an $n$-cube if and only if $cone^i(\mathcal{C})$ is a an $(n-1)$-cube.
\end{itemize}
\qed
\end{lemma}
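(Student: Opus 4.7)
The plan is to observe that \(cone^i\) is defined as a composition of three operations — the sign-change of Lemma \ref{c2lsigns}, the matrix cone on precubes with positive signs, and the inverse sign-change — and to show that each of these three operations is a bijection between the appropriate sets, and that each preserves ``being a cube'' in the appropriate sense. Then both bullets follow by composition. Throughout, I will keep track of the direct sum decomposition so that the coniform condition is automatic on the output side.

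For the first bullet, the sign-change operations between $n$-precubes and $n$-precubes with positive signs are clearly bijective, since they act by multiplying each $f_F$ by a fixed sign $(-1)^{n(F)}$ depending only on $F$. So it suffices to see that the matrix cone operation is a bijection from $n$-precubes with positive signs to $(n-1)$-precubes with positive signs that are in coniform. To invert it, given such a coniform $(n-1)$-precube, read off the two summands at each vertex to recover the modules at the two vertices $(\nu,i,0),(\nu,i,1)$ of $Cube^n$, and at each face $F'$ of $Cube^{n-1}$ read off the entries of the lower-triangular matrix $f_{F'}$ to recover $f_{(\mu(F'),i,0)}$, $f_{(\mu(F'),i,1)}$, and $f_{(\mu(F'),i,-)}$; by the definition of coniform this is well-defined and is a two-sided inverse.

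For the second bullet, the key observation is that the faces of $Cube^n$ partition into triples (and degenerate doubles at the $i$th-boundary of $Cube^n$) according to the $i$th entry of $\mu(F)$: each face $F'$ of $Cube^{n-1}$ has three preimages under deletion of the $i$th coordinate, namely $(\mu(F'),i,0)$, $(\mu(F'),i,1)$, and $(\mu(F'),i,-)$. For an $n$-precube with positive signs, multiplying out the lower triangular matrix composition at $F'$ in the cone gives
\begin{align*}
\sum_{F_1'>F_2'\text{ bdry of }F'} f_{F_2'} f_{F_1'} &= \begin{pmatrix} \Sigma_{(\mu(F'),i,0)} & 0 \\ \Sigma_{(\mu(F'),i,-)} & \Sigma_{(\mu(F'),i,1)} \end{pmatrix},
\end{align*}
where $\Sigma_G$ denotes the left-hand side of the cube equation at the face $G$ of $Cube^n$ (with positive signs). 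Thus the cube equation at $F'$ in the output holds if and only if the three cube equations at the three faces $(\mu(F'),i,0), (\mu(F'),i,1), (\mu(F'),i,-)$ of $Cube^n$ hold. Since every face of $Cube^n$ arises this way for a unique $F'$, the $n$-precube with positive signs is an $n$-cube with positive signs if and only if its cone is an $(n-1)$-cube with positive signs.

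The only delicate point is making sure the correspondence between the $\pm$ signs in the cube equations is consistent, but this is precisely what Lemma \ref{c2lsigns} (and the cancellation of its two applications, noted in Lemma \ref{c2lcones}(1)) guarantees: passing through ``positive signs'' and back preserves the property of satisfying the cube equations. Combining this with the matrix calculation above gives the second bullet, and together with the bijection statement completes the proof.
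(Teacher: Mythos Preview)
Your proof is correct and carefully fills in the details that the paper omits: the paper simply asserts the lemma is ``true by construction'' and places a \qed\ without further argument. Your decomposition into the three constituent operations and the block-matrix verification that the cube equations at $F'$ in the cone correspond exactly to the three cube equations at $(\mu(F'),i,0)$, $(\mu(F'),i,1)$, $(\mu(F'),i,-)$ in the original is precisely the unpacking of that construction.
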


\subsubsection{Composing $n$-cubes}\label{c2sscomposing}
The composition of two chain maps is a chain map. We generalize this construction to higher dimensional cubes.

Let us start with $2$-cubes. Let the two squares below be commutative up to the given homotopies.
\begin{align}
\xymatrix{ 
C_0\ar[r]^{c_0}\ar[d]_{f_0}\ar[dr]_{g_0}& C_1\ar[d]_{f_1}\ar[r]^{c_1}\ar[dr]_{g_1} &C_2\ar[d]^{f_2}\\ D_0\ar[r]_{d_0} &D_1\ar[r]_{d_1}&D_2}
\end{align} In this case, we say that the two $2$-cubes are glued along $f_1$ a $1$-cube.

We can define the composite $2-cube$: 
\begin{align}
\xymatrix{ 
C_0\ar[r]^{c_1c_0}\ar[d]_{f_0}\ar[dr]^{G}&C_2\ar[d]^{f_2}\\ D_0\ar[r]_{d_1d_0}&D_2}
\end{align} where $G=g_1c_0+d_1g_0$.

More generally, let $\mathcal{C}$ and $\mathcal{C}'$ be two $n$-cubes. Assume that the $(n-1)$-cubes associated to the $\{x_k=1\}$ face of $\mathcal{C}$ and the $\{x_k=0\}$ face of $\mathcal{C}'$ are the same, for some $1\leq k\leq n$. Then, we say that $\mathcal{C}$ and $\mathcal{C}'$ can be \textbf{glued} in the $k$th direction. Note that if $\mathcal{C}$ and $\mathcal{C}'$ can be glued, this does not mean in any way that $\mathcal{C}'$ and $\mathcal{C}$ can be glued too, i.e. this is not a symmetric relation.

Let $\mathcal{C}\to\mathcal{C'}$ and $\mathcal{C'}\to\mathcal{C''}$ be two $n$-cubes, which are maps of $(n-1)$-cubes. This means that these two cubes can be glued in the $n$th direction. We will in the future represent this situation by the diagram: $$\mathcal{C}\to\mathcal{C'}\to\mathcal{C''},$$ and say that  $\mathcal{C}\to\mathcal{C'}$ and $\mathcal{C'}\to\mathcal{C''}$ can be \textbf{composed}. 

Now we will define the \textbf{composition} of $\mathcal{C}\to\mathcal{C'}$ and $\mathcal{C'}\to\mathcal{C''}$ as a map of $(n-1)$-cubes $\mathcal{C}\to\mathcal{C}''$, $n\geq 1$. 

We need to define the maps $f_F$ where $F$ is a face of $Cube^n$ such that $\mu(F)$ has a $-$ in its last entry. If $G$ is a face of $Cube^{n-1}$, then we let $G-$ be the face of $Cube^n$ such that $\mu(G-)=(\mu(G),n,-)$.  Let us denote the maps in $\mathcal{C}\to\mathcal{C'}$ and $\mathcal{C'}\to\mathcal{C''}$ by $f_F'$ and $f_F''$. Then, we define for $F=G-$,\begin{align}\label{eqcom} f_{F}:=\sum_{G'>G'' \text{is a bdry of }G} (-1)^{\#(01,\nu(G',G))}f_{G'-}'f_{G''-}''.\end{align}
\begin{claim}
This makes $\mathcal{C}\to\mathcal{C}''$ into an $n$-cube, which is a map of $(n-1)$-cubes.
\end{claim}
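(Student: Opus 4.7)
The statement contains two assertions: that the composition is an $n$-cube (satisfying the cube relation \ref{c2ecube} at every face of $Cube^n$), and that it is a map of $(n-1)$-cubes (its $\{x_n=0\}$ and $\{x_n=1\}$ faces equal $\mathcal{C}$ and $\mathcal{C}''$). The second assertion, together with the cube relations for faces $F$ whose last coordinate in $\mu(F)$ is $0$ or $1$, is immediate from the construction: for such $F$ the map $f_F$ is defined to be inherited from $\mathcal{C}\to\mathcal{C}'$ (last entry $0$) or from $\mathcal{C}'\to\mathcal{C}''$ (last entry $1$); the former restricts to $\mathcal{C}$ on the $\{x_n=0\}$ face and the latter to $\mathcal{C}''$ on the $\{x_n=1\}$ face by hypothesis, and the relevant cube relations are precisely those already satisfied in the respective cubes, since every boundary pair $(F',F'')$ of such an $F$ also has the same fixed last coordinate.

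The substance lies in verifying the cube relation at $F = G-$ for a face $G$ of $Cube^{n-1}$. Boundary pairs $(F',F'')$ of $G-$ split into two types according to the last coordinate of the middle vertex, both indexed by boundary pairs $(G_1,G_2)$ of $G$ in $Cube^{n-1}$: type A with $(F',F'') = (G_1 0,\, G_2-)$, and type B with $(F',F'') = (G_1-,\, G_2 1)$. After substituting the composition formula (Equation \ref{eqcom}) into the terms $f_{G_2-}$ (type A) and $f_{G_1-}$ (type B) and expanding, the cube relation becomes a sum over triples $A > B > C$ of adjacent faces in $Cube^{n-1}$ whose combined span equals $G$ (together with the vertex-boundary contributions accounting for the differentials, which arise as the degenerate sub-cases $G_1 = \nu_{in(G)}$ and $G_2 = \nu_{ter(G)}$). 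Each summand is a triple composition mixing a map in $\mathcal{C}$ or $\mathcal{C}''$ with one map from $\mathcal{C}\to\mathcal{C}'$ and one from $\mathcal{C}'\to\mathcal{C}''$.

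The vanishing of this triple sum follows by reorganizing the terms: for each fixed outer pair of adjacent faces, the partial sum over the intermediate face either matches the cube relation for $\mathcal{C}\to\mathcal{C}'$ at face $(\mathrm{span}(A,B))-$ post-composed with a map from $\mathcal{C}'\to\mathcal{C}''$, or the cube relation for $\mathcal{C}'\to\mathcal{C}''$ at face $(\mathrm{span}(B,C))-$ pre-composed with a map from $\mathcal{C}\to\mathcal{C}'$; both vanish by hypothesis. The main technical obstacle is the sign bookkeeping, since the cube relation signs $(-1)^{\#(1,v)+\#(01,v)}$ and the composition signs $(-1)^{\#(01,v)}$ must combine consistently after the re-indexing, and the dependence of each on the appended last coordinate of $v$ differs between types A and B. The cleanest route is to first perform the entire verification in the positive-signs framework of Section \ref{c2sspositive} (where all relations have coefficient $+1$ and the cancellations above are manifest), and then transport the statement back via the sign-change bijection of Lemma \ref{c2lsigns}, in a manner analogous to how the cone operation was transferred between the two conventions.
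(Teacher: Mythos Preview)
Your approach is correct but genuinely different from the paper's. You argue by direct verification of the cube relation at each face $G-$, splitting boundary pairs by the last coordinate of the intermediate vertex, expanding via the composition formula, and reorganizing the resulting triple sums so that they match the already-known cube relations in $\mathcal{C}\to\mathcal{C}'$ and $\mathcal{C}'\to\mathcal{C}''$; you then propose to handle the signs by passing to the positive-signs framework of Lemma~\ref{c2lsigns}. The paper instead reduces the dimension all the way down: it takes the iterated cone in the first $n-1$ directions to obtain ordinary chain maps $cone^{n-1}(\mathcal{C})\to cone^{n-1}(\mathcal{C}')\to cone^{n-1}(\mathcal{C}'')$, composes them (trivially a chain map), and then invokes Lemma~\ref{decone} to recover an $n$-cube from this $1$-cube in coniform. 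The only remaining work is to check that the formula so obtained agrees with Equation~\ref{eqcom}, which reduces to the sign identity $n(G-)+n(G'-)+n(G''-)\equiv \#(01,v(G',G))\pmod 2$. Your route is more hands-on and makes the combinatorics of the triple sum explicit, which has some expository value; the paper's route avoids that combinatorics entirely by exploiting that composition of chain maps is obviously a chain map, and isolates the sign question into a single clean identity. Both ultimately rely on the positive-signs conversion to tame the signs.
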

\begin{proof}
Let us first give another description of the composition. By taking the $(n-1)$ times iterated cone in the first $(n-1)$-directions, we get two chain maps glued along a chain complex $cone^{n-1}(\mathcal{C})\to cone^{n-1}(\mathcal{C'})\to cone^{n-1}(\mathcal{C''})$. The composition of these maps is also a chain map, hence we obtain a $1$-cube $$cone^{n-1}(\mathcal{C})\to cone^{n-1}(\mathcal{C''}).$$
Now using Lemma \ref{decone} iteratively, we produce from this data a map of $(n-1)$-cubes $\mathcal{C}\to\mathcal{C}''$ with the property that taking the $(n-1)$ times iterated cone in the first $(n-1)$-directions reproduces $cone^{n-1}(\mathcal{C})\to cone^{n-1}(\mathcal{C''})$. Here the only thing we are using is the fact that products of lower triangular block matrices are also of the same form.
Only thing left is to check that this description of composition agrees with the one given by Equation \ref{eqcom}. Up to the signs this is clear. It is easy to see that the exponent of the sign should be $$n(G-)+n(G'-)+n(G''-).$$

For any face $H$ of $Cube^{n-1}$, $$\#(0,H-)+\#(0-,H-)=\#(0-,H) \text{ (mod }2).$$ Noting that $\#(0-,\mu(G))+\#(0-,\mu(G'))+\#(0-,\mu(G''))$ is equal to $\#(01,v(G',G))$ for $G'>G$ a boundary of $G$ finishes the proof.
\end{proof}
%
%
%
%
%

The following lemma follows easily from the proof.

\begin{lemma}\label{c2lcompose}
\begin{itemize}
\item The composition operation is associative. Namely, if we have three cubes that can be glued $$\mathcal{C}\to\mathcal{C'}\to\mathcal{C''}\to \mathcal{C}''',$$ then the composition $\mathcal{C}\to \mathcal{C}'''$ is independent of the order in which we performed the compositions.
\item Composition of two $n$-cubes commutes with the cone operation done in any direction other than the last one.
\end{itemize}\qed
\end{lemma}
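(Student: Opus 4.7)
The plan is to reduce both statements to elementary facts about composition of chain maps, via the alternative description of composition given in the proof of the Claim preceding the lemma. Recall that for a glued pair $\mathcal{C}\to\mathcal{C}'$, the composition was built by iteratively applying $cone$ in the first $n-1$ directions to collapse each $n$-cube into a chain map between chain complexes, composing these chain maps in the ordinary sense, and then inverting the cone identifications via Lemma \ref{decone} to produce an $(n-1)$-cube in coniform. The key observation is that the cone and de-cone operations of Lemma \ref{decone} are bijections, so any identity that holds after iterated contraction lifts to an identity of the original cubes.

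For associativity (1), I would take three composable $n$-cubes $\mathcal{C}\to\mathcal{C}'\to\mathcal{C}''\to\mathcal{C}'''$ and apply $cone^1,\ldots,cone^{n-1}$ in the first $n-1$ directions, yielding three composable chain maps $F_1,F_2,F_3$ between four chain complexes. Composition of chain maps is strictly associative, so $(F_3F_2)F_1=F_3(F_2F_1)$ as a single chain map. Because the product of lower-triangular block matrices is again lower triangular, both bracketings of the original cube compositions remain in coniform throughout the process; by the inverse de-cone bijection (Lemma \ref{decone}) the two bracketings yield the same $(n-1)$-cube composition $\mathcal{C}\to\mathcal{C}'''$.

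For commutativity with $cone^d$ for $d<n$ (2), I would first note that $cone^d$ with $d<n$ sends a composable pair of $n$-cubes to a composable pair of $(n-1)$-cubes: by Lemma \ref{c2lcones} (2) it preserves the property of being a map of $(n-1)$-cubes, and it acts on the $\{x_n=0\}$ and $\{x_n=1\}$ faces by functoriality of the cone operation, so the matching $(n-1)$-cubes stay matched. Then I would apply Lemma \ref{c2lcones} (1) to reorder the iterated cones in the composition construction so that $cone^d$ is performed first, followed by cones in the remaining $n-2$ directions. Since the chain-map composition step at the center is literal function composition, it is independent of which external cones have already been taken, and inverting the remaining cones produces an equality of $(n-1)$-cubes that are maps of $(n-2)$-cubes.

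The main obstacle, if one tried instead to verify everything directly from Equation \ref{eqcom}, would be a very fiddly sign bookkeeping involving $\#(01,-)$ on nested boundaries of faces. Routing through the iterated-cone description sidesteps this entirely, since all signs have already been absorbed into the cone construction in Lemma \ref{c2lsigns} and have been shown compatible with maps, identities, slits, and triangles in parts (2)--(4) of Lemma \ref{c2lcones}.
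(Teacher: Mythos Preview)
Your proposal is correct and follows exactly the approach the paper intends: the paper simply says the lemma ``follows easily from the proof'' of the preceding Claim, meaning from the alternative description of composition as cone-down / compose chain maps / de-cone-up, and you have spelled this out faithfully. Your use of Lemma~\ref{decone} for the bijectivity of coning and Lemma~\ref{c2lcones}(1) for reordering the iterated cones is precisely what makes the reduction to ordinary chain-map associativity work.
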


A simple case of composition is the following. Let $\mathcal{D}_{i}$, $i=1,2,3$, be three maps of $n$-cubes of the form $\mathcal{C}\to\mathcal{C'}$. Assume that there is a a homotopy from $\mathcal{D}_{1}$ to $\mathcal{D}_{2}$ and also from $\mathcal{D}_{2}$ to $\mathcal{D}_{3}$. Then the $(n+1)$-cubes defining these homotopies give rise to a homotopy from $\mathcal{D}_{1.}$ to $\mathcal{D}_{3}$ using the composition operation. 

The following lemma follows from the definitions. 

\begin{lemma} \label{triangletoslit} Consider an $n$-triangle \begin{align}
\xymatrix{ 
\mathcal{C}\ar[r]^{id}\ar[d]_f\ar[dr]& \mathcal{C}\ar[d]^{g}\\ \mathcal{C}'\ar[r]_{f'} &\mathcal{C}''}
\end{align}
We can define an $n$-slit (keeping the maps represented by the diagonal exactly the same):
 \begin{align}
\xymatrix{ 
\mathcal{C}\ar[r]^{id}\ar[d]_{f'\circ f}\ar[dr]& \mathcal{C}\ar[d]^{g}\\ \mathcal{C}''\ar[r]_{id} &\mathcal{C}'',}
\end{align} which is a homotopy of maps between $g$ and the composition of $f$ and $f'$.
\end{lemma}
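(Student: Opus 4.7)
My plan is to verify directly that the proposed slit defines a valid $(n+2)$-cube. Specifically, I take the four corners to be $\mathcal{C}$ at $(x_{n+1},x_{n+2})=(0,0),(1,0)$ and $\mathcal{C}''$ at $(0,1),(1,1)$; the four $(n+1)$-dimensional edge faces to be $id_{\mathcal{C}}$, $id_{\mathcal{C}''}$, $g$, and the composition $f'\circ f$ constructed in Section \ref{c2sscomposing}; and the collection of diagonal maps $h_F$ (indexed by faces $F$ of $Cube^{n+2}$ with $\mu(F)=(\sigma,-,-)$) to be copied verbatim from the given triangle. Since $h_F$ has source at the corner $(0,0)=\mathcal{C}$ and target at $(1,1)=\mathcal{C}''$ in both the triangle and the slit, this assignment is well-defined.

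Next I would verify the $(n+2)$-cube equations face by face. For any face $F$ whose last two entries of $\mu(F)$ are not both $-$, the equation depends only on maps internal to a single corner or edge face, and is satisfied by the construction of $id_{\mathcal{C}}$, $id_{\mathcal{C}''}$, $g$, and $f'\circ f$. The remaining case is $\mu(F)=(\sigma,-,-)$. For such $F$, I would expand the cube equation as a sum over boundary splittings $S_1 \subset S$ (with $S$ the set of positions of $-$'s in $\mu(F)$) and reorganize the terms according to $A := S_1 \cap \{n+1,n+2\}$. The three subsums with $A=\varnothing,\{n+1\},\{n+1,n+2\}$ involve only maps that coincide in the triangle and the slit (diagonal maps, the $id_{\mathcal{C}}$ edge, the $g$ edge, and interior maps of the $\mathcal{C}$ and $\mathcal{C}''$ corners), so these three subsums contribute identically to both cube equations.

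For the remaining subsum $A=\{n+2\}$, the slit and the triangle use genuinely different maps: sub-face maps of $f'\circ f$ and $id_{\mathcal{C}''}$ in the slit, versus sub-face maps of $f$ and $f'$ in the triangle. However, since all higher homotopies of $id_{\mathcal{C}''}$ vanish, only the splitting with $F''$ one-dimensional contributes in the slit, collapsing the subsum to a single term proportional to the top-dimensional sub-face map of $f'\circ f$. By the composition formula (Equation \ref{eqcom}), this top map is precisely the sum, over the same boundary splittings, of products of sub-face maps of $f$ and $f'$ that make up the triangle's $A=\{n+2\}$ subsum. Hence the slit's cube equation at $F$ equals the triangle's cube equation at $F$ and vanishes by hypothesis. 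The interpretation of the resulting slit as a homotopy between $g$ and $f'\circ f$ is then automatic from the shape of a slit.

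The main obstacle is sign bookkeeping: matching the signs $(-1)^{*_{F',F}}$ appearing in the cube equation with the signs $(-1)^{\#(01,v(G',G))}$ appearing in the composition formula, together with any twists introduced by the sign-change operation of Lemma \ref{c2lsigns} that is implicit in the definition of composition. I would handle this cleanly by first running the verification in the setting of cubes with positive signs (Section \ref{c2sspositive}), where all such signs are trivially $+1$ so that the match is immediate from the formulas, and then transferring the result back along the canonical sign-change bijection.
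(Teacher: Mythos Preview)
Your proposal is correct but takes a different route from the paper. The paper's one-line justification (``follows from the definitions'') points to the cone-reduction argument: by Lemma \ref{decone} it suffices to check cube-ness after applying the iterated cone in the first $n$ directions, and since cones commute with composition (Lemma \ref{c2lcompose}) and with $id$ (Lemma \ref{c2lcones}(3)), both the triangle and the proposed slit cone down to $2$-cubes whose top-face equations are literally the same identity $g - f'f + dh + hd = 0$. Your approach instead verifies the $(n+2)$-cube equation directly; the decomposition by $A$ is correct, and your key observation---that only the $A=\{n+2\}$ subsum differs and that it collapses via the composition formula---is exactly right. Your plan to handle signs by passing to positive signs is valid, but the phrase ``trivially $+1$'' is a slight overstatement: what you are really using is that the sign-change bijection of Lemma \ref{c2lsigns} intertwines the signed composition formula (\ref{eqcom}) with its unsigned analogue and sends $id$ to $id$. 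The first fact is precisely the identity $n(G-)+n(G'-)+n(G''-)\equiv\#(01,v(G',G))$ established in the proof of the Claim after (\ref{eqcom}); the second is the content of Lemma \ref{c2lcones}(3) (edges in the last direction have even $n$-invariant). So your argument goes through, but its sign bookkeeping quietly relies on the same identities that drive the cone-reduction; the latter is shorter because it packages those identities once and for all.
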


\subsubsection{Rays}\label{c2ssrays}
We call an infinite sequence of $n$-cubes $\mathcal{D}_1,\mathcal{D}_2,\ldots$ an \textbf{$n$-ray} if for every $i\geq 1$, $\mathcal{D}_i$ and $\mathcal{D}_{i+1}$ can be glued in the $n$th direction. 

We will generally present an $n$-ray as \begin{align}
\mathcal{C}_1\to \mathcal{C}_2\to \mathcal{C}_3\to\ldots,
\end{align}where $\mathcal{C}_i$ are the $(n-1)$-cubes such that $\mathcal{D}_i$ is the map $\mathcal{C}_i\to \mathcal{C}_{i+1}$, for every $i\geq 1$. We call $\mathcal{C}_i$ the $i$th \textbf{slice}.

Below is a $1$-ray:
\begin{align}
C_1\to C_2\to C_3\to\ldots
\end{align}

And a $2$-ray:
\begin{align}\label{2-ray}
\xymatrix{ 
C_1\ar[r]^{c_1}\ar[d]_{f_1}\ar[dr]_{g_1}& C_2\ar[d]_{f_2}\ar[r]^{c_2}\ar[dr]_{g_2} &C_3\ar[r]\ar[d]^{f_3}\ar[dr]& \ldots\\ D_1\ar[r]_{d_1} &D_2\ar[r]_{d_2}&D_3\ar[r]&\ldots}
\end{align}

We define a \textbf{map} between two $n$-rays $\mathcal{C}_1\to \mathcal{C}_2\to \mathcal{C}_3\to\ldots$ and $\mathcal{C}_1'\to \mathcal{C}_2'\to \mathcal{C}_3'\to\ldots$ to be a collection $\mathcal{E}_i$, $i\geq 1$, of maps of $n$-cubes from $\mathcal{C}_i\to \mathcal{C}_{i+1}$ to $\mathcal{C}_i'\to \mathcal{C}_{i+1}'$ such that the $n$-cube $\mathcal{C}_{i+1}\to \mathcal{C}_{i+1}'$ induced from $\mathcal{E}_i$ and $\mathcal{E}_{i+1}$ agree, i.e. $\mathcal{E}_i$ and $\mathcal{E}_{i+1}$ can be glued in the $n$th direction. 

We warn the reader that a map between two $n$-rays is not an $(n+1)$-ray itself. For example the $2$-ray given in (\ref{2-ray}) satisfies the equations $$f_{i+1}c_i-d_if_i+dg_i+g_id=0.$$ If the same data were to be seen as a map between $1$-rays, then the equations would become $$-f_{i+1}c_i+d_if_i+dg_i+g_id=0.$$

A \textbf{homotopy} between two maps of $n$-rays given by $\mathcal{E}_i$ and $\mathcal{E}'_i$ is a collection of homotopies between $\mathcal{E}_i$ and $\mathcal{E}'_i$ ($(n+2)$-slits), which also can be glued in the $n$th direction. Figure \ref{c2fhomotopy} shows a homotopy between two maps between two $1$-rays. 
\begin{figure}[ht]\label{c2fhomotopy}
\centering
\begin{tikzpicture}[commutative diagrams/every diagram]
  \matrix (m) [matrix of math nodes, row sep=1.5em,
    column sep=1em]{
    & \ldots & & C_i & & C_{i+1} & & \ldots   \\
    \ldots & & C_i  & & C_{i+1}  & & \ldots & \\
    &\ldots & & D_i & & D_{i+1} & & \ldots \\
    \ldots & & D_i  & & D_{i+1} & &\ldots  & \\};
  \path[commutative diagrams/.cd, every arrow, every label]
    (m-1-4) edge (m-1-6) edge node[swap] {id} (m-2-3)
            edge [densely dotted] (m-3-4) 
            edge [densely dotted] (m-4-3) 
            edge [densely dotted] (m-3-6)
            edge [densely dotted] (m-4-5)
    (m-1-6) edge [densely dotted] (m-3-6) edge node[swap] {id}  (m-2-5) 
            edge [densely dotted] (m-4-5) 
    
    (m-2-3) edge [-,line width=6pt,draw=white] (m-2-5)
            edge (m-2-5) edge (m-4-3) edge (m-4-5)
    (m-3-4) edge [densely dotted] (m-3-6)
            edge [densely dotted] node {id} (m-4-3)
            
    (m-4-3) edge (m-4-5)
    (m-3-6) edge node {id}  (m-4-5)
    (m-2-5) edge [-,line width=6pt,draw=white] (m-4-5)
            edge (m-4-5)
    (m-1-2) edge (m-1-4)
    (m-2-1) edge (m-2-3) 
    (m-3-2) edge [densely dotted](m-3-4) 
    (m-4-1) edge (m-4-3) 
    (m-1-6) edge (m-1-8) 
    (m-2-5) edge [-,line width=6pt,draw=white] (m-2-7) edge (m-2-7)
    (m-3-6) edge (m-3-8) 
    (m-4-5) edge (m-4-7);             
\end{tikzpicture}
\caption{A homotopy between two maps between two $1$-rays. If there is no arrow on a face that means that the map on it is zero. Note that here we are seeing $3$-cubes with respect to the ordering where the infinite direction is the first and the direction containing the $id$ maps is the last.}
\end{figure}
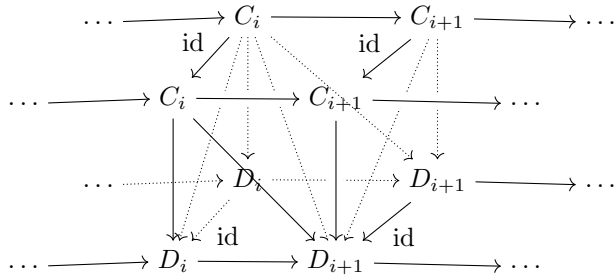

A \textbf{triangle} of maps between $n$-rays is defined in the same way, via $(n+2)$-triangles that can be glued in the $n$th direction.
 
\subsubsection{Cones and telescopes of $n$-rays}\label{c2ssconesand}

First of all, note that we can take the cone in any one of the $n-1$ finite directions of an $n$-ray and obtain an $(n-1)-$ ray. This is an immediate consequence of functoriality of the cones. 

Let $\mathcal{C}=C_1\overset{f_1}{\to} C_2\overset{f_2}{\to} C_3\overset{f_3}{\to}\ldots$ be a $1$-ray. The \textbf{telescope} $tel(\mathcal{C})$ of such a diagram \cite{AS} is defined to be the chain complex with the underlying $R$-module $\bigoplus_{i\in\mathbb{N}}C_i[1]\oplus C_i$ and the differential $\delta$ as depicted below:
\begin{align}
\xymatrix{
C_1\ar@{>}@(ul,ur)^{d }  &C_2\ar@{>}@(ul,ur)^{d} &C_3\ar@{>}@(ul,ur)^{d}\\
C_1[1]\ar@{>}@(dl,dr)_{-d} \ar[u]^{\text{id}}\ar[ur]^{f_1} &C_2[1]\ar@{>}@(dl,dr)_{-d} \ar[u]^{\text{id}}\ar[ur]^{f_2}&C_3[1]\ar@{>}@(dl,dr)_{-d} \ar[u]^{\text{id}}_{\ldots}}
\end{align}

More precisely, if $x\in C_i$, then $\delta x:=dx$, where $d$ represents the differential in $C_i$, and if $x\in C_i[1]$, then $\delta x:=\tilde{x}-dx+f_i(x)$, where again $d$ is the differential of $C_i$, $\tilde{x}$ is the copy of $x$ in $C_i$, and $f_i(x)$ is considered as an element of $C_{i+1}$, for every $i\geq 1$. 

We note the abuse of notation we did in the previous definition (and will continue to do so as we believe it does not cause confusion): the element $x$ and expressions on the right hand sides in these definitions are considered as elements of $\bigoplus_{i\in\mathbb{N}}C_i[1]\oplus C_i$ via the natural inclusions of summands. 

More generally, we define the \textbf{telescope} $tel(\mathcal{C})$ of an $n$-ray $\mathcal{C}$ as an $(n-1)$-cube in the following way. Let $\mathcal{C}$ be the $n$-ray $\mathcal{C}_1\to \mathcal{C}_2\to \mathcal{C}_3\to\ldots$.

Consider the cones $cone^n(\mathcal{D}_i)$ in the last direction of the $n$-cubes $\mathcal{D}_i:=\mathcal{C}_{i}\to\mathcal{C}_{i+1}$, for $i\geq 0$, where $\mathcal{C}_{0}$ is defined to have zero modules at all vertices. Denote the maps in $cone^n(\mathcal{D}_i)$ by $f^i_F$ for $F$ a face of $Cube^{n-1}$. 

Define the $R$-modules at any vertex $w$ of $Cube^{n-1}$ of $tel(\mathcal{C})$ as the direct sum $$\bigoplus_{i\in\mathbb{N}}(cone^n(\mathcal{D}_i))^w=\bigoplus_{i\in\mathbb{N}}\mathcal{C}^w_i[1]\oplus \mathcal{C}^w_i.$$

We then define the maps $\delta_F$ of $tel(\mathcal{C})$, for every face $F$ of $Cube^{n-1}$ as follows.  Let $w$ be the initial vertex of $F$. For $x\in  \mathcal{C}^w_i$, we simply define $\delta_Fx= f^i_Fx$. If $x\in  \mathcal{C}^w_i[1]$, and $F$ is zero dimensional, we define $\delta_Fx= f^i_Fx+\tilde{x}$, where $\tilde{x}$ is the copy of $x$ in $\mathcal{C}^w_i$. If $x\in  \mathcal{C}^w_i[1]$, and $F$ is positive dimensional, we define $\delta_Fx= f^i_Fx$.

Informally, the maps in the $(n-1)$-cube structure are depicted in: 
\begin{align}
\xymatrix{
\mathcal{C}_0   &\mathcal{C}_1&\mathcal{C}_2\\
\pm\mathcal{C}_0[1] \ar[u]^{\text{id}}\ar[ur] &\pm\mathcal{C}_1[1] \ar[u]^{\text{id}}\ar[ur]&\pm\mathcal{C}_2[1] \ar[u]^{\text{id}}_{\ldots}}
\end{align}
Note that the $\mathcal{C}_i$'s (and the shifted copies) have internal structure that makes them an $(n-1)$-cube that is taken into account here, and the $\pm$ in front means that some those maps are negated. The sign changes in the internal structure of $\pm\mathcal{C}_i[1]$ and the structure maps corresponding to diagonal and vertical arrows come from the cones of $\mathcal{D}_i=\mathcal{C}_{i}\to\mathcal{C}_{i+1}$ and $\mathcal{C}_{i}\xrightarrow{id}\mathcal{C}_i$ in the $n$th direction. The statement that $tel(\mathcal{C})$ is an $(n-1)$-cube boils down to functoriality of cones.

\begin{lemma}\label{c2ltelescope}
\begin{itemize}
\item We get a canonical $1$-ray from any $n$-ray by an $(n-1)$ times iterated cone. This commutes with the telescope operation.
\item Telescopes are functorial in the sense that (1) a map of $n$-rays canonically gives a map of the telescopes (which are $(n-1)$-cubes), (2) a homotopy between two maps gives a homotopy, (3) a triangle of maps gives a triangle of maps.
\end{itemize}\qed
\end{lemma}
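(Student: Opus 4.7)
The first bullet follows from the commutativity and functoriality of cones established in Lemma \ref{c2lcones}. Given an $n$-ray $\mathcal{C}_1\to\mathcal{C}_2\to\ldots$, the plan is to apply cone operations in the first $n-1$ directions: by Lemma \ref{c2lcones}(1) the order of contraction is immaterial, and by Lemma \ref{c2lcones}(2) each contraction sends a map in the last direction to a map, so the iterated cone is indeed a $1$-ray of chain complexes. For the commutation with telescope, I would unpack the definition of $tel(\mathcal{C})$: its vertex modules are $\bigoplus_i \mathcal{C}_i^w[1]\oplus \mathcal{C}_i^w$, and its structure maps come from the cones $cone^n(\mathcal{D}_i)$ plus identity pieces $\tilde{x}$ at the $0$-dimensional faces. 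Since cones commute amongst themselves, contracting the remaining $n-1$ directions of $tel(\mathcal{C})$ reorganizes the data into a telescope whose building blocks are the full $n$-fold iterated cones of the $\mathcal{D}_i$, which is precisely the telescope of the $1$-ray built from iterated cones. The identity pieces are untouched by any further cone operation in the finite directions and reappear as the identity pieces of the $1$-ray telescope.

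For the functoriality statements in the second bullet, a map of $n$-rays is a collection $\{\mathcal{E}_i\}$ of $(n+1)$-cubes that glue in the $n$th direction. I would apply $cone^n$ to each $\mathcal{E}_i$, obtaining (via Lemma \ref{c2lcones}(2)) a map of $(n-1)$-cubes from $cone^n(\mathcal{D}_i)$ to $cone^n(\mathcal{D}_i')$, take the direct sum over $i$, and adjoin identity maps on the shifted summands at the $0$-dimensional faces, mirroring the telescope recipe. The $n$-cube relations reduce, face by face, to (a) the relations already satisfied by the individual $cone^n(\mathcal{E}_i)$ and (b) cancellations between consecutive $\mathcal{E}_i,\mathcal{E}_{i+1}$ coming from their gluing in the $n$th direction, which is exactly what makes interior telescope terms line up. For (2) and (3), the same construction applied to the given $(n+2)$-slits and $(n+2)$-triangles works: by Lemma \ref{c2lcones}(4), $cone^n$ sends these to $(n+1)$-slits and $(n+1)$-triangles, respectively, and the direct-sum-with-identities assembly yields the required $(n+1)$-cube of maps between telescopes.

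The main obstacle is sign bookkeeping, but this is largely absorbed into Lemma \ref{c2lcones}: the sign compatibilities of $cone$ with identity maps, slits, and triangles are packaged there, so the only genuinely new signs to verify are those on the adjoined identity pieces $\tilde{x}$ of the telescope. These are forced by the degree-zero requirement on identity maps and by the formula for $\delta_F$ at $0$-dimensional faces, and a direct inspection confirms they assemble consistently with the telescoping cancellations. Once this verification is in hand, all four claimed properties follow by combining the assembly operations with the associativity/functoriality established in Lemmas \ref{c2lcones} and \ref{c2lcompose}.
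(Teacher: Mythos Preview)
Your proposal is correct and matches the paper's approach: the lemma is stated with a \qed immediately following it, i.e., no explicit proof is given, the intended justification being precisely that everything reduces to the functoriality of cones (Lemma~\ref{c2lcones}) together with the explicit telescope formulas laid out in the paragraphs preceding the lemma. Your sketch fills in exactly those details.
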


\subsection{$1$-rays and quasi-isomorphisms}\label{c2srays}

The arguments in this section are slight modifications of \cite{AS}. 

\begin{lemma}\label{c2lcolimit}
Let $\mathcal{C}=C_1\to C_2\to C_3\to\ldots$ be a $1$-ray. 
Then, there is a canonical quasi-isomorphism \begin{align}tel(\mathcal{C})\to\lim_{\rightarrow}(C_1\to C_2\to\ldots)
\end{align}
\end{lemma}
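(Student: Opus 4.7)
The plan is to identify $tel(\mathcal{C})$ with a mapping cone and then apply the standard homological algebra fact that the cone of an injective chain map is quasi-isomorphic to its cokernel. Set $X := \bigoplus_{i \geq 1} C_i$ equipped with the direct sum differential, and define a chain map $\sigma \colon X \to X$ by $\sigma(x) := x + f_i(x)$ for $x \in C_i$ (so that $x$ lands in the $C_i$-summand and $f_i(x)$ in the $C_{i+1}$-summand of the target). Direct comparison of formulas then shows that $\operatorname{cone}(\sigma) = X[1] \oplus X$ with its cone differential $\bigl(\begin{smallmatrix}-d & 0 \\ \sigma & d\end{smallmatrix}\bigr)$ is canonically isomorphic, as a chain complex, to $tel(\mathcal{C})$: for $y$ in the $C_i[1]$-summand the output $(-dy,\sigma(y)) = (-dy, y + f_i(y))$ matches $\delta y = \tilde{y} - dy + f_i(y)$, and on the $C_i$-summand both differentials are just $d$.

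Next I would verify two things: that $\sigma$ is injective, and that its cokernel is canonically isomorphic to $\varinjlim_i C_i$. Injectivity follows by triangularity: if $\sigma(\sum x_i) = 0$ then equating $C_j$-components gives $x_j = -f_{j-1}(x_{j-1})$ (with the convention $x_0 = 0$), and induction starting at $j = 1$ forces every $x_j$ to vanish. The cokernel $\bigl(\bigoplus_i C_i\bigr)/\{x + f_i(x) : x \in C_i\}$ is canonically isomorphic to $\varinjlim_i C_i = \bigl(\bigoplus_i C_i\bigr)/\{x - f_i(x) : x \in C_i\}$ via the sign twist $[x]_i \mapsto (-1)^i [x]_i$; this is well-defined because the defining relation $[x]_i + [f_i(x)]_{i+1} = 0$ is sent to $(-1)^i\bigl([x]_i - [f_i(x)]_{i+1}\bigr) = 0$, and the map intertwines the differentials.

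The final ingredient is the lemma that for any injective chain map $\sigma \colon X \to X'$, the natural surjection $\operatorname{cone}(\sigma) = X[1] \oplus X' \twoheadrightarrow \operatorname{coker}(\sigma)$ given by $(y, x') \mapsto [x']$ is a quasi-isomorphism. Its kernel consists of pairs $(y, \sigma(y'))$ and becomes $\operatorname{cone}(\operatorname{id}_X) = X[1] \oplus X$ with the standard (acyclic) cone differential under the bijection $(y, \sigma(y')) \mapsto (y, y')$, using that $\sigma$ is a chain map. Since the cone of the identity is contractible, the kernel is acyclic, and the long exact sequence of $0 \to \operatorname{cone}(\operatorname{id}_X) \to \operatorname{cone}(\sigma) \to \operatorname{coker}(\sigma) \to 0$ concludes. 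Chaining the identifications produces the canonical quasi-isomorphism $tel(\mathcal{C}) \to \varinjlim_i C_i$; explicitly it is the map sending $x \in C_i \mapsto (-1)^i [x]$ and $y \in C_i[1] \mapsto 0$. I expect no serious obstacle beyond the mildly tedious bookkeeping of signs when matching $\delta$ against the cone differential of $\sigma$.
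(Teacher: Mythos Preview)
Your argument is correct and is a genuinely different route from the paper's. The paper filters $tel(\mathcal{C})$ by the finite partial telescopes $F^n(tel(\mathcal{C})) = \bigl(\bigoplus_{i<n} C_i[1]\oplus C_i\bigr)\oplus C_n$, exhibits each $F^n$ as quasi-isomorphic to $C_n$ via an explicit collapse map, and then passes to the direct limit using that homology commutes with filtered colimits. You instead realize $tel(\mathcal{C})$ as $\operatorname{cone}(\sigma)$ for the injective endomorphism $\sigma = \mathrm{id}+\text{shift}$ of $\bigoplus_i C_i$, and invoke the general fact that the cone of an injection surjects quasi-isomorphically onto its cokernel.

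Two small remarks. First, your sign twist $[x]_i\mapsto (-1)^i[x]_i$ is exactly right given the paper's convention $\delta y = \tilde y - dy + f_i(y)$ (i.e.\ $\sigma=\mathrm{id}+\text{shift}$ rather than $\mathrm{id}-\text{shift}$); without it the collapse map $C_i\to \varinjlim$ by the unadorned structure maps is \emph{not} a chain map, since $\tilde y + f_i(y)$ would land on $2[y]$ rather than $0$. So your bookkeeping is not merely tedious but necessary. Second, the paper's filtration argument is chosen because the pieces $F^n$ and the maps $F^n\to C_n$ are reused immediately afterwards to prove functoriality of the isomorphism $H(tel(\mathcal{C}))\cong \varinjlim H(C_i)$ under maps of $1$-rays; your cone argument is slicker for the present lemma but does not by itself hand you those building blocks, so you would need a separate (equally short) naturality check for the next lemma.
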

\begin{proof}
Define $F^n(tel(\mathcal{C}))$ to be $(\bigoplus_{i\in [1,n-1]}C_i[1]\oplus C_i)\oplus C_n$. Notice that $tel(\mathcal{C})$ is the usual direct limit of $F^n(tel(\mathcal{C}))$. Moreover, there are canonical quasi-isomorphisms $F^n(\mathcal{C})\to C_n$ induced by the given maps $C_i\to C_n$, $i\in [1,n]$ and the zero maps $C_i[1]\to C_n$, $i\in [1,n-1]$, which makes the diagrams 
\begin{align}\label{c2estrictification}
\xymatrix{
F^n(tel(\mathcal{C})) \ar[d]\ar[r]  &F^{n+1}(tel(\mathcal{C}))\ar[d] \\ C_n \ar[r] &C_{n+1}},
\end{align}
commutative. The induced map $tel(\mathcal{C})\to\lim_{\rightarrow}C_i$ is also a quasi-isomorphism, since direct limits commute with homology. 
\end{proof}

In particular, $\lim_{\rightarrow}H(C_i)$ is canonically isomorphic to $H(tel(\mathcal{C}))$. This isomorphism is also functorial in the following way.

\begin{lemma}\label{c2lhomologymaps}
Let $\mathcal{C}=C_1\to C_2\to C_3\to\ldots$ and $\mathcal{D}=D_1\to D_2\to D_3\to\ldots$ be $1$-rays, and let $f:\mathcal{C}\to\mathcal{D}$ be a map of $1$-rays. Note that this induces a strictly commutative diagram  \begin{align}
\xymatrix{ 
H(C_1)\ar[r]\ar[d]& H(C_2)\ar[d]\ar[r] &H(C_3)\ar[r]\ar[d]& \ldots\\ H(D_1)\ar[r] &H(D_2)\ar[r]&H(D_3)\ar[r]&\ldots}, 
\end{align} and consequently a map $$\lim_{\rightarrow}H(C_i)\to \lim_{\rightarrow}H(D_i).$$ On the other hand $f$ also induces the map $$tel(\mathcal{C})\to tel(\mathcal{D}).$$ The diagram \begin{align}
\xymatrix{
H(tel(\mathcal{C})) \ar[d]\ar[r]  &H(tel(\mathcal{D}))\ar[d] \\ \lim_{\rightarrow}H(C_i) \ar[r] &\lim_{\rightarrow}H(D_i)}
\end{align}commutes.
\end{lemma}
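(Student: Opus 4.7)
The plan is to lift the desired commutative square on homology to a homotopy-commutative square of chain complexes, using the filtration $F^n(tel(\mathcal{C}))$ from the proof of Lemma \ref{c2lcolimit}, and then pass to direct limits.

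A map of $1$-rays $f:\mathcal{C}\to\mathcal{D}$ amounts to chain maps $f_i:C_i\to D_i$ together with homotopies $h_i:C_i\to D_{i+1}$ witnessing the relation $f_{i+1}c_i-d_if_i=dh_i+h_id$ (up to signs, with $c_i,d_i$ denoting the $1$-ray maps of $\mathcal{C}$ and $\mathcal{D}$). The induced telescope map $tel(f):tel(\mathcal{C})\to tel(\mathcal{D})$ from Lemma \ref{c2ltelescope} sends $C_i$ via $f_i$, and sends the shifted copy $C_i[1]$ via $f_i$ (into $D_i[1]$) together with $h_i$ (into $D_{i+1}$). Because $h_i(C_i)\subset D_{i+1}$, this restricts for every $n$ to a chain map $F^n(tel(\mathcal{C}))\to F^n(tel(\mathcal{D}))$, compatibly with the filtration inclusions $F^n\subset F^{n+1}$.

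I would then verify that for every $n$, the square with $F^n(tel(\mathcal{C}))\to F^n(tel(\mathcal{D}))$ on top, $C_n\xrightarrow{f_n}D_n$ on the bottom, and the quasi-isomorphisms of Lemma \ref{c2lcolimit} on the sides, commutes up to an explicit chain homotopy $H_n:F^n(tel(\mathcal{C}))\to D_n$. The two compositions agree strictly on $C_n\subset F^n(tel(\mathcal{C}))$ (both giving $f_n$). On the summand $C_i$ with $i<n$, their difference equals the telescoping sum $\sum_{k=i}^{n-1} (d_{n-1}\cdots d_{k+1})(f_{k+1}c_k-d_k f_k)(c_{k-1}\cdots c_i)$, which is $dH_n+H_nd$ on $C_i$ by the chain homotopy relation on the $h_k$, with $H_n=\sum_k (d_{n-1}\cdots d_{k+1})h_k(c_{k-1}\cdots c_i)$; a sign-corrected analogue on the shifted summands $C_i[1]$ absorbs the $h_i$-contribution of $tel(f)$. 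Passing to the direct limit over $n$ and using that direct limits commute with homology, the homotopy-commutative squares at finite level assemble into the claimed commutative square on homology, with the vertical arrows identified via Lemma \ref{c2lcolimit} and the bottom horizontal identified with $\lim_{\rightarrow} H(f_i)$.

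The main obstacle is the explicit construction and the compatibility check for the $H_n$ as $n$ varies, including verifying that the homotopies extending across shifted and unshifted summands fit together coherently with the filtration inclusions. This is a routine if tedious sign-bookkeeping exercise in the telescope differential, with no substantive difficulty beyond careful index tracking.
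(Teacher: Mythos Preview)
Your approach is correct and matches the paper's strategy: restrict $tel(f)$ to the filtration $F^n$, show the finite-level squares over $C_n\xrightarrow{f_n}D_n$ commute on homology, and pass to direct limits. The paper sidesteps your explicit chain-homotopy construction by (implicitly) using that the inclusion $C_n\hookrightarrow F^n(tel(\mathcal{C}))$ is a section of the quasi-isomorphism $F^n(tel(\mathcal{C}))\to C_n$, so every homology class is represented by a cycle in the top summand $C_n$, where the square commutes on the nose.
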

\begin{proof}
The important point is that $f$ induces a strictly commutative diagram
 \begin{align}
\xymatrix{ 
F^1(\mathcal{C})\ar[r]\ar[d]& F^2(\mathcal{C})\ar[d]\ar[r] &F^3(\mathcal{C})\ar[r]\ar[d]& \ldots\\ F^1(\mathcal{D})\ar[r] &F^2(\mathcal{D})\ar[r]&F^3(\mathcal{D})\ar[r]&\ldots}, 
\end{align}
and $tel(\mathcal{C})\to tel(\mathcal{D})$ is simply the maps between the usual direct limits.

Also note that the diagrams
\begin{align}
\xymatrix{
H(F^n(tel(\mathcal{C}))) \ar[d]\ar[r]  &H(F^{n}(tel(\mathcal{D})))\ar[d] \\ H(C_n) \ar[r] &H(D_{n})})
\end{align} commute for every $n$, and that we have the commutative diagrams from Equation \ref{c2estrictification}.

We then obtain the desired statement from the morphism version of the fact that homotopy commutes with direct limits.
\end{proof}

Let $\mathcal{C}=C_1\to C_2\to C_3\to\ldots$ be a $1$-ray, and $i(0)<i(1)<i(2)<\ldots $ be an infinite strictly monotone sequence of positive integers. Note that by composing maps we get a unique map $C_n\to C_m$ for all $m\geq n$. Then we canonically obtain a $1$-ray $\mathcal{C}^i=C_{i(1)}\to C_{i(2)}\to\ldots$. Let us call this a \textbf{subray}. Let us call the canonical map of $1$-rays $\mathcal{C}\to\mathcal{C}^i$ a compression map:
 
 \begin{align}
\xymatrix{ 
C_1\ar[r]\ar[d]& C_2\ar[d]\ar[r] &C_3\ar[r]\ar[d]& \ldots\\ C_{i(1)}\ar[r] &C_{i(2)}\ar[r]&C_{i(3)}\ar[r]&\ldots}
\end{align}

\begin{lemma}\label{c2lcompression}
The compression map induces a quasi-isomorphism: $tel(\mathcal{C})\to tel(\mathcal{C}^i)$.
\end{lemma}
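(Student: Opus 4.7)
The plan is to deduce this directly from Lemmas \ref{c2lcolimit} and \ref{c2lhomologymaps}, which together identify the homology of a telescope with the direct limit of the homology of its slices, functorially in maps of 1-rays. First I would note that the compression map is genuinely a map of 1-rays in the sense defined earlier: the squares
\[
\xymatrix{
C_n \ar[r] \ar[d] & C_{n+1} \ar[d] \\
C_{i(n)} \ar[r] & C_{i(n+1)}
}
\]
commute strictly, since associativity of composition makes both paths equal to the induced map $C_n \to C_{i(n+1)}$.

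Next I would apply Lemma \ref{c2lhomologymaps} to the compression map to obtain the commutative square
\[
\xymatrix{
H(tel(\mathcal{C})) \ar[d]\ar[r]  & H(tel(\mathcal{C}^i))\ar[d] \\
\lim_{\rightarrow}H(C_n) \ar[r] & \lim_{\rightarrow}H(C_{i(n)}),
}
\]
in which the top horizontal arrow is the map whose bijectivity we want to establish, and in which both vertical arrows are isomorphisms by Lemma \ref{c2lcolimit}. Thus the problem reduces to showing that the bottom horizontal arrow is an isomorphism.

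Finally, I would invoke the standard cofinality statement for direct limits of abelian groups (or $R$-modules). The bottom arrow is the map induced on colimits by the system morphism whose $n$th component is the composed map $H(C_n) \to H(C_{i(n)})$ coming from the structure maps of $\mathcal{C}$. Since $\{i(n)\}$ is a strictly increasing, and hence cofinal, subsequence of $\mathbb{N}$, this induced map on colimits is an isomorphism: an inverse is constructed by sending a class represented at stage $i(n)$ to its own class in $\lim_{\rightarrow} H(C_m)$, and checking that the resulting assignment descends to the colimit and inverts the forward map on each representative.

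I do not anticipate any serious obstacle: the content is purely formal once one has the functorial identification of telescope homology with the direct limit of slice homologies from the previous two lemmas, and cofinality takes care of the rest. The only thing to verify carefully is the strict commutativity needed to apply Lemma \ref{c2lhomologymaps}, which is immediate.
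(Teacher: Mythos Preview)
Your proposal is correct and follows essentially the same strategy as the paper: reduce to the map on direct limits via Lemma \ref{c2lcolimit}, and then invoke cofinality. The only minor difference is that the paper works at the chain level, using the commutative square with $\lim_{\rightarrow}(C_1\to C_2\to\ldots)\to\lim_{\rightarrow}(C_{i(1)}\to C_{i(2)}\to\ldots)$ on the bottom and observing this is already an isomorphism of chain complexes (indeed of underlying modules), whereas you pass to homology first via Lemma \ref{c2lhomologymaps} and then use cofinality on $\lim_{\rightarrow}H(C_n)$; both are perfectly valid and amount to the same argument.
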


\begin{proof}
We have the following commutative diagram:
\begin{align}
\xymatrix{
tel(\mathcal{C}) \ar[d]\ar[r]  &tel(\mathcal{C}^i)\ar[d] \\ \lim_{\rightarrow}(C_1\to C_2\to\ldots) \ar[r] &\lim_{\rightarrow}(C_{i(1)}\to C_{i(2)}\to\ldots)}
\end{align}Note that the bottom horizontal map is a chain map which is an isomorphism of the underlying modules. This finishes the proof.
\end{proof}

\begin{definition}\label{c2dweak}
Let $\mathcal{C}=\mathcal{C}_1\to \mathcal{C}_2\to \mathcal{C}_3\to\ldots$ be an $n$-ray, and $i(0)<i(1)<i(2)<\ldots $ be any infinite strictly monotone sequence of positive integers. A ray $\mathcal{C}_{i(1)}\to \mathcal{C}_{i(2)}\to\ldots$ is called a \textbf{weak subray} if for every $k\geq 1$, $\mathcal{C}_{i(k)}\to \mathcal{C}_{i(k+1)}$ as a map of $(n-1)$-cubes is homotopic to the composition of $\mathcal{C}_{i(k)}\to \mathcal{C}_{i(k)+1}\to\ldots\to\mathcal{C}_{i(k+1)}$.

Similarly, a map of $n$-rays from $\mathcal{C}$ to a weak subray $\mathcal{C}_{i(1)}\to \mathcal{C}_{i(2)}\to\ldots$ is called a \textbf{weak compression} if for every $k\geq 1$, the map $\mathcal{C}_{k}\to \mathcal{C}_{i(k)}$ is homotopic to the composition $\mathcal{C}_{k}\to \mathcal{C}_{k+1}\to\ldots\to\mathcal{C}_{i(k)}$.
\end{definition}

\begin{proposition}\label{c2pweakcompression}
Let $\mathcal{C}=\mathcal{C}_1\to \mathcal{C}_2\to \mathcal{C}_3\to\ldots$ be an $n$-ray and \begin{align}
\xymatrix{ 
\mathcal{C}_1\ar[r]\ar[d]& \mathcal{C}_2\ar[d]\ar[r] &\mathcal{C}_3\ar[r]\ar[d]& \ldots \\ \mathcal{C}_{i(1)}\ar[r] &\mathcal{C}_{i(2)}\ar[r]&\mathcal{C}_{i(3)}\ar[r]&\ldots}
\end{align} be a weak compression. Then applying $tel\circ cone^{n-1}$ to this map of $n$-rays, we obtain a quasi-isomorphism.
\end{proposition}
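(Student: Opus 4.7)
The plan is to apply the iterated cone $cone^{n-1}$ first, reducing the problem to the case of $1$-rays, and then combine Lemmas \ref{c2lcolimit}, \ref{c2lhomologymaps}, and \ref{c2lcompression} with the defining homotopies of a weak compression.

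First I would invoke the functoriality of the cone operation (parts (2)--(4) of Lemma \ref{c2lcones}) to conclude that $(n-1)$ iterated cones in the first $n-1$ directions send an $n$-ray to a $1$-ray, a map of $n$-rays to a map of $1$-rays, and a homotopy of maps of $(n-1)$-cubes to an honest chain homotopy of the induced chain maps. Applied to the given weak compression, this yields a map of $1$-rays $f : \mathcal{D} \to \mathcal{D}^i$, where $\mathcal{D} = D_1 \xrightarrow{f_1} D_2 \xrightarrow{f_2} \cdots$ and $\mathcal{D}^i = D_{i(1)} \xrightarrow{g_1} D_{i(2)} \xrightarrow{g_2} \cdots$, with two properties at the chain level: each $g_k$ is chain homotopic to the strict composition $f_{i(k+1)-1}\circ\cdots\circ f_{i(k)}$, and the $k$-th component of $f$, a chain map $D_k \to D_{i(k)}$, is chain homotopic to the strict composition $f_{i(k)-1}\circ\cdots\circ f_k$.

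Next, I would apply $tel$ and pass to homology. Lemma \ref{c2lcolimit} identifies $H(tel(\mathcal{D}))$ with $\lim_{\rightarrow}(H(D_k),[f_k])$ and $H(tel(\mathcal{D}^i))$ with $\lim_{\rightarrow}(H(D_{i(k)}),[g_k])$, and Lemma \ref{c2lhomologymaps} identifies $H(tel(f))$ with the map of these direct limits induced by the homology maps $[D_k \to D_{i(k)}]$. Since homotopic chain maps induce equal maps on homology, the two properties of the previous paragraph yield $[g_k] = [f_{i(k+1)-1}]\circ\cdots\circ [f_{i(k)}]$ and $[D_k \to D_{i(k)}] = [f_{i(k)-1}]\circ\cdots\circ [f_k]$. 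Thus the direct system $(H(D_{i(k)}),[g_k])$ coincides with the direct system arising from the strict subray of Lemma \ref{c2lcompression}, and $H(tel(f))$ corresponds to the canonical map from $\lim_{\rightarrow} H(D_k)$ to its cofinal subsystem indexed by $\{i(k)\}$, which is an isomorphism. Therefore $tel(cone^{n-1}(f))$ is a quasi-isomorphism.

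The only subtle point, which I expect to be the main but mild obstacle, is verifying that the homotopies built into a weak compression at the level of maps of $(n-1)$-cubes really do descend through the iterated cones to chain homotopies of the resulting $1$-ray maps; this is precisely the content of parts (2)--(4) of Lemma \ref{c2lcones}, which ensure that identity maps, slits, and triangles are preserved by the cone operation in any direction other than the last one.
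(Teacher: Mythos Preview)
Your proposal is correct and follows essentially the same route as the paper: reduce to $1$-rays via iterated cones, then invoke Lemmas \ref{c2lhomologymaps} and \ref{c2lcompression}. The one ingredient you use implicitly but do not cite is Lemma \ref{c2lcompose} (composition of cube maps commutes with cones): this is what guarantees that, after applying $cone^{n-1}$, the $n$-cube composition $\mathcal{C}_{i(k)}\to\cdots\to\mathcal{C}_{i(k+1)}$ becomes the ordinary chain-map composition $f_{i(k+1)-1}\circ\cdots\circ f_{i(k)}$, so that the surviving chain homotopy really relates $g_k$ to that strict composition. The paper makes this point explicit; otherwise your argument matches.
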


\begin{proof}
Note that since composition commutes with cones as in Lemma \ref{c2lcompose}, we have the following weak compression of $1$-rays:
\begin{align}
\xymatrix{ 
cone^{n-1}(\mathcal{C}_1)\ar[r]\ar[d]& cone^{n-1}(\mathcal{C}_2)\ar[d]\ar[r] &cone^{n-1}(\mathcal{C}_3)\ar[r]\ar[d]& \ldots \\ cone^{n-1}(\mathcal{C}_{i(1)})\ar[r] &cone^{n-1}(\mathcal{C}_{i(2)})\ar[r]&cone^{n-1}(\mathcal{C}_{i(3)})\ar[r]&\ldots}
\end{align} Therefore, it suffices to prove the statement for $1$-rays. This follows from Lemmas \ref{c2lhomologymaps} and \ref{c2lcompression}.
\end{proof}

\subsection{Completion of modules and chain complexes over the Novikov ring}\label{c2scompletion}

Let us start by writing down our conventions for the Novikov field:
\begin{align}
\Lambda=\{\sum_{i\in\mathbb{N}} a_i T^{\alpha_i}\mid a_i\in\mathbb{Q}, \alpha_i\in\mathbb{R} \text{, and for any }R\in\mathbb{R},\\ \text{there are only finitely many }a_i\neq 0\text{ with }\alpha_i<R\}
\end{align}

There is a valuation map $val:\Lambda\to\mathbb{R}\cup\{+\infty\}$ given by $val(\sum_{i\in\mathbb{N}} a_i T^{\alpha_i})=min_i(\alpha_i\mid a_i\neq 0)$ for non zero elements, and $val(0)=+\infty$. We define $\Lambda_{\geq r}:=val^{-1}([r,\infty])$ and $\Lambda_{> r}:=val^{-1}((r,\infty])$. $\Lambda_{\geq 0}$ is called the Novikov ring. The valuation we described makes $\Lambda_{\geq 0}$ a complete valuation ring with real numbers as the value group (see Section 2.1 of \cite{Bosch_2014}). 

\begin{lemma}\label{c2lmodule}Let $A$ be a $\Lambda_{\geq 0}$-module. Then, $A$ is flat if and only if it is torsion free. 
\end{lemma}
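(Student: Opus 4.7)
The plan is to use the standard equational/ideal-theoretic criterion for flatness, exploiting the fact that $\Lambda_{\geq 0}$ is a valuation ring. One direction is a general fact about integral domains: if $A$ is flat and $\lambda \in \Lambda_{\geq 0}$ is nonzero, then tensoring the short exact sequence $0 \to \Lambda_{\geq 0} \xrightarrow{\cdot \lambda} \Lambda_{\geq 0}$ with $A$ preserves injectivity, so multiplication by $\lambda$ on $A$ is injective. Hence $A$ is torsion-free.

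For the converse, I would invoke the criterion that $A$ is flat if and only if for every finitely generated ideal $I \subset \Lambda_{\geq 0}$ the natural multiplication map $I \otimes_{\Lambda_{\geq 0}} A \to A$ is injective. The key point is then the general fact about valuation rings: since the ideals of $\Lambda_{\geq 0}$ are totally ordered by inclusion (any two elements $a,b$ satisfy either $\mathrm{val}(a) \leq \mathrm{val}(b)$, giving $b \in (a)$, or vice versa), every finitely generated ideal of $\Lambda_{\geq 0}$ is principal.

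With this in hand, if $I = (a)$ for some nonzero $a$, then the map $\Lambda_{\geq 0} \to I$ sending $1 \mapsto a$ is an isomorphism (no torsion in $\Lambda_{\geq 0}$), so $I \otimes_{\Lambda_{\geq 0}} A \cong A$ and the structure map $I \otimes_{\Lambda_{\geq 0}} A \to A$ is identified with multiplication by $a$ on $A$. Torsion-freeness makes this injective, and the case $I=(0)$ is trivial. This proves flatness.

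The only nontrivial input is the standard lemma that every finitely generated ideal of a valuation ring is principal, which I would either state and prove in one line (pick the generator of minimal valuation) or cite from a reference such as \cite{Bosch_2014}. No real obstacle here — the statement is essentially a reformulation of a well-known feature of valuation rings, and the proof is short once one knows the flatness criterion.
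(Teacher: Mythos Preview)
Your proof is correct; it is the standard argument that torsion-free modules over a valuation ring are flat. The paper itself does not give a proof at all---it simply observes that this holds for any valuation ring and cites Lemma~3.2 of \cite{Datta}---so your argument is strictly more informative than what appears in the paper while following exactly the route one would expect that reference to take.
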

\begin{proof}
This is true for any valuation ring (see Lemma 3.2 of \cite{Datta} for a proof). 
\end{proof}

\begin{corollary}\label{c2cacyclic}
Let $C$ be an acyclic chain complex over $\Lambda_{\geq 0}$ with a torsion free underlying module. Then, $C\otimes_{\Lambda_{\geq 0}}\Lambda_{\geq 0}/\Lambda_{\geq r}$, $r>0$, and $C\otimes_{\Lambda_{\geq 0}}\Lambda_{\geq 0}/\Lambda_{> r}$, for $r\geq 0$, are also acyclic.
\end{corollary}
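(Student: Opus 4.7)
The plan is to deduce the corollary from the fact that tensoring an acyclic complex of flat modules with any module preserves acyclicity. First I would invoke Lemma \ref{c2lmodule}: since the underlying module of $C$ is torsion-free, each chain group $C_n$ is a flat $\Lambda_{\geq 0}$-module. Writing $Z_n := \ker d_n$ and $B_n := \operatorname{im} d_{n+1}$, both are submodules of the torsion-free $C_n$, hence themselves torsion-free, hence also flat by Lemma \ref{c2lmodule}.

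Acyclicity of $C$ gives $Z_n = B_n$ for every $n$, and yields short exact sequences
\begin{align*}
0 \to Z_n \to C_n \to B_{n-1} \to 0
\end{align*}
whose three terms are all flat. Setting $M := \Lambda_{\geq 0}/\Lambda_{\geq r}$ or $M := \Lambda_{\geq 0}/\Lambda_{>r}$, the flatness of $B_{n-1}$ ensures that the sequence above stays exact after applying $-\otimes_{\Lambda_{\geq 0}} M$. Now comes a one-line diagram chase: factor the differential as $d_n \colon C_n \twoheadrightarrow B_{n-1} \hookrightarrow C_{n-1}$. Tensoring with $M$ preserves the surjection automatically, and preserves the injection because its cokernel $C_{n-1}/B_{n-1} \cong B_{n-2}$ is flat. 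Consequently $\ker(d_n \otimes 1) = Z_n \otimes M$ and $\operatorname{im}(d_{n+1} \otimes 1) = B_n \otimes M$ as submodules of $C_n \otimes M$, and these coincide since $Z_n = B_n$. This is exactly acyclicity of $C \otimes_{\Lambda_{\geq 0}} M$.

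I do not foresee any real obstacle: the argument is a standard piece of homological algebra, and the only input specific to the Novikov setting is the identification of flat modules with torsion-free modules supplied by Lemma \ref{c2lmodule}.
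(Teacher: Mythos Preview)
Your argument is correct and in fact proves more than the paper states: it shows that $C\otimes_{\Lambda_{\geq 0}}M$ is acyclic for \emph{any} $\Lambda_{\geq 0}$-module $M$, not just for the particular quotients $\Lambda_{\geq 0}/\Lambda_{\geq r}$ and $\Lambda_{\geq 0}/\Lambda_{> r}$. The key observation---that over a valuation ring submodules of torsion-free modules are torsion-free, hence flat, so the cycle and boundary modules are automatically flat---is exactly what makes the standard ``acyclic complex of flat modules with flat boundaries'' argument go through.

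The paper takes a different, more ad hoc route: it tensors the short exact sequence $0\to\Lambda_{\geq r}\to\Lambda_{\geq 0}\to\Lambda_{\geq 0}/\Lambda_{\geq r}\to 0$ with $C$ (flatness of $C$ keeps this exact), then uses the long exact sequence in homology together with the observation that $C\otimes\Lambda_{\geq r}$ is acyclic (since $\Lambda_{\geq r}\cong\Lambda_{\geq 0}$ as a module via multiplication by $T^r$). This is shorter but tied to the specific shape of the quotient; your approach is the cleaner general statement and would serve just as well in the paper.
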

\begin{proof}We have the short exact sequence:
\begin{align}
\xymatrix{
  0 \ar[r] & \Lambda_{\geq r} \ar[r] & \Lambda_{\geq 0} \ar[r] & \Lambda_{\geq 0}/\Lambda_{\geq r}\ar[r] & 0. 
}
\end{align}
We now tensor this equation with $C$, and consider the long exact sequence of the resulting short exact sequence of chain complexes (using that $C$ is flat). The desired result follows since $\Lambda_{\geq r}$ is a flat module, and consequently, $C\otimes_{\Lambda_{\geq 0}}\Lambda_{\geq r}$ is also acyclic. 

The same argument works if $\Lambda_{\geq r}$ is replaced with $\Lambda_{> r}$ as well.
\end{proof}

Completion is a functor $Mod(\Lambda_{\geq 0})\to Mod(\Lambda_{\geq 0})$ defined by \begin{align}
A\mapsto \widehat{A}:\lim_{\xleftarrow[r\geq 0]{}}A\otimes_{\Lambda_{\geq 0}}\Lambda_{\geq 0}/\Lambda_{\geq r},
\end{align} and by functoriality of inverse limits on the morphisms. There is a natural map of modules $A\to \widehat{A}$. $A$ is called \textbf{complete} is this map is an isomorphism.

One can construct the completion in the following way. Let us say that a sequence $(a_1,a_2,\ldots)$ of elements of $A$

\begin{itemize}
\item is a \textbf{Cauchy sequence}, if for every $r\geq 0$ there exists a positive integer $N$ such that for every $n,n'>N$, $a_n-a_{n'}\in T^rA$,
\item \textbf{converges} to $a\in A$, if for every $r\geq 0$ there exists a positive integer $N$ such that for every $n>N$, $a-a_n\in T^rA$.
\end{itemize}

Then, we have that $\widehat{A}$ is isomorphic to all Cauchy sequences in $M$ (with its natural $\Lambda_{\geq 0}$-module structure) modulo the ones that converge to $0$. Note that the completeness of $A$ is equivalent to all Cauchy sequences in $A$ to be convergent.

In case $A$ is free, this description becomes simpler. Choose a basis $\{v_i\}$, $i\in \mathcal{I}$. Then, $\widehat{A}$ is isomorphic to\begin{align}\{\sum_{i\in\mathcal{I}}\beta_iv_i\mid \beta_i\in\Lambda_{\geq 0},\text{ and for every }R\geq 0,\text{ there is only}\\ \text{ finitely many }i\in\mathcal{I}\text{ s.t. }val(\beta_i)< R\}.\end{align} The following lemma is immediate from this description.

\begin{lemma}\label{c2lfreecompletion}
Let $A$ be a free $\Lambda_{\geq 0}$-module. Then \begin{itemize}
\item $\widehat{A}$ is torsion free.
\item The map $A\otimes_{\Lambda_{\geq 0}}\Lambda_{\geq 0}/\Lambda_{\geq r}\to \widehat{A}\otimes_{\Lambda_{\geq 0}}\Lambda_{\geq 0}/\Lambda_{\geq r}$ is an isomorphism for all $r\geq 0$.
\end{itemize}
\end{lemma}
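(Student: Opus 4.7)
The starting observation is that $\Lambda_{\geq r}=T^{r}\Lambda_{\geq 0}$, so that $\Lambda_{\geq 0}/\Lambda_{\geq r}=\Lambda_{\geq 0}/T^{r}\Lambda_{\geq 0}$ and hence for any $\Lambda_{\geq 0}$-module $M$ we have a canonical identification $M\otimes_{\Lambda_{\geq 0}}\Lambda_{\geq 0}/\Lambda_{\geq r}=M/T^{r}M$. So the second claim becomes: the natural map $A/T^{r}A\to \widehat{A}/T^{r}\widehat{A}$ is an isomorphism. I will work throughout with the explicit description of $\widehat{A}$ as formal sums $\sum_{i\in\mathcal{I}}\beta_{i}v_{i}$ with $\beta_{i}\in\Lambda_{\geq 0}$ satisfying the finiteness-below-$R$ condition, where $\{v_{i}\}_{i\in\mathcal{I}}$ is a fixed basis of $A$; crucially this representation is unique.

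For torsion-freeness, suppose $\beta\in\Lambda_{\geq 0}$ is nonzero and $x=\sum\beta_{i}v_{i}\in\widehat{A}$ satisfies $\beta x=0$. Then $\sum(\beta\beta_{i})v_{i}=0$ in $\widehat{A}$; by uniqueness of coefficients this forces $\beta\beta_{i}=0$ for each $i$. Since $\Lambda_{\geq 0}$ sits inside the field $\Lambda$, it is a domain, so $\beta_{i}=0$ for every $i$ and hence $x=0$.

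For the second statement, I will verify surjectivity and injectivity directly. For surjectivity, given $x=\sum\beta_{i}v_{i}\in\widehat{A}$, by the finiteness condition only finitely many indices $i$ satisfy $\mathrm{val}(\beta_{i})<r$; set $x_{0}:=\sum_{\mathrm{val}(\beta_{i})<r}\beta_{i}v_{i}$ (a finite sum, hence in $A$). The complementary sum can be written as $T^{r}\sum\gamma_{i}v_{i}$ with $\gamma_{i}=\beta_{i}/T^{r}\in\Lambda_{\geq 0}$, and one checks the $\gamma_{i}$ still satisfy the finiteness-below-$R$ condition (they satisfy it with $R$ replaced by $R+r$), so $\sum\gamma_{i}v_{i}\in\widehat{A}$ and the remainder lies in $T^{r}\widehat{A}$. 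Thus $x\equiv x_{0}$ modulo $T^{r}\widehat{A}$, and $x_{0}$ comes from $A$. For injectivity, let $\sum\beta_{i}v_{i}\in A$ (a finite sum) map to $0$ in $\widehat{A}/T^{r}\widehat{A}$, i.e.\ $\sum\beta_{i}v_{i}=T^{r}\sum\gamma_{i}v_{i}$ in $\widehat{A}$ for some $\sum\gamma_{i}v_{i}\in\widehat{A}$. Again by uniqueness of coefficients, $\beta_{i}=T^{r}\gamma_{i}$ for all $i$, so each $\beta_{i}\in T^{r}\Lambda_{\geq 0}$ and the original element lies in $T^{r}A$, completing the proof.

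The only genuine point to be careful about is the uniqueness-of-coefficients statement for elements of $\widehat{A}$, which is what turns the proof into one-line algebra in each case; it is built directly into the explicit description of $\widehat{A}$ recalled just before the lemma, so I expect no serious obstacle.
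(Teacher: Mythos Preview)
Your argument is correct and is exactly what the paper has in mind: the paper does not give a separate proof but simply declares the lemma ``immediate from this description,'' referring to the explicit model of $\widehat{A}$ as formal sums $\sum_{i\in\mathcal{I}}\beta_i v_i$ with the valuation-finiteness condition. You have just unpacked that remark, using uniqueness of coefficients in the explicit model to read off torsion-freeness and the isomorphism $A/T^rA \to \widehat{A}/T^r\widehat{A}$.
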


The completion functor automatically extends to a functor $Ch(\Lambda_{\geq 0})\to Ch(\Lambda_{\geq 0})$. Namely, if $(C,d)$ is a chain complex over $\Lambda_{\geq 0}$, then the completion $(\widehat{C},\widehat{d})$ is obtained by applying the completion functor to the underlying module, and also to the map $d:C\to C$. This is clearly a chain complex, and as usual, we mostly omit the differential from the notation.

\begin{remark}
In the $\mathbb{Z}$-graded context, there are two genuinely different ways of completing a graded module, and consequently, a chain complex: degree-wise completion and completing the underlying module forgetting the grading. For our $\mathbb{Z}/2$-graded chain complexes there is no difference between these two options as completion commutes with finite direct sums. 

Let us denote the periodical extension of a $\mathbb{Z}/2$-graded chain complex $C$ to a $\mathbb{Z}$-graded $2$-periodic chain complex by $C^{per}$ (see also the second paragraph of the introduction in the beginning of this section). Note that $(\widehat{C})^{per}$ is the degree-wise completion of $C^{per}$. Hence, what we are dealing with is a special case of grade-wise completions of $\mathbb{Z}$-graded complexes.

There is nothing special about this $2$-periodicity for any of our results, which all easily extend to $\mathbb{Z}$-graded chain complexes provided that we use degree-wise completion. \qed
\end{remark}

\begin{lemma}\label{c2lcomplete}
Let $C$ be a chain complex over $\Lambda_{\geq 0}$, and $r>0$. If the underlying module of $C$ is torsion-free and complete, then $C\otimes_{\Lambda_{\geq 0}}\Lambda_{\geq 0}/\Lambda_{\geq r}$ is acyclic only if $C$ is acyclic.
\end{lemma}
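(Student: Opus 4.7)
The plan is to show that every cycle of $C$ bounds, by producing a primitive as a convergent telescoping sum in which each successive term corrects an error of higher and higher $T$-order.

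First I would translate the hypothesis. Since $\Lambda_{\geq r}=T^r\Lambda_{\geq 0}$ as an ideal of $\Lambda_{\geq 0}$, and $C$ is torsion-free, $C\otimes_{\Lambda_{\geq 0}}\Lambda_{\geq 0}/\Lambda_{\geq r}$ is naturally identified with $C/T^rC$. So our hypothesis reads: every cycle in $C$ is a boundary modulo $T^rC$. Next, given a cycle $x\in C$, I would construct inductively pairs $(x_n,y_n)_{n\geq 0}$ with $x_0=x$ as follows. Assuming $x_n\in C$ is a cycle, the hypothesis supplies $y_n\in C$ with $x_n-dy_n\in T^rC$, and torsion-freeness gives a unique $x_{n+1}\in C$ with $x_n-dy_n=T^rx_{n+1}$. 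Applying $d$ and cancelling $T^r$ (again by torsion-freeness) shows that $x_{n+1}$ is itself a cycle, closing the induction.

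Now set $s_N:=\sum_{n=0}^N T^{nr}y_n$. The defining relations telescope to
\begin{equation}
ds_N \;=\; \sum_{n=0}^N T^{nr}(x_n-T^rx_{n+1}) \;=\; x - T^{(N+1)r}x_{N+1}.
\end{equation}
In particular $(s_N)$ is Cauchy, since $s_{N'}-s_N\in T^{(N+1)r}C$ for $N'>N$, so by completeness of $C$ it converges to some $y\in C$ with $y-s_N\in T^{(N+1)r}C$ for every $N$. Because $d$ is $\Lambda_{\geq 0}$-linear it preserves the filtration by $T^{nr}C$, and combining $d(y-s_N)\in T^{(N+1)r}C$ with the telescoping identity gives $dy-x\in T^{(N+1)r}C$ for every $N$.

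Finally, completeness of $C$ forces $\bigcap_N T^{Nr}C=0$: this intersection is exactly the kernel of the isomorphism $C\to\widehat{C}$. Hence $dy=x$, proving acyclicity of $C$. The only subtle step is the clean use of completeness in three places at once---convergence of the $s_N$, continuity of $d$ against the $T$-adic filtration, and the vanishing of $\bigcap_N T^{Nr}C$---but these are all facets of the same property of $C$ and require no extra input beyond the hypotheses.
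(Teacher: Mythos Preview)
Your proof is correct and follows essentially the same approach as the paper's: start with a cycle, use acyclicity modulo $T^r$ to produce a partial primitive with error divisible by $T^r$, use torsion-freeness to divide out $T^r$ and iterate, then invoke completeness to sum the resulting series. The paper's version is much terser (it merely says ``repeat the previous step and keep going; completeness defines a primitive''), whereas you spell out the telescoping identity, the Cauchy property of the partial sums, and the separatedness $\bigcap_N T^{Nr}C=0$ needed to conclude $dy=x$. One harmless over-attribution: the identification $C\otimes_{\Lambda_{\geq 0}}\Lambda_{\geq 0}/\Lambda_{\geq r}\cong C/T^rC$ holds by right exactness of the tensor product alone and does not require torsion-freeness.
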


\begin{proof}
Let $\alpha\in C$, and $d\alpha=0$. We need to show that $\alpha$ is exact. Our assumption implies that there exists $a,b\in C$ such that $\alpha=db+T^ra$. 

We have that $d(T^ra)=T^rda=0$, which implies that $da=0$ by torsion-freeness. Now we repeat the previous step for $a$, and keep going. Because of our completeness assumption this defines a primitive of $\alpha$.
\end{proof}

\begin{corollary}\label{c2ccomplete}
\begin{enumerate}
\item Assume that $C$ is finitely generated free as a module, then if $C\otimes_{\Lambda_{\geq 0}}\Lambda_{\geq 0}/\Lambda_{> 0}$ is acyclic then so is $C$.
\item Assume that $C$ is free as a module, then $C$ acyclic implies $\widehat{C}$ acyclic.
\item Let $f:C\to C'$ be a chain map. Assume that the underlying modules of $C$ and $C'$ are free. Then $\hat{f}:\widehat{C}\to\widehat{C'}$ is  a quasi-isomorphism if $f$ is one.
\end{enumerate}
\end{corollary}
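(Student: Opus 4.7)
My plan is to prove (1) by a Nakayama-style lifting of a chain contraction from the residue complex, and then derive (2) and (3) formally from (1) together with the machinery already in place.

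For (1), the first step is to observe that $\bar C := C \otimes_{\Lambda_{\geq 0}} \Lambda_{\geq 0}/\Lambda_{>0}$ is an acyclic complex of finite-dimensional $\mathbb{Q}$-vector spaces, hence contractible; let $h$ be a chain contraction on $\bar C$. Because $C$ is free, I would lift $h$ to a $\Lambda_{\geq 0}$-linear endomorphism $\tilde h : C \to C$ of the appropriate parity (concretely, by choosing a basis and lifting each matrix entry from $\mathbb{Q} \subset \Lambda_{\geq 0}$). Setting $A := d\tilde h + \tilde h d$, the reduction $\bar A$ equals $\mathrm{id}_{\bar C}$, so $A = \mathrm{id} + E$ where every matrix entry of $E$ lies in $\Lambda_{>0}$. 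The finite-rank hypothesis then furnishes a uniform $\epsilon > 0$ with $E \in \Lambda_{\geq \epsilon}\cdot \mathrm{End}(C)$, whence $E^k \in \Lambda_{\geq k\epsilon}\cdot \mathrm{End}(C)$, and the geometric series $\sum_{k\geq 0}(-E)^k$ converges in $\mathrm{End}(C)$ — which is complete, being itself a finite-rank free $\Lambda_{\geq 0}$-module — to an inverse $A^{-1}$ of $A$. A direct calculation using $d^2=0$ shows $dA = Ad$, so $dA^{-1} = A^{-1}d$, and therefore $A^{-1}\tilde h$ is a chain contraction of $C$. In particular $C$ is acyclic.

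For (2), the plan is to apply Lemma \ref{c2lcomplete} to $\widehat C$. By Lemma \ref{c2lfreecompletion}, $\widehat C$ is torsion-free and satisfies $\widehat C \otimes \Lambda_{\geq 0}/\Lambda_{\geq r} \cong C \otimes \Lambda_{\geq 0}/\Lambda_{\geq r}$ for every $r > 0$. The right-hand side is acyclic by Corollary \ref{c2cacyclic} applied to $C$ (which is acyclic by hypothesis and torsion-free because it is free). Since $\widehat C$ is complete by construction, Lemma \ref{c2lcomplete} yields that $\widehat C$ is acyclic. For (3), I would reduce to (2) via the mapping cone: $\mathrm{cone}(f)$ has underlying module $C[1]\oplus C'$, which is free, and it is acyclic since $f$ is a quasi-isomorphism. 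By (2), $\widehat{\mathrm{cone}(f)}$ is acyclic, and because completion commutes with finite direct sums one has a canonical identification $\widehat{\mathrm{cone}(f)} \cong \mathrm{cone}(\hat f)$, so $\hat f$ is a quasi-isomorphism.

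The hard part is the convergence step in (1). The Novikov filtration is indexed by real numbers rather than by integers, so the geometric series $\sum (-E)^k$ only converges once we know that the valuations of the entries of $E$ admit a \emph{uniform} positive lower bound — and this is precisely where the finite-rank hypothesis intervenes. In infinite rank those valuations could accumulate at $0$, the series would fail to converge, and the lifting argument would break down; every other step of the proof is formal and follows immediately from the results already established.
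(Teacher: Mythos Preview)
Your proof is correct. Parts (2) and (3) match the paper's argument exactly. For (1), however, you take a genuinely different route.

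The paper argues (1) as follows: writing $d$ as a finite matrix over $\Lambda_{\geq 0}$, there is a smallest positive exponent $r$ occurring in any entry, so the reduction of $d$ modulo $\Lambda_{\geq r}$ coincides (after base change) with its reduction modulo $\Lambda_{>0}$; hence $C\otimes\Lambda_{\geq 0}/\Lambda_{\geq r}$ is acyclic, and Lemma~\ref{c2lcomplete} (applied to the complete, torsion-free module $C$) finishes. Thus the paper uses the finite-rank hypothesis to control the \emph{differential}, and then invokes the same iterative primitive-building lemma that powers (2). Your argument instead uses finite rank to control the \emph{error term} $E$ in a lifted null-homotopy, inverting $\mathrm{id}+E$ by a convergent geometric series to produce an explicit contracting homotopy. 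Both arguments hinge on the same phenomenon---finitely many matrix entries force a uniform positive gap in valuations---but yours yields a contraction directly without appealing to Lemma~\ref{c2lcomplete}, while the paper's has the virtue of treating (1) and (2) by a single mechanism.
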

\begin{proof}
For (1), choose a basis for $C$ and write $d$ as a matrix. There exists a smallest positive number $r$ such that $T^r$ has a non-zero coefficient in a matrix entry. Then our assumption actually implies that $C\otimes_{\Lambda_{\geq 0}}\Lambda_{\geq 0}/\Lambda_{\geq r}$ is acyclic. We now apply Lemma \ref{c2lcomplete}.

For (2), we combine Corollary \ref{c2cacyclic}, Lemma \ref{c2lfreecompletion} and Lemma \ref{c2lcomplete} (noting that the completion of a module is complete). Finally, for (3), we use the fact a chain map is a quasi-isomorphism if its cone is acyclic.
\end{proof}

Even though taking homotopy colimits is better suited for general constructions, sometimes usual direct limits are better for computations. To this end we show that Lemma \ref{c2lcolimit} still holds after completions. A similar argument seems to have appeared in the Lemma 3.1 of \cite{Lunts}.

\begin{lemma}\label{c2lcomlim}
Let $\mathcal{C}=C_0\to C_1\to C_2\to\ldots$ be a $1$-ray. There is a canonical quasi-isomorphism \begin{align}\widehat{tel(\mathcal{C})}\to\widehat{\lim_{\rightarrow}(\mathcal{C})}.\end{align}
\end{lemma}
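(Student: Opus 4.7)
The plan is to apply the completion functor to the canonical quasi-isomorphism $\phi : tel(\mathcal{C}) \to \lim_{\rightarrow}(\mathcal{C})$ produced by Lemma \ref{c2lcolimit}. This yields the desired canonical map $\widehat{\phi} : \widehat{tel(\mathcal{C})} \to \widehat{\lim_{\rightarrow}(\mathcal{C})}$, and the task is to verify that it is still a quasi-isomorphism. Since completion is an additive functor and the cone of a chain map uses only finite direct sums and sign changes, there is a canonical identification $\mathrm{cone}(\widehat{\phi}) \cong \widehat{\mathrm{cone}(\phi)}$, so everything reduces to showing that $\widehat{\mathrm{cone}(\phi)}$ is acyclic.

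For the first reduction, I would record that the underlying module of $\mathrm{cone}(\phi)$ is $tel(\mathcal{C})[1] \oplus \lim_{\rightarrow} C_i$. Assuming the $C_i$ are torsion-free (as will be the case in all of our applications, where they are in fact free), both summands are torsion-free: the telescope summand is a direct sum of shifts of the $C_i$, and the direct limit summand inherits torsion-freeness since if $T^r \cdot [x] = 0$ for $x \in C_j$, then $T^r x$ vanishes in some $C_k$ with $k \geq j$, forcing $x = 0$ there. By Lemma \ref{c2lcolimit}, $\mathrm{cone}(\phi)$ is acyclic; Corollary \ref{c2cacyclic} then gives that $A_r := \mathrm{cone}(\phi) \otimes_{\Lambda_{\geq 0}} \Lambda_{\geq 0}/\Lambda_{\geq r}$ is acyclic for every $r > 0$.

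The second step is to pass from the levelwise acyclicity of the tower $\{A_r\}_{r>0}$ to acyclicity of its inverse limit, which, by definition of completion, is $\widehat{\mathrm{cone}(\phi)}$. Cofinally restricting to the countable subsystem $\{A_n\}_{n \in \mathbb{N}_{>0}}$, the transition maps $A_{n+1} \twoheadrightarrow A_n$ are surjective (they are induced by the surjections $\Lambda_{\geq 0}/\Lambda_{\geq n+1} \twoheadrightarrow \Lambda_{\geq 0}/\Lambda_{\geq n}$), so the system satisfies the Mittag-Leffler condition. The Milnor short exact sequence
\[
0 \to {\lim_{\leftarrow}}^1 H_{*+1}(A_n) \to H_*\bigl(\lim_{\leftarrow} A_n\bigr) \to \lim_{\leftarrow} H_*(A_n) \to 0
\]
together with $H_*(A_n) = 0$ for all $n$ forces $H_*(\widehat{\mathrm{cone}(\phi)}) = 0$, and hence $\widehat{\phi}$ is a quasi-isomorphism.

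The main delicate point is the very first interchange $\mathrm{cone}(\widehat{\phi}) \cong \widehat{\mathrm{cone}(\phi)}$ together with the torsion-freeness of $\lim_{\rightarrow} C_i$: once these are in hand, everything else is a standard $\lim^1$ computation. An alternative to the Milnor step would be to appeal directly to Lemma \ref{c2lcomplete} applied to $\widehat{\mathrm{cone}(\phi)}$, which is automatically complete; this would require first verifying that $\widehat{\mathrm{cone}(\phi)}$ is torsion-free and that $\widehat{\mathrm{cone}(\phi)} \otimes \Lambda_{\geq 0}/\Lambda_{\geq r} \cong A_r$, both of which hold for torsion-free modules but need a small separate argument in the absence of the freeness hypothesis used in Lemma \ref{c2lfreecompletion} and Corollary \ref{c2ccomplete}.
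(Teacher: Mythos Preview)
Your approach is essentially the paper's: both complete the map from Lemma~\ref{c2lcolimit}, reduce to acyclicity of the inverse limit of the levelwise cones, and finish with the Milnor exact sequence using surjectivity of the transition maps. So the skeleton is correct.

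The one unnecessary detour is your use of Corollary~\ref{c2cacyclic} to obtain acyclicity of $A_r = \mathrm{cone}(\phi)\otimes\Lambda_{\geq 0}/\Lambda_{\geq r}$, which forces you to add a torsion-freeness hypothesis on the $C_i$ that is not in the statement. The paper bypasses this entirely: since tensoring with $\Lambda_{\geq 0}/\Lambda_{\geq r}$ commutes with both $tel$ and $\lim_{\rightarrow}$, the map $f_r := \phi\otimes\mathrm{id}$ is identified with the canonical map $tel(\mathcal{C}\otimes\Lambda_{\geq 0}/\Lambda_{\geq r})\to\lim_{\rightarrow}(\mathcal{C}\otimes\Lambda_{\geq 0}/\Lambda_{\geq r})$, and Lemma~\ref{c2lcolimit} applied to the tensored $1$-ray shows this is a quasi-isomorphism directly. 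This gives acyclicity of $\mathrm{cone}(f_r)=A_r$ with no flatness or torsion-freeness input. Making this one-line substitution removes the extra hypothesis and your argument then proves the lemma as stated.
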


\begin{proof}
We have canonical quasi-isomorphisms \begin{align}f_r:tel(\mathcal{C})\otimes_{\Lambda_{\geq 0}} \Lambda_{\geq 0}/\Lambda_{\geq r}\to\lim_{\rightarrow}(\mathcal{C})\otimes_{\Lambda_{\geq 0}} \Lambda_{\geq 0}/\Lambda_{\geq r},\end{align}
that are compatible with each other, using Lemma \ref{c2lcolimit} and that tensor product commutes with telescopes and direct limit. We claim that the inverse limit over $r$ of these maps give the desired map.

We show that the inverse limit of $cone(f_r)$ is acyclic, which is clearly enough. Note  that the maps in this inverse system are all surjective. Therefore we have a Milnor short exact sequence (see Theorem 3.5.8 in \cite{W}), and the fact that $cone(f_r)$'s are acyclic implies the desired acyclicity.
\end{proof}

\subsection{Acyclic cubes and an exact sequence}\label{c2sacyclic}

Starting from an $n$-ray we can obtain a $(n-1)$-cube by applying telescope. We can then apply completion functor to the result. Hence, we obtain an assignment $\widehat{tel}: (n-rays)\to ((n-1)-cubes)$. This trivially extends to morphisms, and respects homotopies. It is functorial in the sense that it also preserves triangles. We can also apply the maximally iterated cone functor to obtain a chain complex. In fact we could have applied it before the other two operations and the result would not change: $cone^{n-1}\circ \widehat{tel}=\widehat{cone^{n-1}\circ tel}=\widehat{tel\circ cone^{n-1}}$. Note that completion is always applied after telescope.

Let us call an $n$-cube \textbf{acyclic} if its maximally iterated cone is an acyclic chain complex. Note that by Corollary \ref{c2ccomplete} Part (1), if the modules in this cube are finitely generated free, then this acyclicity is equivalent to acyclicity after tensoring with the residue field of $\Lambda_{\geq 0}$, i.e. $\Lambda_{\geq 0}/\Lambda_{> 0}$.

\begin{remark}\label{remarkT} Note that the residue field is naturally identified with $\mathbb{Q}$. Using this identification, if $C$ is a chain complex over $\Lambda_{\geq 0}$, then $C\otimes \Lambda_{\geq 0}/\Lambda_{> 0}$ is a chain complex over $\mathbb{Q}$. If the underlying module of $C$ is the free module $ (\Lambda_{\geq 0})^n$, then the differential can be represented by a matrix $M$ with entries in $ \Lambda_{\geq 0}$. Then, a concrete description of the $\mathbb{Q}$-chain complex $C\otimes \Lambda_{\geq 0}/\Lambda_{> 0}$ is the vector space $\mathbb{Q}^n$ with differential given by the matrix with rational number entries obtained by setting $T=0$ in the entries of the matrix $M$. \end{remark}

\begin{lemma}\label{c2lacycliccube}
Let $\mathcal{C}$ be a $n$-ray where the underlying modules are free. Assume that all the slices are acyclic $(n-1)$-cubes, then $tel(\mathcal{C})$ is acyclic, and hence $\widehat{tel}(\mathcal{C})$ is also acyclic.
\end{lemma}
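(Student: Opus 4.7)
The plan is to reduce the statement to the well-understood case of a $1$-ray of acyclic chain complexes using the commutativity between $cone^{n-1}$ and $tel$, and then bootstrap to the completion case using Corollary \ref{c2ccomplete}.

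First I would pass to $1$-rays via iterated cones. By Lemma \ref{c2ltelescope}(1), taking the $(n-1)$-fold iterated cone of an $n$-ray commutes with the telescope operation, so
\begin{align*}
cone^{n-1}(tel(\mathcal{C})) \;=\; tel\bigl(cone^{n-1}(\mathcal{C}_1)\to cone^{n-1}(\mathcal{C}_2)\to\cdots\bigr).
\end{align*}
By hypothesis each slice $\mathcal{C}_i$ is acyclic, which by definition means that $cone^{n-1}(\mathcal{C}_i)$ is acyclic as a chain complex. Thus the right-hand side is the telescope of a $1$-ray $D_1\to D_2\to\cdots$ of acyclic free chain complexes.

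Next I would prove that the telescope of a $1$-ray of acyclic complexes is acyclic. By Lemma \ref{c2lcolimit} there is a canonical quasi-isomorphism $tel(D_\bullet)\to\lim_{\rightarrow} D_i$. Since direct limits commute with taking homology and each $H(D_i)=0$, the direct limit is acyclic, hence so is $tel(D_\bullet)=cone^{n-1}(tel(\mathcal{C}))$. By definition this says that the $(n-1)$-cube $tel(\mathcal{C})$ is acyclic, which is the first assertion.

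For the second assertion I would invoke Corollary \ref{c2ccomplete}(2). The underlying module of $cone^{n-1}(tel(\mathcal{C}))$ is a direct sum of (shifts of) the free modules sitting at the vertices of the cubes $\mathcal{C}_i$, hence is itself free. Combined with the acyclicity we just established, Corollary \ref{c2ccomplete}(2) gives that $\widehat{cone^{n-1}(tel(\mathcal{C}))}$ is acyclic. Finally, as noted in the opening of Section \ref{c2sacyclic}, $cone^{n-1}\circ \widehat{tel}=\widehat{cone^{n-1}\circ tel}$, so this identifies with the maximally iterated cone of $\widehat{tel}(\mathcal{C})$. Therefore $\widehat{tel}(\mathcal{C})$ is an acyclic $(n-1)$-cube, completing the proof.

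No single step looks like a serious obstacle; the only thing to be careful about is making sure freeness propagates through the cone and telescope operations so that Corollary \ref{c2ccomplete}(2) actually applies, but this is immediate from the formulas defining $cone^i$ and $tel$ as direct sums of (shifts of) the vertex modules.
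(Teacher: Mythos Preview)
Your proof is correct and follows essentially the same approach as the paper: commute the iterated cone with the telescope via Lemma \ref{c2ltelescope}, apply Lemma \ref{c2lcolimit} to reduce to the direct limit of acyclic complexes, and then invoke Corollary \ref{c2ccomplete}(2) for the completion statement. The paper's proof is just a more compressed version of what you wrote.
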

\begin{proof}
The first follows because the maximally iterated cone commutes with the telescope functor, and Lemma \ref{c2lcolimit}. The second part is Lemma \ref{c2ccomplete} Part (2).
\end{proof}

\begin{lemma}\label{c2lmv}
An acyclic $2$-cube \begin{align}
\xymatrix{ 
C_{00}\ar[r]\ar[d]\ar[dr]& C_{10}\ar[d]\\ C_{01}\ar[r] &C_{11}}
\end{align} gives rise to an exact sequence, 
\begin{align}
\xymatrix{
H(C_{00})\ar[r]&H(C_{10})\oplus H(C_{01})\ar[dl]\\H(C_{11})\ar[u]^{[1]}}.
\end{align}
where the degree preserving arrows are induced from the ones in the $2$-cube.
\end{lemma}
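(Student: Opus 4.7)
The plan is to reduce the statement to the observation that an acyclic mapping cone of a chain map produces a long exact sequence equating the homologies of the source and target. The acyclicity of the $2$-cube is, by definition, the acyclicity of $\mathrm{cone}^2\circ\mathrm{cone}^1$ applied to the square, so by Lemma \ref{decone} and the functoriality of cones (Lemma \ref{c2lcones}) I can freely trade the iterated cone for any bracketing I like and use it to expose the Mayer--Vietoris triangle.

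Concretely, first I would bundle the two chain maps $C_{00}\to C_{10}$ and $C_{00}\to C_{01}$ (up to sign) into a single chain map
\[
\Phi\colon C_{00}\longrightarrow C_{10}\oplus C_{01}.
\]
Next I would use the diagonal homotopy $h\colon C_{00}\to C_{11}$, together with the chain maps $C_{10}\to C_{11}$ and $C_{01}\to C_{11}$, to assemble a chain map
\[
\Psi\colon \mathrm{cone}(\Phi)\longrightarrow C_{11}.
\]
The defining identity $*_{F',F}$ of the $2$-cube (Equation \ref{c2ecube}), after the sign adjustment of Lemma \ref{c2lsigns}, is exactly what is needed to verify that $\Psi$ is a chain map. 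Moreover, by construction $\mathrm{cone}(\Psi)$ is the iterated cone $\mathrm{cone}^2\circ\mathrm{cone}^1$ of the original $2$-cube (this is the content of Lemma \ref{decone}: the iterated cone is simply the bijective reassembly of the precube with the appropriate block-triangular differential).

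By hypothesis that iterated cone is acyclic, so $\Psi$ is a quasi-isomorphism and induces an isomorphism
\[
\Psi_{*}\colon H(\mathrm{cone}(\Phi))\xrightarrow{\ \cong\ } H(C_{11}).
\]
Finally, I would invoke the tautological short exact sequence
\[
0\longrightarrow C_{10}\oplus C_{01}\longrightarrow \mathrm{cone}(\Phi)\longrightarrow C_{00}[1]\longrightarrow 0
\]
and write down its long exact sequence in homology, substituting $H(\mathrm{cone}(\Phi))\cong H(C_{11})$ via $\Psi_{*}$. The horizontal degree-preserving maps in the resulting hexagon are, by construction, those induced by the edges of the square (up to the signs absorbed into $\Phi$ and $\Psi$), giving precisely the stated triangle.

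The only real subtlety is bookkeeping: I must check that the signs produced by the two rounds of $\mathrm{cone}^i$ (each of which applies the sign-change operation of Lemma \ref{c2lsigns}) agree with the tautological signs in the $\Phi$, $\Psi$ construction above, and that the horizontal homology-level maps coming out of the long exact sequence really are the ones induced by the face maps of the $2$-cube (``up to signs'' as in the statement). Both points are automatic once one appeals to Lemma \ref{decone}, but writing out the block matrices for the two iterated cones and matching them with $\Psi\circ(\text{inclusion})$ and $\Phi$ is the only place where care is required.
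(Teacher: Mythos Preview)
Your proposal is correct and follows essentially the same idea as the paper: exploit the acyclicity of the iterated cone to turn one map into a quasi-isomorphism and then read off the long exact sequence of the remaining cone. The only difference is the bracketing. The paper contracts the ``second'' direction first, obtaining a quasi-isomorphism $C_{00}\to \mathrm{cone}(C_{10}\oplus C_{01}\to C_{11})$ and then invokes the long exact sequence of that cone; you instead contract the ``first'' direction, obtaining $\Psi\colon \mathrm{cone}(\Phi)\to C_{11}$ as a quasi-isomorphism and using the tautological short exact sequence for $\mathrm{cone}(\Phi)$. By Lemma~\ref{c2lcones}(1) the two iterated cones are canonically identified, so the two arguments are interchangeable and produce the same exact triangle (up to the same sign conventions). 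The paper's choice makes the identification of the degree-preserving maps marginally more immediate, but your extra bookkeeping paragraph handles this correctly.
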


\begin{proof}
The acyclicity implies that $C_{00}\to cone(C_{10}\oplus C_{01}\to C_{11})$ is a quasi-isomorphism. Then the long exact sequence of homology associated to the cone finishes the proof.
\end{proof}

\section{Definition and Basic properties}\label{c3}

In this section, we assume familiarity with Hamiltonian Floer theory in the form described in Pardon \cite{P}, Section 10. We also freely use notations and results of the previous section. 

\subsection{Conventions}\label{c3sconventions}

In this short subsection, we put together our conventions in setting up Hamiltonian Floer theory. 
\begin{enumerate}
\item $\omega(X_H,\cdot )=dH$.

\item $g:=\omega(\cdot,J\cdot )$ is a Riemannian metric, and hence $JX_H=\text{grad}_g H$.

\item Floer equation: $J\frac{\partial u}{\partial s}=\frac{\partial u}{\partial t}-X_H$.

\item Topological energy of (arbitrary) $u:S^1\times \mathbb{R}\to M$ for a given Hamiltonian $S^1\times \mathbb{R}\times M\to\mathbb{R}$: \begin{align}
\int u^*\omega+\int \partial_s&(H(s,t,u(s,t))dsdt\\=&\int\omega(\frac{\partial u}{\partial s},\frac{\partial u}{\partial t})dsdt+\int[(\partial_sH_s)+d_{u(s,t)}H_{s,t}(\frac{\partial u}{\partial s})]dsdt\\=&\int\omega(\frac{\partial u}{\partial s},\frac{\partial u}{\partial t}-X_H)dsdt+\int(\partial_sH_s)dsdt.
\end{align}
\item Homomorphisms defined by moduli problems always send the generator of the Floer complex at the negative punctures to the one at the positive puncture.
\item We consider all orbits, not just contractible ones.
\item We always work over $\Lambda_{\geq 0}$. The generators have no action but the solutions of Floer equations are weighted by their topological energy.
\item Non-degenerate $1$-periodic orbits of a $1$-periodic Hamiltonian vector field can be assigned a sign, which is the index of the fixed point of the time-$1$ map (i.e. the Lefschetz index). This sign defines the $\mathbb{Z}/2$-grading of our chain complexes.
\end{enumerate}

\begin{remark}
As already hinted at, we will be using virtual techniques for Hamiltonian Floer theory. Our heavy use of families of Floer equations (which are parametrized by arbitrarily large dimensional spaces) makes the purely genericity based approaches to deal with negative Chern number spheres insufficient. We could get around using virtual techniques when $c_1(M,\omega)|_{\pi_2(M)}$ and $[\omega]|_{\pi_2(M)}$ are non-negative multiples of each other, in other words, in the so called aspherical, monotone and Calabi-Yau cases. All of these cases are covered in the first three lectures of \cite{Sa} (and references cited therein) at a level that is sufficient for our purposes except that in the Calabi-Yau case, we would also need the results of McLean, Appendix B \cite{Mc} This is because in the standard reference \cite{HS} genericity of Hamiltonian data was used to achieve certain transversality results. We have a more limited flexibility in that choice for different reasons (but especially in Section \ref{c5}) and hence we would need to achieve transversality using the almost complex structure, which is done in the aforementioned reference.
\end{remark}

\begin{remark}\label{c3rgradingconventions}
Assume that $c_1(M,\omega)=0$, and we fix a homotopy class of a smooth trivialization of the canonical bundle (defined via some compatible almost complex structure). Then, all of our chain complexes can be equipped with a $\mathbb{Z}$-grading. All the statements that we prove can be extended to take into account this grading with no extra work.
\end{remark}

\begin{remark}
In \cite{P}, Pardon restricts himself to contractible orbits, but this is not a necessary restriction, and everything there generalizes to our setting with no extra effort. This includes the results regarding orientations of moduli spaces. The main reason for this is that what forms the backbone of the results in \cite{P} are properties of the orientation gluing operation that are proved in Floer-Hofer's coherent orientations paper \cite{FHcoh}. Floer-Hofer paper does not have any restrictions about the topology of the orbits in question. We note that a potentially different framework (not used in \cite{P}), where the orientation lines are constructed intrinsically using determinant lines of Floer operators on caps (as in \cite{AA}), at first glance seems use the contractibility of orbits. Yet, even this framework does not really need that assumption as explained in \cite{GPS}, page 77.
\end{remark}

\subsection{Hamiltonian Floer theory}\label{c3shamiltonian}

Let $M$ be a closed symplectic manifold. Take a one periodic time-dependent Hamiltonian $H:M\times S^1\to \mathbb{R}$ with non-degenerate one-periodic orbits, the set of which is denoted by $\mathcal{P}(H)$. Then, for every compatible almost complex structure $J$,  there exists choices of extra Pardon data $P$ (as in Definition 7.5.3 in \cite{P}), and coherent orientations (as in Appendix C of \cite{P}) so we can define a chain complex over $\Lambda_{\geq 0}$ as follows:

\begin{itemize}
\item As a $\mathbb{Z}_2$-graded module: \begin{align}CF(H,J,P)=\bigoplus_{\gamma\in\mathcal{P}(H)}\Lambda_{\geq 0}\cdot\gamma,\end{align}i.e. $CF(H,J,P)$ is freely generated over $\Lambda_{\geq 0}$ by the elements of $\mathcal{P}(H)$. 
\item We define the differential by the formula:\begin{align}
d\gamma=\sum_{\gamma', A\in\pi_2(\gamma,\gamma')}\#^{vir}\mathcal{M}(\gamma,\gamma',A,H,J,P) T^{\omega(A)+\int_{S^1}\gamma'^*Hdt-\int_{S^1}\gamma^*Hdt}\gamma',
\end{align}and extend it $\Lambda_{\geq 0}$-linearly. Here $\pi_2(\gamma,\gamma')$ denotes the set of homotopy classes of continuous maps $S^1\times I\to M$ such that $S^1\times\{0\}\to M$ and $S^1\times\{1\}\to M$ are the defining parametrizations of $\gamma$ and $\gamma'$, respectively. 

$\#^{vir}\mathcal{M}(\gamma,\gamma',A,H,J,P)\in\mathbb{Q}$ are virtual numbers defined as in Pardon. These are virtual counts of finite energy genus 0 nodal curves with one negative and one positive puncture, where both punctures are at the same component, mapping into $M$. The component with punctures is a cylinder and the restriction of the map to it $u:\mathbb{R}\times S^1\to M$ satisfies the Floer equation:\begin{align}\label{jhol}
J\frac{\partial u}{\partial s}=\frac{\partial u}{\partial t}-X_H,
\end{align}with the asymptotic conditions\begin{align}
u(t,s)\to
\begin{cases}
\gamma(t),\ s\to -\infty\\
\gamma'(t),\ s\to\infty.
\end{cases}
\end{align} 
The other components of the curve are $J$-holomorphic spheres. Using asymptotic convergence and resolving the nodes, we canonically obtain a class in $\pi_2(\gamma,\gamma')$. We require this class to be equal to $A$. 

Moreover, integrating $\omega$ over all the components of this nodal curve, and summing them, we obtain a number that only depends on the cohomology class of $\omega$ and $A$: $\omega(A)$. This number can be defined as follows. $\omega$ defines a class in $H^2(M, im(\gamma)\cup im(\gamma'))$, as $im(\gamma)\cup im(\gamma')$ is a one dimensional manifold. On the other hand $A$ gives us a class in $H_2(M, im(\gamma)\cup im(\gamma'))$ using the orientation on $S^1\times I$ induced by the complex structure that we used to write down the Floer equation. The canonical pairing of these two classes is $\omega(A)$.

Hence, the exponent of $T$ in the formula, $\omega(A)+\int_{S^1}\gamma'^*Hdt-\int_{S^1}\gamma^*Hdt$, is the topological energy (as in the item (4) of Section \ref{c3sconventions}) of $u$ plus the integral of $\omega$ along the sphere components. It follows from the well-known computation presented in that same item (4) that each of these terms, and hence $\omega(A)+\int_{S^1}\gamma'^*Hdt-\int_{S^1}\gamma^*Hdt$, is always non-negative whenever $\#^{vir}\mathcal{M}(\gamma,\gamma',A,H,J,P)\neq 0$.

For a more careful description of the moduli spaces involved see Definition 10.2.2 for $n=0$ in \cite{P}.

This makes $d$ a degree one $\Lambda_{\geq 0}$-module map that squares to zero.
\end{itemize} 

Continuing to follow Pardon, we outline what Hamiltonian Floer theory gives us for higher dimensional families of Hamiltonians. It will be more convenient to use cubes, so we give the theory in that framework, instead of the simplices as Pardon does.

Let $Cube^n=[0,1]^n\subset \mathbb{R}^n$. Let us consider the Morse function \begin{align}\label{c3emorse}f(x_1,\ldots ,x_n)=\sum_{i=1}^n\rho{( x_i)},\end{align}where $\rho:[0,1]\to [-1,1]$ is a smooth function satisfying: \begin{itemize}
\item $\rho(x)=1-x^2$ near $x=0$
\item $\rho(x)=-1+x^2$ near $x=1$
\item $\rho$ is strictly decreasing 
\end{itemize}

 Critical points of $f$ are precisely the vertices of the cube, and its gradient vector field (with respect to the flat metric) is tangent to all the strata of the cube.
 

By an \textbf{$n$-cube of Hamiltonians}, we mean a smooth\footnote{meaning $Cube^n\times M\times S^1\to \mathbb{R}$ is smooth} map $H:Cube^n\to C^{\infty}(M\times S^1,\mathbb{R})$, which is constant on an open neighborhood of each of the vertices, and also so that the Hamiltonians at the vertices are non-degenerate. 

\begin{remark}\label{c3rsmooth}
In Section \ref{c3sscontractibility}, we will relax the smoothness requirement of the map $H:Cube^n\to C^{\infty}(M\times S^1,\mathbb{R})$ a little bit. This is related to the fact that what is relevant is not the smoothness of $H$ but the smoothness of the family of the continuation map equations that it defines.
\end{remark}

We also choose a $Cube^n$-family of compatible almost complex structures $J$, which are also constant near the vertices, Pardon data $P$, and coherent orientations. Now, for each face $F$ of the cube we can consider virtual counts $\#^{vir}\mathcal{M}(\gamma,\gamma',A,H,J,P,F)$ of Floer trajectories for any $\gamma$ and $\gamma'$, which are periodic orbits of the Hamiltonians at the initial and terminal (resp.) vertices of $F$, intuitively counting rigid, possibly broken and bubbled, solutions of Equation \ref{jhol} with $(s,t)$-dependent $H$ and $J$ prescribed by the gradient flow lines of $f$ (see Figure \ref{c3fsalamon} for a picture, and Definition 10.2.2 in \cite{P} for a precise definition). We will again weight these counts by powers of the Novikov parameter $T$ using their topological energy. Note that the exponents here (meaning for $n\geq 1$) do not have to be non-negative, unless a monotonicity condition is assumed on the $n$-cube of Hamiltonians, see Definition \ref{c3dmonotone}.

\begin{figure}
\fbox{\begin{minipage}{0.5\textwidth}
\includegraphics[scale=0.3]{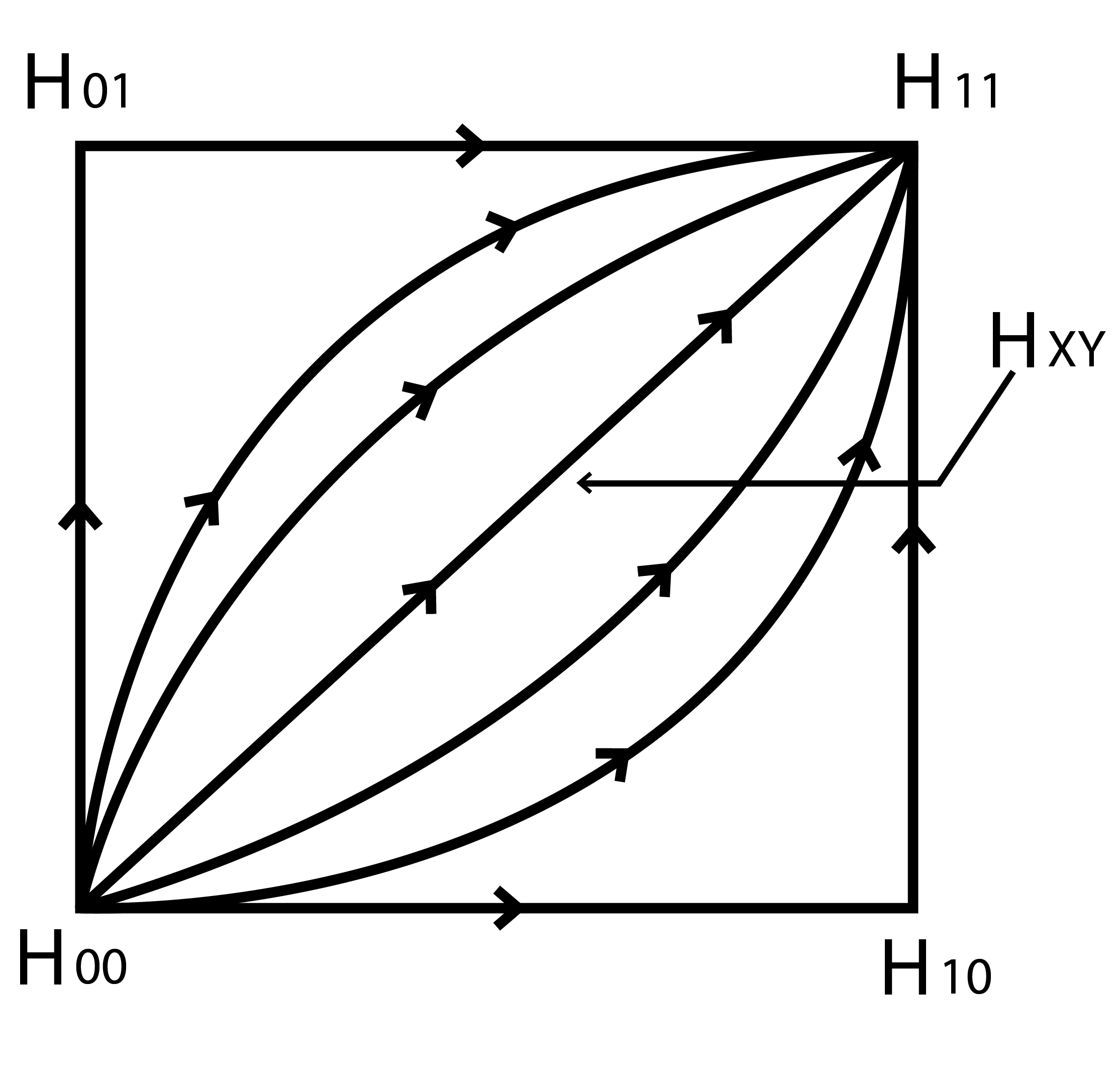}
\end{minipage}
\begin{minipage}{0.5\textwidth}

A square family of Hamiltonians as depicted on the left gives rise to a $2$-cube of chain complexes as below. Note that the homotopy is (intuitively) defined by counting the rigid solutions in the one parameter family of continuation map equations.
\begin{align}
\xymatrix{ 
CH(H_{00})\ar[r]\ar[d]\ar[dr]& CH(H_{10})\ar[d]\\ CH(H_{01})\ar[r] &CH(H_{11})}.
\end{align}
\end{minipage}}
\end{figure}
We want to make three remarks about these virtual counts:
\begin{lemma}\label{c3lVFC}
\begin{enumerate}
\item If the compactified moduli space of stable Floer trajectories $\bar{\mathcal{M}}(\gamma,\gamma',A,H,J,F)$ (as in Definition 10.2.3 iv. of \cite{P}) is empty for some homotopy class $A$, then the virtual count $\#^{vir}\mathcal{M}(\gamma,\gamma',A,H,J,P,F)$ is zero, for any $P$ and coherent orientations. 
\item If a compactified moduli space of stable Floer trajectories consists of one point and that point is regular, then the virtual number associated to it is non-zero. This is a consequence of Lemma 5.2.6 of \cite{P}.
\item If the virtual dimension of a moduli space is not equal to zero, then the virtual counts necessarily give zero.\qed
\end{enumerate}
\end{lemma}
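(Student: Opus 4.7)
The plan is to treat Pardon's virtual fundamental class (VFC) machinery as a black box and extract each of the three statements from its formal properties, since none of them is genuinely about symplectic geometry. Throughout I will refer to the construction of $\#^{vir}\mathcal{M}$ in Section 5 and Definition 10.2.2 of \cite{P}: one equips the compactified moduli space $\bar{\mathcal{M}}$ with an implicit atlas, builds a virtual fundamental chain in degree equal to the virtual dimension, and then $\#^{vir}\mathcal{M}$ is the pairing of this chain against the point class in the virtual dimension zero case.

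For part (1), I would observe that if $\bar{\mathcal{M}}(\gamma,\gamma',A,H,J,F)=\varnothing$, then every chart of any implicit atlas covering it is automatically empty, so the virtual fundamental chain produced by Pardon's construction is the zero chain. This is independent of the Pardon data $P$ or the coherent orientations, so $\#^{vir}\mathcal{M}=0$.

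For part (2), the hypothesis says that $\bar{\mathcal{M}}$ is a single regular point. In that situation a trivial (zero-dimensional) implicit atlas exists, and Lemma 5.2.6 of \cite{P} (which we cite verbatim) asserts that the virtual count coincides with the classical count at a transverse point; the latter is $\pm 1$ with sign determined by the coherent orientation, hence non-zero.

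For part (3), the virtual fundamental chain lives in degree equal to the virtual dimension $d$ of $\bar{\mathcal{M}}$, and $\#^{vir}\mathcal{M}$ is by definition the coefficient of the zero-dimensional part; if $d\neq 0$ this coefficient is vacuously zero. In Pardon's conventions this is already the way $\#^{vir}$ is defined for non-rigid moduli problems, so nothing needs to be checked.

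The only substantive issue, and the closest thing to an obstacle, is bookkeeping: one must confirm that Pardon's moduli spaces of Floer trajectories with broken/bubbled configurations and the resulting virtual counts satisfy the three formal properties above in the particular flavor needed here (parametrized families over the faces $F$ of $Cube^n$, arbitrary homotopy classes $A\in\pi_2(\gamma,\gamma')$, no restriction on contractibility of orbits). Since Pardon's construction is purely formal in these inputs, and since we have already addressed the non-contractibility point in Remark~\ref{c3rgradingconventions} via the Floer--Hofer coherent orientation framework \cite{FHcoh}, no new analytic or algebraic input is needed and the lemma follows at once.
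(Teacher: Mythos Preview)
Your proposal is correct and in fact more detailed than the paper's own treatment: the paper gives no proof at all, simply appending a \qed\ to the statement and treating all three items as immediate consequences of Pardon's VFC package. Your unpacking of each item from the formal properties of the implicit atlas construction is accurate and appropriate.

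One small slip: in your final paragraph you cite Remark~\ref{c3rgradingconventions} for the point about non-contractible orbits, but that remark concerns $\mathbb{Z}$-gradings in the Calabi--Yau case. The remark you want is the one immediately following it, which explains that Pardon's restriction to contractible orbits is inessential because the Floer--Hofer coherent orientation framework \cite{FHcoh} imposes no such restriction.
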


In particular,  if $\#^{vir}\mathcal{M}(\gamma,\gamma',A,H,J,P,F)\neq 0$, then, by (1) of Lemma \ref{c3lVFC} and the computation shown in the item (4) of Section \ref{c3sconventions},  \begin{align}\label{c3etope}
\omega(A)+\int_{S^1}\gamma'^*H'-\int_{S^1}\gamma^*H\geq \int\abs{\frac{\partial u}{\partial s}}^2dsdt+\int(\partial_sH_s)dsdt,
\end{align}
where there exists a broken flow line of $f$ in $Cube_n$ with intermediate vertices $v_1,\ldots ,v_i$ (possibly equal to each other, $v_{in(F)}$ or $v_{ter(F)}$), and $$u:\underbrace{\mathbb{R}\times S^1\sqcup\ldots \sqcup\mathbb{R}\times S^1}_{i+1\text{ many}}\to M$$ is a of solution of the Floer equations (as dictated by the broken flow line) going from $\gamma$ to $\gamma_1$, $\gamma_1$ to $\gamma_2$, $\ldots$, $\gamma_i$ to $\gamma'$, for some $\gamma_i$, a one-periodic orbit of the Hamiltonian at $v_i$. Note that we only have an  inequality because we are not considering the energy of the sphere bubbles on the right hand side. We call this the \textbf{energy inequality}. We have already alluded to a special case of this inequality once in the discussion of the differential, where the second term on the right is zero.

The upshot for us is that these (weighted) counts fit together to give an $n$-cube (over $\Lambda$, just so that we can make this point without introducing monotonicity): the chain complexes at the vertices are the Hamiltonian Floer cochain complexes (only for this paragraph: tensored with $\Lambda$); at the edges we have what is known as continuation maps; and higher dimensional faces give a hierarchy of homotopies as in the definition of an $n$-cube. Instead of showing this from scratch, we deduce it from Pardon's results for simplex families in Appendix A.

\begin{figure}[!h]
\centering
\includegraphics[scale=0.2]{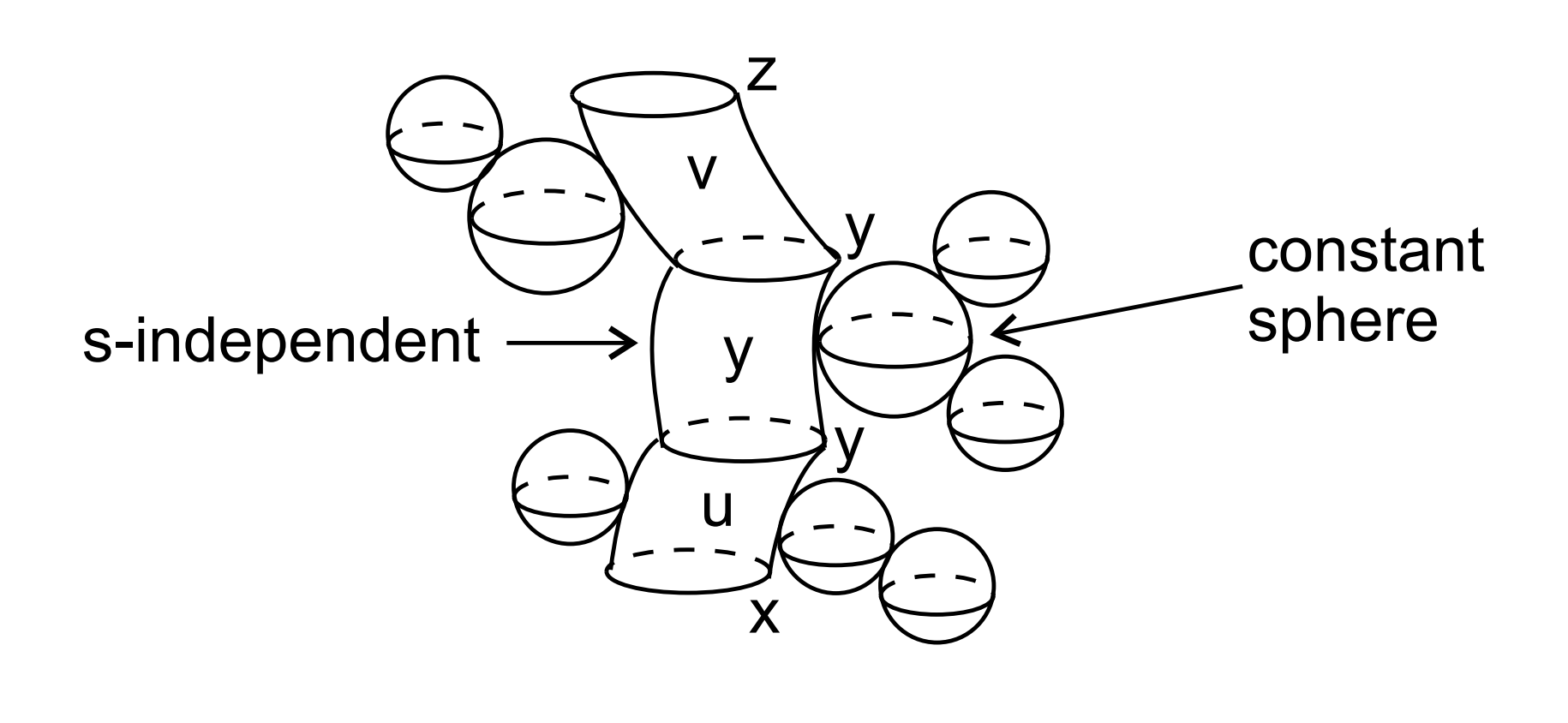}
\caption{Picture of a stable Floer trajectory (taken from Salamon \cite{Sa})}
\label{c3fsalamon}
\end{figure}

\begin{remark}\label{remarkextrachoices}
Whenever we pass from a family of Hamiltonians to a diagram of chain complexes we have to make choices of almost complex structures, Pardon data, and coherent orientations. Our final statements do not depend on these choices. In proofs and constructions all we need is their existence. We can handle these choices in two different ways (1) make a universal choice once and for all, or (2) make the choices inductively whenever you need one as in Pardon \cite{P1}. We generally suppress these choices and omit them from the labeling of the diagrams. 
\end{remark}

\subsubsection{Monotone families}\label{c3ssmonotone}

\begin{definition}\label{c3dmonotone} We call an $n$-cube family of Hamiltonians \textbf{monotone} if the Hamiltonians are non-decreasing along all of the flow lines of $f$ (as defined in (\ref{c3emorse})). By the energy inequality (\ref{c3etope}), a monotone $n$-cube of Hamiltonians gives rise to an $n$-cube defined over $\Lambda_{\geq 0}$. 
\end{definition}

We will also use two other shapes $Triangle^n$ and $Slit^n$ which are subsets of $Cube^n$. These are used to define $n$-triangles and $n$-slits of chain complexes (always over $\Lambda_{\geq 0}$). 

\begin{figure}
\includegraphics[scale=0.3]{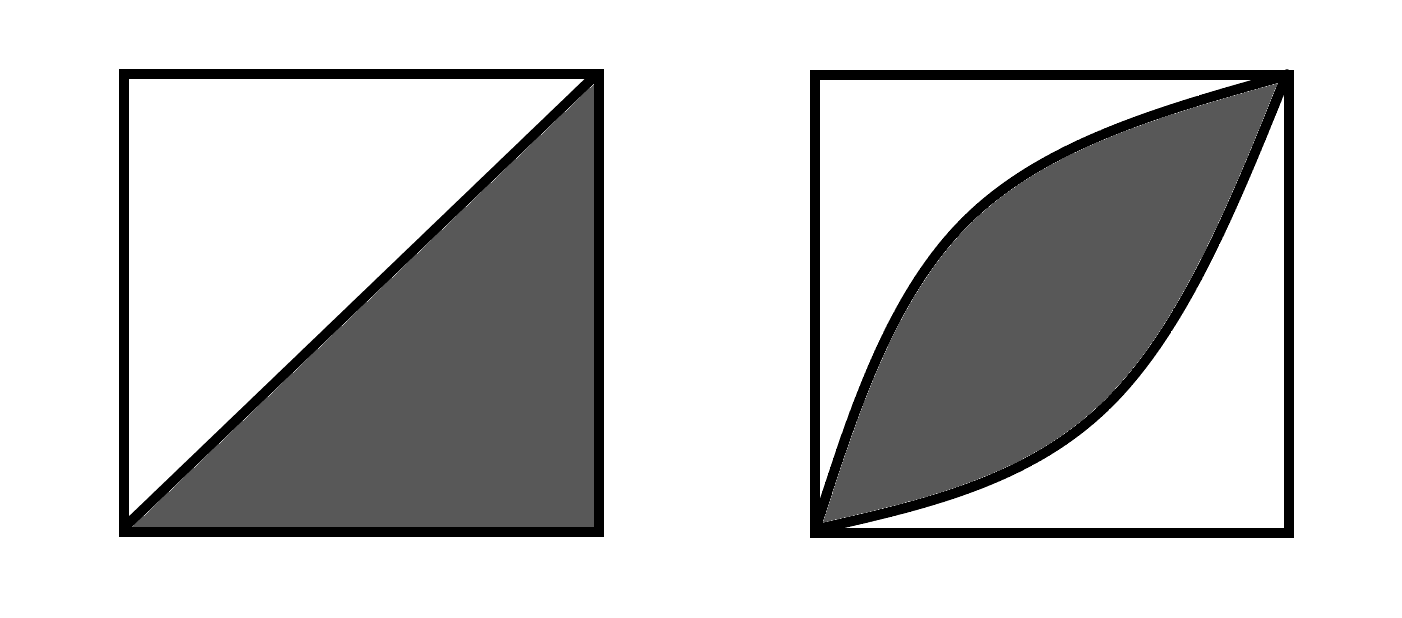}
\caption{On the left $Triangle^2$ and on the right $Slit^2$ is pictured.}
\label{slit}
\end{figure}

We define $Triangle^2:=\{x_1\geq x_2\}\subset Cube^2$ and $Slit^2$ to be the closed region that lies between the flow lines of $f$ (as in Equation \ref{c3emorse}) that pass through the points $(1/3,2/3)$ and $(2/3,1/3)$, see Figure \ref{slit}. Then, for $n\geq 0$ we define $Triangle^n:=Cube^{n-2}\times Triangle^2$ and $Slit^n:=Cube^{n-2}\times Slit^2$, which are both seen as subsets of $Cube^n=Cube^{n-2}\times Cube^{2}$. The gradient flow of $f$ is tangent to $Triangle^n$ and $Slit^n$ as well. The notion of monotonicity is defined in the same way as we did for the cubes. Families of Hamiltonians parametrized by these shapes give rise to special $n$-cubes as in Section \ref{c2ssmaps}: 
\begin{itemize}
\item $Slit^n$ gives two $(n-2)$-cubes, two maps between them, and a homotopy between the two maps, i.e. an $n$-slit.
\item $Triangle^n$ gives three $(n-2)$-cubes, three maps between them as dictated by the connections in the triangle, and a filling of the remainder of the diagram, i.e. an $n$-triangle.
\end{itemize}

We note that in our framework these statements do not follow from the fact that $n$-cubes of Hamiltonians give rise to $n$-cubes of chain complexes. See Appendix \ref{appa}.

\begin{remark} There is an alternative way of obtaining slightly generalized $n$-slits and $n$-triangles which would eliminate the need for a special treatment (but has its own drawbacks). Let us define a cubical $Slit^n$-family of Hamiltonians to be an $n$-cube of Hamiltonians such that the Hamiltonian is independent of the $x_n$-coordinate along the faces $\{x_{n-1}=0\}$ and $\{x_{n-1}=1\}$. Similarly, a cubical $Triangle^n$-family of Hamiltonians to be an $n$-cube of Hamiltonians such that the Hamiltonian is independent of the $x_n$-coordinate along the face $\{x_{n-1}=0\}$. We claim that $Slit^n$-families give rise to a slightly generalized version of $n$-slits, and $Triangle^n$-families to $n$-triangles.

Slightly generalizing the situation, let $H$ be an $n$-cube of Hamiltonians that is independent of the coordinate $x_n$. Let us also choose an $n$-cube family of almost complex structures which is independent of the $x_n$ coordinate. Then given Pardon data and coherent orientations for the $(n-1)$-cube with $H$ and $J$, one should be able to make a choice of Pardon data and coherent orientations so that Hamiltonian Floer theory gives us an $n$-cube which is a self map of an $(n-1)$-cube that is homotopic to the identity map. We do not attempt a proof as we will not use this approach, and indeed, it is not entirely obvious how to do this in Pardon's framework.  Assuming this could be done we would then define a notion of generalized $n$-slits and $n$-triangles to allow for the extra flexibility. We note that if we could rely on standard transversality techniques, we could make this map $id$ on the nose (as in \cite{GPS}, Equation 4.54 in page 78).

Let us fix a continuous map from $Cube^2$ to $Slit^2$, which contracts the sides with constant $x_1$ coordinate to the vertices of $Slit^2$, and is a diffeomorphism in the complement of those sides. Using this map we can find a homeomorphism between cubical $Slit^n$-families of Hamiltonians and Hamiltonians parametrized by $Slit^n$. Consequently, we could relate the two constructions. We can make similar statements for $Triangle^n$. We do not use this method and treat $Slit^n$ and $Triangle^n$ as shapes of their own and prove that they lead to $n$-slits and $n$-triangles separately. 
\end{remark}

\subsubsection{Contractibility}\label{c3sscontractibility}

The goal of this section is to create a framework in which we can systematically prove statements similar to the following claim:

\begin{claim}\label{fillablecube}
Let $H:\partial Cube^n\times M\times S^1\to \mathbb{R}$ be a continuous function so that it is a monotone $(n-1)$-cube family of Hamiltonians on each $(n-1)$-dimensional face of $Cube^{n}$. Then, we can extend $H$ to a monotone $n$-cube family of Hamiltonians.
\end{claim}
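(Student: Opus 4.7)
The plan is to build the extension in two layers---a collar near $\partial Cube^n$ and a core interpolation---and then glue them, exploiting the product structure of $f$. The key structural input is the decomposition $f(x_1,\ldots,x_n) = \sum_i \rho(x_i)$, which makes the gradient flow of $f$ tangent to every face of $Cube^n$, sends every interior flow line from $(0,\ldots,0)$ to $(1,\ldots,1)$, and keeps flow lines on a face $F$ inside $F$. Consequently, monotonicity along flow lines does \emph{not} force coordinate-wise monotonicity: boundary points sitting in different faces (say in $\{x_1=0\}$ versus $\{x_1=1\}$) lie on distinct flow orbits and are not directly comparable. This looseness is what makes the extension always achievable from face-wise monotone data.

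First, I construct a collar extension $H_{\mathrm{coll}}$ on a neighborhood $U$ of $\partial Cube^n$ by induction on face dimension. Near each vertex, $H$ is already constant and extends trivially to a small ball. Having extended near all faces of dimension $<k$, near each $k$-face $F$ I extend via the normal projection by setting $\tilde H(x):=H(\pi_F(x))$; since the flow of $f$ projects under $\pi_F$ onto the flow of $f|_F$, the pull-back is automatically flow-monotone. Consistency on overlaps follows from the agreement of $H$ on shared sub-faces together with its constancy near vertices. For the ``core'' $C := Cube^n \setminus U$, set $H_0 := H|_{(0,\ldots,0)}$ and $H_1 := H|_{(1,\ldots,1)}$, pick a smooth function $\lambda: Cube^n \to [0,1]$ monotone non-decreasing along every flow line with $\lambda \equiv 0$ near $(0,\ldots,0)$ and $\lambda \equiv 1$ near $(1,\ldots,1)$, and define
$$H_{\mathrm{core}}(x) := (1-\lambda(x))\,H_0 + \lambda(x)\,H_1.$$
This is flow-monotone because $H_0 \leq H_1$ pointwise---a consequence of chaining boundary monotonicities along any path in the 1-skeleton from the initial to the terminal vertex.

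The main obstacle is to glue $H_{\mathrm{coll}}$ and $H_{\mathrm{core}}$ into a single flow-monotone extension agreeing exactly with $H$ on $\partial Cube^n$. A naive convex combination $\eta\,H_{\mathrm{coll}} + (1-\eta)\,H_{\mathrm{core}}$ generically destroys monotonicity because the flow derivative of $\eta$ has no definite sign. My plan is to use a smoothed maximum
$$\tilde H := \mathrm{smax}_\alpha(H_{\mathrm{coll}},\,H_{\mathrm{core}}) \;=\; \tfrac{1}{\alpha}\log\!\bigl(e^{\alpha H_{\mathrm{coll}}} + e^{\alpha H_{\mathrm{core}}}\bigr),$$
whose flow derivative is a positively weighted convex combination of the individual flow derivatives and is therefore non-negative. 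To force exact agreement on $\partial Cube^n$, I first preprocess $H_{\mathrm{core}}$ so that $H_{\mathrm{core}} < H_{\mathrm{coll}}$ holds uniformly on $\partial Cube^n$---for instance by subtracting a flow-monotone bump built out of the height function $\lambda$ that vanishes at the two special vertices---and then take $\alpha$ large enough that $\mathrm{smax}_\alpha$ coincides with $H_{\mathrm{coll}}$ on the boundary to arbitrary precision, corrected to exact equality by a final cutoff multiplication whose factors are functions of $\lambda$ alone. The technical heart of the argument is the chain-rule sign analysis in the transition region, which is tractable precisely because every cutoff and every height is a function of the flow-monotone $\lambda$, giving their flow derivatives a controllable sign.
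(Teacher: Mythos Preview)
Your approach has two genuine gaps. First, the collar construction: near a codimension-$2$ corner, say $\{x_1=0\}\cap\{x_2=0\}$, the pullbacks $H(\pi_{\{x_1=0\}}(\cdot))$ and $H(\pi_{\{x_2=0\}}(\cdot))$ agree only on the corner itself, not on a neighborhood, so the inductive patching you describe does not produce a well-defined $H_{\mathrm{coll}}$ without a further partition-of-unity step---and that step reintroduces exactly the sign problem you are trying to avoid. Second, and more seriously, the gluing step cannot recover the exact boundary values. The function $\mathrm{smax}_\alpha(H_{\mathrm{coll}},H_{\mathrm{core}})$ is strictly larger than $H_{\mathrm{coll}}$ everywhere, and a correction that is ``a function of $\lambda$ alone'' cannot repair this, because the boundary data $H|_{\partial Cube^n}$ depends on all boundary coordinates, not just on the height $\lambda$. (Relatedly, your preprocessing bump $b(\lambda)$ vanishing at both endpoints is not flow-monotone, so subtracting it from $H_{\mathrm{core}}$ destroys monotonicity.) The underlying issue is that interior flow lines of $f$ meet $\partial Cube^n$ only at the two extreme vertices, so any construction based solely on $\lambda$ sees only $H_0$ and $H_1$.

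The paper's route avoids all of this by \emph{rectification}: it constructs a manifold with corners $X$ (the compactified space of broken flow lines from the top to the bottom vertex) and a surjection $s:X\times[h(b),h(a)]\to Cube^n$ under which flow-monotonicity becomes monotonicity in the single interval coordinate. The boundary data then lives on $\partial X\times[0,1]$, and one extends slice-by-slice using the Whitney extension formula with a partition of unity chosen once and independently of the slice parameter. Because that formula is a fixed positive linear combination of boundary values, monotonicity in the interval direction is preserved automatically and exact boundary agreement is built in. The key conceptual step you are missing is passing to broken flow lines: these are the objects that genuinely interpolate between interior points and arbitrary boundary faces.
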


Without the monotonicity condition this would be fairly simple (see Lemma 16.8 of \cite{Lee} for example). We generalize the framework first. We refer to Melrose (\cite{Mel}, Chapters 1 and 5) for our conventions and basic results regarding manifolds with corners.

\begin{definition}\label{def}
Let $Y$ be a compact manifold with corners (Definition 1.8.5 in \cite{Mel}). We call a Morse function $h$ and a Riemannian metric $g$ on $Y$ \textbf{admissible} if they satisfy the following conditions.
\begin{enumerate}
\item For any boundary face $Z$ (definition at the end of page I.13 of \cite{Mel}) of $Y$ the directional derivative of $h$ in the normal directions to $Z$ is zero. This implies that the negative gradient vector field of $h$ is tangent to all the boundary faces, and in particular that $h$ has critical points at the $0$-dimensional boundary faces of $Y$.
\item For every boundary face $F$ of $Y$, $h|_F$ has exactly one local minimum and one local maximum. Let us denote the maximum point of $h$ by $a\in Y$, and the minimum point by $b\in Y$.
\item $h$ has no other critical points than the $0$-dimensional boundary faces.
\item The pair $(h,g)$ is locally trivial at the critical points. Namely, if $p$ is a critical point, which implies that it is a $0$-dimensional boundary face, then there exists local coordinates centered at $p$, $(x_1,\ldots ,x_n): U\to \mathbb{R}_{\geq 0}^n$, and a $1\leq k\leq n$, such that $$h(x_1,\ldots ,x_n)=h(p)+x_1^2+\ldots +x_k^2-x_{k+1}^2-\ldots -x_n^2,$$ $$g=dx_1^2+\ldots +dx_n^2.$$ Note that this implies that the intersection of unstable, and stable sets of $p$ with $U$ are given by the (local) faces $$\{x_1=\ldots =x_k=0\},$$ and $$\{x_{k+1}=\ldots =x_n=0\},$$ respectively.
We call the unique face of $Y$ that contains the local boundary face $\{x_1=\ldots =x_k=0\}$ the \textbf{unstable face} of $p$. The \textbf{stable face} of $p$ is defined similarly.
\item Let $p$ be a critical point. Then, $a$ is contained in its stable face and $b$ is contained in its unstable face.
\item Morse-Smale condition is satisfied. Namely, if $p\neq q$ are any two critical points, then the unstable face of $p$ and the stable face of $q$ intersect transversely as $p$-submanifolds. See Definition 1.7.4 of \cite{Mel} for what is a $p$-submanifold, and equation (1.9.3) on page I.15 for what it means for two of them to intersect transversely. Note that boundary faces are $p$-submanifolds.
\end{enumerate}
\end{definition}

Note that for $Y$ equal to one of $Cube^n$, $Triangle^n$, or $Slit^n$, the Morse function $f$ as in Equation \ref{c3emorse} and the flat metric are admissible. 

\begin{remark}
Considering Theorem 3.5 of \cite{Qin}, for any locally trivial Morse-Smale pair in a smooth manifold, the restriction of the Morse pair to $\widehat{W(p,q)}$, using the notation of \cite{Qin},  gives an example of an admissible Morse pair on the manifold with corners $\widehat{W(p,q)}$. By periodically extending $f$ to the entire $\mathbb{R}^n$, $Cube^n$ is seen to be of this form. More examples are obtained by restricting to invariant subsets (under the gradient flow) of $\widehat{W(p,q)}$. $Triangle^n$ and $Slit^n$ are of this form.
\end{remark}

Let us note the following lemma even though we will not use it.

\begin{lemma}
Let $Y$ be a compact manifold with corners. Assume that we have a Morse function $h$ and a Riemannian metric $g$ on $Y$ which are admissible.

Let $p$ and $q$ be critical points of $h$, and let $Y(p,q)$ be the subset of $Y$ that is the intersection of the unstable face of $p$ and the stable face of $q$. Then, $Y(p,q)$ is a boundary face, and hence is a manifold with corners itself. Moreover, the restriction of $h$ and $g$ to $Y(p,q)$ is admissible.
\end{lemma}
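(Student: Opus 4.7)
The plan is to check each of the six admissibility conditions of Definition \ref{def} for the restricted triple $(Y(p,q), h|_{Y(p,q)}, g|_{Y(p,q)})$, mostly by pulling the corresponding conditions on $(Y,h,g)$ back along the inclusion $Y(p,q)\hookrightarrow Y$.

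First I would show $Y(p,q)$ is a boundary face of $Y$. The unstable face $U(p)$ and the stable face $S(q)$ are p-submanifolds of $Y$ (being boundary faces) that intersect transversely by axiom (6), so $Y(p,q)=U(p)\cap S(q)$ is itself a p-submanifold. Since each of $U(p)$ and $S(q)$ is locally cut out by the vanishing of a subset of the boundary defining functions of $Y$, so is their transverse intersection; this identifies $Y(p,q)$ as a boundary face of $Y$. The same reasoning shows that every boundary face of $Y(p,q)$ is a component of $Y(p,q)\cap Z$ for some boundary face $Z$ of $Y$, and is itself a boundary face of $Y$.

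Axioms (1)--(4) and (6) then follow fairly formally. For (1), by condition (1) on $Y$ the vector field $\nabla h$ is tangent to $U(p)$ and $S(q)$, hence to $Y(p,q)$, so $\nabla(h|_{Y(p,q)})=\nabla h|_{Y(p,q)}$, and the same reasoning shows it is tangent to every boundary face of $Y(p,q)$. Critical points of $h|_{Y(p,q)}$ are critical points of $h$ lying in $Y(p,q)$, which by the previous paragraph are vertices of $Y(p,q)$, giving (3). For (4), at a critical point $r\in Y(p,q)$ the adapted chart from (4) on $Y$ cuts $Y(p,q)$ out by $\{x_i=0:i\in I\}$ for some $I\subset\{1,\dots,n\}$, and restricting the normal forms of $h$ and $g$ to this coordinate subspace produces the required normal form on $Y(p,q)$. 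Axiom (2) is then immediate, since every boundary face of $Y(p,q)$ is a boundary face of $Y$. For (6), the same local coordinates yield $U_{Y(p,q)}(r)=U_Y(r)\cap Y(p,q)$ and $S_{Y(p,q)}(s)=S_Y(s)\cap Y(p,q)$, so pairwise Morse-Smale transversality on $Y$ descends to pairwise transversality inside $Y(p,q)$.

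The main obstacle is axiom (5). The plan is to first identify the global max of $h|_{Y(p,q)}$ as $p$ and the global min as $q$: in the adapted coordinates at $p$, $U(p)$ is locally $\{x_1=\cdots=x_{k_p}=0\}$ and $h$ has a strict local max at $p$ on $U(p)$, hence on $Y(p,q)\subseteq U(p)$, and by (2) applied to $Y(p,q)$ itself this local max is unique and equals $p$; symmetrically the min is $q$. It then remains to show $p\in S_Y(r)$ and $q\in U_Y(r)$ for every critical point $r\in Y(p,q)$, so that $p$ and $q$ lie in the required stable and unstable faces of $r$ inside $Y(p,q)$. I would establish this via the duality $r\in U_Y(p)\iff p\in S_Y(r)$, which holds for any two critical points and can be extracted from axiom (4) by matching the signs in the normal forms at $p$ and $r$ under the local coordinate identifications of adjacent boundary faces. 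Since $r\in Y(p,q)\subseteq U_Y(p)$, the duality yields $p\in S_Y(r)$; symmetrically $q\in U_Y(r)$, completing the verification.
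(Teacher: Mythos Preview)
The paper's own proof is almost entirely omitted: it notes that a transverse intersection of boundary faces is a disjoint union of boundary faces, asserts that connectedness of $Y(p,q)$ is ``easily seen,'' and then declares the admissibility verification ``straightforward'' without carrying it out. Your axiom-by-axiom plan is therefore far more detailed than what the paper provides, and your arguments for conditions (1)--(4) and (6) are correct.

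There are, however, two real gaps. First, you do not address connectedness: showing that $Y(p,q)$ is locally cut out by the vanishing of boundary-defining coordinates only identifies it as a \emph{union} of boundary faces, and it is precisely the connectedness step that the paper singles out as needing an argument. Second, your treatment of axiom (5) is incomplete in two places. You tacitly assume $p\in Y(p,q)$, i.e.\ $p\in S_Y(q)$, when identifying $p$ as the maximum; this is not given and in fact is equivalent (via your duality) to $q\in U_Y(p)$, which must be established. More seriously, the proposed justification of the duality $r\in U_Y(p)\Leftrightarrow p\in S_Y(r)$ by ``matching the signs in the normal forms at $p$ and $r$'' is not an argument: axiom (4) gives you adapted coordinates at $r$ in which $U_Y(p)$ appears as some coordinate face $\{y_i=0:i\in I\}$, but it tells you nothing about which indices lie in $I$, nor anything about the distant point $p$. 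The duality is true, but proving it requires showing that $I$ is contained in the stable index set at $r$; one route is an induction along edges (each edge out of $r$ in an unstable direction has $r$ as its maximum, hence lies in $U_Y(r)$, and one argues such an edge cannot leave $U_Y(p)$), another is induction on dimension. Once the duality is established rigorously, both $p,q\in Y(p,q)$ and, with a short inductive argument, the connectedness of $Y(p,q)$ follow---so this is really the heart of the matter that your sketch elides.
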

\begin{proof}
Being the intersection of transversely intersecting boundary faces, $Y(p,q)$ is a disjoint union of boundary faces. But, it is easily seen that $Y(p,q)$ is connected, so the first claim follows. We omit the straightforward proof of the second claim.
\end{proof}


Let us first introduce some notation regarding blow-ups (see Chapter 5 of \cite{Mel}). Let $W$ be a manifold with corners and $Z$ be a $p$-submanifold. Then, we can define a canonical blow-up manifold with corners $[W;Z]$, which comes with a canonical blow-down map $\beta:[W;Z]\to W$. Note that $\pi$ is a diffeomorphism onto its image away from $\beta^{-1}(Z)$, and $\beta^{-1}(Z)$ is the inward pointing part of the spherical normal bundle, called front face of the blow-up in Melrose. 

Now, let $Z'$ be another $p$-submanifold of $W$, and assume that it is transverse (in the $p$-submanifold sense) to $Z$. We define the proper transform $\beta^*Z'$ of $Z'$ to be the closure of $\beta^{-1}(Z'-Z)$ inside $[W;Z]$, which is also a $p$-submanifold. Note that a disjoint union of boundary faces of a manifold with corners is a $p$-submanifold.

A key property of blow-up is commutativity. This means that, continuing the same notation above, $$[[W;Z];\beta^*Z']=[[W;Z'];\beta^*Z].$$ Note that we denoted both blow-down maps by $\beta$ abusing notation - we will continue doing this. Let us denote such an iterated blow-up by $[W;Z,Z']$. We have that the diagram of blow-down maps
\begin{align}
\xymatrix{ 
[W;Z,Z']\ar[r]\ar[d]& [W;Z]\ar[d]\\ [W;Z']\ar[r] &W}
\end{align}
commutes. 

Finally, we note the following elementary lemma.

\begin{lemma}\label{c3lfaces} Under the blow-up of a boundary face, the proper transforms of boundary faces are boundary faces, and transversal boundary faces remain transversal.
\end{lemma}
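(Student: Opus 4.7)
The plan is to reduce both claims to the standard local model for blow-ups. By Chapter 5 of \cite{Mel}, the blow-up $[W;Z]$ is a local construction: near any point of $W$ there is a corner chart $U \cong \mathbb{R}^n_{\geq 0}$ in which $Z$ is a coordinate boundary face, and on such a chart the blow-up is diffeomorphic to the blow-up of that coordinate face in $\mathbb{R}^n_{\geq 0}$. Since boundary faces of $W$ are cut out locally by the vanishing of subsets of the corner coordinates, it suffices to verify the lemma for the model case where $W = \mathbb{R}^n_{\geq 0}$ and $Z = \{x_1 = \cdots = x_k = 0\}$.

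In this model, $[W;Z]$ is the product $[\mathbb{R}^k_{\geq 0}; \{0\}] \times \mathbb{R}^{n-k}_{\geq 0}$, and spherical-polar coordinates $x_i = r\omega_i$ for $1 \leq i \leq k$, where $r \in [0,\infty)$ and $\omega$ ranges in the spherical simplex $S^{k-1}_+ = \{\omega \in \mathbb{R}^k_{\geq 0} : |\omega| = 1\}$ (itself a manifold with corners with boundary hypersurfaces $\{\omega_i = 0\}$), provide a corner chart on $[W;Z]$ near the front face. Its boundary hypersurfaces are then the front face $\{r = 0\}$, the faces $\{\omega_i = 0\}$ for $i \leq k$, and the faces $\{x_j = 0\}$ for $j > k$.

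With this picture, any boundary face $Z' = \{x_{j_1} = \cdots = x_{j_m} = 0\}$ of $W$ has proper transform cut out in the new coordinates by the equations $\omega_{j_\ell} = 0$ for $j_\ell \leq k$ together with $x_{j_\ell} = 0$ for $j_\ell > k$ (when $Z' \cap Z = \varnothing$ one only gets the second type, which is just $Z'$ itself away from the front face). This is a coordinate boundary face of $[W;Z]$, proving the first assertion. For the second assertion, two transverse boundary faces $Z', Z''$ of $W$ are locally cut out by coordinate equations whose total vanishing locus has the expected codimension, and the substitution above replaces each such system with a system of coordinate equations on $[W;Z]$ whose total vanishing locus again has the expected codimension; hence $\beta^* Z'$ and $\beta^* Z''$ meet transversely as $p$-submanifolds.

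The one point of genuine care, which I expect to be the main obstacle, is checking that transversality is really preserved when both $Z'$ and $Z''$ meet $Z$, so that after the polar substitution their defining equations both involve the $\omega$ variables. Here one must verify that distinct $\omega_i$'s remain independent boundary-defining functions on the spherical simplex factor, which is immediate from the product structure of the local model but is the only step where the mere local form is not already self-evident.
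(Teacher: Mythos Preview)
Your argument is correct. The paper states this lemma without proof (introducing it as ``the following elementary lemma''), and your reduction to the local model $W=\mathbb{R}^n_{\geq 0}$, $Z=\{x_1=\cdots=x_k=0\}$ via polar coordinates is exactly the natural way to supply the details. The concern you flag in the final paragraph is not a real obstacle: the functions $\omega_1,\ldots,\omega_k$ are the boundary-defining functions of the manifold with corners $\mathbb{S}^{k-1}_+$, so distinct subsets of them cut out boundary faces of the expected codimension (and if all $k$ of them vanish simultaneously the locus is empty, which is still transverse).
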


If $Y$, $h$, and $g$ are as above, for any two points $p$ and $q$ in $Y$, let us denote the set of unparametrized, possibly broken, negative gradient flow lines from $p$ to $q$ by $M(p,q)$. The following proposition is a generalized and strengthened version of the construction of the map $\phi$ on page $9$ of \cite{CJS}. The proof was inspired by the construction of manifolds $P_i$ from Observation $8$ on page $21$ of \cite{Bur}.

\begin{proposition}\label{c3prectify} Let $Y$ be a compact manifold with corners. Assume that we have a Morse function $h$ and a Riemannian metric on $Y$ which are admissible. Denote the maximum, and minimum of $h$ by $a$ and $b$. Then, there exists a canonical manifold with corners $X$, and a canonical surjective continuous map $$s:X\times [h(b),h(a)]_r\to Y,$$ called the \textbf{rectification} such that:\begin{itemize} 
\item $s$ is smooth at $x\in X\times [h(a),h(b)]$, if $s(x)$ is not a critical point of $h$.
\item $s(x,h(a))=a$ and $s(x,h(b))=b$ for every $x\in X$
\item For every $x\in X$, $h\circ s|_{\{x\}\times [h(a),h(b)]}$ is strictly increasing in the $r$ coordinate, and, most importantly, $s_x:=s|_{\{x\}\times [h(b),h(a)]}$ satisfies the equation:
 $$\frac{\partial s_x}{\partial r}(r)=\frac{V(s(r))}{\abs{V(s(r))}^2}$$ for all points $r\in [h(a), h(b)]$ that are not mapped to critical points of $h$, where $V$ is the gradient vector field of $h$. 
\item By the previous item, for every $x\in X$, there exists a broken negative gradient flow line $\gamma_x$ from $a$ to $b$ of $h$ such that the image of $s_x$ is the closure of the image of $\gamma_x$. This defines a map $X\to M(a,b)$ and we require it to be a bijection.
\item For every critical point $p$ of $h$, which is not $a$ or $b$, there exists a boundary hypersurface $F_p$ of $X$ such that $s^{-1}(p)=F_p\times \{h(p)\}$.
\end{itemize}
\end{proposition}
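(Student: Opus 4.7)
The plan is to realize $X$ as the compactified moduli space of broken, unparametrized, negative gradient trajectories of $h$ from $a$ to $b$, and to let $s(\gamma, r)$ be the unique point on $\gamma$ whose $h$-value equals $r$.

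Set-theoretically, a point of $X$ is a finite sequence of distinct critical points $a = p_0, p_1, \ldots, p_m, p_{m+1} = b$ with $h(p_0) > h(p_1) > \cdots > h(p_{m+1})$, together with, for each $0 \leq i \leq m$, an unparametrized (i.e.\ modulo time translation) negative gradient trajectory from $p_i$ to $p_{i+1}$. Letting $M(p,q)$ denote the moduli space of such trajectories between critical points, the Morse-Smale condition (6) and the local normal form (4) together ensure that each $M(p,q)$ is a smooth manifold, and the stratum of $X$ labeled by $(p_0, \ldots, p_{m+1})$ is the product $M(p_0, p_1) \times \cdots \times M(p_m, p_{m+1})$, with the top stratum $M(a,b)$ forming the interior of $X$.

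Next, I would topologize $X$ in the standard Gromov sense: a sequence converges to a broken trajectory if, after translations chosen so the trajectories pass through prescribed regular level sets, it converges in $C^\infty_{\mathrm{loc}}$ away from the breaking points. Compactness follows from compactness of $Y$ together with the standard Morse-theoretic broken-trajectory argument. The technical core is upgrading $X$ from a stratified space to a compact manifold with corners: near a broken trajectory with breakings at $p_{i_1}, \ldots, p_{i_\ell}$, the normal form (4) at each $p_{i_j}$ and the transversality from (6) produce a gluing chart modelled on a product of $\ell$ closed half-lines transverse to the stratum. Equivalently, one obtains $X$ by an iterated blow-up matching flow segments at each intermediate critical point, in the spirit of the CJS and Burghelea constructions cited just before the statement; the commutativity of blow-ups along disjoint centers, together with the independence of the gluing from any auxiliary choices, yields a canonical smooth structure. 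The codimension $\ell$ strata correspond precisely to trajectories with $\ell$ breakings, so the boundary hypersurfaces are $F_p := \{\gamma \in X : \gamma \text{ passes through } p\}$, one for each intermediate critical point $p$.

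Finally, I define $s(\gamma, r)$ to be the unique point of $\gamma$ at $h$-level $r$; this is well-defined because $h$ is strictly monotone along each non-trivial flow segment and equals $h(p_i)$ exactly at $p_i$. Continuity of $s$ comes from the topology on $X$ and the continuous dependence of the flow on initial conditions away from critical points. The endpoint conditions and the ODE $\partial_r s_x = V/|V|^2$ are both immediate translations of the fact that reparametrizing a gradient line by its $h$-value is characterized by $dh(\dot s_x) \equiv 1$. Surjectivity of $s$ follows from admissibility condition (5): forward and backward negative gradient flow from any $y \in Y$ limit to critical points $q_\pm$ whose unstable and stable faces contain $b$ and $a$ respectively, and iterating this places $y$ on a broken trajectory from $a$ to $b$. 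The identification $s^{-1}(p) = F_p \times \{h(p)\}$ is tautological from the construction. The main obstacle is the manifold-with-corners gluing step: it requires precise control of the exponential approach and departure rates at each breaking critical point, which is exactly the information that the normal form (4) is designed to provide and that the Burghelea-style blow-up construction exploits.
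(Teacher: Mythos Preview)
Your proposal is correct in outline and arrives at the right object, but takes a different route from the paper. You define $X$ \emph{a priori} as the compactified moduli space of broken trajectories and then argue that gluing near breakings gives it a manifold-with-corners structure. The paper goes the other way: it \emph{constructs} $X$ directly as an iterated blow-up of a regular level set $M(w_0)=h^{-1}(w_0)$. Concretely, as one descends through the critical values $c_1>\cdots>c_k$, each crossing replaces the current level set by its blow-up along the intersection with the stable faces of the critical points at that level (an Atiyah-flop-type modification, analyzed in the local model in Appendix~\ref{appc}); instead of performing the accompanying blow-down, the paper retains all blow-ups and organizes them into a ``pyramid'' of manifolds with corners $E_{j,i}$ with $E_{j,i}=[E_{j-1,i};\beta^*U_{i-j+1}]=[E_{j-1,i-1};\beta^*S_i]$, terminating in $X=E_{k,k}$. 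The rectification map $s$ is then assembled from the blow-down maps $X\to M(w_i)$ and the normalized gradient flow between regular levels.

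What this buys: the paper's construction makes the manifold-with-corners structure and its canonicity manifest from the start, since each step is a blow-up of a manifold with corners along a disjoint union of boundary faces (Lemma~\ref{c3lfaces}), and the commutativity of blow-ups at transverse centers handles the ordering. Your approach is the more standard Morse-theoretic one, but the step you flag as the ``main obstacle''---producing the corner charts via gluing---is precisely what the paper's explicit iterated blow-up construction replaces. In your write-up the canonicity of the smooth structure is asserted rather than derived, whereas in the paper it falls out of the pyramid.
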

\begin{proof}
Let us denote the flow of the smooth vector field $-\frac{V(s(r))}{\abs{V(s(r))}^2}$, defined in the complement of the critical points, by $\psi^t$ (see \cite{Audin}, proof of Theorem 2.1.7, or pages 20-21 of \cite{Bur} for a discussion of this flow). If $y\in Y$, and $t_0\in \mathbb{R}_{\geq 0}$ is such that $\psi^t(y)$ exists (in other words, never hits a critical point) for all $0\leq t\leq t_0$, then $$h(\psi^t(y))=h(y)-t.$$For $w\in [h(b),h(a)]$, we define $M(w):=h^{-1}(w)$, and $c(w)$ to be the set of critical points with value $w$. Moreover, we denote the stable and unstable faces of a critical point $p$ by $SF(p)$ and $UF(p)$.

Let us first prove that if $c(w)$ is empty than $M(w)$ is a $p$-submanifold of $Y$. Let $h(a)=c_0>c_1>\ldots>c_k>c_{k+1}=h(b)$ be all of the critical values of $h$. 

We start by showing that for $c_0>w>c_1$ the statement is true. For $w$ really close to $a$ we are inside the local model at $a$, and the level set is the intersection of a sphere centered at the origin with $\mathbb{R}_{\geq 0}^n$ inside $\mathbb{R}_{\geq 0}^n$. This is clearly a $p$-submanifold. For the larger values of $w$ in the same interval, one then transports the $p$-submanifold charts via $\psi^t$. 

A similar strategy works for $c_i>w>c_{i+1}$, for all $1\leq i\leq k$, by an inductive argument. Consider a $w$ that is very close to $c_i$ in this interval. It is easy to get $p$-submanifold charts for points that are contained in $M(w)\cap UF(p)$, for some $p\in c(c_i)$, as we are inside one of the local models (see Appendix C for a more detailed analysis). For all the other points of $M(w)$ one can bring the charts from $M(w')$, where $c_{i-1}>w>c_{i}$, using $\psi$. This finishes the proof for $w$ that is very close to $c_i$, and then we can again move charts to other values via $\psi^t$. 

Moreover, for $c_{i-1}>w'>c_i>w>c_{i+1}$, the local model can be used to show that there exists a manifold with corners $M_i$, which is the blow-up of $M(w)$ at the union of its disjoint boundary faces $\bigcup_{p\in c(c_i)}(M(w')\cap UF(p))$ and also at the same time the blow-up of $M(w')$ at the union of its disjoint boundary faces $\bigcup_{p\in c(c_i)}(M(w)\cap SF(p))$ (see Appendix C for details).

Now, let us choose $c_{0}>w(0)>c_1>w(1)>c_{2}>w_2>\ldots>c_{k-1}>w_{k-1}>c_k>w_k>c_{k+1}$. We will construct the diagram below where each entry is a manifold with corners, and each arrow is a blow-down map (which is associated to the blow-up of a union of disjoint boundary faces). In the end $E_{k,k}$ will be our $X$. We refer to this diagram as the pyramid.

\begin{align}\label{c3epyramid}
\xymatrix@!0@R=10mm@C=10mm{ 
&     & &      & &&E_{k,k}\ar[dl]\ar[dr] & &\\ 
&     & &      & &E_{k-1,k-1}\ar[dl]\ar[dr]& &E_{k-1,k} \ar[dl]\ar[dr]&\\ 
&     & &      &  & &&&\\ 
&     & &      & \ldots & &\ldots&&\ldots\\ 
  &&     &E_{3,3} \ar[dl]\ar[dr] &        &  \ar[dl] & \ldots && &E_{3,k}\ar[dr] \ar[dl] \\ 
&     &E_{2,2} \ar[dl]\ar[dr]&      &E_{2,3} \ar[dl]\ar[dr] &  &  \ar[dl]& \ldots&& &E_{2,k}\ar[dr] \ar[dl]\\ 
           &M_1\ar[dl]\ar[dr] &     &  M_2\ar[dl]\ar[dr]    &&M_3\ar[dl]\ar[dr]& & \ar[dl]&\ldots&  & &M_k\ar[dr]  \ar[dl]\\ 
 M(w_0) &     & M(w_1) &      &M(w_2)  &  & M(w_3)& & & \ldots&& &M(w_k)\\ 
}
\end{align}

Let $U_i\subset M(w_i)$ be the disjoint union $\bigcup_{p\in c(c_i)}(M(w_i)\cap UF(p))$ of faces of $M(w_i)$, for every $i\in\{0,\ldots, k\}$, and let $S_i\subset M(w_{i-1})$ be the disjoint union $\bigcup_{p\in c(c_i)}(M(w_{i-1})\cap SF(p))$ of faces of $M(w_{i-1})$ for every $i\in\{1,\ldots, k+1\}.$ Note that $U_0=M(w_0)$ and $S_{k+1}=M(w_k)$, and all others contained in the boundary. Moreover, by the Morse-Smale property, $U_i$ and $S_{i+1}$ are transverse to each other. This is summarized below:
\begin{align}
\xymatrix@!0@R=10mm@C=10mm{ 
 M(w_0) &     & M(w_1) &      &M(w_2)  &  & M(w_3)& & & \ldots&& &M(w_k)\\ 
 U_0 \pitchfork S_1&     & U_1\pitchfork S_2 &      &U_2\pitchfork S_3  &  & U_3\pitchfork S_4& & & \ldots&& &U_{k}\pitchfork S_{k+1}
}
\end{align}

We restate the definition of $M_i$'s using our new notations:
$$M_i:=[M(w_{i-1}); S_i]=[M(w_{i}); U_i],$$ for every $i\in\{1,\ldots, k\}.$ This explains the bottom two rows of our pyramid. Also, note that for $1\leq i\leq k$, $M_i$ contains the proper transforms of $U_{i-1}$ and $S_{i+1}$. These are also disjoint unions of (positive codimension) boundary faces (being proper transforms of boundary faces transverse to the blow-up boundary faces). Moreover, by the Morse-Smale property, they are transverse to each other.

Now let us define, for $2\geq i\geq k$, $$E_{2,i}:=[M_{i};\beta^*U_{i-1}]=[M_{i-1};\beta^*S_i].$$ For, $2<i<k$, $E_{2,i}$ contains the proper transforms of $U_{i-2}$ and $S_{i+1}$. These are also disjoint unions of boundary faces, which are transverse to each other, by the Morse-Smale property. We also define $E_{0,i}:=M(w_0)$, for $i\in\{0,\ldots, k\}$, and $E_{1,i}:=M_i$, for $i\in\{1,\ldots, k\}$ to have uniformity of notation. We construct the entire pyramid similarly, by induction.

We define, for every $j\geq 1$ and $j\leq i\leq k$, $$E_{j,i}:=[E_{j-1,i};\beta^*U_{i-j+1}]=[E_{j-1,i-1};\beta^*S_i].$$ Using Morse-Smale property, we can show that for $j\leq i\leq k$, the proper transforms of $U_{i-j}$ and $S_{i+1}$ are disjoint unions of boundary faces of $E_{j,i}$, which are transverse to each other. The last step defines $E_{k,k}:=[E_{k-1,k};\beta^*U_{1}]=[E_{k-1,k-1};\beta^*S_k]$, and we finish the procedure. We define $X:=E_{k,k}$.

Finally, we define the rectification map $s:X\times [h(b),h(a)]\to Y.$ First note that for each $w\in [h(b),h(a)]$ that is not a critical value, $X$ has a canonical map to $M(w)$. This is because for $c_i>w>c_{i+1}$, using the pyramid, we have a canonical iterated blow-down map $X\to M(w_i)$. Using the flow $\psi^t$ (or its inverse) this gives us the desired maps for every regular value $w$. Also note that, we have canonical continous maps: 
\begin{align}
\xymatrix@!0@R=10mm@C=10mm{ 
 M(w_0)\ar[dr] &     & M(w_1)\ar[dl]\ar[dr] &      &M(w_2) \ar[dl]\ar[dr] &  & M(w_3) \ar[dl]\ar[dr]& &  \ar[dl]& \ldots&& &M(w_k)\ar[dl] \\ 
&M(c_1)&     &  M(c_2)    &&M(c_3)& &&\ldots&  & &M(c_k),
}
\end{align} which result in a commutative diagram if added to the pyramid. These maps are defined by following $\psi^t$ (or its inverse) except that for the right pointing diagonal arrows, the intersections with the corresponding stable faces,  and for the left pointing diagonal arrows) he intersections with the unstable ones are collapsed into their critical points. We can put all these maps together and define $s:X\times [h(b),h(a)]\to Y.$ It is straightforward to check that all the properties listed are satisfied.
\end{proof}

\begin{remark}
What happens to a level set as it crosses a critical point (as in the proof above) is analogous to an Atiyah flop. The bottom two rows of the pyramid in Equation \ref{c3epyramid} can be seen as a sequence of flops from a simplex to itself. See Figure \ref{c3fflop}. This is analyzed in details in Appendix C. The idea behind the construction of the manifold with corners structure on the set of broken flow lines is to not do the blow-down step, and keep blowing-up as we cross critical points.
\end{remark}
\begin{figure}[!h]\label{c3fflop}
\centering
\includegraphics[scale=0.5]{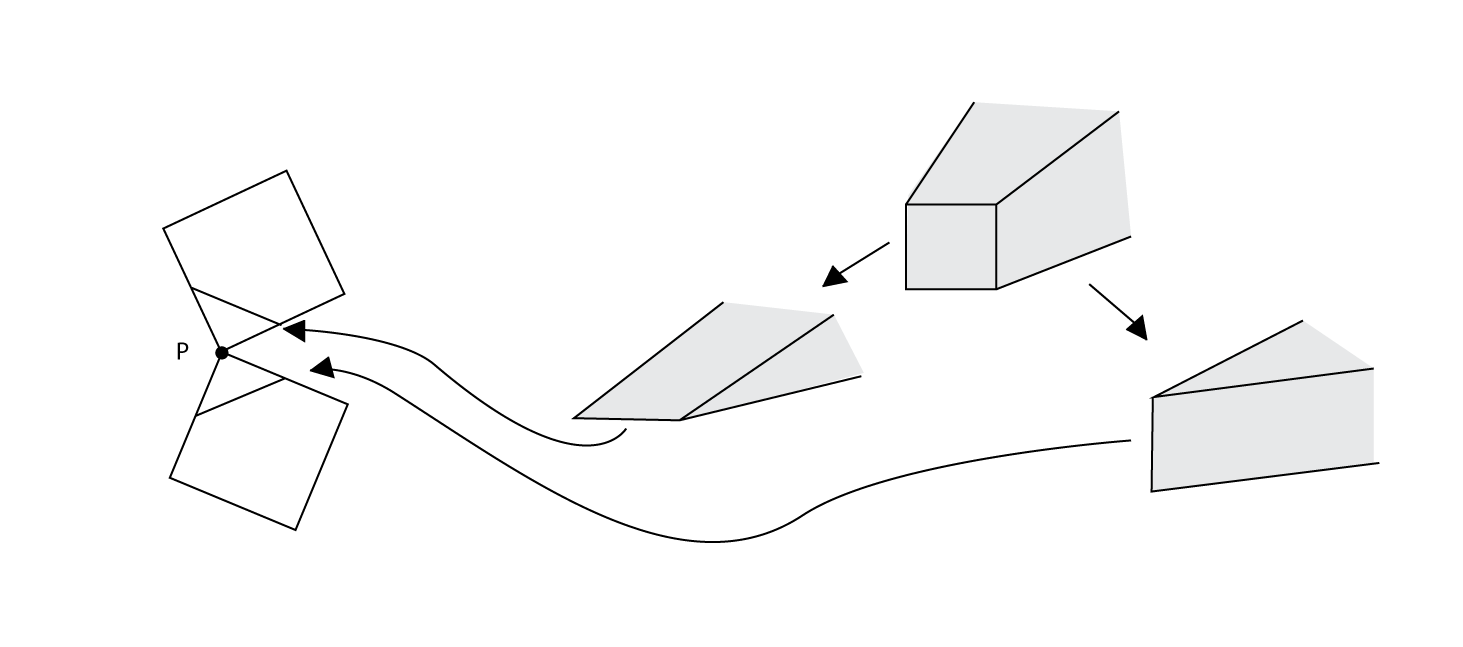}
\caption{Here $p$ is a critical value with $2$ dimensional stable and unstable faces, which are pictured on the left. On the right of the picture, we see the local modification that the level sets undergo after crossing $p$.}
\end{figure}

\begin{example}
Let us give an example of the procedure described in the proof. Consider $Y=Cube^4$ with $f$ as the Morse function as before. Then, $X$ is constructed by starting with a three dimensional simplex and doing the following blow-ups. We first blow-up all corners of the simplex, and then we blow-up the proper transforms of the original edges of the simplex. What we obtain is a standard permutohedron as expected (compare with \cite{Sar}, e.g. the last paragraph in the proof of Lemma 4.3). In Figure \ref{c3fblowup}, we presented what the procedure does to obtain $X$ for $Y$ being the $4$-dimensional simplex $\{(x_1,\ldots,x_4)\mid 0\leq x_1\leq\ldots\leq x_4\leq 1\}\subset [0,1]^4$ with restriction of $f$ as the Morse function and flat metric. The result is as expected a three dimensional cube (see Section 10.1 in \cite{P}).
\end{example}
\begin{figure}[!h]\label{c3fblowup}
\centering
\includegraphics[scale=0.5]{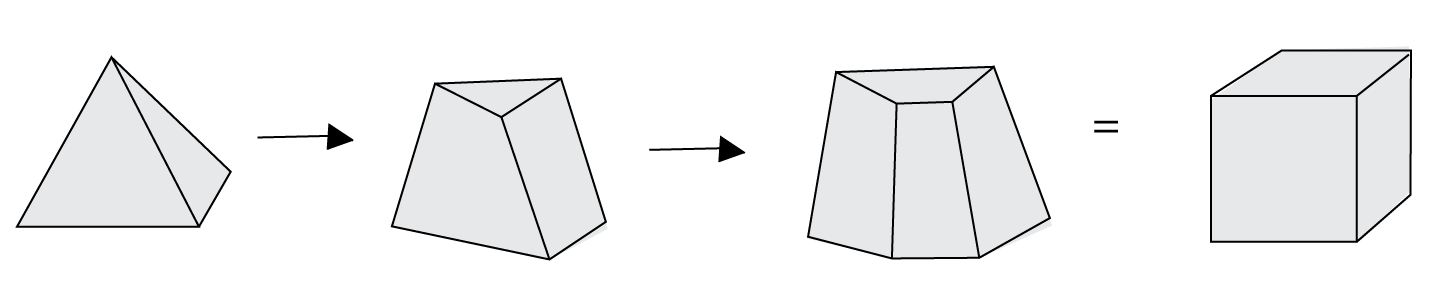}
\caption{}
\end{figure}


\begin{definition}\label{c3d2}Let $X$ be a manifold with corners. We define an \textbf{$X$ family of homotopy of Hamiltonians with stations} between $H_0: M\times S^1\to \mathbb{R}$ and $H_1:M\times S^1\to \mathbb{R}$ as a smooth map $$H:X\times [0,1]\times M\times S^1\to \mathbb{R}$$ such that $\{x\}\times\{0\}\times M\times S^1\to\mathbb{R}$ is $H_0$ for all $x\in X$, and $\{x\}\times\{1\}\times M\times S^1\to\mathbb{R}$ is $H_1$ for all $x\in X$. Moreover, we are given a subset $S\subset X\times [0,1]$ (the stations) satisfying the conditions: \begin{itemize}
\item There exists numbers $0<s_1<\ldots <s_k<1$ and faces $F_1,\ldots F_k$ of $X$ such that $S=\bigcup_{i=1}^k F_i\times\{s_i\}$.
\item There exists a neighborhood $U$ of $S\cup X\times\{0\}\cup X\times\{1\}$ in $X\times [0,1]$ such that for every $x\in M$ and $t\in S^1$, $H\mid_{U\times\{x\}\times\{t\}}$ is locally constant.\end{itemize}

We say such family is \textbf{monotone} if it is increasing in the $[0,1]$-direction.
\end{definition}

\begin{remark} Let $H$ be a monotone $X$ family of homotopy of Hamiltonians with stations between $H_0$ and $H_1$. Let us denote the coordinate in the $[0,1]$-direction by $r$. Choose a smooth function $\rho:X\times [0,1]\to \mathbb{R}$ such that: \begin{itemize}
\item $\rho\geq 0$,
\item $\rho$ vanishes precisely along $S$,
\item all the integral curves of the vector field $\rho\partial_r$ are defined for all times $(-\infty,\infty)$.
\end{itemize}

A generalized version of Pardon's construction can then be used to produce a diagram of chain maps and a hierarchy of homotopy maps indexed by the faces of $X$ from $CF(H_0)$ to $CF(H_1)$, over $\Lambda_{\geq 0}$. 

Note that if $X\times [0,1]\to Y$ is the rectification of a Morse pair $(h,g)$ on $Y$ (as in Proposition \ref{c3prectify}), then we obtain a canonical $\rho$ as above given by $|grad_g(h)_{s(x,t)}|^2)$.
\end{remark}

Now, we modify the definition of what it means for an $n$-cube of Hamiltonians to be smooth in this more general context. From now on, whenever we talk about about Hamiltonians parametrized by $Cube^n$, $Triangle_n$, or $Slit_n$, we are using this definition.

\begin{definition} Let $Y$ be a manifold with corners and assume that we have a Morse function $h$ and a Riemannian metric on $Y$, which are admissible with the rectification $s:X\times [h(b),h(a)]\to Y$. Then a (monotone) $(X,h,g)$-family of Hamiltonians is a map $H:Y\to C^{\infty}(M\times S^1,\mathbb{R})$, which is constant on an open neighborhood of each of the vertices with non-degenerate Hamiltonians at the vertices, and the induced map $X\times [0,1]\times M\times S^1\to \mathbb{R}$ given by pre-composing with $s$ and the orientation reversing affine identification $\phi:[0,1]\to[h(b),h(a)]$ is smooth (and monotone). Here the stations for $X\times [0,1]\times M\times S^1\to \mathbb{R}$ are precisely at $F_p\times\{\phi^{-1}(h(p))\}$ for $p$ a critical point of $h$.
\end{definition}
\begin{remark}
This is indeed a more flexible notion of smoothness (see Remark \ref{c3rsmooth}). To see this note our $s$ has a behaviour similar to polar coordinates at the preimages of the $1$ or more dimensional faces of $Y$. This can be observed in Figure \ref{filling}, after the family is filled.
\end{remark}

The general contractibility result we have is as follows. 

\begin{lemma}\label{c3lcontractible}
Let $H:\partial X\times [0,1]\times M\times S^1\to \mathbb{R}$ continuous map which is a monotone  family of homotopy of Hamiltonians with stations on each boundary hypersurface of $X$ such that the union of all of the stations $S\subset\partial X\times[0,1]$ is of the form $S=\bigcup_{i=1}^k F_i\times\{s_i\}$, for some numbers $0<s_1<\ldots <s_k<1$ and boundary faces $F_1,\ldots F_k$ of $X$, and if a point is in the station set of a boundary hypersurface, then it is in the station set of all boundary hypersurfaces containing it. Then, $H$ can be extended to a monotone $X$ family of homotopy of Hamiltonians with stations along $S$.
\end{lemma}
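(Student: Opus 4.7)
The plan is to construct $\tilde H$ by interpolating between a ``collar extension'' of $H$ near $\partial X$ and a simple affine homotopy in the interior of $X$, exploiting the fact that a convex combination of $r$-monotone Hamiltonians weighted by functions depending only on $x$ is itself $r$-monotone. This will obviate the need for any separate ``monotonicity correction'' step.

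First I would use a collaring theorem for manifolds with corners to obtain an open neighborhood $V \supset \partial X$ in $X$ together with a smooth retraction $\pi : V \to \partial X$ compatible with the boundary stratification, and set
\[
H_{\mathrm{col}}(x, r, p, t) := H(\pi(x), r, p, t), \qquad (x, r, p, t) \in V \times [0,1] \times M \times S^1.
\]
The compatibility hypothesis on the stations -- that membership in the station set of one boundary hypersurface forces membership in the station set of every hypersurface containing the point -- is precisely what is needed to guarantee that $H$ assembles into a smooth family on $\partial X$, so $H_{\mathrm{col}}$ is smooth; it plainly inherits monotonicity in $r$ and, by continuity of $\pi$, pulls back the local constancy of $H$ near $S \cup \partial X \times\{0,1\}$ to a $\pi$-saturated neighborhood in $V \times [0,1]$. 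Next, I would pick a smooth nondecreasing $\sigma : [0,1] \to [0,1]$ that equals $0$ near $0$, equals $1$ near $1$, and is locally constant near each $s_i$, and define the affine interior model
\[
L(r, p, t) := (1 - \sigma(r)) H_0(p, t) + \sigma(r) H_1(p, t).
\]
Finally, I would choose a smooth cutoff $\chi : X \to [0,1]$ with $\chi \equiv 1$ on a smaller neighborhood $V' \Subset V$ of $\partial X$ and $\chi \equiv 0$ outside $V$, and set
\[
\tilde H(x, r, p, t) := \chi(x)\, H_{\mathrm{col}}(x, r, p, t) + (1 - \chi(x))\, L(r, p, t).
\]

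Then $\tilde H|_{\partial X \times [0,1]} = H$ because $\chi \equiv 1$ on $\partial X$, and $\partial_r \tilde H = \chi \,\partial_r H_{\mathrm{col}} + (1 - \chi)\,\partial_r L \geq 0$ gives monotonicity, thanks to the fact that $\chi$ is independent of $r$. Near any station $(x_0, s_i) \in F_i \times \{s_i\}$ we have $\chi \equiv 1$ on an $X$-neighborhood of $x_0$ (since $\partial X \subset V'$), so $\tilde H = H_{\mathrm{col}}$ in a neighborhood of $(x_0, s_i)$ and the local constancy is inherited from $H$; near $r = 0$ or $r = 1$, both $H_{\mathrm{col}}$ (via pullback of local constancy of $H$) and $L$ (by the choice of $\sigma$) are locally $r$-constant, so $\tilde H$ is as well. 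The main technical obstacle is the smoothness of $H_{\mathrm{col}}$ across intersections of boundary hypersurfaces, which is exactly the point at which the stations-compatibility hypothesis is essential: without it, two hypersurfaces meeting at a corner could prescribe conflicting ``locally constant'' behavior at a common corner point, obstructing the glueing of the boundary data into a smooth family on $\partial X$. Once this smoothness is established, the convex-combination observation handles monotonicity automatically.
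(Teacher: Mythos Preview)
Your approach has a genuine gap at the very first step. A smooth retraction $\pi:V\to\partial X$ does not exist when $X$ is a manifold with corners: $\partial X$ is then not a smooth manifold, but a union of boundary hypersurfaces meeting along lower-dimensional strata. Already for $X=[0,1]^2$ there is no smooth map from a neighborhood of the corner $(0,0)$ onto the union of the two adjacent edges that restricts to the identity on those edges; any candidate (e.g.\ nearest-point projection) fails to be differentiable along some locus in the interior. Consequently $H_{\mathrm{col}}=H\circ(\pi,\mathrm{id})$ cannot be smooth on $V$, and the whole interpolation collapses. The collaring theorem for manifolds with corners does exist, but it produces \emph{compatible collars} for the individual boundary hypersurfaces (equivalently, a local product structure at each corner), not a single retraction to $\partial X$.

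You also misattribute the role of the stations-compatibility hypothesis: it has nothing to do with smoothness of $H$ across corners of $\partial X$. Its purpose is to ensure that $H$ is locally constant on a full $\partial X$-neighborhood of each station point (not just on one hypersurface through it), which is exactly what you later use---correctly---in your local-constancy argument. Smoothness across corners is a separate issue that neither this hypothesis nor your retraction addresses.

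The paper sidesteps the corner problem entirely by embedding $X\hookrightarrow\mathbb{R}^N$ and applying the explicit Whitney extension formula (Stein VI.2, Equation~(8)): cover $\mathbb{R}^N\setminus\partial X$ by Whitney cubes, pick a closest boundary point $p_j\in\partial X$ for each cube, and set $\tilde H(x,r,\cdot)=\sum_j\phi_j(x)\,H(p_j,r,\cdot)$ for a partition of unity $\{\phi_j\}$ subordinate to the cover. Because the weights $\phi_j$ depend only on $x$ and not on $r$, your convex-combination observation applies verbatim and monotonicity is automatic---so your key insight survives, just implemented without a retraction. Constancy near stations follows because all nearby $p_j$ lie in the $\partial X$-neighborhood where $H$ is constant (this is where the stations-compatibility hypothesis is actually used), and the weighted average of a constant is that constant.
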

\begin{proof}
This is an application of Whitney extension theorem \cite{Wh}, more accurately of the construction that is involved in proving it. We refer to Sections VI.2.1-3 in \cite{St} for the construction (i.e. Equation (8) in \cite{St}) and its properties. 

Let us embed $X$ into a Euclidean space $\mathbb{R}^N$. For every $r\in I$, we choose the same partitions of unity as used in Equation (8) of \cite{St} for $\mathbb{R}^N-\partial X$, and also the same closest points on the boundary for each cube used in the partitions of unity. Then, we extend Hamiltonians seperately for each $r\in I$ to $\mathbb{R}^N$, and restrict to $X$. It is easy to check that the total extension (which is by construction smooth on each $\{r\}\times X$) is smooth on $I\times X$.

The only property to check is constancy near the stations. This is automatically satisfied in our construction using the item (3) in Theorem 1 of Section VI.2.1 in \cite{St}, since we already have constancy near stations in the boundary.
\end{proof}

\subsubsection{Floer theoretic $n$-cubes}

Assume that we have Hamiltonians defined on some union of the faces of $Cube^n$ (called $H$), such that it is defined on all vertices (and is non-degenerate at all of them), it is locally constant in a neighborhood of the vertices, and if it is defined on two points $x$ and $y$ in $Cube^n$ and there exists a possibly broken negative gradient flow line of $f$ from $x$ to $y$, then $H|_y\geq H|_x$. We call such a family monotone as well. Let us call a partial $n$-cube defined by such data \textbf{Floer theoretic}. Note that this definition includes the case of completely defined $n$-cubes. A consequence of contractibility is the following.

\begin{proposition}\label{Floertheoreticprop}
Any partially defined Floer theoretic $n$-cube has a filling to an $n$-cube which is also Floer theoretic. Moreover, the statement extends to partially defined $n$-cubes that are obtained from partial data on $Slit^n$ or $Triangle^n$, which provides fillings that are $n$-slits or $n$-triangles.
\end{proposition}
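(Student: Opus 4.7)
The plan is to fill in the undefined faces of $Cube^n$ (or of $Slit^n$, $Triangle^n$) one at a time in order of increasing dimension, using the rectification of Proposition \ref{c3prectify} to convert each face into a product of the form $X_F \times [0,1]$ where Lemma \ref{c3lcontractible} directly applies.

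First I would set up the induction. Enumerate the faces of $Cube^n$ on which the Hamiltonian is \emph{not} originally defined in order of increasing dimension $F_1, F_2, \ldots$. Assume inductively that we have already extended the data to a Floer theoretic partial $n$-cube in which all proper subfaces of the next face $F$ under consideration are defined. This is automatic at the base step by the hypothesis that the partial data is defined on some union of faces that is already Floer theoretic. Let $k = \dim F$.

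Next, I would apply Proposition \ref{c3prectify} to $F$. The restriction of $f$ and of the flat metric to $F$ forms an admissible Morse pair in the sense of Definition \ref{def}: the critical points of $f|_F$ are precisely the vertices of $F$, and both the local triviality and Morse--Smale conditions are inherited from the global product structure of $f$ on $Cube^n$. The rectification thus provides a manifold with corners $X_F$ and a surjection $s : X_F \times [b_F, a_F] \to F$, where $a_F = f(\nu_{ter(F)})$ and $b_F = f(\nu_{in(F)})$; composing with the affine orientation-reversing identification $[0,1] \cong [b_F, a_F]$ I view this as a map $X_F \times [0,1] \to F$. The data already defined on $\partial F$ pulls back via $s$ to continuous data on $(\partial X_F \times [0,1]) \cup (X_F \times \{0,1\})$. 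On the two ends it is the constant Hamiltonian equal to the vertex value $H(\nu_{in(F)})$ respectively $H(\nu_{ter(F)})$. On each boundary hypersurface of $X_F$ it is a monotone family of homotopies of Hamiltonians with stations, with station set canonically determined by the intermediate critical points of $f|_F$ through the boundary hypersurfaces $F_p$ of Proposition \ref{c3prectify}. Because this station set is defined uniformly in terms of the critical points of $f|_F$, the matching condition in Lemma \ref{c3lcontractible} (that a point lying in the station set of one boundary hypersurface lies in that of every boundary hypersurface containing it) is automatic. Lemma \ref{c3lcontractible} then produces a monotone $X_F$-family of homotopies with stations over $X_F \times [0,1]$. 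Pushing forward through $s$ gives a smooth, monotone extension of the Hamiltonian to $F$ in the sense dictated by the admissible Morse pair, completing the inductive step. Monotonicity on $F$ is immediate because $f \circ s$ is strictly increasing in the $[0,1]$-direction and the extension is monotone in that direction.

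The extension to $Slit^n$ and $Triangle^n$ is essentially the same argument: both shapes are compact manifolds with corners on which the gradient flow of $f$ is tangent to every boundary face, and the restriction of $(f,g)$ is again admissible. Running the same inductive filling on their faces, the resulting filling automatically respects the $n$-slit or $n$-triangle shape constraints because those constraints are encoded in which faces the partial data originally lives on. The main obstacle is book-keeping compatibility between the partial data on adjoining faces and the station structure it induces under rectification. This is handled by the two observations above: filling in order of increasing dimension means that all of $\partial F$ is defined (hence compatible) before $F$ is filled, and the station set at any stage is uniquely determined by the combinatorics of the Morse function $f|_F$, not by choices of filling.
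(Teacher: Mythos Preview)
Your proof is correct and follows essentially the same approach as the paper: induct on the dimension of the undefined faces, rectify each face via Proposition \ref{c3prectify}, and fill using Lemma \ref{c3lcontractible}. The only notable differences are cosmetic: the paper treats the $1$-dimensional faces separately with an explicit partition-of-unity interpolation (whereas your uniform rectification argument handles them as the degenerate case $X_F=\text{point}$), and the paper explicitly flags the smoothness compatibility on $\partial X_F$ across inductive steps (reducing it to the smoothness of the projection $Z\times Z'\to Z$ for manifolds with corners), a point you allude to under ``book-keeping compatibility'' but do not spell out.
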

\begin{proof}
We want to extend the partially defined family of Hamiltonians to a monotone $n$-cube family of Hamiltonians. We start extending on the $1$-dimensional faces of $Cube^{n}$ that are undefined. Because of our monotonicity assumption this can be done easily in such a way that near the vertices the extension is constant. For example, we can do this by fixing a partitions of unity on $[0,1]$: $\rho_i:[0,1]\to [0,1]$, for $i=0,1$, and $\rho_0+\rho_1=1$ such that $\rho_0$ is $1$ near $0$, $0$ near $1$, and non-increasing. 

We go on to $2$-dimensional faces. We take any one of them and rectify that face. Then, we use Lemma \ref{c3lcontractible} to obtain our extension. After we do this for all $2$-dimensional faces, we go to $3$-dimensional faces and so on (see Figure \ref{filling}). The only point to remark is that the smoothness requirement for the functions that are defined on the boundary in Lemma \ref{c3lcontractible} are always satisfied throughout this procedure. This boils down to the fact that if $Z$ and $Z'$ are manifolds with boundary $Z\times Z'\to Z$ is a smooth function. 

This finishes the proof of the first statement, and the second one follows by exactly the same argument.
\end{proof}

\begin{figure}[!h]
\centering
\includegraphics[scale=0.7]{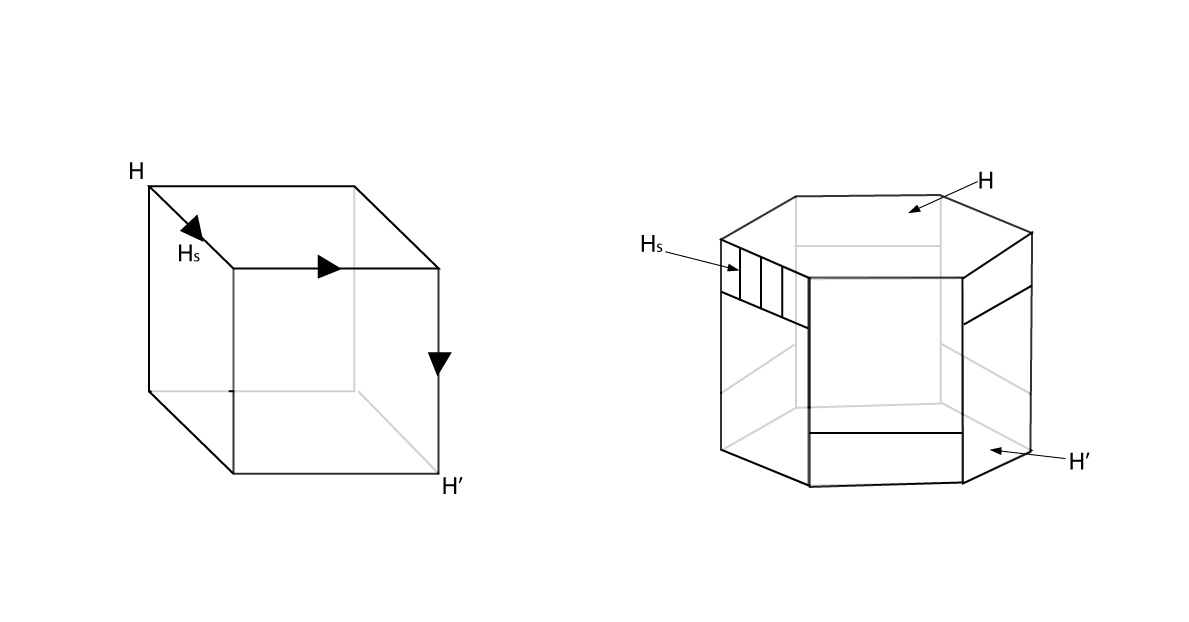}
\caption{On the left we have a family of Hamiltonians defined on the boundary of $Cube^3$ and on the right the rectification of $Cube^3$. The arrows on the cube depict the direction of the Morse flow. The stations on the rectification are the 6 horizontal  lines on the two dimensional faces. We have labeled only part of the Hamiltonians on the cube and showed where they are on the rectified picture. The monotone filling of the cube with Hamiltonians is found by filling the hexagons at every height by a uniform formula using Whitney's technique.}
\label{filling}
\end{figure}

Let us give a corollary of this proposition. Whenever we refer to Proposition \ref{Floertheoreticprop}, we mean that an argument similar to the one used to prove the statement below is used. 

\begin{corollary}\label{Floertheoreticcor}
\begin{itemize}
\item Let $\mathcal{C}$ and $\mathcal{C}'$ be two Floer theoretic $n$-cubes, defined by two monotone $n$-cubes of Hamiltonians, $\mathcal{H}, \mathcal{H}': Cube^n\to C^{\infty}(M\times S^1,\mathbb{R})$ such that $\mathcal{H}\leq \mathcal{H}'$ at every vertex of $Cube^n$. Then, there exists a Floer theoretic $(n+1)$-cube, which is a map from $\mathcal{C}$ to $\mathcal{C}'$. Moreover, any two such maps are homotopic as maps of $n$-cubes.
\item Let $\mathcal{D}$, $\mathcal{D'}$ be Floer theoretic $(n+1)$-cubes defined using monotone $(n+1)$-cubes of Hamiltonians $\mathcal{H}, \mathcal{H}': Cube^{n+1}\to C^{\infty}(M\times S^1,\mathbb{R})$ such that $\mathcal{H}|_{x_{n+1}=1}=\mathcal{H}'|_{x_{n+1}=0}$. Then, $\mathcal{D}$, $\mathcal{D'}$ can be glued in the $(n+1)$st direction. Let us consider them as maps of $n$-cubes which form a diagram: $$\mathcal{C}\to\mathcal{C}'\to\mathcal{C}''.$$ We can then compose these and obtain a map of $n$-cubes $\mathcal{C}\to\mathcal{C}''$. 
Any Floer theoretic $(n+1)$-cube $\mathcal{C}\to\mathcal{C}''$ as in the previous bullet point (which exists) is homotopic to the composition $\mathcal{C}\to\mathcal{C}''$ just described.
\item Let $\mathcal{D}_1\ldots, \mathcal{D}_k$ be Floer theoretic $(n+1)$-cubes defined using monotone $(n+1)$-cubes of Hamiltonians $\mathcal{H}_1\ldots, \mathcal{H}_k: Cube^{n+1}\to C^{\infty}(M\times S^1,\mathbb{R})$ such that $\mathcal{H}_i|_{x_{n+1}=1}=\mathcal{H}_{i+1}|_{x_{n+1}=0}$, for every $1\leq i\leq k-1$. Then, $\mathcal{D}_1\ldots, \mathcal{D}_k$ can be glued in the $(n+1)$st direction in a chain. Let us consider them as maps of $n$-cubes which form a diagram: $$\mathcal{C}_1\to\mathcal{C}_2\to\ldots\to \mathcal{C}_k\to\mathcal{C}_{k+1}.$$ We can then compose these and obtain a map of $n$-cubes $\mathcal{C}_1\to\mathcal{C}_{k+1}$. 
Any Floer theoretic $(n+1)$-cube $\mathcal{C}_1\to\mathcal{C}_{k+1}$ as in the first bullet point (which exists) is homotopic to the composition $\mathcal{C}\to\mathcal{C}''$ just described.
\end{itemize}
\end{corollary}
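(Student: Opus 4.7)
The plan is to reduce all three assertions to applications of Proposition \ref{Floertheoreticprop}, by writing each one as the filling of a partially defined Floer theoretic diagram of an appropriate shape. For the existence statement in (1), I regard the desired map $\mathcal{C}\to\mathcal{C}'$ as an $(n+1)$-cube whose face $\{x_{n+1}=0\}$ carries the $n$-cube family $\mathcal{H}$ and whose face $\{x_{n+1}=1\}$ carries $\mathcal{H}'$. The hypothesis $\mathcal{H}\le\mathcal{H}'$ at corresponding vertices is precisely the monotonicity inequality along the new $f$-flow direction, so the partial data is Floer theoretic and Proposition \ref{Floertheoreticprop} fills it. For uniqueness, given two fillings coming from monotone data $\tilde{\mathcal{H}}_0,\tilde{\mathcal{H}}_1$, I build a partially defined Floer theoretic $(n+2)$-slit by placing $\tilde{\mathcal{H}}_i$ on $\{x_{n+2}=i\}$, and on each of $\{x_{n+1}=0\}$ and $\{x_{n+1}=1\}$ placing the $x_{n+2}$-independent extension of the relevant vertex family; monotonicity at the vertices is automatic, and Proposition \ref{Floertheoreticprop} again furnishes the filling.

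For (2), I would use an $(n+2)$-triangle rather than a slit. Given $\mathcal{D},\mathcal{D}'$ from monotone families $\tilde{\mathcal{H}},\tilde{\mathcal{H}}'$, part (1) provides some Floer theoretic $\mathcal{D}''\colon\mathcal{C}\to\mathcal{C}''$ from a monotone family $\tilde{\mathcal{H}}''$. On the three boundary hypersurfaces of $Triangle^{n+1}\subset Cube^{n+1}$ I install $\tilde{\mathcal{H}},\tilde{\mathcal{H}}',\tilde{\mathcal{H}}''$; the vertex inequalities $\mathcal{H}\le\mathcal{H}'\le\mathcal{H}''$ (with $\mathcal{H},\mathcal{H}',\mathcal{H}''$ the families at $\mathcal{C},\mathcal{C}',\mathcal{C}''$) are exactly what is needed for the pieced-together data to be monotone throughout the triangle. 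Proposition \ref{Floertheoreticprop} fills, yielding an $(n+2)$-triangle whose two slope edges are $\mathcal{D},\mathcal{D}'$ and whose hypotenuse is $\mathcal{D}''$. Lemma \ref{triangletoslit} then converts this triangle into a slit, producing a homotopy between the algebraic composition $\mathcal{D}\ast\mathcal{D}'$ and $\mathcal{D}''$. Combined with the uniqueness in (1), this proves (2). Part (3) follows by induction on $k$ from (2), using associativity of composition (Lemma \ref{c2lcompose}) to peel off one factor at a time.

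The main technical obstacle is the handling of identity maps, since the algebraic definition of a homotopy in Section \ref{c2ssmaps} requires the $\{x_{n+2}=0,1\}$ faces to be the strict $\mathrm{id}$ cubes (with identity edge maps and all higher homotopies zero), whereas the Floer theoretic self-map produced by a constant Hamiltonian family is only chain homotopic to the identity, not equal. I would address this in the same spirit: the slit construction from the uniqueness part of (1), applied to a trivially constant choice, provides a homotopy between any such Floer theoretic self-map and the algebraic $\mathrm{id}$, so wherever a strict identity face is required one can splice in such a filling. Once this identification is in place, the arguments above go through verbatim.
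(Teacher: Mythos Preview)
Your proof is correct and follows essentially the same route as the paper: existence of the map in (1) from the $Cube^{n+1}$ part of Proposition \ref{Floertheoreticprop}, uniqueness from its $Slit^{n+2}$ part, the homotopy in (2) from the $Triangle^{n+2}$ part together with Lemma \ref{triangletoslit}, and (3) by iterating (2).

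The one point worth clarifying is your final paragraph on identity maps. Your concern arises because you phrased the slit construction as a full $(n+2)$-\emph{cube} with $x_{n+2}$-independent families on the faces $\{x_{n+1}=0,1\}$; that is precisely the ``cubical'' alternative the paper discusses in the Remark after Definition \ref{c3dmonotone} and explicitly avoids, since in Pardon's framework a constant family does not obviously yield the strict algebraic $\mathrm{id}$. The paper's resolution is built into the statement of Proposition \ref{Floertheoreticprop} itself: the proposition takes partial data on the shape $Slit^{n+2}$ (not $Cube^{n+2}$) and asserts that the filling is an $(n+2)$-slit, i.e.\ already has honest identity maps on the relevant faces. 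This is why $Slit^n$ is treated as a shape in its own right (with its own simplicial decomposition in Appendix \ref{appa}) rather than as a degenerate cube. So no splicing argument is needed---you should simply invoke the $Slit^n$ clause of Proposition \ref{Floertheoreticprop} directly, and the identity faces come for free.
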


\begin{proof}
The first statement of the first bullet point follows from the first sentence of Proposition \ref{Floertheoreticprop}. The second one follows from the $Slit^n$ part of Proposition \ref{Floertheoreticprop}. 

First of all, the existence statement in the last two bullet points follow from the first bullet point. The rest of second bullet point follows from the $Triangle^n$ part of Proposition \ref{Floertheoreticprop} along with Lemma \ref{triangletoslit}. The third bullet point follows from iterating the second one. 
\end{proof}

\subsection{Construction of the invariant}\label{c3sconstruction}

\subsubsection{Cofinality}\label{c3sscofinality}
Let $X$ be a closed smooth manifold, and $A\subset X$ be a compact subset. We define $C^{\infty}_{A\subset X}:=\{H\in C^{\infty}(X,\mathbb{R}) \mid H\mid_A<0\}$. Note that $C^{\infty}_{A\subset X}$ is a directed set, with the relation $H\geq H'$ if $H(x)\geq H'(x)$ for all $x\in X$.

\begin{lemma}\label{c3lcofinal}
Let $H_1\leq H_2\leq\ldots$ be elements of $C^{\infty}_{A\subset X}$. They form a cofinal family if and only if $H_i(x)\to 0$, for $x\in A$, and $H_i(x)\to \infty$, for $x\in X-A$, as $i\to \infty$.
\end{lemma}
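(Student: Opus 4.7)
The plan is to prove the two implications separately, using compactness of $X$ (and $A$) in both directions and monotonicity of the sequence throughout.

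For the forward direction, assume $(H_i)$ is cofinal. For a point $x\in X\setminus A$ and any $N>0$, I would build an auxiliary function $H\in C^{\infty}_{A\subset X}$ with $H(x)\geq N$ by using a smooth bump $\phi$ supported in a small neighborhood of $x$ disjoint from the compact set $A$, and setting $H=N\phi-\epsilon$ for some small $\epsilon>0$; this has $H|_A=-\epsilon<0$ and $H(x)=N-\epsilon$. Cofinality yields some $H_i\geq H$ pointwise, so $H_i(x)\geq N-\epsilon$, and since $N$ was arbitrary, $H_i(x)\to\infty$. For $x\in A$, the sequence $H_i(x)$ is monotone and bounded above by $0$ (since $H_j|_A<0$), hence converges to some $L\leq 0$; if $L<0$, the constant function $H\equiv L/2$ lies in $C^{\infty}_{A\subset X}$ yet no $H_i$ satisfies $H_i(x)\geq H(x)$, contradicting cofinality.

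For the reverse direction, assume the pointwise convergence conditions and fix $H\in C^{\infty}_{A\subset X}$. The goal is to find some index $i$ with $H_i\geq H$ everywhere on $X$. For each $x\in X$ I will produce an open neighborhood $U_x$ and an integer $i_x$ such that $H_{i_x}>H$ on $U_x$; by monotonicity the same inequality then holds for all $H_j$ with $j\geq i_x$. If $x\in A$, then $H(x)<0$ and $H_i(x)\to 0$, so for $i$ large enough $H_i(x)>H(x)$, and continuity extends this strict inequality to an open neighborhood. If $x\in X\setminus A$, then $H_i(x)\to\infty$ while $H(x)$ is finite, so again for $i$ large $H_i(x)>H(x)$ and continuity gives a neighborhood.

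Covering the compact manifold $X$ by the neighborhoods $U_x$, we extract a finite subcover $U_{x_1},\ldots,U_{x_k}$, and take $i:=\max(i_{x_1},\ldots,i_{x_k})$. By monotonicity $H_i\geq H_{i_{x_j}}>H$ on each $U_{x_j}$, so $H_i\geq H$ on all of $X$, proving cofinality.

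The only mildly subtle point is the forward direction at points of $A$: one must observe that $H_i|_A<0$ forces the monotone limit to be $\leq 0$, so the only possible failure mode is convergence to a strictly negative limit, which is immediately ruled out by testing against a constant function. Everything else reduces to a standard compactness-plus-monotonicity argument, so I do not expect any genuine obstacle.
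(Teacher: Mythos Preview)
Your proof is correct. The reverse direction follows the same compactness-plus-monotonicity idea as the paper, but organized differently: the paper handles $A$ in one stroke via Dini's theorem (monotone pointwise convergence $H_i\to 0$ on the compact set $A$ is uniform, so $f<H_j$ on all of $A$ for some $j$), then extends this to an open neighborhood $U\supset A$ by continuity, and finally uses compactness of $X\setminus U$ for the rest. Your version instead runs a single pointwise argument over all of $X$ and extracts one finite subcover. Both are standard and equally short; the paper's two-region split avoids choosing a neighborhood at every point, while your approach avoids invoking Dini's theorem. The paper dismisses the forward direction as trivial, and your argument for it (bump function off $A$; constant function $L/2$ on $A$) is the natural way to fill that in.
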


\begin{proof}
The only if direction is trivial, we prove the if direction. Take any $f\in C^{\infty}_{A\subset X}$, we need to show that there exists an $i>0$ such that $f\leq H_i$.

By compactness (and Dini's theorem), there is a $j>0$ such that $f< H_j$ on $A$. But, then there has to be a neighborhood $U$ of $A$ such that $f< H_j$ on $U$. 

Again, by compactness, there is a $j'>0$ such that $f< H_{j'}$ on $X-U$. Choosing, $i=max(j,j')$ finishes the proof.
\end{proof}

\subsubsection{Definition and basic properties}\label{c3ssdefinition}

Let $M$ be a closed symplectic manifold, $K\subset M$ be a compact subset. We call the following data an \textbf{acceleration data} for $K$:\begin{itemize}
\item $H_1\leq H_2\leq\ldots$ a cofinal family in $C^{\infty}_{K\times S^1\subset M\times S^1}$, where $H_i$ are non-degenerate for all $i\geq 1$.
\item Monotone $1$-cube of Hamiltonians $\{H_s\}_{s\in [i,i+1]}$, for all $i$.
\end{itemize}

Note that acceleration data gives one $\mathbb{R}_{\geq 1}$ family of Hamiltonians, which we will denote by $H_s$. From an acceleration data, we obtain a $1$-ray of chain complexes over $\Lambda_{\geq 0}$: $\mathcal{C}(H_s):= CF(H_1)\to CF(H_2)\to\ldots $.

We define $SC_M(K,H_s):=\widehat{tel}(\mathcal{C}(H_s))$. 

If $H_s$ and $H'_s$ are two acceleration data for $K$ such that $H_n\geq H'_n$ for all $n\in\mathbb{N}$, we can produce a map of $1$-rays $\mathcal{C}(H'_s)\to \mathcal{C}(H_s)$ by filling in the $2$-cubes (using Proposition \ref{Floertheoreticprop}). \begin{align}\label{canonical-1-ray-maps}
\xymatrix{ 
CF(H_1')\ar[r]\ar[d]\ar[dr]& CF(H_2')\ar[d]\ar[r]\ar[dr] &CF(H_3')\ar[r]\ar[d]\ar[dr]& \ldots\\ CF(H_1)\ar[r] &CF(H_2)\ar[r]&CF(H_3)\ar[r]&\ldots}
\end{align}

Note that here we are using Proposition \ref{Floertheoreticprop} in a slightly stronger way than Corollary \ref{Floertheoreticcor} does. This is because of the gluing condition for the $2$-cubes defining a map of $1$-rays. Proposition \ref{Floertheoreticprop} is written so that it covers this situation and we do not make such comments from now on about how we use Proposition \ref{Floertheoreticprop}.

This map is unique up to homotopy of maps of $1$-rays by filling in the $3$-slits using Proposition \ref{Floertheoreticprop}. Therefore, by Lemma \ref{c2ltelescope}, and functoriality (and additivity) of the completion functor on chain complexes, we get a canonical map:
\begin{align}\label{c3ecompare}
H(SC_M(K,H'_s))\to H(SC_M(K,H_s)).
\end{align}

Moreover, if we have $H_n\geq H'_n\geq H''_n$, the canonical triangle is commutative, this time by filling in the $3$-triangles, again by Lemma \ref{c2ltelescope}.

%

\begin{proposition}\label{c3piso}
The comparison maps (as defined in (\ref{c3ecompare})) $H(SC_M(K,H'_s))\to H(SC_M(K,H_s))$ are isomorphisms.
\end{proposition}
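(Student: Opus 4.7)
The plan is to use an interleaving (zig-zag) argument that reduces the claim to Proposition~\ref{c2pweakcompression}: I will produce a map going in the opposite direction up to passing to a subsequence, check that the two round-trip compositions are \emph{weak compressions}, and then transport the resulting quasi-isomorphisms of telescopes through the completion functor.

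First, since both $H_s$ and $H'_s$ are cofinal in $C^{\infty}_{K\times S^1\subset M\times S^1}$, Lemma~\ref{c3lcofinal} lets me choose a strictly increasing sequence $j(1)<j(2)<\ldots$ of positive integers with $H'_{j(n)}\geq H_n$ for every $n$. Exactly as in the construction of the map $\Phi$ in Diagram~\ref{canonical-1-ray-maps}, I would invoke Proposition~\ref{Floertheoreticprop} to fill in monotone $2$-cubes of Hamiltonians with corners $H_n, H_{n+1}, H'_{j(n)}, H'_{j(n+1)}$ (all four monotonicity conditions hold by the choice of $j$), thereby building a map of $1$-rays $\Psi:\mathcal{C}(H_s)\to\mathcal{C}(H'_s)^{(j)}$, where $\mathcal{C}(H'_s)^{(j)}$ is the subray indexed by $j$. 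Write $\Phi^{(j)}$ for the restriction of $\Phi$ to the same subsequence.

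The key step is to verify that both round trips
\[
\Psi\circ\Phi:\mathcal{C}(H'_s)\to\mathcal{C}(H'_s)^{(j)} \qquad\text{and}\qquad \Phi^{(j)}\circ\Psi:\mathcal{C}(H_s)\to\mathcal{C}(H_s)^{(j)}
\]
are weak compressions in the sense of Definition~\ref{c2dweak}. At each level $n$ the chain-map component of, say, $\Phi^{(j)}\circ\Psi$ is a two-step continuation map $CF(H_n)\to CF(H'_{j(n)})\to CF(H_{j(n)})$, whereas the compression map is the direct continuation $CF(H_n)\to CF(H_{j(n)})$ obtained by iterating the ray maps. Both are Floer-theoretic, so Corollary~\ref{Floertheoreticcor} (itself a packaging of Proposition~\ref{Floertheoreticprop}) provides a chain homotopy, and the hardest technical point is to arrange these homotopies coherently across all $n$ by filling in the appropriate $3$-cubes so that the entire map-of-$1$-rays data is homotopic to the compression map-of-$1$-rays. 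This verification is the main obstacle.

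Once weak-compressionness is established, Proposition~\ref{c2pweakcompression} (with $n=1$, so $cone^{n-1}$ is the identity) yields that $tel(\Psi\circ\Phi)$ and $tel(\Phi^{(j)}\circ\Psi)$ are quasi-isomorphisms. The underlying modules of all the telescopes are free $\Lambda_{\geq 0}$-modules (direct sums of the finitely generated free $CF(H_n)$'s), so Corollary~\ref{c2ccomplete}(3) promotes these to quasi-isomorphisms after applying the completion functor. Functoriality of $\widehat{tel}$ then makes both $\Psi_*\circ\Phi_*$ and $\Phi^{(j)}_*\circ\Psi_*$ isomorphisms on homology. A two-out-of-three argument concludes: the first identity forces $\Phi_*$ injective and $\Psi_*$ surjective, the second forces $\Psi_*$ injective, so $\Psi_*$ is an isomorphism and hence $\Phi_*=\Psi_*^{-1}\circ(\Psi_*\circ\Phi_*)$ is too, as desired.
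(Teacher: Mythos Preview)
Your argument is correct and is essentially the same interleaving/two-out-of-three strategy the paper uses: the paper chooses sequences with $H'_i<H_i<H'_{n(i)}<H_{m(i)}$, builds three Floer-theoretic maps of $1$-rays in a chain, and observes (via Proposition~\ref{c2pweakcompression} and Corollary~\ref{Floertheoreticcor}) that the two overlapping compositions become isomorphisms after $H(\widehat{tel}(\cdot))$. One simplification worth noting: Definition~\ref{c2dweak} only asks that each level map $\mathcal{C}_k\to\mathcal{C}_{i(k)}$ be homotopic to the iterated structure map, so your ``hardest technical point'' of arranging the homotopies coherently into a homotopy of maps of $1$-rays is not actually needed---levelwise homotopies from Corollary~\ref{Floertheoreticcor} suffice, and the paper's version sidesteps the minor ambiguity in your $\Phi^{(j)}$ by choosing fresh acceleration data extending the subsequences rather than literal subrays.
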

\begin{proof}
We can find infinite strictly monotone sequences  $n(i)$ and $m(i)$ of positive integers such that $H_i'<H_i<H_{n(i)}'<H_{m(i)}$. We extend $H_{n(i)}'$ and $H_{m(i)}$ to acceleration data $H^{n'}_s$ and $H^m_s$. We then get three $2$-rays glued to each other by constructing maps of $1$-rays as in Equation \ref{canonical-1-ray-maps}.\begin{align}
\xymatrix{ 
CF(H_1')\ar[r]\ar[d]\ar[dr]& CF(H_2')\ar[d]\ar[r]\ar[dr] &CF(H_3')\ar[r]\ar[d]\ar[dr]& \ldots\\ CF(H_1)\ar[r]\ar[d]\ar[dr]& CF(H_2)\ar[d]\ar[r]\ar[dr] &CF(H_3)\ar[r]\ar[d]\ar[dr]& \ldots\\CF(H_{n(1)}')\ar[r]\ar[d]\ar[dr]& CF(H_{n(2)}')\ar[d]\ar[r]\ar[dr] &CF(H_{n(3)}')\ar[r]\ar[d]\ar[dr]& \ldots\\CF(H_{m(1)})\ar[r] &CF(H_{m(2)})\ar[r]&CF(H_{m(3)})\ar[r]&\ldots}
\end{align}
Now we apply $H(\widehat{tel}(\cdot))$ to this diagram. By Proposition \ref{c2pweakcompression} and the third bullet point of Corollary \ref{Floertheoreticcor}, the composition of the first and second (from the top) maps $$H(\widehat{tel}(\mathcal{C}(H_s')))\to H(\widehat{tel}(\mathcal{C}(H_s)))\to H(\widehat{tel}(\mathcal{C}(H^{n'}_s)))$$ is an isomorphism. The same is true for the second and third maps for the same reason. This finishes the proof.
\end{proof}

\begin{proposition}\label{c3pwell}
\begin{enumerate}
\item Let $H_s$ and $H'_s$ be two different acceleration data, then $H(SC_M(K,H_s))$ is isomorphic to $H(SC_M(K,H'_s))$ canonically. Therefore we denote the invariant by $SH_M(K)$, by a harmless abuse of notation.
\item For $K\subset K'$, there are canonical restriction maps $SH_M(K')\to SH_M(K)$, which satisfy the presheaf property.
\item Let $\phi:M\to M$ be a symplectomorphism. Then, there exists a canonical isomorphism between $SH_M(K)$ and $SH_M(\phi(K))$.
\end{enumerate}
\end{proposition}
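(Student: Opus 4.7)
The plan is to prove the three parts in turn, each reducing to a careful application of Proposition \ref{c3piso} together with the existence-and-uniqueness-up-to-homotopy results for fillings of Floer theoretic partial cubes, slits, and triangles packaged in Proposition \ref{Floertheoreticprop} and Corollary \ref{Floertheoreticcor}.

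For part (1), given two acceleration data $H_s$ and $H'_s$ for $K$, the first step is to produce a third acceleration data $H''_s$ for $K$ with $H''_n \geq H_n$ and $H''_n \geq H'_n$ at every point for all $n$: one picks non-degenerate Hamiltonians $H''_n$ pointwise dominating $\max(H_n, H'_n)$ whose pointwise limits satisfy the criterion of Lemma \ref{c3lcofinal}, then connects them with monotone homotopies (possible because pointwise maxima of monotone increasing sequences are monotone). Proposition \ref{c3piso} then supplies isomorphisms $H(SC_M(K,H_s))\xrightarrow{\sim} H(SC_M(K,H''_s))$ and $H(SC_M(K,H'_s))\xrightarrow{\sim} H(SC_M(K,H''_s))$, and I define the comparison $H(SC_M(K,H_s))\to H(SC_M(K,H'_s))$ as the first map followed by the inverse of the second. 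Independence from the choice of $H''_s$ follows by choosing any further common dominator and invoking the commutativity of the comparison triangle recorded just before Proposition \ref{c3piso}.

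For part (2), fix acceleration data $H'_s$ for $K'$ and $H_s$ for $K$. The key construction is an auxiliary acceleration data $\tilde H_s$ for $K$ with $\tilde H_n \geq H'_n$ pointwise for all $n$. This is achieved by adding to $H'_n$ a nonnegative bump function supported in $M\setminus K$ which is engineered to push the total to infinity on $K' \setminus K$ as $n\to\infty$, then perturbing slightly to non-degeneracy and interpolating monotonically; the result is cofinal for $K$ by Lemma \ref{c3lcofinal}. By filling in the defining $2$-cubes (Proposition \ref{Floertheoreticprop}) we obtain a map of $1$-rays $\mathcal{C}(H'_s)\to \mathcal{C}(\tilde H_s)$, hence a map on $H(\widehat{tel}(\cdot))$, and we compose with the canonical identification $H(SC_M(K,\tilde H_s))\cong SH_M(K)$ from (1) to get the restriction map. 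Independence of all choices reduces to uniqueness up to homotopy of the $1$-ray map (fillings of $3$-slits) and of the comparison maps in (1); the presheaf property for $K''\subset K'\subset K$ follows from associativity of composition of $1$-ray maps, which is the $3$-triangle content of Corollary \ref{Floertheoreticcor} combined with Lemma \ref{c2ltelescope}.

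For part (3), I pull back the acceleration data along $\phi$: if $H_s$ is acceleration data for $K$, then $H_s\circ (\phi^{-1}\times \mathrm{id}_{S^1})$ is acceleration data for $\phi(K)$, and $\phi$ intertwines the Hamiltonian vector fields, so it bijects $1$-periodic orbits and Floer trajectories of the two setups. Pulling back the almost complex structures, Pardon data, and coherent orientations by $\phi$ turns this into a strict isomorphism of the entire $1$-ray $\mathcal{C}(H_s)\xrightarrow{\sim} \mathcal{C}(H_s\circ \phi^{-1})$, hence of completed telescopes; well-definedness on $SH_M$ uses (1) to absorb the dependence on the pullback choices of auxiliary data.

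The main obstacle I anticipate is not conceptual but bookkeeping: in (3) one must check that the pulled-back choices of Pardon data and coherent orientations give an honest chain-level isomorphism (so that the map on $SH_M$ is canonical, not merely well-defined up to sign and homotopy), and in (2) one must verify that the bump-function construction of $\tilde H_s$ can be carried out while keeping the family smooth, non-degenerate at each integer, and monotone across each interval $[n,n+1]$. Both are handled by repeated invocation of Proposition \ref{Floertheoreticprop} to fill the required partial Floer theoretic cubes, together with Proposition \ref{c3piso} to confirm that the resulting isomorphism on homology is insensitive to those incidental choices.
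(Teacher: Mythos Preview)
Your proposal is correct and follows essentially the same approach as the paper: the roof-of-roofs argument for (1), the construction of dominating acceleration data and the comparison map of Equation~\ref{c3ecompare} for (2), and relabeling all choices via $\phi$ for (3). You are simply more explicit about the bookkeeping (e.g.\ the bump-function construction of $\tilde H_s$ and the pullback of Pardon data), whereas the paper dispatches (2) and (3) in one sentence each.
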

\begin{proof}
To construct the maps in (1), we take acceleration data $H_s''$ (a roof) such that $H_i''>H_i$ and $H_i''>H_i'$ for every $i\geq 1$. Then we obtain canonical isomorphisms $H(SC_M(K,H_s))\to H(SC_M(K,H_s''))$ and $H(SC_M(K,H_s'))\to H(SC_M(K,H_s''))$. Inverting the latter we obtain an isomorphism $H(SC_M(K,H_s))\to H(SC_M(K,H'_s))$. The canonicity of this map is proved by constructing two such isomorphisms, and then taking another acceleration data that is a roof for the two roofs involved and using the discussion around Equation \ref{c3ecompare}. 

The maps in (2), are defined exactly as the maps in Equation \ref{c3ecompare} were defined. Their properties are proved exactly in the same way. 

The map in (3) is defined by relabeling all choices by the symplectomorphism $\phi$.
\end{proof}

\subsubsection{Computing $SH_M(M)$ and $SH_M(\varnothing)$}\label{c3sscomputing}

For $K=M$, take a $C^2$-small non-degenerate $H: M\to \mathbb{R}$ with no non-constant time-$1$ orbits, which is negative everywhere (see Lemma \ref{c5lflat}). We define $H_s=s^{-1}H$, for $s\geq 1$, as the acceleration data. 

Let $CM(H)$ be the Morse complex of $H$ with $\mathbb{Z}$-coefficients. On the other hand we denote by $CM(H,\Lambda_{\geq 0})$ the complex freely generated over $\Lambda_{\geq 0}$ by the critical points, but with the terms in the differential weighed by $T^{H(p_+)-H(p_-)}$.

By the arguments leading to Pardon \cite{P} Theorem 10.7.1, and that we can actually achieve transversality for the relevant moduli spaces of gradient flow lines for $H$, we see that the extra choices can be made so that the associated $1$-ray for this acceleration data looks like
\begin{align}
\ldots CM(H_n,\Lambda_{\geq 0})\to CM(H_{n+1},\Lambda_{\geq 0})\ldots ,
\end{align}
where a generator $p$ in the $n$th level is sent to $T^{\frac{-H(p)}{n(n+1)}}p$ by the continuation map, using $\frac{1}{n(n+1)}=\frac{1}{n}-\frac{1}{n+1}$. 

Recalling Lemma \ref{c2lcomlim}, note that we can use completed direct limit rather than completed telescope to compute the relative symplectic cohomology groups from this $1$-ray. It is easy to see that the direct limit of this diagram of chain complexes is $CM(H)\otimes_{\mathbb{Z}}\Lambda_{> 0}$ with maps \begin{align}
CM(H_n,\Lambda_{\geq 0})\to CM(H)\otimes_{\mathbb{Z}}\Lambda_{> 0},
\end{align}sending $p$ to $T^{\frac{-H(p)}{n}}p$. Completion does nothing to $CM(H)\otimes_{\mathbb{Z}}\Lambda_{> 0}$. Using that $\Lambda_{> 0}$ is flat over $\mathbb{Z}$, we get the result that was stated in the Introduction: \begin{align}
SH_M(M)=H(M,\mathbb{Z})\otimes_{\mathbb{Z}}\Lambda_{> 0}.
\end{align}

For $K=\varnothing$, we start with any non-degenerate $H:M\times S^1\to \mathbb{R}$, and define $H_s:=H+s$. The $1$-ray $\mathcal{C}$ for this acceleration data is of the form $C\to \ldots C\to C\to\ldots$ for some chain complex $C:=CF(H)$, where all the maps $C\to C$ are also the same. Let us give this map the name $F:C\to C$. 

We will apply the completed direct limit to $\mathcal{C}$ and observe that the resulting chain complex vanishes on the nose, which implies the result (again using Lemma \ref{c2lcomlim}).

By the energy inequality (as in Equation \ref{c3etope}), the image of $F$ is contained in the submodule $\Lambda_{\geq 1}C$ of $C$. Hence the image of $F^{\circ N}$, i.e. the $N$th iterate of $F$, is contained in $\Lambda_{\geq N}C$.

Therefore, $\lim_{\rightarrow}(\mathcal{C})\otimes_{\Lambda_{\geq 0}} \Lambda_{\geq 0}/\Lambda_{\geq r}=\lim_{\rightarrow}(\mathcal{C}\otimes_{\Lambda_{\geq 0}} \Lambda_{\geq 0}/\Lambda_{\geq r})=0$, for every $r>0$, using the standard model for computing filtered directed limits. This finishes the proof, as the inverse limit of any diagram of $0$-modules is the $0$-module.

\subsection{Multiple subsets}\label{c3smultiple}

The reader who is only interested in the existence of a Mayer-Vietoris sequence as in Theorem \ref{c1tmv} can safely skip this section.

Let $K_1,\ldots ,K_n$ be compact subsets of $M$. For every $I\subset [n]$, choose a cofinal sequence $H_k^{C_I}$ for $C_I:=\bigcap_{i\in I} K_i$, such that $H_k^{C_{I'}}\geq H_k^{C_{I}}$ whenever $I\subset I'$.  Here by $C_{\varnothing}$ we mean the union of $K_i$'s. 

For each $k$, we can find a monotone $(n+1)$-cube family of Hamiltonians extending $H_k^{C_I}$ and $H_{k+1}^{C_I}$ as in Proposition \ref{Floertheoreticprop}, which agree on the common faces. Let us call this entire $n+1$ dimensional familty of Hamiltonians $\mathcal{H}$. Such an $\mathcal{H}$ is called \textbf{acceleration data} for $K_1,\ldots ,K_n$. This extends the definition given in the previous section for $n=1$.

Using Hamiltonian Floer theory, an acceleration data gives us an $(n+1)$-ray. The ordering of the coordinates of the slices is given by the ordering of the subsets and the infinite direction is the last one as is needed in an $(n+1)$-ray. Here is a diagram for how this looks like for $n=2$:

\begin{tikzpicture}
  \matrix (m) [matrix of math nodes, row sep=1.5em,
    column sep=1em]{
    & \ldots & & CF(H_k^{K_1\cup K_2}) & & CF(H_{k+1}^{K_1\cup K_2}) & & \ldots   \\
    \ldots & & CF(H_k^{K_1})  & & CF(H_{k+1}^{K_1})  & & \ldots & \\
    &\ldots & & CF(H_k^{K_2})  & & CF(H_{k+1}^{K_2}) & & \ldots \\
    \ldots & & CF(H_k^{K_1\cap K_2})  & & CF(H_{k+1}^{K_1\cap K_2}) & &\ldots  & \\};
  \path[-stealth]
    (m-1-4) edge (m-1-6) edge (m-2-3)
            edge [densely dotted] (m-3-4) edge (m-2-5)
            edge [densely dotted] (m-4-3) 
            edge [densely dotted] (m-3-6)
            edge [densely dotted] (m-4-5)
    (m-1-6) edge [densely dotted] (m-3-6) edge (m-2-5) edge [densely dotted] (m-4-5)
    
    (m-2-3) edge [-,line width=6pt,draw=white] (m-2-5)
            edge (m-2-5) edge (m-4-3) edge (m-4-5)
    (m-3-4) edge [densely dotted] (m-3-6)
            edge [densely dotted] (m-4-3)
            edge [densely dotted] (m-4-5)
    (m-4-3) edge (m-4-5)
    (m-3-6) edge (m-4-5)
    (m-2-5) edge [-,line width=6pt,draw=white] (m-4-5)
            edge (m-4-5)
    (m-1-2) edge (m-1-4)
    (m-2-1) edge (m-2-3) 
    (m-3-2) edge [densely dotted](m-3-4) 
    (m-4-1) edge (m-4-3) 
    (m-1-6) edge (m-1-8) 
    (m-2-5) edge [-,line width=6pt,draw=white] (m-2-7) edge (m-2-7)
    (m-3-6) edge (m-3-8) 
    (m-4-5) edge (m-4-7);             
\end{tikzpicture} 

Applying $\widehat{tel\circ cone^n}$ to this $(n+1)$-ray, we construct a chain complex $SC_M(K_1,\ldots K_n,\mathcal{H})$. Note that $SC_M(K_1,\ldots K_n,\mathcal{H})$ depends on the ordering of the subsets. Now, we want to show that for any two acceleration data $\mathcal{H}$ and $\mathcal{H}'$ for $K_1,\ldots, K_n$, there exists a canonical isomorphism $$H(SC_M(K_1,\ldots K_n,\mathcal{H}'))\to H(SC_M(K_1,\ldots K_n,\mathcal{H})).$$ This goes through exactly the same steps as in the $n=1$ case.

\begin{enumerate}
\item Construct maps $H(SC_M(K_1,\ldots K_n,\mathcal{H}'))\to H(SC_M(K_1,\ldots K_n,\mathcal{H}))$ whenever $(H_i')^A\leq H_i^A$, for every $i\geq 1$ and $I\subset [n]$, where $A=\bigcap_{i\in I} K_i$, and show that they are well-defined, both using Proposition \ref{Floertheoreticprop}, and Lemma \ref{c2ltelescope}.
\item Show that the maps from the previous item are isomorphisms by the same strategy as in Proposition \ref{c3piso}. Namely, choose infinite strictly monotone sequences  $n(i)$ and $m(i)$ of positive integers such that $(H_i')^A<H_i^A<(H_{n(i)}')^A<H_{m(i)}^A$, for every $i\geq 1$ and $I\subset [n]$, where $A=\bigcap_{i\in I} K_i$. Then, using Proposition \ref{Floertheoreticprop}, we construct 3 composable maps of $(n+1)$-rays and use Proposition \ref{c2pweakcompression} and the second bullet point of Corollary \ref{Floertheoreticcor} to get the desired statement.
\item To construct the comparison maps for any $\mathcal{H}'$ and $\mathcal{H}$, find an $\mathcal{H}''$, which is larger than both $\mathcal{H}'$ and $\mathcal{H}$. Using the previous item to invert the map $H(SC_M(K_1,\ldots K_n,\mathcal{H}'))\to H(SC_M(K_1,\ldots K_n,\mathcal{H}''))$, we obtain the desired map by composition. To show that they are well defined, again we use a ``roof of the roofs argument".
\end{enumerate}


Hence, we define $$SH_M(K_1,\ldots ,K_n):=H(SC_M(K_1,\ldots K_n,\mathcal{H})),$$ for some acceleration data $\mathcal{H}$. In particular, it makes sense to talk about $SH_M(K_1,\ldots ,K_n)$ being trivial or not.

\section{Mayer Vietoris property}\label{c5}
Let us start with a recap. Let $X,Y$ be two compact subsets of a closed smooth manifold $M$. 
We can choose acceleration data $H_s^A$, for $A=X\cap Y,X,Y, X\cup Y$, so that $H_n^A\geq H_n^B$, whenever $A\subset B$. We can then construct a $3$-ray with $\mathcal{C}(H_s^A)$ at the four infinite edges (using Proposition \ref{Floertheoreticprop}):

\begin{tikzpicture}
  \matrix (m) [matrix of math nodes, row sep=1.5em,
    column sep=1em]{
    & \ldots & & CH(H_n^{X\cup Y}) & & CH(H_{n+1}^{X\cup Y}) & & \ldots   \\
    \ldots & & CH(H_n^{X})  & & CH(H_{n+1}^{X})  & & \ldots & \\
    &\ldots & & CH(H_n^{Y})  & & CH(H_{n+1}^{Y}) & & \ldots \\
    \ldots & & CH(H_n^{X\cap Y})  & & CH(H_{n+1}^{X\cap Y}) & &\ldots  & \\};
  \path[-stealth]
    (m-1-4) edge (m-1-6) edge (m-2-3)
            edge [densely dotted] (m-3-4) edge (m-2-5)
            edge [densely dotted] (m-4-3) 
            edge [densely dotted] (m-3-6)
            edge [densely dotted] (m-4-5)
    (m-1-6) edge [densely dotted] (m-3-6) edge (m-2-5) edge [densely dotted] (m-4-5)
    
    (m-2-3) edge [-,line width=6pt,draw=white] (m-2-5)
            edge (m-2-5) edge (m-4-3) edge (m-4-5)
    (m-3-4) edge [densely dotted] (m-3-6)
            edge [densely dotted] (m-4-3)
            edge [densely dotted] (m-4-5)
    (m-4-3) edge (m-4-5)
    (m-3-6) edge (m-4-5)
    (m-2-5) edge [-,line width=6pt,draw=white] (m-4-5)
            edge (m-4-5)
    (m-1-2) edge (m-1-4)
    (m-2-1) edge (m-2-3) 
    (m-3-2) edge [densely dotted](m-3-4) 
    (m-4-1) edge (m-4-3) 
    (m-1-6) edge (m-1-8) 
    (m-2-5) edge [-,line width=6pt,draw=white] (m-2-7) edge (m-2-7)
    (m-3-6) edge (m-3-8) 
    (m-4-5) edge (m-4-7);             
\end{tikzpicture}

The $2$-cube slices of this $3$-ray look like:
\begin{align}
\xymatrix{ 
CH(H_n^{X\cup Y})\ar[r]\ar[d]\ar[dr]& CH(H_n^{X})\ar[d]\\ CH(H_n^{Y})\ar[r] &CH(H_n^{X\cap Y})}.
\end{align}

\begin{definition}\label{c5dcompatible3ray} Let us say that such a $3$-ray is \textbf{compatible} with $H_s^A$.
\end{definition}

We are going to show that, under certain strong geometric assumptions, we can set-up this $3$-ray in such a way that all of these slices are acyclic $2$-cubes. This implies the desired Mayer-Vietoris sequence by Lemma \ref{c2lacycliccube} and Lemma \ref{c2lmv}.

\begin{remark}\label{c5rmultiple}
In terms of Section \ref{c3smultiple}, setting up such a $3$-ray implies that $SH_M(X,Y)=0$. Yet, the fact that $SH_M(X,Y)$ is well-defined does not play a role in the existence a Mayer-Vietoris sequence (it does play a role in its canonicity, but we do not discuss that here). The results of Section \ref{c3smultiple} will be used only in Section \ref{c5sinvolutive}.
\end{remark}

\subsection{Zero energy solutions}\label{c5szero}

In this section we analyze the zero energy solutions of Floer equations, whose importance to us stems from Corollary \ref{c2ccomplete}, part (1). More precisely, our goal is to prove the following proposition. 

\begin{proposition}\label{c5pmax}
Let $f$ and $g$ be two non-degenerate Hamiltonians $M\times S^1\to \mathbb{R}$. We define $U=\{f<g\}\subset M\times S^1$ and $V=\{f>g\}\subset M\times S^1$. 

Consider the following conditions:

\begin{enumerate}
\item $\overline{U}$ and $\overline{V}$ are disjoint.
\item $M\times S^1-U$ and $M\times S^1-V$ are both closures of open sets.
\item No one-periodic orbit of $X_f$, $X_g$, $X_{min(f,g)}$ or $X_{max(f,g)}$ has a graph\footnote{the graph of $\gamma: S^1\to M$ is the image of the map $\gamma\times id:S^1\to M\times S^1.$} that intersects both $U$ and $V$ (see Figure \ref{c5fsep}).
\end{enumerate}

If we assume (1), then $max(f,g)$ and $min(f,g)$ are smooth functions.

If we assume (1), (2), and (3), then $max(f,g)$ and $min(f,g)$ are non-degenerate, and the $2$-cube
\begin{align}
\xymatrix{ 
CF(min(f,g))\ar[r]\ar[d]\ar[dr]& CF(f)\ar[d]\\ CF(g)\ar[r] &CF(max(f,g))}.
\end{align}
is acyclic, for any choice of monotone $2$-cube family of Hamiltonians (extending $f,g,max(f,g)$ and $min(f,g)$ at the corners $(1,0),(0,1),(1,1)$ and $(0,0)$, respectively) and extra data necessary to define the maps (as in Remark \ref{remarkextrachoices}). 
\end{proposition}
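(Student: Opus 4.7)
The plan is to reduce the acyclicity statement to the $\mathbb{Q}$-coefficient case via Corollary~\ref{c2ccomplete}(1), then identify what survives at $T=0$ as a manifestly acyclic direct sum indexed by $1$-periodic orbits. I would first establish smoothness and non-degeneracy. Under (1) every point of $M\times S^1$ has a neighborhood lying in exactly one of the open sets $U$, $V$, or $M\times S^1-(\overline{U}\cup\overline{V})$, on which $\max(f,g)$ and $\min(f,g)$ coincide locally with $g$, $f$, or with $f=g$, so both are smooth globally. For non-degeneracy of $X_{\max(f,g)}$ (the case of $\min(f,g)$ is symmetric), I would take a $1$-periodic orbit $\gamma$: by (3) combined with (1) its graph lies in $\{f\geq g\}$ or in $\{f\leq g\}$, and in the former case $\max(f,g)\geq f$ globally with equality on the graph forces $d\max(f,g)=df$ pointwise there, identifying $\gamma$ with a $1$-periodic orbit of $X_f$. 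Condition (2) is what upgrades this pointwise identity to $\max(f,g)=f$ on an open neighborhood of the graph (by ruling out graph points on $\partial U$), so that the linearized return maps of $X_{\max(f,g)}$ and $X_f$ agree along $\gamma$ and non-degeneracy is inherited from $f$.

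For acyclicity of the $2$-cube, Corollary~\ref{c2ccomplete}(1) reduces the claim to acyclicity after tensoring with $\Lambda_{\geq 0}/\Lambda_{>0}\cong\mathbb{Q}$, in which only $T^0$-coefficients survive. By Lemma~\ref{c3lVFC}(1) and the energy inequality $E_{top}(u)\geq \int |\partial_s u|^2\,ds\,dt+\int\partial_s H_s\,ds\,dt\geq 0$ valid for monotone families, only Floer configurations of topological energy zero contribute at $T^0$; both summands must then vanish, forcing $u(s,t)=\gamma(t)$ for a single orbit $\gamma$ that is a simultaneous $1$-periodic orbit of every Hamiltonian appearing along the relevant flow line of the Morse function on $Cube^2$. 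Consequently the internal Floer differentials at each vertex vanish identically (their virtual dimension is $-1$), each continuation map at $T^0$ acts as the identity on orbits shared by its source and target Hamiltonian and as zero otherwise, and the diagonal homotopy filling the $2$-face is zero on any orbit that is not simultaneously a $1$-periodic orbit of both $\min(f,g)$ and $\max(f,g)$.

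Combining this with the orbit analysis, each $1$-periodic orbit $\gamma$ of any of the four Hamiltonians is, generically, of one of four disjoint ``edge'' types indexed by which of $X_f,X_g$ it is an orbit of and which of $\{f\geq g\},\{f\leq g\}$ contains its graph: a $(X_f,\{f\leq g\})$-orbit for instance appears exactly in $CF(\min(f,g))$ and $CF(f)$ with identity edge map and zero elsewhere, and the other three types populate the other three edges of the $2$-cube analogously. The corresponding sub-$2$-cube is of shape $\mathbb{Q}\xrightarrow{\mathrm{id}}\mathbb{Q}$ along one edge with the opposite two vertices zero, whose total cone is trivially acyclic. Orbits whose graphs lie in $\operatorname{int}\{f=g\}$ are shared by all four Hamiltonians and give a ``full'' sub-$2$-cube with identity maps on every edge and a possibly nonzero diagonal homotopy; one checks directly that the total cone is acyclic regardless of the homotopy. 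Assembling the resulting direct sum decomposition yields acyclicity over $\mathbb{Q}$ and hence over $\Lambda_{\geq 0}$. The main technical obstacle I anticipate is making the orbit-to-orbit matching in this last step fully rigorous, in particular ruling out off-diagonal $T^0$-contributions: this is where conditions (1), (2), (3) are used in concert, with (1) providing smoothness and pointwise coincidence of Hamiltonians, (2) promoting pointwise to neighborhood coincidence (and hence matching of the Floer operators and their linearizations), and (3) enforcing the clean $\{f\geq g\}$ vs.\ $\{f\leq g\}$ dichotomy for the graphs of all four relevant vector fields simultaneously.
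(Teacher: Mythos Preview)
Your approach is essentially the paper's: reduce to $\mathbb{Q}$ via Corollary~\ref{c2ccomplete}(1), then analyze zero-energy contributions to decompose the $T=0$ complex as a direct sum of acyclic pieces indexed by orbits. One step needs correction, however. Condition~(2) does \emph{not} rule out graph points on $\partial U$; an orbit with graph in $M\times S^1-U$ may well touch $\partial U$. The actual role of (2) is that $M\times S^1-U$ equals the closure of its interior, and since $h_0=h_1$ on that open interior, monotonicity of the interpolating homotopy forces $H(s,\cdot,\cdot)-h_0$ to vanish \emph{to all orders} on all of $M\times S^1-U$, boundary included. This vanishing-to-all-orders (packaged in the paper as Lemma~\ref{c5lzero}) is exactly what is needed both to inherit non-degeneracy and to show the constant cylinder is a regular, isolated point of its moduli space with virtual count $\pm 1$; your phrase ``neighborhood coincidence'' does not quite capture it and your argument as written breaks at $\partial U$-points.

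Separately, your allowance of a possibly nonzero diagonal homotopy on ``full'' orbits is harmless but unnecessary: the paper observes that such an orbit carries the same $\mathbb{Z}/2$-degree as a generator of $CF(\min(f,g))$ and of $CF(\max(f,g))$ (same linearized return map), while the $2$-face homotopy has odd degree, so this matrix entry is forced to vanish. Either route finishes the argument.
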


\begin{figure}[!h]
\centering
\includegraphics[scale=0.4]{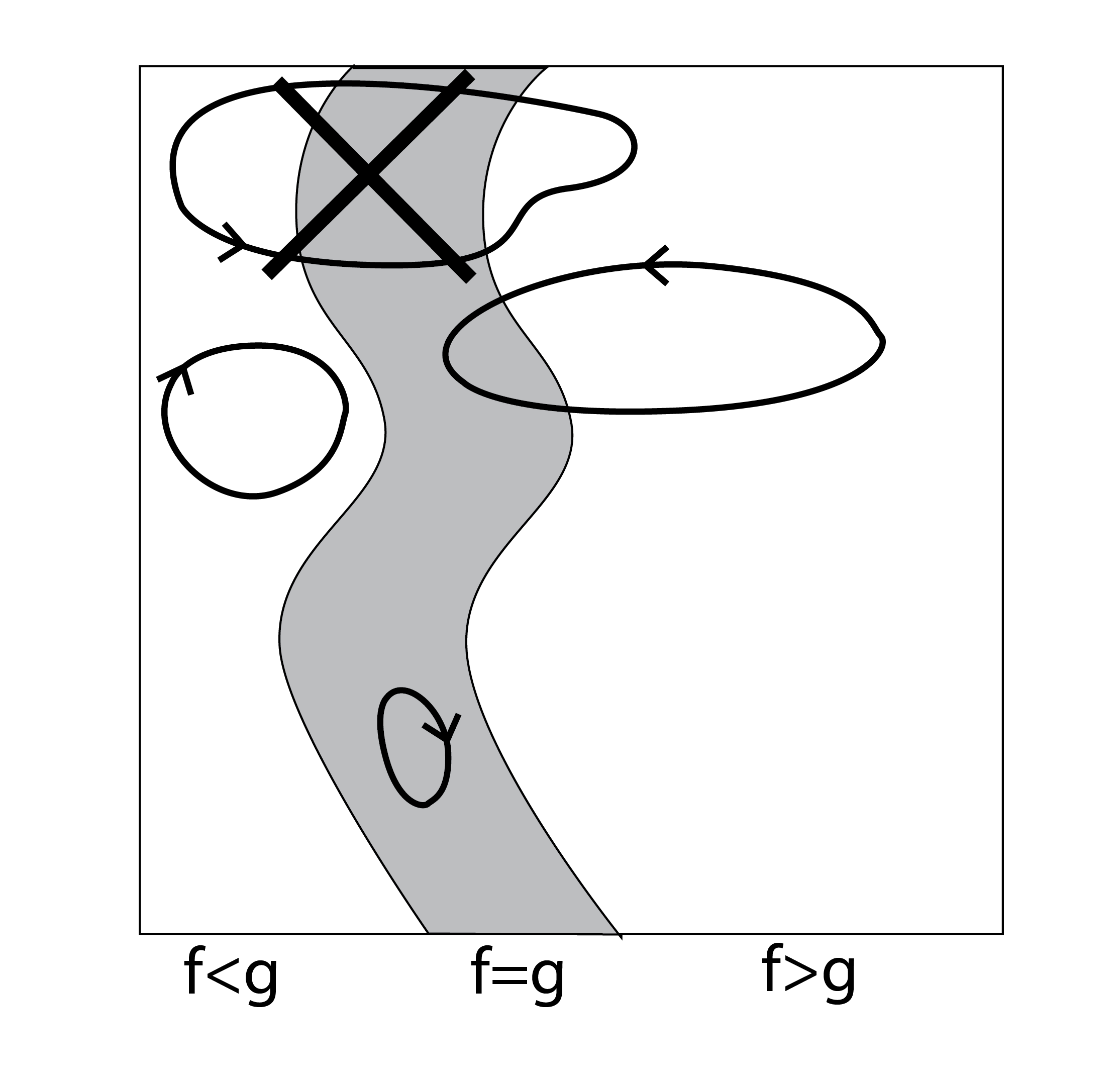}
\caption{A depiction of the orbit condition in Proposition \ref{c5pmax}. The crossed orbit is not allowed. The other $3$ orbits are allowed.}
\label{c5fsep}
\end{figure}

\begin{remark}\label{c5rconstant}
In the applications below $U$ and $V$ will be of the form $\tilde{U}\times S^1$ and $\tilde{V}\times S^1$.This case is what is realistically depicted in Figure \ref{c5fsep}. Note also that in this case, the condition of not intersecting both $U$ and $V$ is empty for constant orbits. 
\end{remark}

We need a preliminary lemma.

\begin{lemma}\label{c5lzero}
Let $h_0\leq h_1$ be non-degenerate Hamiltonians $M\times S^1\to \mathbb{R}$ with a monotone homotopy $H:\mathbb{R}_s\times M\times S^1\to \mathbb{R}$, \begin{align}
H(s,\cdot,\cdot) =
\begin{cases}
h_0(\cdot,\cdot) , & s<<0  \\
h_1(\cdot,\cdot) , & s>>0,
\end{cases}
\end{align}between them. 

Let $\gamma_0$ and $\gamma_1$ be maps $S^1\to M$ which are one-periodic orbits of $h_0$ and $h_1$, respectively. We make the necessary choices (arbitrarily as in Remark \ref{remarkextrachoices}) and define the continuation map $con:CF(h_0)\to CF(h_1)$. We consider the matrix coefficient $\alpha:= <con(\gamma_0),\gamma_1>\in \Lambda_{\geq 0}$.

\begin{itemize}
\item If $val(\alpha)=0$, then $\gamma_0=\gamma_1$, and $H(s,\gamma_0(t),t): S^1\to\mathbb{R}$ is independent of $s$.
\item Assume that for any point $(x,t)\in M\times S^1$ in the graph of $\gamma_0$ and $s\in \mathbb{R}$, the function $H(s,\cdot,\cdot)-h_0:M\times S^1\to \mathbb{R}$ vanishes to all orders at $(x,t)$\footnote{this means that at that point, in some arbitrary coordinate system, all iterated partial derivatives are zero.} . Then $\gamma_0$ is a non-degenerate one-periodic orbit for $h_1$ as well. Moreover, if $\gamma_1$ in question is equal to $\gamma_0$ as a map $S^1\to M$, then $val(\alpha)=0$.
\end{itemize}
\end{lemma}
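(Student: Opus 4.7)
The plan is to derive both bullets from the energy identity recorded in item~(4) of Section~\ref{c3sconventions}, which for a Floer cylinder $u$ with respect to the $s$-dependent monotone Hamiltonian $H$ reads
\[
\int u^{*}\omega + \int\gamma_1^{*} h_1\,dt - \int\gamma_0^{*} h_0\,dt \;=\; \int |\partial_s u|^2\,ds\,dt + \int (\partial_s H)(s,t,u(s,t))\,ds\,dt.
\]
Both terms on the right are non-negative by monotonicity, and combining with $\omega$-positivity of $J$-holomorphic sphere bubbles, the total topological energy of a nodal Floer solution is non-negative and vanishes precisely when $u$ is $s$-independent, $\partial_s H$ vanishes along the image of $u$, and there are no sphere bubbles.

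For the first bullet, $val(\alpha)=0$ means that there is a homotopy class $A$ with vanishing Novikov weight $\omega(A)+\int\gamma_1^{*}h_1-\int\gamma_0^{*}h_0$ contributing non-trivially to the continuation map. By part~(1) of Lemma~\ref{c3lVFC}, the compactified moduli space in class $A$ is non-empty; pick any stable representative. The energy identity forces $\partial_s u\equiv 0$, no sphere bubbles, and $(\partial_s H)(s,t,u(s,t))=0$ for every $(s,t)$. Hence $u(s,t)=\gamma(t)$ for a single loop $\gamma$, the Floer equation degenerates to $\dot\gamma(t)=X_{H(s,\cdot,t)}(\gamma(t))$, and the asymptotic conditions force $\gamma=\gamma_0=\gamma_1$. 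The vanishing of $\partial_s H$ along the graph of $\gamma_0$ is exactly the assertion that $H(s,\gamma_0(t),t)$ is $s$-independent.

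For the second bullet, the flat tangency hypothesis gives $d_x H(s,\cdot,t)=d_x h_0(\cdot,t)$ and $\mathrm{Hess}_x H(s,\cdot,t)=\mathrm{Hess}_x h_0(\cdot,t)$ at every point of the graph of $\gamma_0$. The first-order equality yields $X_{H(s,\cdot,t)}(\gamma_0(t))=X_{h_0(\cdot,t)}(\gamma_0(t))$, so $\gamma_0$ is a $1$-periodic orbit of every $H(s,\cdot,\cdot)$, in particular of $h_1$; the second-order equality identifies the linearized time-$1$ flow at $\gamma_0(0)$ for $h_1$ with that for $h_0$, transferring non-degeneracy. When $\gamma_1=\gamma_0$, I would consider the constant cylinder $u_0(s,t):=\gamma_0(t)$. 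It solves the $s$-dependent Floer equation (both sides reduce to $\dot\gamma_0(t)=X_{h_0}(\gamma_0(t),t)$), has vanishing topological energy since $H$ is constant along the graph, and belongs to the trivial homotopy class, which by the first bullet is the only homotopy class containing a zero-energy stable representative (any broken trajectory with total energy zero must have all of its components constant and hence reduce to $u_0$). The linearized Floer operator at $u_0$ is the translation-invariant operator $\partial_s+A$, where $A$ is the non-degenerate asymptotic operator at $\gamma_0$ determined by $\mathrm{Hess}(h_0)$; such an operator is an isomorphism between the appropriate Sobolev spaces, so $u_0$ is a regular isolated point of its moduli space. Part~(2) of Lemma~\ref{c3lVFC} then supplies a non-vanishing contribution to the $T^{0}$-coefficient of $\alpha$, yielding $val(\alpha)=0$. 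The main obstacle is precisely this last step, ruling out cancellations at $T^{0}$ from competing configurations, which is handled by the uniqueness statement just extracted from the first bullet together with the regularity argument for $u_0$.
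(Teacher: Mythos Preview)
Your proof is correct and follows essentially the same route as the paper's: both bullets are reduced to the energy identity, and for the second bullet you identify the constant cylinder $u_0$, argue it is the unique zero-energy stable trajectory, verify regularity from non-degeneracy together with the higher-order vanishing hypothesis, and then invoke part~(2) of Lemma~\ref{c3lVFC}. The paper spells out the stability step a bit more explicitly (separately ruling out sphere bubbles and $s$-independent Floer pieces), and it does not commit to the linearized operator being literally translation-invariant (which, strictly speaking, uses that $J$ is $s$-independent along $\gamma_0$), but the substance of the argument is the same.
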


\begin{proof}
The first statement immediately follows from the energy identity (Equation (\ref{c3etope}) in Section \ref{c3shamiltonian}). 

For the second statement, note that $u(s,t)=\gamma_0(t)$ satisfies the Floer equation. This solution is regular, because of the non-degeneracy of $\gamma_0$ as a one-periodic of $h_0$, and the vanishing up to all orders condition. By the energy identity, it is the only solution with zero topological energy. 

Now, we need to argue that the moduli space of stable Floer trajectories in the homotopy class of the constant solution also consists only of this solution (please see the Definition 10.2.2 for $n=1$, and Definition 10.2.3 from \cite{P}). Since topological energy is the same (and equal to $0$) for every member of this moduli space, the trajectory cannot contain any non-constant sphere bubbles. Because of stability, this means that it also cannot contain any constant sphere bubbles. Similarly, by the topological energy argument the trajectory cannot contain any $s$-dependent Floer solutions for $s$-independent data (equal to $h_0$ or $h_1$), and by stability, it cannot contain any $s$-independent ones. This finishes the argument.

It is clear that that no other homotopy class can contain Floer trajectories with $0$ topological energy. Hence, we have the desired statement using the second bullet point of Lemma \ref{c3lVFC}.
\end{proof}

In the notation of Lemma \ref{c5lzero}, if $\gamma_0$ and $\gamma_1$ are as in the second bullet point, we call $\gamma_0$ and $\gamma_1$ \textbf{common} orbits of $h_0$ and $h_1$. Note that $\gamma_0$ and $\gamma_1$ satisfying the conclusion of the first bullet point of Lemma \ref{c5lzero} does not imply that they are common orbits of $h_0$ and $h_1$.

\begin{proof}[Proof of Proposition \ref{c5pmax}]
Let us first assume (1), and prove that $min(f,g)$ is a smooth function. The proof for $max(f,g)$ is identical. Let us define $W:=M\times S^1 -(\overline{U}\cup \overline{V})$, which is an open set. We will check that $min(f,g)$ is smooth at all $y\in M\times S^1$. \begin{itemize}
\item If $y\in W$, then $y$ has an open neighborhood $N$ in $W$. On $N$, $min(f,g)=f=g$, by definition.
\item If $y\in \overline{U},$ then $y\notin \overline{V}$. Therefore, $y$ has an open neighborhood $N$ in $M\times S^1-\overline{V}$. On $N$, $min(f,g)=f$, by definition.
\item If $y\in \overline{V},$ the proof is the same as the previous item with the roles of $f$ and $g$ reversed.
\end{itemize}

Since $f$ and $g$ are smooth, this finishes the proof.

Let us now assume all of (1), (2), and (3). Note that if $h_0=h_1$ on an open set $S$, and $H$ is a monotone homotopy from $h_0$ to $h_1$ (as in Lemma \ref{c5lzero}), then for every $s\in\mathbb{R}$, $H(s,\cdot,\cdot)=h_0$ on $\overline{S}$ with all derivatives.  

If $(h_0,h_1)$ equals one of the two ordered pairs $(min(f,g), f)$ or $(g,max(f,g))$, then the previous paragraph and condition (2) implies that, for every $s\in\mathbb{R}$, $H(s,\cdot,\cdot)=h_0$ on $M\times S^1-V$ with all derivatives. Also note that $h_0<h_1$ on $V$. Similarly, if $(h_0,h_1)$ equals one of the two pairs $(min(f,g), g)$ or $(f,max(f,g))$, we have that $H(s,\cdot,\cdot)=h_0$ on $M\times S^1-U$ with all derivatives, and $h_0<h_1$ on $U$. We denote this paragraph by $\star$.

It follows from the previous paragraph (i.e. $\star$) and condition (3) that a $1$-periodic orbit $\gamma_0$ of $min(f,g)$ is common with an orbit of $f$ or $g$, or both. This is because the graph of $\gamma_0$ has to be contained in $M\times S^1-U$ or $M\times S^1-V$, or both. There is a similar statement for $max(f,g)$. The second bullet point of Lemma \ref{c5lzero} proves the non-degeneracy statement.

Let us call the $2$-cube in question $\mathcal{C}$. We want to show that the chain complex $cone^{2}(\mathcal{C})$ is acyclic. Now set the Novikov parameter $T=0$ (see Remark \ref{remarkT} for what this means) to obtain the chain complex $C:=cone^{2}(\mathcal{C})\otimes \Lambda_{\geq 0}/\Lambda_{> 0}$ over the rational numbers. 

First, notice that the valuations of the matrix entries of the homotopy map $$CF(min(f,g))\to CF(max(f,g))[1]$$ are all positive. The easiest way to see this is to use the mod $2$ grading. Assume the contrary, then by the first item of Lemma \ref{c5lzero}, we get that any two orbits corresponding to a $0$ valuation entry have to be the same map $S^1\to M$ whose graph is disjoint from $U\cup V$. By $\star$, we get that the two orbits are both common with an orbit of $f$ (and $g$). Therefore, the two generators have the same mod $2$ grading, as they have exactly the same linearization of the return map. This is a contradiction because the homotopy map has degree $1$. 

Now, let $CF(h_0)\otimes \Lambda_{\geq 0}/\Lambda_{> 0}\to CF(h_1)\otimes \Lambda_{\geq 0}/\Lambda_{> 0}$  be one of the four chain maps corresponding to the edges of $\mathcal{C}$ after setting $T=0$. Let $\gamma_0$ be a $1$-periodic orbit of $h_0$ with the corresponding basis element $b_0$ of $CF(h_0)\otimes \Lambda_{\geq 0}/\Lambda_{> 0}$. Let us similarly take $\gamma_1$ and $b_1$ for $h_1$. 

We claim that that the matrix entry corresponding to $b_0$ and $b_1$ is non-zero if and only if $\gamma_0$ and $\gamma_1$ are common orbits of $h_0$ and $h_1$ (we call this statement $\star\star$). The if part is the second item of Lemma \ref{c5lzero}. Conversely, if the matrix entry is non-zero, then we apply the first bullet point of Lemma \ref{c5lzero}, which in this case actually implies the stronger statement that $\gamma_0$ and $\gamma_1$ are common orbits of $h_0$ and $h_1$ using $\star$. For example, let $h_0=min(f,g)$ and $h_1=f$. Then $\gamma_0$ and $\gamma_1$ needs to have the same underlying $S^1\to M$, and the graph of this map cannot intersect $V$. This means that it is contained in $M-V$, which proves that they are common. 

Let us consider an orbit $\gamma$ of $min(f,g)$. Then $\gamma$ is common with an orbit of $f$ if and only if the graph of $\gamma$ is contained in $M\times S^1-V$ using $\star$. Similarly, $\gamma$ is common with an orbit of $g$ if and only if the graph of $\gamma$ is contained in $M\times S^1-U$. There are similar statements for all four Hamiltonians.

Also note that an orbit $\gamma$ of $min(f,g)$ is common with an orbit $\gamma_0$ of $f$ and with an orbit $\gamma_1$ of $g$ if and only if $\gamma$ is contained in $M\times S^1-U$ and $M\times S^1-V$. This then implies that there is an orbit $\gamma'$ of $max(f,g)$, which is common with both $\gamma_0$ of $f$ and $\gamma_1$ of $g$. There are similar statements for all four Hamiltonians.

Using the analysis of the last two paragraphs along with $\star\star$, noting that $C$ is a chain complex, and possibly rescaling generators, we obtain that $C$ is isomorphic to a direct sum of the following chain complexes  \begin{itemize}
\item $\mathbb{Q}<x,y>; dx=y, dy=0$
\item $\mathbb{Q}<x_1,x_2,y_1,y_2>; dx_1=y_1+y_2, dy_1=x_2, dy_2=-x_2, dx_2=0 $
\end{itemize}

Hence, we proved that $C$ is acyclic. This finishes the proof by Corollary \ref{c2ccomplete} Part (1).
\end{proof}

\begin{remark} Let us note that, in the notation of Proposition \ref{c5pmax}, the condition (3) only holds under very special circumstances that will be discussed in later sections. The condition (1) is also a serious one in the sense that it is not a generic condition, and is only satisfied for specially constructed functions. On the contrary, (2) is of a technical nature, and it is satisfied generically. For example, $0$ being a regular value of $f-g$ implies (2), but this is far from being a necessary condition.
\end{remark}

Let us note the following situations in which the conditions (1) and (2) of Proposition \ref{c5pmax} are satisfied that will be relevant in later sections.

\begin{lemma}\label{lemma12}
Let $X$ be a smooth manifold and $f,g$ be two smooth functions on $X$. Then, the conditions (1) and (2) of Proposition \ref{c5pmax} (with $M\times S^1$ replaced with $X$) are satisfied in the following cases.
\begin{itemize}
\item The locus where $f=g$ is a topological submanifold with boundary of codimension $0$.
\item There is a smooth and proper map $\pi: X\to Y$ such that $f$ and $g$ factors through $\pi$, i.e. there exists smooth functions $\tilde{f}, \tilde{g}$ on $Y$ such that $f=\tilde{f}\circ\pi$ and $g=\tilde{g}\circ\pi$, and $\tilde{f}, \tilde{g}$ satisfy the conditions (1) and (2) of Proposition \ref{c5pmax}.
\end{itemize}
\end{lemma}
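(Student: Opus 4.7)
The plan is to handle the two bullets separately, using local geometry in the first case and a pull-back argument in the second.

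For the first bullet, I would set $W:=\{f=g\}$, which is closed in $X$ (as the zero set of $f-g$) and, by hypothesis, a topological codimension-$0$ submanifold with boundary. Its topological interior $W^{\circ}$ in $X$ coincides with its manifold interior (by invariance of domain), while its manifold boundary consists of points $p$ at which a small $X$-neighborhood is modeled on $\mathbb{R}^n$ with $W$ corresponding to $\mathbb{R}^{n-1}\times[0,\infty)$. At such $p$, the complement of $W$ is locally the connected open half-space $\mathbb{R}^{n-1}\times(-\infty,0)$, on which the continuous non-vanishing function $f-g$ has constant sign, so $p$ lies in the closure of exactly one of $U,V$. At an interior point $p\in W^{\circ}$, the equality $f=g$ holds on an entire $X$-neighborhood, so $p\notin\overline{U}\cup\overline{V}$. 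Combined with $\overline{U}\cap\overline{V}\subseteq W$, this gives (1). For (2), write $X\setminus U=V\cup W$: both $V$ and $W^{\circ}$ are open and contained in $X\setminus U$, and since every manifold-boundary point of $W$ is a limit of points of $W^{\circ}$, one obtains $\overline{V\cup W^{\circ}}=\overline{V}\cup W=V\cup W=X\setminus U$, exhibiting $X\setminus U$ as the closure of an open set; the case of $X\setminus V$ is symmetric.

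For the second bullet, condition (1) is immediate from continuity of $\pi$: $\overline{U}=\overline{\pi^{-1}(\tilde{U})}\subseteq\pi^{-1}(\overline{\tilde{U}})$ and similarly $\overline{V}\subseteq\pi^{-1}(\overline{\tilde{V}})$, so the assumption $\overline{\tilde{U}}\cap\overline{\tilde{V}}=\emptyset$ forces $\overline{U}\cap\overline{V}=\emptyset$. For condition (2), I would let $\tilde{O}:=\mathrm{int}(Y\setminus\tilde{U})$, so that by hypothesis $Y\setminus\tilde{U}=\overline{\tilde{O}}$, and set $O:=\pi^{-1}(\tilde{O})$. Then $O$ is open, $O\subseteq X\setminus U$, and hence $\overline{O}\subseteq X\setminus U$. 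For the reverse inclusion, given $x\in X\setminus U$ with $\pi(x)\in\overline{\tilde{O}}$, I would approximate $x$ by points in $O$ using smoothness and properness of $\pi$: Sard's theorem gives that the regular values of $\pi$ are dense in $Y$, hence in $\tilde{O}$, so there is a sequence $y_n\in\tilde{O}$ of regular values with $y_n\to\pi(x)$; at each $y_n$ the submersion theorem provides local smooth sections of $\pi$, while properness of $\pi$ controls preimages of compact sets, and together these allow a lift to a sequence $x_n\in O$ converging to $x$.

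The main obstacle is the last lifting step in the second bullet when $x$ is itself a critical point of $\pi$: there $\pi$ need not be open and the topological lifting argument may fail, so one must work in local coordinates around $x$ and exploit the smooth structure together with properness carefully (in the Lagrangian-fibration or involutive-map settings where this lemma is ultimately applied, $\pi$ is typically a submersion and this subtlety does not arise). By contrast the first bullet reduces cleanly to the local model of a codimension-$0$ manifold with boundary and is purely topological.
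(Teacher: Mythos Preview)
Your treatment of the first bullet is correct and is essentially the paper's argument spelled out in more detail: both use the local half-space model at boundary points of $W=\{f=g\}$ to see that each such point lies in exactly one of $\overline U,\overline V$, and that $X\setminus U=V\cup W$ is the closure of the open set $V\cup W^\circ$.

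For the second bullet your argument for condition~(1) is fine, but for condition~(2) there is a genuine gap, which you yourself identify: the Sard-plus-local-section lifting does not produce a sequence $x_n\to x$ when $x$ is a critical point of $\pi$, and invoking properness to ``control preimages of compact sets'' does not supply such lifts either. The paper avoids this entirely by taking a different route: it invokes the identity $\overline{\pi^{-1}(A)}=\pi^{-1}(\overline A)$ for proper $\pi$ (citing Palais), after which both (1) and (2) become one-line set-theoretic computations, e.g.\ $X\setminus U=\pi^{-1}(Y\setminus\tilde U)=\pi^{-1}(\overline{\tilde O})=\overline{\pi^{-1}(\tilde O)}$. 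Notice that the nontrivial inclusion $\pi^{-1}(\overline{\tilde O})\subseteq\overline{\pi^{-1}(\tilde O)}$ is \emph{exactly} what your lifting argument is trying to establish point by point; the right move is to prove it once as a property of the map $\pi$. (A word of caution: this inclusion for arbitrary $A$ is equivalent to $\pi$ being open, not merely closed, so one should check that the hypotheses actually deliver it---in the paper's applications $\pi$ is a submersion onto its image or the subsets involved are tame enough that the identity holds.)
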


\begin{proof}
Let $U$ and $V$ be as in Proposition \ref{c5pmax}. Let us denote the locus where $f=g$ by $D$. In the first case, by definition, every point of $\partial D$ admits an open neighborhood $N$ such that $N\cap \partial D$ is connected, and its complement in $N$ has two non-empty connected components, one in which $f=g$, and one in which either $f<g$ or $f>g$. The condition (1) automatically follows. For (2) note that it follows that $U\cup D$ and $V\cup D$ are topological submanifolds of codimension $0$ as well.

For the second item note that under these assumptions we have that for every subset $A\subset Y$, \begin{align}\label{eqclosure}\overline{\pi^{-1}(A)}=\pi^{-1}(\overline{A}).
\end{align} See for example the Corollary in \cite{Palais}. After this, the result follows from elementary set theoretic considerations and that $\pi$ is continuous.
\end{proof}

\begin{remark}
We do not need any of the smoothness requirements in Lemma \ref{lemma12}. For example, we could assume that $X$ and $Y$ are topological manifolds and change all the smoothness requirements to continuity. In the application of the second bullet point for this paper, $X$ is in fact compact, in which case the proof of Equation \ref{eqclosure} is simpler. We wrote it more generally with future applications in mind.
\end{remark}

\subsection{Boundary accelerators}\label{c5sboundary}

In this subsection we explain how to choose an acceleration data so that the interesting Hamiltonian dynamics concentrates near hypersurfaces that tightly envelop the compact subset in question.

\begin{definition}
Let $K$ be a subset of $M$, we say that a sequence of compact domains $D_K^1,D_K^2,\ldots $ \textbf{approximate} $K$, if: \begin{itemize}
\item $\bigcap D_K^i=K$.
\item $D_K^{i+1}\subset int(D_K^i)$ , for all $i\geq 1$.
\end{itemize}
\end{definition}

Note that every compact subset can be approximated by compact domains.

\begin{definition}\label{c5dboundaryacc}
A \textbf{boundary accelerator} for $K$ consists of three pieces of data:
\begin{enumerate}
\item A strictly increasing sequence of positive numbers $\Delta_i$ which converge to infinity as $i\to \infty$.
\item A sequence of triples of compact domains $\{(fill(\partial N_K^{i-}),N_K^i, fill(\partial N_K^{i+}))\}_{i\in\mathbb{Z}_{>0}}$ such that\begin{itemize}
\item $fill(\partial N_K^{i-})\cup N_K^i\cup fill(\partial N_K^{i+})=M$.
\item The interiors of $fill(\partial N_K^{i-}),N_K^i, fill(\partial N_K^{i+})$ are pairwise disjoint.
\item $\partial N_K^i=\partial N_K^{i-}\sqcup \partial N_K^{i
+}$, $\partial fill(\partial N_K^{i-})=\partial N_K^{i-}$, $\partial fill(\partial N_K^{i+})=\partial N_K^{i+}$.
\item $fill(\partial N_K^{i-})$ approximate $K$.
\item $N_K^i$ is contained in the interior of $fill(\partial N_K^{(i+1)+})$.
\end{itemize}
We call $N_K^i$ the \textbf{mixing regions}, $fill(\partial N_K^{i-})$ the \textbf{inner fillers}, and $fill(\partial N_K^{i+})$ the \textbf{outer fillers}.
\item Smooth functions $f_i:N_K^i\to [0,\Delta_i]$ such that \begin{itemize}
\item $f_i$ has no critical points along $\partial N_K^i$.
\item $f_i^{-1}(0)= \partial N_K^{i-}$ and $f_i^{-1}(\Delta_i)= \partial N_K^{i+}$.
\end{itemize}
We call these the \textbf{excitation functions}.
\end{enumerate}
\end{definition} See Figure \ref{c5ffillers} for a depiction. We will generally drop the fillers from notation, but they are always there.

\begin{figure}
\centering
\includegraphics[scale=0.4]{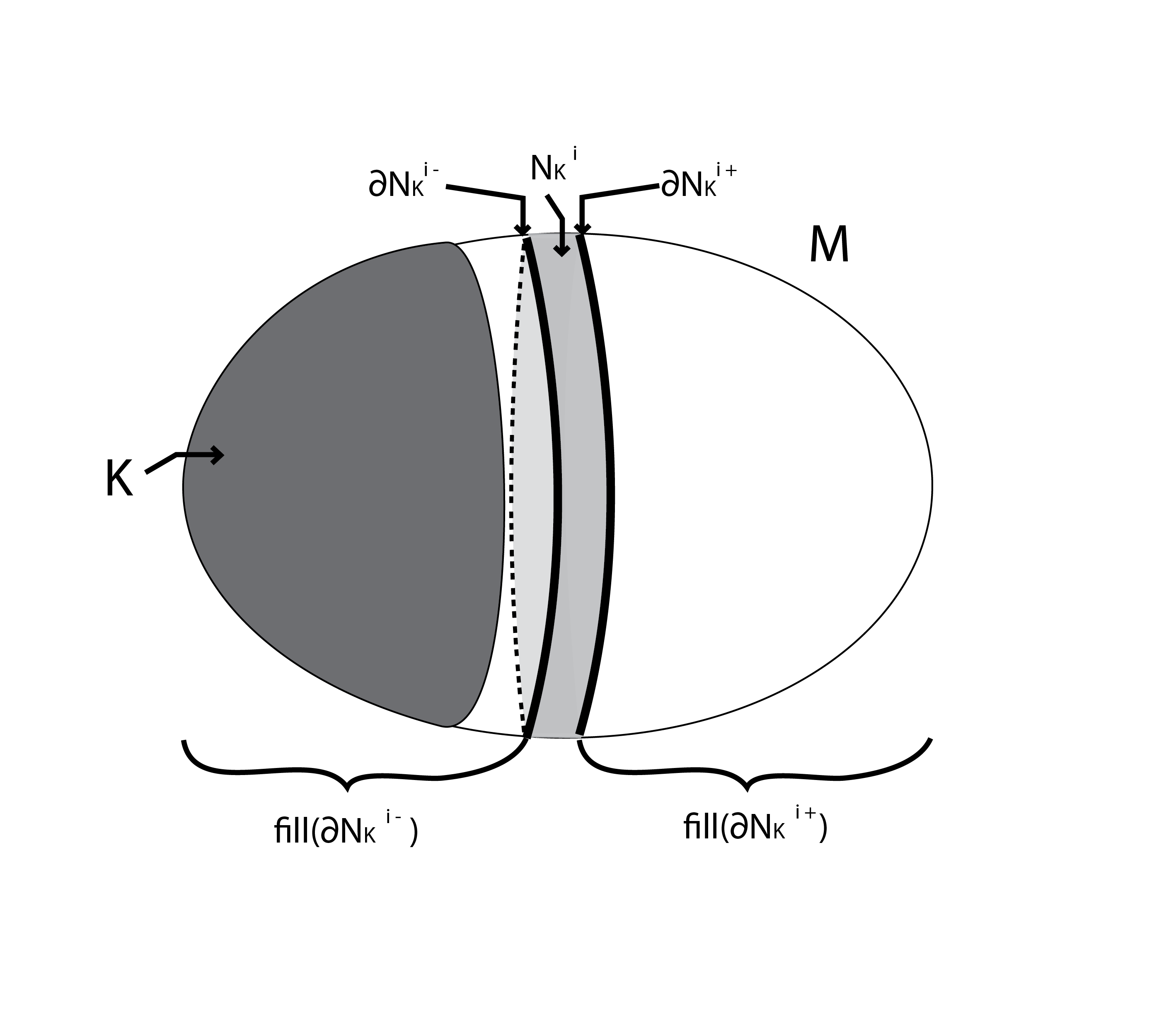}
\caption{One mixing region in a boundary accelerator, and the relevant notation. }
\label{c5ffillers}
\end{figure}

Now we explain how we get a valid acceleration data starting from a boundary accelerator. An extra property we want is to restrict the points that a non-constant periodic orbit can pass through to the interiors of the mixing regions. The following lemma is our main tool in that respect.

\begin{lemma}\label{c5lflat}
Let $H:X\to \mathbb{R}$ be a smooth function, where $X$ is a compact symplectic manifold with boundary and $H$ is constant along the boundary. For small enough $\epsilon>0$, all time-$1$ orbits of $X_{{\epsilon}H}$ are constant. Moreover, if $H$ is Morse, for a possibly smaller $\epsilon>0$, all of those orbits are non-degenerate as $1$-periodic orbits.
\end{lemma}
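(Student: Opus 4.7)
The plan is to reduce the first claim to a period lower bound on non-constant closed orbits of $X_H$. First I would note that, since $H$ is constant on $\partial X$, $dH$ vanishes on $T\partial X$, so $X_H$ sits in the symplectic orthogonal $(T\partial X)^\omega$, which equals the characteristic line field of $\partial X$ and is contained in $T\partial X$. Hence $X_H$ is tangent to $\partial X$ and its flow $\phi_H^t$ is complete on the compact manifold $X$. The rescaling $\gamma(t)\mapsto \gamma(t/\epsilon)$ identifies $1$-periodic orbits of $X_{\epsilon H}$ with closed orbits of $X_H$ of period exactly $\epsilon$, so the first claim becomes: there exists $T_{\min}>0$ so that every non-constant closed orbit of $X_H$ has period $\geq T_{\min}$.

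To prove this period lower bound I would argue by contradiction. Suppose there are non-constant closed orbits $\gamma_n$ with minimal periods $T_n\to 0$. Their diameters are bounded by $T_n\,\|X_H\|_{C^0}\to 0$, so, passing to a subsequence, they shrink to a point $p^\ast\in X$. If $X_H(p^\ast)\neq 0$, the flow-box theorem around $p^\ast$ rectifies $X_H$ and precludes any closed orbit nearby, a contradiction. Otherwise $X_H(p^\ast)=0$, and (after passing to a Darboux chart around $p^\ast$, using a collar if $p^\ast\in\partial X$, which is permissible since $X_H$ is tangent to $\partial X$) the $\gamma_n$ eventually lie in a fixed Euclidean chart on which $X_H$ has some finite Lipschitz constant $L$. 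The standard Yorke/Poincar\'e-inequality estimate then applies: since $\dot\gamma_n$ has mean zero on $[0,T_n]$ (as $\gamma_n$ is closed), one gets
\[
\|\dot\gamma_n\|_{L^2}\;\leq\;\tfrac{T_n}{2\pi}\|\ddot\gamma_n\|_{L^2}\;\leq\;\tfrac{T_nL}{2\pi}\|\dot\gamma_n\|_{L^2},
\]
forcing $T_n\geq 2\pi/L$ and contradicting $T_n\to 0$.

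For the Morse claim, by the first part every time-$1$ orbit of $X_{\epsilon H}$ (for $\epsilon<T_{\min}$) is a constant orbit located at a critical point $p$ of $H$. In a Darboux chart at $p$ the linearized time-$1$ return map is $\exp\bigl(\epsilon\, J\,\mathrm{Hess}_p H\bigr)$. Since $\mathrm{Hess}_p H$ is non-degenerate, all eigenvalues $\mu$ of $J\,\mathrm{Hess}_p H$ are non-zero; the exponential has eigenvalue $1$ only when $\epsilon \mu \in 2\pi i(\mathbb{Z}\setminus\{0\})$, which can occur only for purely imaginary $\mu$ and only at isolated positive values of $\epsilon$. Since $H$ is Morse (and $X$ compact), there are finitely many critical points, hence finitely many such thresholds; shrinking $\epsilon$ below all of them, and below $T_{\min}$, gives non-degeneracy of every time-$1$ orbit.

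The main obstacle is the period lower bound, which I expect the Poincar\'e-inequality argument above to handle cleanly once the reduction to a Darboux chart around an accumulation point is in place. The remaining pieces—the tangency of $X_H$ to $\partial X$, the rescaling identification, and the eigenvalue analysis at critical points—are routine.
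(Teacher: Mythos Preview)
Your proof is correct and follows essentially the same approach as the paper: the paper invokes Yorke's theorem as a black box for the first statement while you reproduce its proof via the Poincar\'e inequality, and for the second statement both proceed by showing that $\exp(\epsilon\, J\,\mathrm{Hess}_p H)$ avoids the eigenvalue $1$ for small $\epsilon$. Your eigenvalue argument via the spectral mapping theorem for $\exp$ is in fact slightly more direct than the paper's, which appeals to approximation by diagonalizable matrices.
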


\begin{proof}
The first statement follows from the more general theorem of Yorke \cite{Y}. 

For the second statement, let $p$ be one of the finitely many critical points of $H$. Then, $X_H$ has a non-degenerate zero at $p$, and the linearization there is a well-defined map $SHess_p(f): T_pX\to T_pX$. Note that we have $$\omega(SHess_p\cdot,\cdot)=Hess(\cdot,\cdot),$$ and in particular $SHess_p$ is a linear isomorphism. It suffices to show that $exp(\epsilon SHess_p)$ does not have $1$ as an eigenvalue for sufficiently small $\epsilon>0$. This is in fact true for any linear isomorphism, which can be easily proved for diagonalizable (over $\mathbb{C}$) ones, and the general case follows from an approximation by diagonalizable matrices argument.
\end{proof}

Moreover, we will need to perturb the excitation functions to have non-degenerate orbits, but we will have to perturb in a very controlled fashion. We start with a preparatory lemma.

\begin{lemma}
Let $F:M\times [0,T]\to \mathbb{R}$ be a Hamiltonian, where $T$ is a positive real number, and $\gamma :[0,T]\to M$ a flow line of the time dependent vector field $X_{F_t}$. Then, for any $t_0\in [0,T]$, and neighborhood $V$ of $\gamma(t_0)$; we have that for every $v\in T_{\gamma(T)}M$, there is a smooth one parameter family of functions $f_s:M\times [0,T]\to \mathbb{R}$, $s\in [0,\tau),$ for some $\tau >0$, such that \begin{itemize}
\item $f_0=0$
\item for some $\epsilon >0$, $f_s(x,t)=0$, for $t<\epsilon$ and $t>T-\epsilon$ and all $s$
\item $f_s\geq 0$, for all $s$
\item $supp(f_s)\subset V$, for all $s$
\item The tangent vector to the curve $s\mapsto \phi^T_{F+f_s}(\gamma(0))$ at $s=0$ is equal to $v$.
\end{itemize}
\end{lemma}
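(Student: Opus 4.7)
The strategy is to take $f_s=sh$ for a single fixed nonnegative function $h$; then $f_0=0$, $f_s\geq 0$ for $s\in[0,\tau)$, and the support and vanishing-near-endpoints conditions on $f_s$ translate directly into the same conditions on $h$. What remains is to choose $h$ so that the first order variation of $s\mapsto\phi^T_{F+sh}(\gamma(0))$ at $s=0$ equals $v$.

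Let $\Phi^{t_2,t_1}$ denote the flow of $X_F$ from time $t_1$ to time $t_2$, so that $\gamma(t)=\Phi^{t,0}(\gamma(0))$. A standard variation-of-parameters computation for the non-autonomous ODE $\dot u = X_{F+sh}(u,t)$ with initial condition $\gamma(0)$ gives
\[
\left.\frac{d}{ds}\right|_{s=0}\phi^T_{F+sh}(\gamma(0)) \;=\; \int_0^T D\Phi^{T,t}\big|_{\gamma(t)}\, X_{h(\cdot,t)}(\gamma(t))\, dt \;\in\; T_{\gamma(T)}M.
\]
I will look for $h$ of the form $h(x,t)=\rho(t)\,g(x)$, where $\rho\geq 0$ is a bump with $\mathrm{supp}(\rho)\subset(t_0-\delta, t_0+\delta)\subset(\epsilon, T-\epsilon)$ and $\int\rho=1$, and $g\geq 0$ is smooth and supported in $V$. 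The key observation is that even when restricted to such nonnegative $g$, the map $g\mapsto X_g(\gamma(t_0))$ is surjective onto $T_{\gamma(t_0)}M$. Indeed, given $w\in T_{\gamma(t_0)}M$, choose a smooth $L$ compactly supported in $V$ with $L(\gamma(t_0))=0$ and $X_L(\gamma(t_0))=w$ (possible since $dL(\gamma(t_0))\mapsto X_L(\gamma(t_0))=\omega^{-1}(dL(\gamma(t_0)),\cdot)$ is a linear isomorphism $T^*_{\gamma(t_0)}M\to T_{\gamma(t_0)}M$), fix a bump $\chi\geq 0$ supported in $V$ with $\chi(\gamma(t_0))=1$ and $d\chi(\gamma(t_0))=0$, and set
\[
g_w(x):=\chi(x)\bigl(C+L(x)\bigr),
\]
where $C>0$ is large enough that $C+L\geq 0$ on $\mathrm{supp}(\chi)$. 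Then $g_w\geq 0$, and a direct computation using $d\chi(\gamma(t_0))=0$ and $L(\gamma(t_0))=0$ yields $dg_w(\gamma(t_0))=dL(\gamma(t_0))$, so that $X_{g_w}(\gamma(t_0))=w$.

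Finally, define the linear map $\mathcal{T}_\delta\colon T_{\gamma(t_0)}M\to T_{\gamma(T)}M$ by
\[
\mathcal{T}_\delta(w):=\int_0^T \rho(t)\, D\Phi^{T,t}\big|_{\gamma(t)}\, X_{g_w}(\gamma(t))\, dt.
\]
Since $w\mapsto g_w$ is linear, so is $\mathcal{T}_\delta$, and as $\delta\to 0$ we have $\mathcal{T}_\delta \to D\Phi^{T,t_0}|_{\gamma(t_0)}$, which is a linear isomorphism. Hence $\mathcal{T}_\delta$ is itself an isomorphism for $\delta$ sufficiently small; setting $w:=\mathcal{T}_\delta^{-1}(v)$, $h(x,t):=\rho(t)g_w(x)$ and $f_s:=sh$ completes the construction. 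The main obstacle is the tension between the nonnegativity constraint $f_s\geq 0$ and the need for a nonzero Hamiltonian vector field at $\gamma(t_0)$: a nonnegative compactly supported function necessarily attains its maximum, and at such a point $X_g$ vanishes. The ``lift by a constant'' ansatz $g_w=\chi(C+L)$ resolves this by decoupling the pointwise positivity of $g_w$ from the value of $dg_w$ at $\gamma(t_0)$, so that $\gamma(t_0)$ need not be a critical point of $g_w$.
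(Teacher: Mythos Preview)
Your overall strategy---using the variation-of-constants formula directly rather than reducing to $F=0$ as the paper does---is a legitimate alternative, and your ``lift by a constant'' ansatz $g_w=\chi(C+L)$ is exactly the same device the paper eventually uses after its reductions. However, there is a genuine gap in the final step.

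You assert that $w\mapsto g_w$ is linear, and then conclude that $\mathcal{T}_\delta$ is linear and converges to the isomorphism $D\Phi^{T,t_0}$. But $g_w=\chi\cdot C+\chi\cdot L_w$: if $C$ is held fixed this is \emph{affine} in $w$, not linear, while if $C=C(w)$ is chosen large enough to guarantee $C+L_w\geq 0$ (as you specify), the map $w\mapsto g_w$ is not even affine. In the first case $\mathcal{T}_\delta$ is an affine bijection for small $\delta$, but then nothing guarantees $C+L_{w}\geq 0$ for the particular $w=\mathcal{T}_\delta^{-1}(v)$, since $w$ can be arbitrarily large. In the second case the invertibility argument breaks down entirely.

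The repair is short. Fix $L_w$ linear in $w$ and split $X_{g_w}=C\,X_\chi+X_{\chi L_w}$. Then
\[
\mathcal{T}_\delta(w)=C\,b_\delta+\Psi_\delta(w),\qquad b_\delta:=\int\rho\,D\Phi^{T,t}X_\chi(\gamma(t))\,dt,\quad \Psi_\delta(w):=\int\rho\,D\Phi^{T,t}X_{\chi L_w}(\gamma(t))\,dt,
\]
where $\Psi_\delta$ is genuinely linear with $\Psi_\delta\to D\Phi^{T,t_0}$, and $b_\delta\to 0$ since $d\chi(\gamma(t_0))=0$. Given $v$, set $w_0:=(D\Phi^{T,t_0})^{-1}(v)$ and fix $C:=1+\sup_{\mathrm{supp}\,\chi}|L_{w_0}|$. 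For $\delta$ small, $\Psi_\delta$ is invertible and $w_\delta:=\Psi_\delta^{-1}(v-Cb_\delta)$ is close enough to $w_0$ that $C+L_{w_\delta}>0$ on $\mathrm{supp}\,\chi$; take $h=\rho\cdot g_{w_\delta}$.

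For comparison, the paper avoids this bookkeeping by first reducing (via the composition formula $\phi^t_{F+f_s}=\phi^t_F\circ\phi^t_{\tilde f_s}$ for $f_s=\tilde f_s\circ(\phi^t_F)^{-1}$) to the case $F\equiv 0$ in a Darboux chart, where the construction becomes a one-line computation with the affine function $Jv\cdot x+c$. Your direct approach is arguably cleaner conceptually, but requires more care with the interplay between the positivity constraint and the inversion step.
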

\begin{proof}
We will show that we can reduce to the special case $F$ equals the $0$ function and $t_0=0$ in two steps:\begin{itemize}
\item We first reduce to the case $\gamma([0,T])\subset V$, and $t_0=0$. We can always find a small $\theta >0$, such that $\gamma([t_0, t_0+\theta])\subset V$, and define a new Hamiltonian $\tilde{F}:=F(\cdot , \cdot +t_0):M\times [0,\theta]\to \mathbb{R}$. Let $\psi: M\to M$ be the diffeomorphism obtained by flowing with the Hamiltonian flow of $F$ from time $t_0+\theta$ to $T$, in particular $\psi(\gamma(t_0+\theta))=\gamma(T)$. Now assuming that we solved the problem for $\tilde{F}, t_0=0, V$ and $(\psi^{-1})_*v$, as in the statement, to find $\tilde{f_s}$, we simply set 
   $$f_s(x,t) = 
\begin{cases}
\tilde{f_s}(x,t+t_0), & t_0\leq t\leq t_0+\theta  \\
0 , & \text{otherwise}
\end{cases}
$$
\item We now further reduce to the case $F=0$. Let $\tilde{V}$ be a neighborhood of $\gamma(0)$ which stays inside $V$ for the entire flow of $F$. Assume that we have solved the problem for $\tilde{F}=0, \tilde{V}$ and $((\psi_F^T)^{-1})_*v$ to find $\tilde{f_s}$. 

Now, we let $$f_s(x,t)=\tilde{f_s}\circ (\phi^t_F)^{-1}(x,t),$$for every $s\in [0,\tau)$. This does the job because $F+f_s:M\times [0,T]\to \mathbb{R}$, generates the Hamiltonian flow $\phi^t_F\circ \phi^t_{f_s}$.
\end{itemize}

By finding a Darboux chart that lies inside $V$, we can further reduce to $M=(\mathbb{R}^{2n},\omega_{st})$, $\gamma$ is the origin, and $V$ is an open ball around the origin. Our final trivial reduction is to $T=1$ for notational simplicity.

Now, consider the affine function $Jv\cdot x+c$ on $\mathbb{R}^{2n}$, where $J$ is the standard complex structure on $\mathbb{R}^{2n}$, $\cdot$ is the dot product, and $c$ sufficiently large so that $Jv\cdot x+c$ is positive on $V$. By reparametrizing the time domain we can make it supported away from $0$ and $1$, while keeping it positive, and not changing the time-$1$ map. Call the resulting Hamiltionian $f$ and define $f_s=sf$. For any positive cutoff function $\beta$ supported on $V$ and equal to $1$ near the origin, $\beta f_s$ does the job.
\end{proof}

\begin{lemma}\label{c5lnond}
Let $H:M\times S^1\to \mathbb{R}$ be a Hamiltonian, $n$ be a positive integer, $\delta$ be a positive real number, $\tilde{g}$ be a Riemannian metric on $M$, and $U\subset M$ be an open subset. Then, there exists an $H':M\times S^1\to \mathbb{R}$ such that \begin{itemize}
\item $supp(H-H')\subset U\times S^1$
\item $\|H'-H\|_{C^n(M\times S^1)}<\delta$, here we used the norm $
\|f\|_{C^n(M\times S^1)} =\sum_{l=0}^n  \sup_{M\times S^1}|\nabla^{l}f(x)|_g,$ where $\nabla$ is the Levi-Civita covariant derivative relative to $g=\tilde{g}+dt^2$.
\item $H\geq H'$.
\item All one-periodic orbits of $X_{H'}$ which intersect $U$ are non-degenerate.
\end{itemize}
\end{lemma}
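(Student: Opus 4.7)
The plan is to apply a Sard-Smale transversality argument to a universal moduli space of time-$1$ fixed points as $H$ is varied, using the preceding lemma as the exact surjectivity input needed. To accommodate the monotonicity constraint $H' \leq H$ alongside the $C^n$-smallness, I would write $H' = H - \rho - g$, where $\rho$ is a fixed smooth non-negative function, supported in $U \times S^1$, strictly positive on the interior of its support, with $\|\rho\|_{C^n} < \delta/2$. The perturbation parameter $g$ would range over smooth functions supported in $U \times S^1$ satisfying $|g(x,t)| \leq \rho(x,t)$ pointwise and $\|g\|_{C^n} < \delta/2$; any such $g$ automatically produces an $H'$ satisfying the first three bullet points.

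Consider the parametric moduli space
$$\mathcal{M} = \{(g, x) : g \text{ admissible}, \ \phi^1_{H - \rho - g}(x) = x, \text{ and } \gamma_x \text{ meets } U \times S^1\},$$
where $\gamma_x(t) = \phi^t_{H-\rho-g}(x)$. At any $(g_0, x_0) \in \mathcal{M}$, pick $t_0 \in (0,1)$ with $\gamma_{x_0}(t_0) \in U$ and a small open $V \ni (\gamma_{x_0}(t_0), t_0)$ with $\overline V \subset U \times S^1$ and $\rho$ bounded below by a positive constant on $\overline V$. The preceding lemma, applied to $F = H - \rho - g_0$ on $M \times [0,1]$ (viewing $S^1$ as $[0,1]$ with endpoints identified), supplies, for each $v \in T_{x_0}M$, a one-parameter family $g_s = g_0 + \tilde f_s$ with $\tilde f_s \geq 0$, $\mathrm{supp}(\tilde f_s) \subset V$, vanishing near the endpoints, and $\frac{d}{ds}\big|_{s=0} \phi^1_{H-\rho-g_s}(x_0) = v$. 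For $s$ small we have $|g_s| \leq \rho$, so $g_s$ remains admissible. Hence the section $(g, x) \mapsto \phi^1_{H-\rho-g}(x) - x$ is transverse to zero, $\mathcal{M}$ is a smooth Banach manifold, and the projection $\pi \colon \mathcal{M} \to \{\text{admissible } g\}$ is Fredholm of index $0$.

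Sard-Smale applied to $\pi$ yields a residual set of admissible $g$'s over which every fibre consists entirely of non-degenerate fixed points of $\phi^1_{H-\rho-g}$ whose orbits meet $U \times S^1$. Picking such a $g$ of arbitrarily small $C^n$-norm and setting $H' = H - \rho - g$ fulfils all four requirements of the lemma. The main technical obstacle is carrying out Sard-Smale honestly for smooth perturbations: the admissible $g$'s form a convex subset of a Fr\'echet, not Banach, space, so one invokes the standard $C^\epsilon$-construction of Floer to obtain a separable Banach space of smooth functions supported in $U \times S^1$, dense in the $C^\infty$-topology and large enough to contain the variations produced by the previous lemma, on which Sard-Smale applies and still returns smooth $g$'s. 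A secondary minor point is the choice of $t_0 \in (0,1)$ rather than at $0$, which is harmless since the set of times at which $\gamma_{x_0}$ meets the open set $U$ is itself open in $S^1$.
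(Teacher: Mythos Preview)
Your proposal is correct and follows essentially the same Sard--Smale strategy as the paper, using the preceding lemma as the surjectivity input. The difference is purely in how the one-sided constraint $H'\le H$ is absorbed into the functional-analytic setup. The paper works directly with the convex set of non-negative perturbations $h\ge 0$ supported in $U\times S^1$ with $\|h\|_{C^n}<\delta$, and appeals to a version of Sard--Smale valid on convex subsets of Banach spaces (citing \cite{ACC}); the previous lemma is precisely tailored to this, since its output perturbations $f_s$ are themselves non-negative. Your $\rho$-buffer trick instead spends half the $C^n$-budget on a fixed strictly positive bump so that the residual perturbation $g$ can vary in an open set, which lets you invoke the standard Sard--Smale; and you address smoothness of the perturbation via Floer's $C^\varepsilon$ space rather than leaving it implicit. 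Both routes are standard and both work; yours is slightly longer but more self-contained, while the paper's is terser at the cost of an external reference.
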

\begin{proof} Let us consider the space $F$ all non-negative smooth functions $M\times S^1\to \mathbb{R}$ that are supported in $U\times S^1$ with $C^n$ norm at most $\delta$. This is a convex subset of a Banach space.

Consider the open subset $\mathcal{V}$ of $F\times M$ given by $(h,x)$, where the Hamiltonian flow $\phi_{h+H}^t$ of $h+H$, starting at $x$, intersects $U$. We have the map $\mathcal{V}\to M\times M$ given by $(h,x)\mapsto (x,\phi_{h+H}^1(x))$. 

The Sard-Smale theory of transversality extends to this setting \cite{ACC}. The previous lemma shows that this map is transverse to the diagonal in $M\times M$, and finishes the proof.
\end{proof}

Let us also note the following elementary lemma about the abundance of Morse functions.

\begin{lemma}\label{c5lmorse}
Let $f:M\to \mathbb{R}$ be a smooth function, $n$ be a positive integer, $\delta$ be a positive real number, and $U\subset M$ be an open subset. Then, there exists a smooth function $f':M\to \mathbb{R}$ with the following properties:

\begin{itemize}
\item $supp(f-f')\subset U$.
\item $\|f-f'\|_{C^n(M)}<\delta$ everywhere on $M$.
\item $f'\geq f$.
\item All critical points of $f'$ inside $U$ are non-degenerate.
\end{itemize}
\end{lemma}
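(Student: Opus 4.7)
The plan is to run a standard Sard-theoretic density argument, with an extra layer to accommodate the one-sided constraint $f' \geq f$. First I would reduce to a local statement via a finite induction. Since $M$ is compact, so is $\bar U$; cover it by finitely many relatively compact coordinate charts $\phi_i: B_i \to \mathbb{R}^d$ (where $d = \dim M$) with $\bar B_i \subset U$, and choose compact sets $K_i \subset B_i$ such that $\bigcup_i \mathrm{int}(K_i) \supset \bar U$. The inductive step will rely on the following local proposition: given a smooth $h: M \to \mathbb{R}$, a compact $K$ contained in a coordinate chart $B$ with $\bar B \subset U$, and $\eta > 0$, there exists a smooth $g \geq 0$ with $\mathrm{supp}(g) \subset B$, $\|g\|_{C^n(M)} < \eta$, and such that every critical point of $h + g$ lying in $K$ is non-degenerate.

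To prove the local proposition, fix a bump $\chi: M \to [0,1]$ supported in $B$ with $\chi \equiv 1$ on a neighborhood of $K$, and let $x_1, \ldots, x_d$ denote the coordinates on $B$. For parameters $a_0 > 0$ and $a = (a_1, \ldots, a_d) \in \mathbb{R}^d$, set
\begin{align*}
g_{a_0,a}(x) := \chi(x)\,\bigl(a_0 + a_1 x_1 + \cdots + a_d x_d\bigr),
\end{align*}
extended by zero outside $\mathrm{supp}(\chi)$. Letting $R := \sup_{\mathrm{supp}(\chi)} \|x\|$, the condition $a_0 > R\|a\|$ makes the affine factor positive on $\mathrm{supp}(\chi)$, so $g_{a_0,a} \geq 0$. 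The $C^n$-norm scales linearly with $(|a_0|, \|a\|)$ for fixed $\chi$, so by choosing these small we guarantee $\|g_{a_0,a}\|_{C^n} < \eta$. On the neighborhood of $K$ where $\chi \equiv 1$ the perturbation reduces to an affine function, so $\nabla g_{a_0,a} = a$ and $\mathrm{Hess}(g_{a_0,a}) = 0$. Hence a point $p \in K$ is critical for $h + g_{a_0,a}$ precisely when $\nabla h(p) = -a$, and its Hessian equals $\mathrm{Hess}(h)(p)$. Applying Sard's theorem to $\nabla h|_K: K \to \mathbb{R}^d$, almost every value is regular, so for a generic arbitrarily small $a$ (with $a_0$ chosen compatibly) every such critical point is non-degenerate.

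For the global construction, start with $f_0 := f$ and inductively define $f_i := f_{i-1} + g_i$, where $g_i$ is produced by the local proposition applied to $h = f_{i-1}$, $B = B_i$, $K = K_i$, and tolerance $\eta_i$. At step $i$, the local proposition guarantees non-degeneracy on $K_i$; for the previously handled critical points on $K_1 \cup \cdots \cup K_{i-1}$ (of which there are only finitely many, since they are isolated in a compact set), non-degeneracy is an open condition in the $C^2$-topology, so by shrinking $\eta_i$ we preserve the non-degeneracy of the perturbed critical points and rule out the appearance of new ones in this compact union (using the implicit function theorem near existing critical points, and a lower bound on $|\nabla f_{i-1}|$ away from them). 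Choosing $\sum_i \eta_i < \delta$ yields $f' := f_N$ with $\|f' - f\|_{C^n} < \delta$, $f' \geq f$, $\mathrm{supp}(f' - f) \subset \bigcup_i B_i \subset U$, and all critical points of $f'$ in $U \subset \bigcup_i K_i$ non-degenerate. I expect the only nontrivial point to be bookkeeping: arranging $\eta_i$ small enough to both keep the cumulative perturbation $C^n$-small and preserve the finite set of already-established non-degenerate critical points. This is controlled by the minimum modulus of the Hessian eigenvalues of $f_{i-1}$ on the prior compact sets, which is positive by non-degeneracy and depends continuously on the perturbation.
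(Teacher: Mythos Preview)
The paper does not give a proof of this lemma; it is stated as an ``elementary lemma about the abundance of Morse functions'' and left to the reader. Your Sard-theoretic argument with affine perturbations in charts is the standard one, and the device of adding a constant $a_0 > R\|a\|$ to secure $g_{a_0,a}\ge 0$ is exactly right.

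However, your covering step has a genuine gap: you propose to cover $\bar U$ by finitely many charts $B_i$ with $\bar B_i \subset U$ and $\bigcup_i \mathrm{int}(K_i)\supset \bar U$, but this is impossible unless $\bar U = U$. Worse, this is not just a slip in your write-up: the lemma as literally stated is false without further hypotheses. Take $M = S^1$, $f \equiv 0$, and $U$ an open arc. Any $f'$ with $\mathrm{supp}(f'-f) \subset U$ is compactly supported in $U$, hence vanishes identically on a sub-arc of $U$ near each end; every point of that sub-arc is a degenerate critical point of $f'$ inside $U$.

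What makes the lemma work in the paper is an implicit hypothesis present in every application (Proposition~\ref{c5pextension}, Lemma~\ref{c5lmorsesimul}): the function $f$ already has no critical points in a collar of $\partial U$, so one only needs non-degeneracy on a prescribed compact $K \subset U$. Under that hypothesis your argument goes through verbatim once you cover $K$ (not $\bar U$) by finitely many charts with closures in $U$ and run your induction. The bookkeeping you flag at the end is routine.
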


\begin{remark}
In the third bullet point of both Lemma \ref{c5lmorse} and Lemma \ref{c5lnond} we could change the direction of the inequality sign.
\end{remark}

\begin{proposition}\label{c5pextension}
Let $K$ be a compact subset of a closed symplectic manifold $M$, then we can find functions $h_i:M\to\mathbb{R}$, $i\in\mathbb{Z}_{> 0}$ such that\begin{itemize}

\item There exists mixing regions $N_K^i$ (with fillers) and a sequence of numbers $\Delta_i$ so that $\{(h_i\mid_{N_K^i},N_K^i,\Delta_i)\}$ is a boundary accelerator.
\item The critical points of $h_i$ inside the fillers are non-degenerate as time-1 orbits of $X_{h_i}$. In particular, they are non-degenerate critical points.
\item All non-constant one-periodic orbits of $X_{h_i}$ are contained in the interior of $N_K^i$.
\item There exists a sequence of positive numbers $\delta_i\to 0$ such that $-\delta_i<h_i\mid_{fill(\partial N_K^{i-})}\leq 0$, with equality only on the boundary, and for $x\in fill(\partial N_K^{i-})$, $h_{i-1}(x)\leq h_{i}(x)$.
\item $\Delta_i\leq h_i\mid_{fill(\partial N_K^{i+})}<\Delta_{i+1}$, with equality only on the boundary.
\end{itemize}
\end{proposition}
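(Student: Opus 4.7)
The plan is to construct the mixing regions, inner fillers, and outer fillers from nested compact domains approximating $K$, and to build each $h_i$ by smoothly gluing three explicit pieces: a small Morse function on the inner filler, a smooth transition on the mixing region, and a large constant plus a small Morse function on the outer filler. The smallness of the scaling parameters on the fillers will let Lemma \ref{c5lflat} control the dynamics there.

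First I would choose a decreasing sequence of compact domains $D_i$ with $\bigcap_i D_i = K$ and $D_{i+1} \subset \mathrm{int}(D_i)$, together with interpolating compact domains $D'_i$ satisfying $D_i \subset \mathrm{int}(D'_i)$ and $D'_i \subset \mathrm{int}(D_{i-1})$. Set $fill(\partial N_K^{i-}) := D_i$, $fill(\partial N_K^{i+}) := M \setminus \mathrm{int}(D'_i)$, and $N_K^i := \overline{D'_i \setminus D_i}$; the required nesting conditions of Definition \ref{c5dboundaryacc} follow, and in particular $N_K^i \subset \mathrm{int}(fill(\partial N_K^{(i+1)+}))$ since $D'_{i+1} \subset \mathrm{int}(D_i)$ is disjoint from $\overline{D'_i \setminus D_i}$. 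Fix bicollar neighborhoods of $\partial D_i$ and $\partial D'_i$ with collar coordinate $s$. Pick a Morse function $\alpha_i : D_i \to [-1, 0]$ with $\alpha_i^{-1}(0) = \partial D_i$, $d\alpha_i \neq 0$ there, and $\alpha_i$ linear in $s$ inside the inner collar; similarly pick $\gamma_i : M \setminus \mathrm{int}(D'_i) \to [0, 1)$ with $\gamma_i^{-1}(0) = \partial D'_i$. Pick a smooth $\psi_i : N_K^i \to [0, 1]$ with $\psi_i^{-1}(0) = \partial D_i$, $\psi_i^{-1}(1) = \partial D'_i$, no critical points on $\partial N_K^i$, and linear in $s$ near both boundary components. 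For parameters $\delta_i, \eta_i > 0$ small and $\Delta_i > 0$ large (to be fixed later), define $h_i := \delta_i \alpha_i$ on $D_i$, $h_i := \Delta_i + \eta_i \gamma_i$ on $M \setminus \mathrm{int}(D'_i)$, and on $N_K^i$ set $h_i$ equal to $\delta_i \cdot s$ near $\partial D_i$ (with the outward collar convention matching $\delta_i \alpha_i$) and to $\Delta_i - \eta_i \cdot s$ near $\partial D'_i$, interpolating smoothly through the middle so that $0 < h_i < \Delta_i$ there. The matching normal derivatives make $h_i$ smooth on $M$.

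Finally I verify the five bullet points. Since $h_i$ is constant on $\partial D_i$ and on $\partial D'_i$, these hypersurfaces are invariant under $X_{h_i}$, so every $1$-periodic orbit lies entirely in one of the three pieces. Applying Lemma \ref{c5lflat} to the pair $(D_i, \delta_i \alpha_i)$ for $\delta_i$ sufficiently small, and independently to $(M \setminus \mathrm{int}(D'_i), \eta_i \gamma_i)$ for $\eta_i$ sufficiently small, shows that all $1$-periodic orbits in the fillers are critical points of $\alpha_i$ (resp.\ $\gamma_i$), non-degenerate as time-$1$ orbits; this yields the second and third bullet points. The first bullet point then holds by construction once we choose $\Delta_i \to \infty$ strictly increasing. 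For the fourth, $D_i \subset \mathrm{int}(D_{i-1})$ together with the strict negativity of $\alpha_{i-1}$ on $\mathrm{int}(D_{i-1})$ gives, by compactness, a bound $h_{i-1}|_{D_i} \leq -\theta_{i-1}$ for some $\theta_{i-1} > 0$; inductively choosing $\delta_i < \theta_{i-1}$ with $\delta_i \to 0$ yields $h_{i-1} \leq -\theta_{i-1} < -\delta_i \leq h_i$ on $D_i$. The fifth bullet point is secured by taking $\Delta_{i+1} > \Delta_i + \eta_i$ at each step. The main technical issue is the simultaneous smooth gluing across the filler-mixing boundaries while keeping the filler-side normal derivatives (which scale linearly with $\delta_i$ and $\eta_i$) small enough for Lemma \ref{c5lflat} to deliver only constant $1$-periodic orbits in the fillers; this works precisely because our collar prescription forces the matching slopes to shrink as the scaling parameters shrink.
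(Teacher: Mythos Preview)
Your proof is correct and follows essentially the same approach as the paper: construct mixing regions from nested approximating domains, put small-scaled Morse functions on the fillers so that Lemma~\ref{c5lflat} forces all time-1 orbits there to be constant and non-degenerate, and interpolate across the mixing region. The only organizational difference is that the paper first extends the excitation functions arbitrarily to the fillers, then Morsifies (Lemma~\ref{c5lmorse}), and finally scales by multiplying by a cutoff $w$ that equals $\epsilon$ near the filler and $1$ near the top of the mixing region, whereas you pick the Morse functions on the fillers already scaled and then glue across the collars; both routes reach the same endpoint with the same key lemma.
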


\begin{proof}
Using compactness, it is elementary to find approximating domains for $K$. Then, using tubular neighborhood theorem, we construct boundary accelerators $\{(h_i\mid_{N_K^i},N_K^i,\Delta_i)\}$. Lastly, we extend the excitation functions to the fillers step by step. 
\begin{enumerate}
\item We extend the excitation functions to the fillers so that the extension is negative in the interior of the inner filler, and it is bigger than $\Delta_i$ in the interior of the outer filler.
\item By making perturbations supported inside the interior of the fillers we can make the functions Morse on the fillers, i.e. Lemma \ref{c5lmorse}.
\item Momentarily denote the function restricted to a small neighborhood of the inner filler by $f$. Let $\tilde{w}:[0,\Delta_i]\to [\epsilon,1]$ be a non-decreasing function which is equal to $\epsilon$ in a neighborhood of $0$, and to $1$ in a neighborhood of $\Delta_i$. We then extend $\tilde{w}\circ h_i$ to a function $w$ on $M$ by constants. If we multiply the function we had constructed in (2) by $w$, it still satisfies all the previous properties, but now its restriction to a (possibly smaller neighborhood) of the inner filler is $\epsilon f$. Now, recall Lemma \ref{c5lflat}. By choosing $\epsilon$ small enough we can make sure that there are no non-constant orbits contained in a neighborhood of the inner filler. We do the same for the outer filler, but this time we have to think of $\Delta_i$ as the zero level, and hence the rescaling results in $f$ becoming $\Delta_i+\epsilon(f-\Delta_i)$. Finally, notice that choosing $\epsilon$ small enough also achieves the extra non-degeneracy condition on the Morse critical points inside the fillers, as well as the last two conditions from the statement of the proposition.
\end{enumerate}
\end{proof}

\begin{proposition}\label{c5pfinal}
Let $i\geq 1$ and $h_i$ be as in Proposition \ref{c5pextension}. We also fix $n>0$ an integer, and $\tau>0$ a real number. 

We can find $H_i:M\times S^1\to\mathbb{R}$ such that
\begin{itemize}
\item $H_i=h_i$ on the fillers.
\item $H_i(int(N_K^i))\subset (0,\Delta_i)$.
\item $\|H_i-h_i\|_{C^n}<\tau$.
\item All one-periodic orbits of $X_{H_i}$ are non-degenerate.
\end{itemize}

\end{proposition}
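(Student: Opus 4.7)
The plan is to produce $H_i$ by a small, compactly supported perturbation of $h_i$ (viewed as a time-independent Hamiltonian on $M \times S^1$) inside the mixing region, using Lemma \ref{c5lnond} to arrange non-degeneracy of the one-periodic orbits while leaving the fillers and the overall size untouched.

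First I would locate all one-periodic orbits of $X_{h_i}$. Since $M$ is closed, the set of one-periodic orbits is compact, and decomposes into two pieces: (i) critical points of $h_i$ lying in the fillers, which by Proposition \ref{c5pextension} are already non-degenerate as time-1 orbits; and (ii) all other one-periodic orbits, which pass through $N_K^i$. By Proposition \ref{c5pextension} the non-constant orbits of $X_{h_i}$ lie in $\operatorname{int}(N_K^i)$, and the constant orbits through $\operatorname{int}(N_K^i)$ are critical points of $h_i$ there. Hence the piece (ii) is a compact subset of $\operatorname{int}(N_K^i)$, so it has positive distance to $\partial N_K^i$. Choose an open $U \subset \operatorname{int}(N_K^i)$ with compact closure $\overline{U} \subset \operatorname{int}(N_K^i)$ that contains this compact set of orbits.

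Next, since $h_i$ maps $\overline{U}$ into a compact subset of $(0,\Delta_i)$, there is $\tau' \in (0,\tau)$ such that any $C^n$-perturbation of $h_i$ of size less than $\tau'$ supported in $U \times S^1$ still sends $\overline{U}$ into $(0,\Delta_i)$. Apply Lemma \ref{c5lnond} with Hamiltonian $h_i$, open set $U$, order $n$, and tolerance $\tau'$ to obtain $H_i: M \times S^1 \to \mathbb{R}$ with $\operatorname{supp}(H_i - h_i) \subset U \times S^1$, $\|H_i - h_i\|_{C^n} < \tau' < \tau$, and every one-periodic orbit of $X_{H_i}$ meeting $U$ non-degenerate.

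Finally I verify the four bullet points. The support condition gives $H_i = h_i$ off $U \times S^1$, in particular on both fillers. For the image condition, on $\operatorname{int}(N_K^i) \setminus U$ we have $H_i = h_i$ which takes values in $(0,\Delta_i)$ (the excitation function attains $0,\Delta_i$ only on $\partial N_K^i$), and on $\overline{U} \times S^1$ the choice of $\tau'$ keeps $H_i$ inside $(0,\Delta_i)$. The $C^n$-bound is immediate. For non-degeneracy: any one-periodic orbit of $X_{H_i}$ that meets $U$ is non-degenerate by the lemma; any one that avoids $U$ agrees with an orbit of $X_{h_i}$ avoiding $U$, so by our choice of $U$ it must be a critical point of $h_i$ in a filler, which is non-degenerate as a time-1 orbit by Proposition \ref{c5pextension}. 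The only real subtlety is picking $U$ so that it simultaneously contains every one-periodic orbit passing through $\operatorname{int}(N_K^i)$ yet stays compactly inside $\operatorname{int}(N_K^i)$, and this is handled by the compactness argument above.
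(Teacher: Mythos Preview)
Your proof is correct and follows exactly the same approach as the paper: apply Lemma~\ref{c5lnond} with $U$ an open subset of $\operatorname{int}(N_K^i)$ containing all the one-periodic orbits of $h_i$ that lie there and having closure disjoint from $\partial N_K^i$. The paper's proof is a single sentence to this effect; you have simply spelled out the compactness argument for the existence of such a $U$ and the verification of the four bullet points in more detail.
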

\begin{proof}
We apply Lemma \ref{c5lnond} with $U$ being an open subset of interior of $N_K^i$ that contains all the $1$-periodic orbits of $h_i$, and whose closure does not intersect the boundary of $N_K^i$.
\end{proof}

In particular, a sequence of $H_i$'s, for $i\geq 1$, as constructed in Proposition \ref{c5pfinal} form a non-degenerate cofinal sequence for $K$. See Figure \ref{c5fsummary} for a summary of this procedure that constructs a cofinal sequence from boundary accelerators. Note that we can easily turn this into an acceleration data using a partitions of unity of $[0,1]$ as in the proof of Proposition \ref{Floertheoreticprop}.

\begin{center}
\begin{figure}
\includegraphics[scale=0.2]{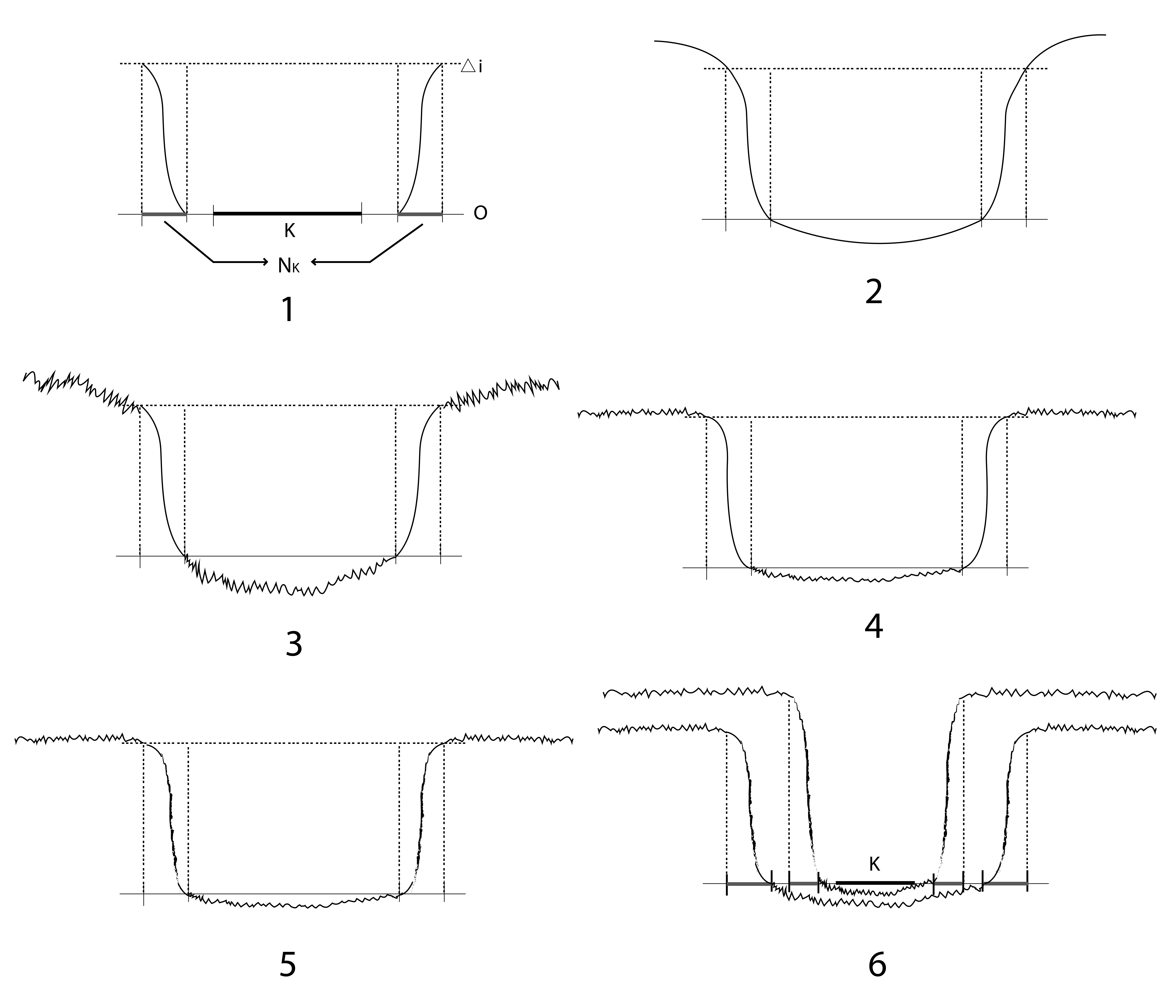}
\caption{This is a summary of the construction of a cofinal sequence for $K$ via boundary accelerators. 1) Boundary accelerators, 2) Extending excitation functions to smooth functions on the entire manifold, 3) Morsifying inside the fillers without changing the function along the mixing regions, 4) Scaling the functions in a neighborhood of the fillers, so that the non-constant one-periodic orbits are forced to lie inside the mixing region, 5) Making the non-constant orbits non-degenerate (note that in reality we start using time dependent Hamiltonians at this step), 6) Two Hamiltonians constructed in this way for $K$ to illustrate how the cofinal family looks. }
\label{c5fsummary}
\end{figure}
\end{center}

\begin{remark}
The main gain from this construction is that we obtained an acceleration data with no non-constant orbits outside of the mixing regions while inside the mixing regions changing the excitation functions only in very controlled ways from what they were originally. A notable challenge in the construction was to achieve non-degeneracy.


In the remaining sections, we will have to go through this construction again, trying to do it for two subsets simultaneously, while satisfying certain extra conditions related to Proposition \ref{c5pmax}. Roughly speaking, the excitation functions will satisfy these extra conditions by the assumptions, and our goal will be to not ruin it, while also achieving the necessary non-degeneracy conditions. 
\end{remark}

\subsection{Non-intersecting boundaries}\label{c5snon}
In this subsection we investigate the case when $X$ and $Y$ are two compact domains with disjoint boundaries.
\begin{definition}
We say that boundary accelerators $(f_i^X,N_X^i,\Delta_i^X)$ and $(f_i^Y,N_Y^i,\Delta^Y_i)$ are \textbf{compatible} if, for all $i\geq 1$, \begin{itemize}
\item $N_X^i$ and $N_Y^i$ are disjoint
\item $\Delta^X_i=\Delta^Y_i$
\end{itemize}
\end{definition}


Let us start with a slight strengthening of a fundamental result of Whitney.

\begin{lemma}\label{c5lwhitney}
Let $C$ be a closed subset of $\mathbb{R}^n$. Then, we can find a smooth function $f:\mathbb{R}^n\to \mathbb{R}$ such that: \begin{itemize}
\item $f\geq 0$
\item $f^{-1}(0)=C$
\item $f$ and all of its iterated partial derivatives vanish at all points of $C$
\end{itemize}
We can also replace  $f\geq 0$ with $f\leq 0$ in this statement. Note also that these properties are still satisfied if we multiply $f$ by a positive real number.
\end{lemma}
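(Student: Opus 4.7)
The plan is to deduce the stronger form from the classical version of Whitney's theorem, which provides a smooth non-negative $g:\mathbb{R}^n\to\mathbb{R}$ with $g^{-1}(0)=C$ (without any a priori control on derivatives along $C$), and then apply the standard $e^{-1/t}$ flattening trick to kill all derivatives on $C$ at once. The $f\leq 0$ version follows by negating.

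So first I would cite (or briefly recall) the classical Whitney theorem: for any closed $C\subset\mathbb{R}^n$ there is a smooth $g\geq 0$ with $g^{-1}(0)=C$. The usual construction covers the open set $\mathbb{R}^n\setminus C$ by a locally finite family of open balls $B_i$ whose radii are comparable to their distance from $C$, picks bump functions $\phi_i$ supported in $B_i$ and positive on $B_i$, and defines $g=\sum c_i\phi_i$ with $c_i>0$ chosen so small that $c_i\|\phi_i\|_{C^k}\leq 2^{-i}$ for all $k\leq i$; convergence in every $C^k$-norm gives smoothness of $g$, while $g>0$ exactly on $\mathbb{R}^n\setminus C$.

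Next, define $\phi:\mathbb{R}\to\mathbb{R}$ by $\phi(t)=e^{-1/t}$ for $t>0$ and $\phi(t)=0$ for $t\leq 0$. A standard computation (by induction, $\phi^{(k)}(t)=P_k(1/t)e^{-1/t}$ on $t>0$ for a polynomial $P_k$) shows that $\phi$ is smooth on $\mathbb{R}$ and $\phi^{(k)}(0)=0$ for every $k\geq 0$. Then I set
\[
f := \phi\circ g.
\]
Since $g$ is smooth and $\phi$ is smooth, $f$ is smooth on all of $\mathbb{R}^n$. Plainly $f\geq 0$, and $f(x)=0$ iff $g(x)\leq 0$ iff $g(x)=0$ iff $x\in C$, so $f^{-1}(0)=C$.

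Finally I verify flatness along $C$. For any $x_0\in C$, we have $g(x_0)=0$, so by the Faà di Bruno formula, any iterated partial derivative $\partial^\alpha f(x_0)$ is a finite sum of terms of the form
\[
\phi^{(k)}(g(x_0))\cdot\prod_j \partial^{\beta_j}g(x_0),
\]
with $k\geq 1$. Every such term contains the factor $\phi^{(k)}(0)=0$, hence $\partial^\alpha f(x_0)=0$ for all multi-indices $\alpha$. This gives the desired vanishing of all iterated partial derivatives on $C$. The non-positive version is obtained by replacing $f$ by $-f$, and multiplying by a positive constant clearly preserves all stated properties. There is no real obstacle here; the only thing one must be a little careful about is invoking the classical Whitney theorem in the precise form that supplies non-negativity together with $g^{-1}(0)=C$.
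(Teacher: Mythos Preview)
Your proof is correct and follows essentially the same approach as the paper: invoke Whitney's theorem to obtain a nonnegative smooth function vanishing exactly on $C$, then post-compose with a smooth function flat at $0$ to force all derivatives to vanish on $C$. The only cosmetic difference is that the paper first squares Whitney's function to ensure nonnegativity and then composes with an abstract flat-at-zero $\rho$, whereas you cite a nonnegative Whitney function directly and use the explicit $e^{-1/t}$.
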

\begin{proof}
It is a classical result of Whitney that we can find a $\tilde{f}$ satisfying the second bullet point. Then, we can take $\hat{f}=(\tilde{f})^2$ and satisfy the first two bullet points. Finally, let us fix a bijective smooth function $\rho: [0,\infty)\to [0,\infty)$ such that $\rho^{(n)}(0)=0$ for all $n\geq 0$, and choose $f=\rho\circ\hat{f}$.
\end{proof}

Here is a two function version of Lemma \ref{c5lmorse}.

\begin{lemma}\label{c5lmorsesimul}
Let $D$ be a compact manifold with boundary, and  $D', D''$ be codimension 0 compact submanifolds with boundary such that $D=D'\cup D''$ and $\partial D'\cap \partial D''$ is empty. Assume that we have smooth functions $f:D'\to \mathbb{R}$ and $g:D''\to \mathbb{R}$ such that:
\begin{itemize}
\item $f\leq 0$ with $f^{-1}(0)=\partial D'.$
\item $g\leq 0$ with $g^{-1}(0)=\partial D''.$
\item $f$ and $g$ have no critical points along the boundaries of their domain of definition
\end{itemize}

Let $V\subset int(\{x\in D'\cap D''| f(x)=g(x)\}) $ be a compact set such that $D'-V$ and $D''-V$ both has no connected components where $f-g$ (in its domain of definition inside this connected component) takes strictly positive and strictly negative values or is identically zero.

Let us also choose a positive integer $n$, and positive real number $\delta$. Then, there exists smooth functions $\tilde{f}:D'\to \mathbb{R}$ and $\tilde{g}:D''\to \mathbb{R}$ with the following properties:

\begin{itemize}
\item $\{x\in D'\cap D''| \tilde{f}(x)=\tilde{g}(x)\}=V$.
\item $supp(f-\tilde{f})\subset int(D')$ and $supp(g-\tilde{g})\subset int(D'')$.
\item $\|f-\tilde{f}\|_{C^n(D')}<\delta$ everywhere on $D'$ and $\|g-\tilde{g}\|_{C^n(D'')}<\delta$ everywhere on $D''$, where the $C^n$-norms are defined using a Riemannian metric on $D$.
\item $\tilde{f}< 0$ on $int(D')$ and $\tilde{g}< 0$ on $int(D'')$.
\item $\tilde{f}$ and $\tilde{g}$ are both Morse functions.
\end{itemize}
\end{lemma}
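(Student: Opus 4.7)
The plan is to construct $\tilde f,\tilde g$ by three successive perturbations: (i) a \emph{joint} small perturbation supported in a neighborhood of $V$ that makes both $f$ and $g$ Morse there while preserving the coincidence set (since the same function is added to both); (ii) a perturbation \emph{flat} along $V$ and sign-controlled by the component hypothesis, which shrinks the coincidence set to exactly $V$ without destroying the Morse property of (i); and (iii) an ordinary Morsification on the complement of a neighborhood of $V$ via Lemma \ref{c5lmorse}.

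For (i), fix nested open sets $V\subset U_0\Subset U\Subset \mathrm{int}(\{f=g\})$ in $D$; this uses the hypothesis $V\subset\mathrm{int}(\{f=g\})$. The Sard--Smale argument underlying Lemma \ref{c5lmorse}, run on the Banach space of $C^n$-small smooth bumps supported in $U$ and applied jointly to the two generic conditions ``$f+h$ Morse on $U_0$'' and ``$g+h$ Morse on $U_0$'' (each residual, hence their intersection is residual), yields a single $h$ of $C^n$-norm less than $\delta/3$ so that $f^{(0)}:=f+h|_{D'}$ and $g^{(0)}:=g+h|_{D''}$ are Morse on $U_0$. Because the same $h$ is added to both, $f^{(0)}-g^{(0)}=f-g$ everywhere, so the coincidence set is unchanged, and the negativity and boundary conditions persist when $\delta$ is small.

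For (ii), the component hypothesis produces a locally constant sign function $\sigma:(D'\cap D'')-V\to\{\pm 1\}$ with $\sigma\cdot(f-g)\geq 0$; extend it by partition of unity to a smooth $\tilde\sigma$ on $D$ with $\tilde\sigma\cdot\sigma>0$ on $(D'\cap D'')-V$ (the values of $\tilde\sigma$ along $V$ are immaterial for what follows). By Lemma \ref{c5lwhitney} pick $\phi\geq 0$ on $D$ with $\phi^{-1}(0)=V$ and infinite-order vanishing along $V$; cutting off, we may further assume $\phi=0$ near $\partial D'\cup\partial D''$. This is legitimate because $V$ is disjoint from $\partial D'$ and $\partial D''$: if $x\in V\cap\partial D'$ then $f(x)=0$ would force $g(x)=0$, hence $x\in\partial D''$, contradicting $\partial D'\cap\partial D''=\emptyset$. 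For small $\epsilon>0$ set
\[
p:=\tfrac{\epsilon}{2}\tilde\sigma\phi|_{D'},\qquad q:=-\tfrac{\epsilon}{2}\tilde\sigma\phi|_{D''},\qquad f^{(1)}:=f^{(0)}+p,\qquad g^{(1)}:=g^{(0)}+q.
\]
Then $f^{(1)}-g^{(1)}=(f-g)+\epsilon\tilde\sigma\phi$ on $D'\cap D''$; on $(D'\cap D'')-V$ both summands share the sign $\sigma$ and the second is strictly nonzero, so the sum is strictly nonzero there, while on $V$ it vanishes. Hence $\{f^{(1)}=g^{(1)}\}=V$. Because $\phi$ is flat along $V$, the Morse critical points of $f^{(0)},g^{(0)}$ inside $U_0$ remain Morse critical points of $f^{(1)},g^{(1)}$; small $\epsilon$ preserves $C^n$-closeness, negativity, and the boundary conditions.

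For (iii), pick a compact $K\Subset U_0$ with $K\cup(\mathrm{int}(D')\setminus K)=\mathrm{int}(D')$ and analogously on $D''$, and apply Lemma \ref{c5lmorse} to $f^{(1)}$ on $\mathrm{int}(D')\setminus K$ and to $g^{(1)}$ on $\mathrm{int}(D'')\setminus K$; taking the new perturbations smaller in $C^n$-norm than $\inf_{(D'\cap D'')\setminus U_0}|f^{(1)}-g^{(1)}|>0$ (positive by compactness) keeps the coincidence set equal to $V$. Combined with the Morseness on $K\subset U_0$ from stage (i), the final $\tilde f,\tilde g$ are Morse on the full interiors. The main obstacle is stage (ii): realizing the locally constant $\sigma$ as a smooth $\tilde\sigma$ on $D$ simultaneously compatibly with the boundary and negativity constraints. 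This works because the closures of $\sigma^{-1}(+1)$ and $\sigma^{-1}(-1)$ inside $(D'\cap D'')-V$ are disjoint (local constancy) and because the flatness of $\phi$ on $V$ renders the values of $\tilde\sigma$ along $V$ irrelevant, so a standard partition of unity subordinate to the component structure does the job.
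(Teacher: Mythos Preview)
Your three-stage strategy is essentially the one the paper uses (joint Morsification on the equality locus, separation via a Whitney-type function whose sign is dictated by the component hypothesis, and Morsification on the remainder), so the architecture is sound. There are, however, two genuine gaps in the execution.

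First, the sentence ``Because $\phi$ is flat along $V$, the Morse critical points of $f^{(0)},g^{(0)}$ inside $U_0$ remain Morse critical points of $f^{(1)},g^{(1)}$'' is not correct as stated. Flatness of $\phi$ along $V$ only tells you that a critical point \emph{lying in $V$} survives unchanged; for a critical point $c\in U_0\setminus V$ the perturbation $p=\tfrac{\epsilon}{2}\tilde\sigma\phi$ has no reason to have vanishing gradient at $c$, so $c$ will in general move. What you actually need in stage~(iii) is that $f^{(1)}$ is Morse on the compact set $K$, and this follows from ordinary Morse stability under $C^2$-small perturbations (choose $\epsilon$ small relative to $\inf_{\partial K}|\nabla f^{(0)}|$ and to the Hessian eigenvalues at the finitely many critical points of $f^{(0)}$ in $\overline K$), not from flatness.

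Second, you do not control the sign of the stage~(iii) perturbation near $\partial D'$ and $\partial D''$, so the conclusion $\tilde f<0$ on $\mathrm{int}(D')$ is unjustified: $f$ can be arbitrarily close to $0$ near $\partial D'$, and an uncontrolled $C^n$-small bump supported in $\mathrm{int}(D')\setminus K$ may push $\tilde f$ nonnegative there. The paper handles this by first setting aside closed collar neighborhoods $N'$, $N''$ of $\partial D'$, $\partial D''$ on which $f,g$ have no critical points (and which are disjoint from $\{f=g\}$), and then performing all Morsifications inside $D'\setminus(N'\cup V)$ and $D''\setminus(N''\cup V)$. You should do the same. Relatedly, your bound in~(iii) should be $\inf_{(D'\cap D'')\setminus\mathrm{int}(K)}|f^{(1)}-g^{(1)}|$, which is positive only once you have arranged $V\subset\mathrm{int}(K)$.

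A smaller point: the extension of $\sigma$ to a smooth $\tilde\sigma$ with $\tilde\sigma\cdot\sigma>0$ on $(D'\cap D'')\setminus V$ is not really a partition-of-unity construction; the clean way (and what the paper in effect does, one function at a time) is to take Whitney functions $\phi_{\pm}\geq 0$ with $\phi_{\pm}^{-1}(0)=D\setminus\sigma^{-1}(\pm1)$ and set $\tilde\sigma\phi:=\phi_+-\phi_-$, which already vanishes to infinite order on $V$ and has the correct sign on each component.
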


\begin{proof}
Let $N'$, and $N''$ be closed, non-intersecting, and disjoint from $\{x\in D'\cap D''| f(x)=g(x)\}$ collar neighborhoods of $\partial D'$, and $\partial D''$ such that $f$, and $g$ has no critical points on $N'$, and $N''$ respectively. Using Lemma \ref{c5lmorse} inside $int(\{x\in D'\cap D''| f(x)=g(x)\})$, we can simultaneously turn $f$ and $g$ into functions $\hat{f}$ and $\hat{g}$ with only non-degenerate critical points inside $int(\{x\in D'\cap D''| f(x)=g(x)\})=int(\{x\in D'\cap D''| \hat{f}(x)=\hat{g}(x)\})$. 

Now consider the union $U'$ of the connected components of $D'-(N'\cup V)$, where $\hat{g}$ is defined somewhere and $\hat{f}\leq \hat{g}$.  Let $C':=D'-U'$. We choose a function $h'$ as in Lemma \ref{c5lwhitney} for $C'$ with non-positive values. We then apply Lemma \ref{c5lmorse} inside $U'$ to $\hat{f}+h'$, so that the new function is smaller, and inside the connected components of $D'-(N'\cup V)$, where $\hat{g}\leq \hat{f}$, so that the new function is larger. In the components where $\hat{g}$ is not defined we also apply apply Lemma \ref{c5lmorse} but we do not have to stipulate that the new functions is smaller or larger. This gives the function $\tilde{f}$. Note that $\tilde{f}$ is indeed a Morse function as by construction all the critical points are non-degenerate on $D'-(N'\cup V)$ and $V$, and there are no critical points on $N'$.

By exactly the same procedure inside $D''$ with $\tilde{f}$ and $\hat{g}$ we obtain $\tilde{g}$. Note that this is possible because at any point $\tilde{f}\leq \hat{g}$ iff $\hat{f}\leq \hat{g}$ (and similarly for $\geq$). 

It is clear that this procedure can be done so that the third and fourth bullet points are satisfied.
\end{proof}

For two functions defined on the same topological space, we call the set of points at which the functions take the same value the \textbf{region of equality}.

\begin{proposition}
We can find $h_i^X$ and $h_i^Y$ as in Proposition \ref{c5pextension} such that
\begin{itemize}
\item The corresponding boundary accelerators are compatible
\item The locus where $h_i^X=h_i^Y$ is satisfied is a compact domain, for every $i\geq 1$.
\item $min(h_i^X, h_i^Y)\leq min(h_{i+1}^X, h_{i+1}^Y)$ on $fill(\partial N_X^{(i+1)-})\cap fill(\partial N_Y^{(i+1)-})$, for every $i\geq 1$.
\end{itemize}
\end{proposition}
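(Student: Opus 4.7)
The plan is to choose disjoint mixing regions exploiting $\partial X \cap \partial Y = \varnothing$, and then extend the excitation functions so that the region of equality becomes a prescribed compact codim-$0$ submanifold with boundary.

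Since $\partial X$ and $\partial Y$ are disjoint compact hypersurfaces, I will fix disjoint tubular neighborhoods $T_X$ and $T_Y$ of them. For each $i \geq 1$, pick nested compact approximating domains $D_X^i \supset X$ and $D_Y^i \supset Y$ with $\partial D_X^i \subset T_X$ and $\partial D_Y^i \subset T_Y$, take thin disjoint collars $N_X^i \subset T_X$ and $N_Y^i \subset T_Y$ of these boundaries as mixing regions, fix a common strictly increasing sequence $\Delta_i \to \infty$, and pick any excitation functions on $N_X^i, N_Y^i$ as in Definition \ref{c5dboundaryacc}. This already gives compatible boundary accelerators. The inequality constraints of Proposition \ref{c5pextension} force strict inequality between $h_i^X$ and $h_i^Y$ on most of $M$; the only regions where equality is possible are the inner filler intersection $D_i := D_X^i \cap D_Y^i$ and the outer filler intersection $M_i' := fill(\partial N_X^{i+}) \cap fill(\partial N_Y^{i+})$.

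The boundary of $D_i$ decomposes, by disjointness of $\partial D_X^i$ and $\partial D_Y^i$, as $\partial_X^i := \partial D_X^i \cap D_Y^i$, where $h_i^X = 0 > h_i^Y$ is forced, together with $\partial_Y^i := D_X^i \cap \partial D_Y^i$, where $h_i^Y = 0 > h_i^X$ is forced. So any admissible $h_i^X, h_i^Y$ must make $h_i^X - h_i^Y$ have opposite signs near these two pieces. I will therefore choose a compact codim-$0$ submanifold with boundary $V_i \subset int(D_i)$ that separates $\partial_X^i$ from $\partial_Y^i$ inside $D_i$: take a smooth function $\mu_i: D_i \to [0,1]$ equal to $0$ on $\partial_X^i$ and to $1$ on $\partial_Y^i$, and set $V_i := \mu_i^{-1}([c_1, c_2])$ for regular values $0 < c_1 < c_2 < 1$. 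Applying the analogous analysis to $M_i'$, whose boundary splits into $\partial N_X^{i+} \cap fill(\partial N_Y^{i+})$ (forcing $h_i^X \leq h_i^Y$) and its $X/Y$-swap, choose $U_i \subset int(M_i')$ analogously. Then $V_i \sqcup U_i$ is the proposed region of equality, a compact domain by construction.

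The core step is to construct $h_i^X, h_i^Y$ satisfying Proposition \ref{c5pextension} with $\{h_i^X = h_i^Y\} = V_i \sqcup U_i$. On $D_i$, I will first build initial extensions of the excitation functions that agree on an open neighborhood of $V_i$ by setting both equal to a common negative constant there, and then add Whitney-type bump functions (cf.~Lemma \ref{c5lwhitney}), supported in the component of $D_i - V_i$ adjacent to $\partial_X^i$ for $h_i^X$, and in the component adjacent to $\partial_Y^i$ for $h_i^Y$, to bring each function smoothly up to its required boundary value $0$ without overshooting. Because $V_i$ separates $\partial_X^i$ from $\partial_Y^i$, the two deviation supports lie in disjoint components of $D_i - V_i$, so in each component of $D_i - V_i$ the sign of $h_i^X - h_i^Y$ is consistently forced by the adjacent boundary piece. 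I then invoke Lemma \ref{c5lmorsesimul} to perturb this initial data into Morse functions whose region of equality is exactly $V_i$, while keeping all the boundary-filler conditions. The same recipe applied to $\Delta_i - h$ (flipping all inequalities) produces $h_i^X, h_i^Y$ on $M_i'$ with equality region $U_i$. The final monotonicity condition $\min(h_i^X, h_i^Y) \leq \min(h_{i+1}^X, h_{i+1}^Y)$ on $D_X^{i+1} \cap D_Y^{i+1}$ follows automatically from the fourth bullet of Proposition \ref{c5pextension}, which already gives $h_i^X \leq h_{i+1}^X$ on $D_X^{i+1}$ and $h_i^Y \leq h_{i+1}^Y$ on $D_Y^{i+1}$; one just takes the common constant on $V_i$ (and on $U_i$) to vary monotonically with $i$. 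The hard part is the bookkeeping in the central construction: producing the initial pair on $D_i$ with a common constant region containing $V_i$, correct sign in each component of the complement, correct boundary behavior on the two disjoint boundary pieces, and enough room in the interior of the agreement set to feed into Lemma \ref{c5lmorsesimul}.
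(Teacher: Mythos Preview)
Your proposal is correct and follows essentially the same approach as the paper: start from compatible boundary accelerators with disjoint mixing regions, extend the excitation functions to the fillers so that the equality locus is a prescribed compact domain sitting in the intersection of the fillers, apply Lemma~\ref{c5lmorsesimul} to Morsify while controlling that equality locus, and handle the outer fillers by the same recipe with the roles of $0$ and $\Delta_i$ swapped. The paper is terser about the choice of the equality region (it just takes the complement of a collar of $\partial\bigl(fill(\partial N_X^{i-})\cap fill(\partial N_Y^{i-})\bigr)$), whereas you give a more explicit separating construction via $\mu_i$; both work. One small point the paper makes explicit that you leave implicit: after Morsification one still needs the flattening step (Step~(3) of Proposition~\ref{c5pextension}) to push non-constant orbits into the mixing regions, and this rescaling must be done \emph{compatibly} on $h_i^X$ and $h_i^Y$ so that the equality locus is unchanged---this is automatic if you use the same scaling function on the common filler regions, but it is worth saying.
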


\begin{proof}
We start with any pair of compatible boundary accelerators. We extend the excitation functions to smooth functions as in Step (1) of the proof of Proposition \ref{c5pextension} so that the extensions are the same along a compact domain $P$. Note that $P$ is necessarily a subset of the union of the interiors of $fill(\partial N_X^{i-})\cap fill(\partial N_Y^{i-})$ and $fill(\partial N_X^{i+})\cap fill(\partial N_Y^{i+})$, and we can take it to be the complement of collar neighborhoods of the boundaries of these domains. 

We now want to perturb the excitation functions to achieve the Morse property (Step (2) of Proposition \ref{c5pextension}).  We take a compact domain $P'\subset P$ such that $P-P'$ is a collar neighborhood of $\partial P$ and apply Lemma \ref{c5lmorsesimul} inside $fill(\partial N_X^{i-})\cup fill(\partial N_Y^{i-})$ and $fill(\partial N_X^{i+})\cup fill(\partial N_Y^{i+})$. The lemma as it is written only directly applies to $fill(\partial N_X^{i-})\cup fill(\partial N_Y^{i-})$ but the procedure for $fill(\partial N_X^{i+})\cup fill(\partial N_Y^{i+})$ involves exactly same steps in the spirit of the proof of Proposition \ref{c5pextension}. 

Final step is to make the functions very flat as in Step (3) of the proof of Proposition \ref{c5pextension} compatibly so that the region of equality stays the same. The last bullet point of the statement can be easily satisfied in the process.
\end{proof}

As the last step, we independently apply Lemma \ref{c5lnond} to obtain $H_i^X$ and $H_i^Y$ as in Proposition \ref{c5pfinal}, using that the mixing regions are disjoint. 

\begin{proposition}\label{c5pblow}
\begin{enumerate}
\item The sequence of functions $min(H_i^X, H_i^Y)$, $i\geq 1$, form a non-degenerate cofinal family for $X\cup Y$. Similarly with $max$ for the intersection.
\item For every $i\geq 1$, $H_i^X$ and $H_i^Y$ satisfy all the conditions in Proposition \ref{c5pmax}.
\end{enumerate}
\end{proposition}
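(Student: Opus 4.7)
The strategy is to first establish part (2)---verifying the three conditions of Proposition \ref{c5pmax} for the pair $(f,g) = (H_i^X, H_i^Y)$---and then deduce part (1) as an immediate consequence. For conditions (1) and (2), I will invoke the first bullet of Lemma \ref{lemma12}. The previous proposition guarantees that $\{h_i^X = h_i^Y\}$ is a compact codimension-0 domain, disjoint from both mixing regions $N_X^i$ and $N_Y^i$. The non-degeneracy perturbations of Lemma \ref{c5lnond} producing $H_i^X$ and $H_i^Y$ are supported in these respective mixing regions, and one may take them with $C^0$-norm small enough that on $N_X^i \cup N_Y^i$---where $h_i^X - h_i^Y$ is bounded away from zero by the range inequalities of Proposition \ref{c5pextension}---the sign of $H_i^X - H_i^Y$ matches that of $h_i^X - h_i^Y$. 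Consequently $\{H_i^X = H_i^Y\} \subset M \times S^1$ equals $\{h_i^X = h_i^Y\} \times S^1$, still a codimension-0 topological submanifold with boundary, and Lemma \ref{lemma12} yields (1) and (2).

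For condition (3), the argument rests on confinement of non-constant $1$-periodic orbits. By Proposition \ref{c5pfinal}, such orbits of $X_{H_i^X}$ lie in $N_X^i \times S^1$, and similarly for $X_{H_i^Y}$. Since $N_X^i$ is disjoint from $N_Y^i$ (boundary included), we have $N_X^i = (N_X^i \cap \mathrm{int}(fill(\partial N_Y^{i-}))) \sqcup (N_X^i \cap \mathrm{int}(fill(\partial N_Y^{i+})))$; the value-range constraints (fourth and fifth bullets of Proposition \ref{c5pextension}) force $h_i^X > 0 > h_i^Y$ strictly on the first piece and $h_i^X < \Delta_i < h_i^Y$ strictly on the second. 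Thus each connected component of $N_X^i$ lies entirely in $U$ or in $V$, a property preserved by the small perturbation, so any non-constant orbit of $X_{H_i^X}$ is confined to a single component and therefore to $U$ or $V$ exclusively; symmetrically for $X_{H_i^Y}$. For $X_{\min}$ and $X_{\max}$ I will use $X_{\min} = X_{H_i^X}$ on $\overline{U}$ and $X_{\min} = X_{H_i^Y}$ on $\overline{V}$ (and symmetrically for $\max$): an orbit contained entirely in one closed half-region reduces to the previous case. To rule out an orbit transiting $U \leftrightarrow V$ through the equality locus $E$, note that $E$ sits inside the fillers, where Step (3) of Proposition \ref{c5pextension} (applied compatibly by the previous proposition) rescales $h_i^X = h_i^Y$ by a small factor $\epsilon > 0$; choosing $\epsilon$ sufficiently small a priori relative to the width of $E$ makes $X_{\min}$ in a neighborhood of $E$ of magnitude $O(\epsilon)$, so that no $1$-periodic trajectory has time to cross $E$ within one period.

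Part (1) follows immediately: Proposition \ref{c5pmax} supplies non-degeneracy of each $\min(H_i^X, H_i^Y)$ and $\max(H_i^X, H_i^Y)$; monotonicity of these sequences in $i$ is inherited from the monotonicity of $H_i^X$ and $H_i^Y$; and the pointwise limits required by Lemma \ref{c3lcofinal} are immediate (on $X \cup Y$ at least one factor of $\min$ tends to $0$ while off $X \cup Y$ both factors tend to $+\infty$; symmetrically for $\max$ and $X \cap Y$). I expect the main obstacle to be the last point in the verification of (3)---the quantitative exclusion of $X_{\min}$- and $X_{\max}$-orbits that would straddle the equality locus---which requires the scaling parameter $\epsilon$ in the excitation construction to be tuned a posteriori to the geometry of $E$; the rest of the argument is bookkeeping on the structures established in the preceding propositions.
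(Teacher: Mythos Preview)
Your argument is correct and matches the paper's approach closely: both establish conditions (1)--(2) of Proposition~\ref{c5pmax} via the first bullet of Lemma~\ref{lemma12} (the equality locus being a codimension-$0$ domain times $S^1$), both deduce part (1) from part (2) together with monotonicity and Lemma~\ref{c3lcofinal}, and for condition (3) both confine non-constant orbits of $H_i^X$ and $H_i^Y$ to the (disjoint) mixing regions, which lie away from the equality locus $D$ and hence each sit in a single one of $\tilde U, \tilde V$.

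Where you go further than the paper is in condition (3) for $X_{\min}$ and $X_{\max}$. The paper's sentence ``the mixing regions, which contain all the non-constant orbits, are disjoint from $D$'' does not spell out why non-constant orbits of $\min$ and $\max$ are themselves so confined; your worry about a $1$-periodic orbit straddling $D$ is precisely the point being elided. Your proposed fix is the right one and is not circular: the region $D$ is fixed at the Morsification stage (the subsequent flattening is done ``compatibly so that the region of equality stays the same''), so the positive number $d=d(\overline{\tilde U},\overline{\tilde V})$ is known before the flattening parameter $\epsilon$ is chosen. On $\operatorname{int}(D)$ one has $X_{\min}=X_{h_i^X}=X_{h_i^Y}$, whose magnitude after flattening is $O(\epsilon)$; hence any arc of a $1$-periodic orbit lying in $\operatorname{int}(D)$ has length at most $C\epsilon<d$ once $\epsilon$ is small, and so cannot connect $\overline{\tilde U}$ to $\overline{\tilde V}$. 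The ``main obstacle'' you flag is therefore genuine but routine once noticed, and no a~posteriori tuning is needed beyond this single inequality.
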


\begin{proof}
For every $i\geq 1$, we arranged $H_i^X$ and $H_i^Y$ so that the region of equality is of the form $D\times S^1$ for some compact domain $D$. In particular, we can use the first bullet point of Lemma \ref{lemma12}.

We have that $min(H_i^X, H_i^Y)$ is smooth and non-degenerate by Proposition \ref{c5pmax}, and $min(H_i^X, H_i^Y)\leq min(H_{i+1}^X, H_{i+1}^Y)$ by construction. Cofinality follows from Section \ref{c3sscofinality}. The same statements can be made verbatim for the maximum. 

Notice that the mixing regions, which contain all the non-constant orbits, are disjoint from $D$. Therefore, it follows that condition (3) of Proposition \ref{c5pmax} is also satisfied for $H_i^X$ and $H_i^Y$ (see Remark \ref{c5rconstant}).
\end{proof}

Therefore, we proved: 

\begin{theorem}\label{c5tbasic}
Let $X$ and $Y$ be two compact domains such that $\partial X\cap \partial Y$ is empty. Then, we have an exact sequence:
\begin{align}
\xymatrix{
SH_M(X\cup Y)\ar[r]&SH_M(X)\oplus SH_M(Y)\ar[dl]\\ SH_M(X\cap Y)\ar[u]^{[1]},}
\end{align}where the degree preserving maps are the restriction maps (up to sign).
\end{theorem}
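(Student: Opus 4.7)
The plan is to assemble a compatible $3$-ray (Definition \ref{c5dcompatible3ray}) whose every $2$-cube slice is acyclic, and then combine Lemma \ref{c2lacycliccube} with Lemma \ref{c2lmv} to read off the Mayer--Vietoris exact triangle, exactly as outlined in the recap at the start of Section \ref{c5}.

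First, I would invoke Proposition \ref{c5pblow}. Since $\partial X\cap\partial Y=\varnothing$, the construction preceding that proposition produces non-degenerate cofinal families $H_i^X$ and $H_i^Y$ for $X$ and $Y$, built from compatible boundary accelerators with disjoint mixing regions and a region of equality of the form $D\times S^1$ for a compact domain $D$. Setting $H_i^{X\cap Y}:=\max(H_i^X,H_i^Y)$ and $H_i^{X\cup Y}:=\min(H_i^X,H_i^Y)$, Proposition \ref{c5pblow}(1) gives non-degenerate cofinal families for the intersection and union, and one obtains the monotonicity $H_k^{X\cup Y}\leq H_k^X,H_k^Y\leq H_k^{X\cap Y}$ required to feed these four families into the framework of Section \ref{c3smultiple}.

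Next, using Proposition \ref{Floertheoreticprop}, I would, for each $k$, fill in a monotone $3$-cube family of Hamiltonians with these four functions at the prescribed corners, and glue the $3$-cubes together in the infinite direction to produce a compatible $3$-ray. Its $k$-th $2$-cube slice has corners $H_k^{X\cup Y},H_k^X,H_k^Y,H_k^{X\cap Y}$. By Proposition \ref{c5pblow}(2), the pair $(H_k^X,H_k^Y)$ satisfies hypotheses (1), (2), (3) of Proposition \ref{c5pmax}: condition (3) is where the hypothesis $\partial X\cap\partial Y=\varnothing$ really bites, because all non-constant $1$-periodic orbits of the four Hamiltonians are confined to the mixing regions, which are disjoint from the region of equality and hence from $\overline{\{H_k^X<H_k^Y\}}\cap\overline{\{H_k^X>H_k^Y\}}$. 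Proposition \ref{c5pmax} then yields acyclicity of each $2$-cube slice.

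With every slice acyclic, Lemma \ref{c2lacycliccube} makes $\widehat{tel}$ of the $3$-ray into an acyclic $2$-cube. By Proposition \ref{c3pwell} its four vertices compute $SH_M(X\cup Y)$, $SH_M(X)$, $SH_M(Y)$, $SH_M(X\cap Y)$, and its edges are, up to sign, the canonical restriction maps, since they are formed from exactly the data used to define those maps in Section \ref{c3ssdefinition}. Applying Lemma \ref{c2lmv} to the resulting acyclic $2$-cube on homology delivers the exact triangle in the statement. The real work is not in this final theorem but in the preceding Propositions \ref{c5pmax} and \ref{c5pblow}; the main obstacle those results overcome is arranging the acceleration data so that the region of equality is simultaneously smooth enough for $\min$ and $\max$ to be honest non-degenerate Hamiltonians and dynamically inert enough for the zero-topological-energy argument, and the disjointness of $\partial X$ and $\partial Y$ is precisely what makes both conditions attainable at once.
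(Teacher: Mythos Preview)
Your proposal is correct and follows essentially the same route as the paper: construct the cofinal families via Proposition \ref{c5pblow}, build a compatible $3$-ray through Proposition \ref{Floertheoreticprop}, conclude that each slice is acyclic from Proposition \ref{c5pmax}, and finish with Lemma \ref{c2lacycliccube} and Lemma \ref{c2lmv}. The paper's own proof is just a terser rendition of exactly these steps.
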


\begin{proof}
The slices of any $3$-ray that is compatible (as in Definition \ref{c5dcompatible3ray}) with any acceleration datum for $X\cup Y, X, Y,$ and $X\cap Y$ that extends the cofinal families $min(H_i^X, H_i^Y), H_i^X, H_i^Y,$ and $max(H_i^X, H_i^Y)$ is acyclic by Proposition \ref{c5pmax}. We remind the reader that such a $3$-ray can be constructed by Proposition \ref{Floertheoreticprop}. Therefore, we have the desired exact sequence by Lemma \ref{c2lacycliccube} and Lemma \ref{c2lmv}.
\end{proof}

\subsection{Barriers}\label{c5sbarriers}

We start with an informal  discussion. Let us consider the simplest example with the boundaries of two domains intersecting to explain what goes wrong for our strategy in general. Take two small disks inside a surface intersecting in the minimal way in an eye-shaped region. Now the Hamiltonians in the acceleration data coming from boundary accelerators will have periodic orbits that make circles around the boundary for all 4 subsets in question. It is clear that in this case no continuation map equation can have topological energy 0 solutions. 

Continuing the informal discussion, we now motivate the definition to come in a slightly simplified setup. Let $N= Y\times [0,1]$ be a symplectic manifold with boundary, and $f:N\to [0,1]$ be any Hamiltonian such that $f^{-1}(0)=Y\times\{0\}$ and $f^{-1}(1)=Y\times\{1\}$. Let $D\subset Y$ be a compact domain, and consider the subset $S:=D\times [0,1]\subset N$. The boundary of $S$ has two portions: the horizontal one that overlaps with the boundary of $N$, and the vertical one coming from the boundary of $D$. We want to come up with a way to guarantee that if an orbit of $X_f$ intersects $S$ then it is contained in it. It appears as though the only feasible way to guarantee this is to assume that $X_f$ has some directionality along the vertical boundary of $S$, more precisely, that $X_f$ cannot be (strictly) inward pointing and (strictly) outward pointing at different points along the vertical boundary of $S$. Let us assume that it is never strictly outward pointing. See Figure \ref{c5fheuristic} for a depiction of the situation. Using energy conversation at the horizontal boundary, this shows that the flow of $X_f$ moves $S$ into itself. But, since Hamiltonian flows preserve volume, this can only happen if $X_f$ is everywhere tangent to the vertical boundary as well.

\begin{figure}
\centering
\includegraphics[scale=0.3]{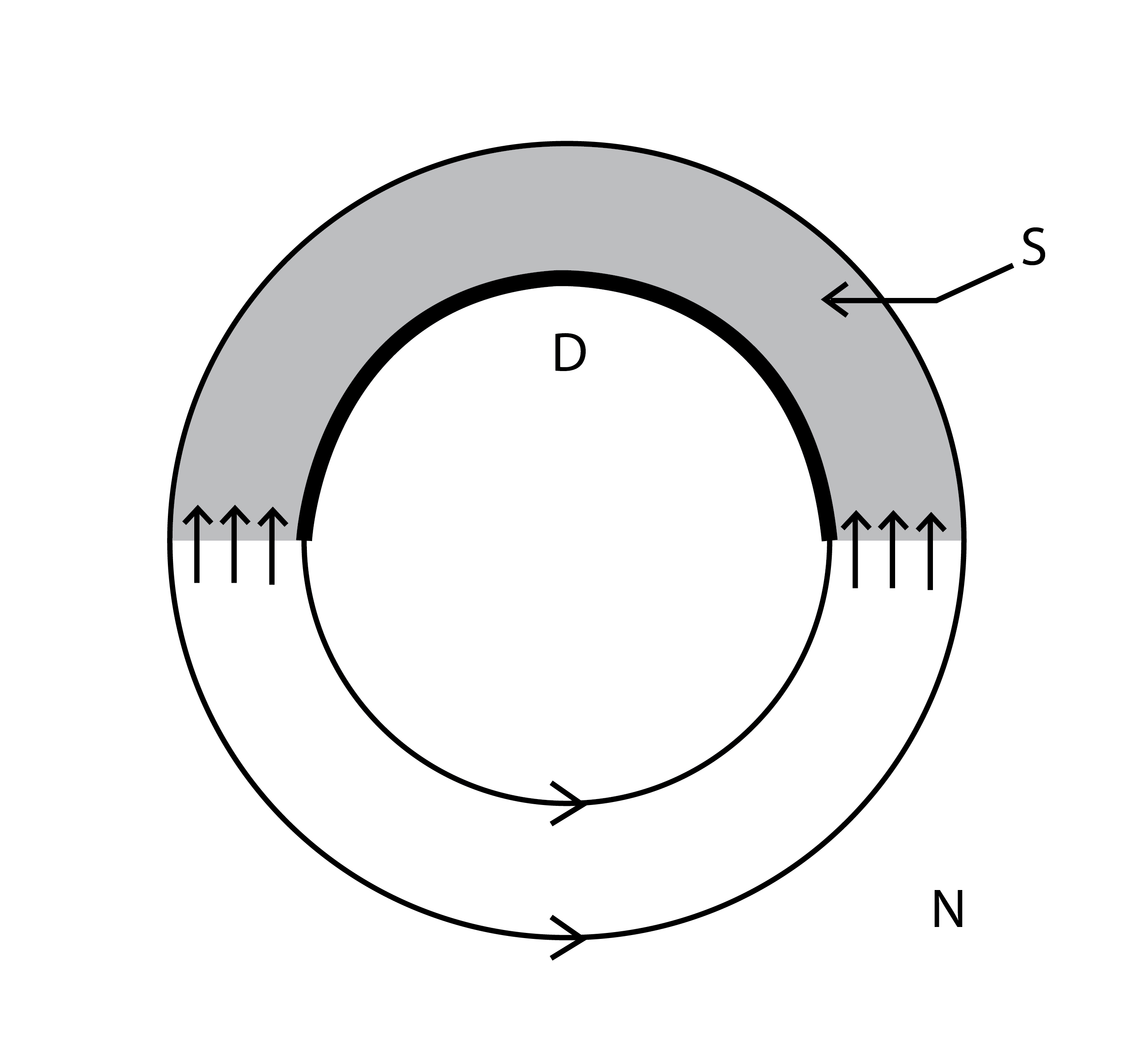}
\caption{The arrows here point in the direction of the Hamiltonian vector field $X_f$}
\label{c5fheuristic}
\end{figure}

Note that this is a very non-generic situation. Energy levels of $f$ will generically be transverse to the vertical boundary. Elementary symplectic geometry shows that intersections of these level sets with the vertical boundary then have to be cosiotropic manifolds of rank 2 (set $X=$ level set, and $Z=X\cap\partial S$ in the following lemma).

\begin{lemma}
Let $X\subset M$ be a hypersurface. Take another hypersurface $Z\subset X$. Then, the characteristic line field of $X$ is tangent to $Z$ if and only if $Z$ is a coisotropic (rank 2). 
\end{lemma}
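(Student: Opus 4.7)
The proof is pointwise linear algebra. Fix $p \in Z$ and write $V = T_pM$, $W = T_pX$, $U = T_pZ$, with $\dim V = 2n$, $\dim W = 2n-1$, $\dim U = 2n-2$. Let $L = W^\omega \subset W$ be the characteristic line. Since $L^{\omega} = W$ we get $L \subset W \subset U^\omega$; and a dimension count ($\dim U^\omega = 2n - \dim U = 2$) shows $\dim U^\omega = 2$, so $L$ has codimension $1$ in $U^\omega$ always.

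The plan is to analyze the rank of $\omega|_U$. Since $\omega$ is skew, this rank is even; and since $\omega|_W$ has rank $2n-2$, restricting to the codimension-$1$ subspace $U$ makes the rank drop by $0, 1,$ or $2$. Parity rules out a drop by $1$, so $\omega|_U$ has rank either $2n-2$ or $2n-4$, i.e. $\ker(\omega|_U) = U^\omega \cap U$ has dimension $0$ or $2$. The condition that $Z$ is coisotropic of rank $2$ is exactly that this kernel has dimension $2$, equivalently that $U^\omega \subset U$ (since then the $2$-dimensional $U^\omega\cap U$ already fills up the $2$-dimensional $U^\omega$).

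For the equivalence with the tangency condition: if $L \subset U$, then $L \subset U \cap U^\omega = \ker(\omega|_U)$, so this kernel is nonzero, hence two-dimensional by the parity dichotomy; thus $U^\omega \subset U$ and $Z$ is coisotropic of rank $2$. Conversely, if $Z$ is coisotropic of rank $2$, then $U^\omega \subset U$, and since $L \subset U^\omega$ always, we conclude $L \subset U$, i.e.\ the characteristic line is tangent to $Z$.

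The only place anything could go wrong is in the parity step — that restricting a skew form to a codimension-$1$ subspace preserves parity of the rank — but this is immediate from the normal form of skew-symmetric forms, so the argument should go through cleanly with no real obstacle. Running this pointwise argument over all $p \in Z$ gives the global statement, since the tangency of the (smooth) line field $L$ along $Z$ and the coisotropy of $Z$ are both pointwise conditions.
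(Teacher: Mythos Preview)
Your approach is essentially the same as the paper's: pointwise linear algebra, using the parity of the rank of a skew form to upgrade ``kernel nonzero'' to ``kernel two-dimensional.'' The paper phrases the forward direction exactly as you do, and for the converse it also uses that $L \subset U^\omega$ together with $U^\omega \subset U$.

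There is one slip in your write-up. You write ``Since $L^{\omega} = W$ we get $L \subset W \subset U^\omega$.'' The first inclusion $L \subset W$ is fine (hypersurfaces are coisotropic), but $W \subset U^\omega$ is false: $\dim W = 2n-1$ while $\dim U^\omega = 2$. What you want is the direct implication $U \subset W \Rightarrow W^\omega \subset U^\omega$, i.e.\ $L \subset U^\omega$; this is the fact you actually use later and it holds regardless of whether $Z$ is coisotropic. Once that line is corrected, the rest of your argument goes through and matches the paper's proof.
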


\begin{proof}
If the characteristic line field is tangent to $Z$, then the kernel of $\omega\mid_Z$ is at least one dimensional. By the classification of skew-symmetric bilinear forms this means that the kernel in question is actually at least two dimensional. By the non-degeneracy of the symplectic form on $M$, we get that $Z$ is a coisotropic.

Conversely, if $Z$ is a coisotropic, then its symplectic orthogonal distribution needs to contain the characteristic line field of $X$. This is because a linear map from a two dimensional vector space to a one dimensional one has at least one dimensional kernel.
\end{proof}

We repeat the definition of a barrier from the introduction in light of this discussion.

\begin{definition}
Let $Z^{2n-2}$ be a closed manifold. We define a \textbf{barrier} to be an embedding $Z\times [-\epsilon,\epsilon]\to M^{2n}$, for some $\epsilon >0$, where $Z\times \{a\}\to M$ is a coisotropic for all $a\in [-\epsilon,\epsilon]$. We call the vector field obtained by pushing forward $\partial_{\epsilon}\in\Gamma(Z\times \{0\},T(Z\times (-\epsilon,\epsilon))\mid_{Z\times \{0\}}) $  to $M$ the \textbf{direction} of the barrier.
\end{definition}

Now, we go back to the formal discussion.

\begin{definition}\label{c5dcompatible}
We say that a Hamiltonian $f:M\to \mathbb{R}$ is \textbf{compatible} with a hypersurface $Y$ (possibly with boundary) if the Hamiltonian vector field $X_f$ is tangent to $Y$ and $\partial Y$.
\end{definition}

Let $B$ be the image of a barrier $Z\times [-\epsilon,\epsilon]\to M$. Sometimes we will abuse notation and denote the barrier by $B$ as well.

\begin{lemma}\label{c5lcompatible}
Let $h:M\to\mathbb{R}$ be a Hamiltonian. If $h$ is constant along $Z\times \{a\}$ for all $a\in [-\epsilon,\epsilon]$, then $h$ is compatible with $B$.
\end{lemma}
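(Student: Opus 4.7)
The plan is straightforward: unwrap the two conditions in the definition of compatibility and exploit the coisotropic condition in the definition of a barrier.

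First I would fix a point $p\in B$ and let $a\in[-\epsilon,\epsilon]$ be such that $p\in Z\times\{a\}$. Since $h$ is constant along $Z\times\{a\}$, the differential $dh$ kills every vector tangent to $Z\times\{a\}$ at $p$. Using the convention $\omega(X_h,\cdot)=dh$, this translates to $\omega(X_h(p),v)=0$ for every $v\in T_p(Z\times\{a\})$, i.e., $X_h(p)\in T_p(Z\times\{a\})^{\perp_\omega}$. This is the only place where the hypothesis on $h$ enters.

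Next I would invoke the barrier condition: $Z\times\{a\}$ is coisotropic, so by definition its symplectic orthogonal is contained in its tangent space, $T_p(Z\times\{a\})^{\perp_\omega}\subset T_p(Z\times\{a\})$. Combined with the previous step, this yields $X_h(p)\in T_p(Z\times\{a\})\subset T_pB$, which is tangency of $X_h$ to $B$ at $p$. Note this really uses only the coisotropy of the single slice containing $p$, not the fact that nearby slices are also coisotropic; the latter is what makes $B$ a hypersurface rather than just a single coisotropic.

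Finally, tangency to $\partial B$ is essentially free: the boundary $\partial B$ is the disjoint union $Z\times\{-\epsilon\}\sqcup Z\times\{\epsilon\}$, and the argument above applied with $a=\pm\epsilon$ shows $X_h(p)\in T_p(Z\times\{\pm\epsilon\})=T_p\partial B$ for any $p\in\partial B$. There is no real obstacle in this proof; the only thing to be a little careful about is bookkeeping the meaning of ``tangent to $B$'' when $B$ is a hypersurface with boundary, and confirming that the slicewise tangency deduced from coisotropy is strong enough to imply tangency to both $B$ and $\partial B$.
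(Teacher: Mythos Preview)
Your argument is correct and essentially identical to the paper's: both fix a point on a slice $Z\times\{a\}$, use constancy of $h$ along the slice to get $X_h$ into the symplectic orthogonal, and then invoke coisotropy to land $X_h$ in the tangent space of the slice. Your explicit treatment of $\partial B$ via $a=\pm\epsilon$ is a nice addition that the paper leaves implicit.
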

\begin{proof}
Let $z\in Z\times \{a\}$. We know that for any vector $v$ at $z$ tangent to $Z\times \{a\}$, the directional derivative of $h$ along $v$ is zero. This is equivalent to $\omega(v,X_h(z))=0$. By coisotropicity, $X_h(z)$ is tangent to $Z\times \{a\}$, finishing the proof.
\end{proof}

\begin{lemma}\label{c5lalmostcompatible}
Take an embedding $B\times [-\delta,\delta]\to M$ extending $B\subset M$. Let $h:M\times [0,1]\to\mathbb{R}$ be a function, and let $\phi_h$ denote the flow of its time-dependent Hamiltonian vector field. Let $\tilde{g}$ be a Riemannian metric on $M$. There exists a $\tau>0$ such that, if for some $B$-compatible $\tilde{h}$, $$
\|h-\tilde{h}\|_{C^2(M\times S^1)} =\sum_{l=0}^2  \sup_{M\times S^1}|\nabla^{l}(h-\tilde{h})(x)|_g<\tau,$$ where $\nabla$ is the Levi-Civita covariant derivative relative to $g=\tilde{g}+dt^2$ and $\tilde{h}$ is considered as a function on $M\times S^1$, then no trajectory of $\phi_h$ starting at a point on $B\times\{\pm\delta\}$ can intersect $B$ within time $1$.
\end{lemma}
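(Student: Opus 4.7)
My plan is to carry out the estimate in tubular neighborhood coordinates around $B$ provided by the given embedding, using the $B$-compatibility of $\tilde h$ to force the normal component of $X_{\tilde h}$ to vanish on $B$, and then a Gronwall-type bound for the perturbed system generated by $h$. First, I write the image of the embedding as $T := B \times [-\delta,\delta] \hookrightarrow M$ with normal coordinate $r$, so that $B = \{r = 0\}$ and the starting hypersurfaces for our trajectories are $\{r = \pm\delta\}$. The $B$-compatibility of $\tilde h_t$ means that the $r$-component of $X_{\tilde h_t}$ vanishes identically along $B$, and hence by smoothness factors as $X_{\tilde h_t}^r(b,r) = r \cdot \psi(t,b,r)$. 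The function $\psi$ is a combination of second derivatives of $\tilde h$ with the coordinate data, so it is bounded by a constant $K$ depending only on $\|\tilde h\|_{C^2} \leq \|h\|_{C^2} + \tau \leq \|h\|_{C^2} + 1$, which is uniform in the choice of $\tilde h$.

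Next, $C^2$-closeness of $h$ to $\tilde h$ (in fact $C^1$-closeness suffices) yields $C^0$-closeness of the Hamiltonian vector fields, so
\begin{equation*}
X_h^r(t,b,r) = r\,\psi(t,b,r) + E(t,b,r), \qquad |E| \leq C_0\,\tau,
\end{equation*}
for a constant $C_0$ depending only on the geometry. For any trajectory $(b(t),r(t))$ of $\phi_h$ staying in $T$, the function $r(t)$ then obeys $\dot r = r\psi + E$. Integrating via the factor $e^{-\Psi(t)}$, where $\Psi(t) = \int_0^t \psi(s,b(s),r(s))\,ds$ and $|\Psi| \leq K$, converts this into $r(t) = e^{\Psi(t)}\bigl(r(0) + \int_0^t E\,e^{-\Psi}\,ds\bigr)$, so
\begin{equation*}
|r(t)| \geq e^{-K}\bigl(\,|r(0)| - t\,C_0\,\tau\,e^{K}\,\bigr) \geq \delta e^{-K} - C_0\tau
\end{equation*}
for $t \in [0,1]$. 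Choosing $\tau < \delta e^{-K}/(2 C_0)$ then forces $|r(t)| > \delta e^{-K}/2 > 0$ throughout $[0,1]$, so the trajectory cannot hit $B = \{r = 0\}$ while it remains in $T$.

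To conclude, I would combine this with a case analysis on how the trajectory may leave and re-enter $T$. Outside $T$, the trajectory is automatically disjoint from $B$ since $B \subset T$. Whenever the trajectory re-enters through the hypersurfaces $\{r = \pm\delta\}$, the Gronwall argument restarts with $|r|$ equal to $\delta$ and yields the same lower bound $\delta e^{-K}/2$ from the re-entry time onward.

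The main technical obstacle is the remaining scenario where the trajectory exits $T$ through the lateral face $\partial B \times [-\delta,\delta]$ (which is nonempty since $B \cong Z \times [-\epsilon,\epsilon]$ has boundary) and re-enters there with $|r|$ already close to $0$. To rule this out, I would exploit the second half of $B$-compatibility, namely that $X_{\tilde h_t}$ is tangent to $\partial B$: a completely analogous Gronwall estimate carried out in a tubular neighborhood of $\partial B$ inside $M$, coupling the $C^2$-closeness of $h$ to $\tilde h$ with the Taylor factorization of the normal component of $X_{\tilde h}$ along $\partial B$, controls how rapidly $h$-trajectories can drift normally away from $\partial B$, so that the dangerous re-entry configurations are excluded at the price of possibly shrinking $\tau$ once more.
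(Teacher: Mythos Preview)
Your route is different from the paper's. The paper gives a soft contradiction argument (the template is spelled out in its later Lemma~\ref{c5lalmostgeneral}): assuming no $\tau$ works, one extracts via Arzela--Ascoli a limiting situation in which a trajectory of a $B$-compatible flow starts on $B\times\{\pm\delta\}$ and yet reaches $B$, contradicting that such a flow preserves $B$. That argument never introduces tubular coordinates and handles exit and re-entry through the lateral face $\partial B\times[-\delta,\delta]$ automatically. Your direct integrating-factor estimate on the normal coordinate $r$ is more quantitative and would produce an explicit $\tau$; the inequality $|r(t)|\ge e^{-K}|r(0)|-C_0\tau$ and the trick $\|\tilde h\|_{C^2}\le\|h\|_{C^2}+1$ to make $K$ uniform in the choice of $\tilde h$ are both correct.

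The lateral re-entry is where your write-up is genuinely incomplete. Your $r$-estimate only controls a single connected sojourn in $T$, so to restart it you need a \emph{uniform lower bound} on $|r|$ at the instant the trajectory re-enters through $\partial B\times[-\delta,\delta]$, not merely a bound on how fast trajectories drift normally to $\partial B$. Tangency to $\partial B$ does provide this, but the mechanism is one step beyond what you wrote: in coordinates $(r,u)$ normal to the codimension-two submanifold $\partial B$ (arranged so that the lateral face of $T$ is $\{u=0\}$), both components of $X_{\tilde h}$ normal to $\partial B$ vanish along $\partial B$, and the same integrating-factor argument applied to $\rho=(r^2+u^2)^{1/2}$ gives $\rho(t)\ge \rho_0 e^{-K'}-C'\tau$, with $\rho_0$ bounded below by $\min(\delta,\eta)$ at start or at entry into this neighbourhood. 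The key observation is that at a lateral re-entry point one has $u=0$, hence $|r|=\rho$ there, so the $\rho$-bound converts directly into the lower bound on $|r|$ needed to restart the $r$-Gronwall. With that link supplied the two estimates patch and your argument closes; as written, your last paragraph stops just short of it.
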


\begin{proof}
This is a standard application of Arzela-Ascoli theorem and Gronwall estimates. Assuming the contrary, we reach a contradiction to the fact the Hamiltonian flow of a $B$-compatible function is tangent to the barrier and its boundary. See the proof of Lemma \ref{c5lalmostgeneral} for details.
\end{proof}

\subsection{The proof of the main theorem}\label{c5sproof}

\begin{definition}
We say that a sequence of approximating domains $D_X^i$ and $D_Y^i$ \textbf{have barriers} if there are barriers $Z^i\times [-\epsilon_i,\epsilon_i]\to M$ such that \begin{itemize}
\item $\partial D_X^i\pitchfork \partial D_Y^i=Z^i\times\{0\}$
\item The direction of the barrier points strictly outside of $D_X^i$ and $D_Y^i$
\end{itemize}
\end{definition}

\begin{remark}
We note that $Z^i$'s are not required to be connected, and there might also be some components of $\partial D_X^i$ which do not intersect $\partial D_Y^i$ (and vice versa). 
\end{remark}

\begin{theorem}\label{c5tmv}
Assume that $X$ and $Y$ admit a sequence of approximating domains with barriers. Then, we have an exact sequence:
\begin{align}
\xymatrix{
SH_M(X\cup Y)\ar[r]&SH_M(X)\oplus SH_M(Y)\ar[dl]\\ SH_M(X\cap Y)\ar[u]^{[1]}},
\end{align}
where the degree preserving maps are the restriction maps (up to sign).
\end{theorem}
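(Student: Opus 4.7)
The strategy mirrors the non-intersecting-boundary case from Section \ref{c5snon}: construct compatible boundary accelerators for $X$ and $Y$ so that the associated non-degenerate Hamiltonians $H^X_i, H^Y_i$ satisfy all three hypotheses of Proposition \ref{c5pmax} at every level $i$, and so that $\min(H^X_i, H^Y_i)$, respectively $\max(H^X_i, H^Y_i)$, form cofinal families for $X \cup Y$, respectively $X \cap Y$. Once this is achieved, every slice of any compatible $3$-ray is acyclic, and the exact sequence drops out of Lemma \ref{c2lacycliccube} combined with Lemma \ref{c2lmv}. The new difficulty is that the mixing regions $N^i_X$ and $N^i_Y$ can no longer be taken disjoint, so conditions (2) and (3) of Proposition \ref{c5pmax} are no longer automatic from compatibility; the barriers are precisely the geometric input that restores them.

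The concrete plan is to construct excitation functions that are \emph{barrier-compatible} in the sense of Definition \ref{c5dcompatible}. I would first fix approximating domains $D^i_X, D^i_Y$ together with their barriers $B^i = \mathrm{im}(Z^i \times [-\epsilon_i,\epsilon_i] \hookrightarrow M)$ whose direction points strictly out of both domains. Near $B^i$ I take $N^i_X$ and $N^i_Y$ to be collars foliated by the coisotropic slices $Z^i \times \{a\}$, and choose the excitation functions $f^X_i, f^Y_i$ to be locally constant along each such slice; Lemma \ref{c5lcompatible} then guarantees that their Hamiltonian vector fields are everywhere tangent to $B^i$ near the barrier, and the same holds for their pointwise $\min$ and $\max$. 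Outside a neighborhood of the barrier, $\partial D^i_X$ and $\partial D^i_Y$ do not intersect and we are in the setting of Section \ref{c5snon}.

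I would then arrange the region of equality $\{H^X_i = H^Y_i\}$ carefully: away from $B^i$, make it a compact codimension-zero submanifold as in Section \ref{c5snon} so that the first bullet of Lemma \ref{lemma12} applies; near $B^i$, use the fact that both Hamiltonians factor through the local projection $\pi \colon B^i \times [-\delta,\delta] \to [-\epsilon_i,\epsilon_i] \times [-\delta,\delta]$ (by slice-constancy) to invoke the second bullet of Lemma \ref{lemma12}. This gives conditions (1) and (2) of Proposition \ref{c5pmax}, and in particular smoothness and cofinality of $\min(H^X_i,H^Y_i)$, $\max(H^X_i,H^Y_i)$ as in Proposition \ref{c5pblow}. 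Morsification on the fillers is carried out via a barrier-preserving variant of Lemma \ref{c5lmorsesimul}, supported away from $B^i$; then Lemma \ref{c5lnond} is applied to pass to non-degenerate $H^X_i, H^Y_i$. For condition (3), the outward-pointing hypothesis places $U_i := \{H^X_i < H^Y_i\}$ and $V_i := \{H^X_i > H^Y_i\}$ on opposite sides of the barrier (the side of $B^i$ pointing into $X \setminus Y$ versus into $Y \setminus X$); if the non-degeneracy perturbations are chosen $C^2$-small relative to the threshold $\tau$ of Lemma \ref{c5lalmostcompatible} applied to each of $H^X_i, H^Y_i, \min, \max$, no one-periodic orbit of these vector fields can cross $B^i$ within time $1$, hence no such orbit can have a graph meeting both $U_i$ and $V_i$.

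The main obstacle is the simultaneous juggling act in the last step: the perturbations required by Lemma \ref{c5lnond} to produce non-degenerate orbits, and by Lemma \ref{c5lmorsesimul} to produce Morse fillers, must be done while preserving the structure of the region of equality near the barrier (so that condition (2) survives) and while keeping the $C^2$-distance from a barrier-compatible function below the threshold of Lemma \ref{c5lalmostcompatible} (so that condition (3) survives). The flexibility built into Proposition \ref{c5pextension}, in particular the $\epsilon$-rescaling of step (3) that forces all non-constant orbits into the interior of the mixing regions, plus the freedom to support all perturbations inside a prescribed open set, should allow us to push every critical point and every non-constant one-periodic orbit away from the barrier and then perturb only in that away-from-barrier region. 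Once this coordination is carried out at each level $i$, the resulting cofinal families and any compatible $3$-ray built from them via Proposition \ref{Floertheoreticprop} produce slices to which Proposition \ref{c5pmax} applies directly, and the Mayer--Vietoris sequence is a formal consequence.
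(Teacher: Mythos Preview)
Your overall strategy is the paper's: build cofinal $H_i^X,H_i^Y$ satisfying Proposition \ref{c5pmax} at every level, then conclude via Lemma \ref{c2lacycliccube} and Lemma \ref{c2lmv}. But the execution near the barrier has a genuine gap.

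You propose to ``push every critical point and every non-constant one-periodic orbit away from the barrier and then perturb only in that away-from-barrier region.'' This cannot be done. The mixing regions $N_X^i$ and $N_Y^i$ necessarily overlap in a neighborhood of the barrier, and the excitation functions are non-constant there; orbits of $h_i^X$, $h_i^Y$ will lie inside that overlap. Section \ref{c5snond} explains exactly this: orbits sitting inside the barrier are typically forced to be degenerate for any barrier-compatible Hamiltonian (for instance when the characteristic foliation of a coisotropic slice is dense), so one \emph{must} perturb near the barrier and break compatibility. The paper's Proposition \ref{c5pmixnond} does this simultaneously for $h_i^X$ and $h_i^Y$ inside the plaster, maintaining a controlled region of equality (a narrowed plaster $T$ plus a collar) that is still a codimension-$0$ topological submanifold. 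Lemma \ref{c5lalmostcompatible} then supplies condition (3): the perturbation is $C^2$-small enough that orbits cannot cross from one side of the barrier to the other, and the last bullet of Proposition \ref{c5pmixnond} guarantees that any path in a mixing region from $\{H_i^X<H_i^Y\}$ to $\{H_i^X>H_i^Y\}$ must cross the barrier.

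A second, smaller issue: you invoke the second bullet of Lemma \ref{lemma12} near the barrier, relying on the Hamiltonians factoring through a projection to a square. After the time-dependent non-degeneracy perturbation this factoring is lost, so that route to conditions (1)--(2) does not survive. The paper instead arranges from the outset (via the tangentialization of Section \ref{c5sstangentialization}, using the compactly supported domain modifications of Proposition \ref{c3pbarrierneighborhood}) that the region of equality is a codimension-$0$ topological submanifold, and Proposition \ref{c5pmixnond} preserves this property through the perturbation; the first bullet of Lemma \ref{lemma12} then applies uniformly.
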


Our strategy is exactly the same as in the proof of Theorem \ref{c5tbasic}. We will construct a cofinal sequence $H_i^X$ and $H_i^Y$ satisfying the conditions of Proposition \ref{c5pmax}. Of course now we have to deal with the intersection of the mixing regions using the barriers.

\begin{remark}\label{c5rmv}
We stress that, in proving the Mayer-Vietoris property, what matters is that we can construct a cofinal sequence of functions $H_i^X$ and $H_i^Y$ for $X$ and $Y$ that satisfy the conditions of Proposition \ref{c5pmax} for every $i\geq 1$. In Theorem \ref{c5tmv}, we merely described a geometric condition in which such cofinal functions can be constructed. This is not the most general theorem we can prove with our methods.

See Theorem \ref{c5tmv2} in Section \ref{c5sinvolutive} for another statement we prove in this paper. Note that the proof of Theorem \ref{c5tmv2} is simpler (even though its context is more general). This is because the special data that is assumed to exist for the subsets $X$ and $Y$ are already in the form of functions. We do not need to construct functions from the existence of certain geometric objects as in the assumptions of Theorem \ref{c5tmv}.
\end{remark}

The rest of this section is devoted to the proof of Theorem \ref{c5tmv}. All the preparations are put together at the very end, right before Section \ref{c5snond} begins.

\subsubsection{Neighborhoods of intersections of the boundary}\label{c5ssneighborhoods}

Let $K_1$ and $K_2$ be two compact domains in $M$ such that $\partial K_1\pitchfork \partial K_2=Z$. Let $F:D\times Z\to M$ to be an embedding, where $D\subset \mathbb{R}^2$ is an open disk centered at the origin, and the map is identity at $\{0\}\times Z$. In what follows we use that $Z$ is compact without mentioning it.

We can assume that $D\times\{z\}$ is transverse to both $\partial K_1$ and $\partial K_2$ for all $z\in Z$, by restricting the domain of $F$ to a smaller radius disk. This implies that for every $z\in Z$ and $i\in\{1,2\}$, $$F^{-1}(\partial K_i)\cap (D\times\{z\})$$ is a properly embedded $1$-dimensional submanifold of $D\times\{z\}$ passing through the origin. For every $z\in Z$ and $i\in\{1,2\}$, let $l_i(z)\subset D\times\{z\}$ be the oriented straight line passing through the origin that is tangent to $F^{-1}(\partial K_i)\cap (D\times\{z\})$. The orientation is given by requiring that $l_i(z)$ points out of $K_{i+1}$ (subscripts are modulo $2$).

Making \textbf{compactly supported modifications} to a domain $K$ inside $F$ means that we find another domain $K'$ such that outside of a compact subset of $im(F)$ we have $K=K'$.

\begin{proposition}\label{c3pbarrierneighborhood}
Let $l(z)\subset D\times\{z\}$, $z\in Z$, be a smooth (as a map $Z\to S^1$) family of oriented straight lines passing through the origin such that for every $z\in Z$, $l(z)$ is transverse to $l_i(z)$, for $i\in\{1,2\}$. Moreover, we require that $l(z)$ points out of $K_1$ and $K_2$ for every $z\in Z$. Also let $l_i'(z)\subset D\times\{z\}$, $z\in Z$ and $i\in\{1,2\}$, be smooth families of oriented straight lines passing through the origin such that for every $z\in Z$, and $i\in\{1,2\}$, $l_i'(z)$ can be rotated to $l_i(z)$ without crossing $l(z)$. 

Then, we can apply compactly supported modifications to $K_1$ and $K_2$ and restrict the domain of $F$ to smaller radius disk such that in the modified versions, for every $z\in Z$ and $i\in\{1,2\}$, $$F^{-1}(\partial K_i)\cap (D\times\{z\})=l_i'(z).$$
\end{proposition}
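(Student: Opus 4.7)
The plan is to construct the modifications directly in the coordinates provided by $F$, by prescribing the new boundaries $\partial K_i'$ to equal $F(l_i'(z)\times\{z\})$ slice by slice near the core $\{0\}\times Z$, to coincide with the original $\partial K_i$ outside a slightly larger tubular neighborhood, and to interpolate smoothly between the two via a $z$-dependent rotation in an intervening annular region.

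First I would choose nested radii $0<r_1<r_2<r_3$, with $r_3$ small enough that for every $z\in Z$ and $i\in\{1,2\}$, the curve $C_i(z):=F^{-1}(\partial K_i)\cap(D_{r_3}\times\{z\})$ is a $C^1$-small graph over $l_i(z)\cap D_{r_3}$. The implicit function theorem then produces a smooth family of compactly supported diffeomorphisms of $D$ straightening $C_i(z)$ onto $l_i(z)$ on $D_{r_3}$, for each $i$ in turn (the transversality $l_1(z)\pitchfork l_2(z)$ at the origin lets us straighten one after the other without interference). Using the non-crossing hypothesis I then choose, for each $i$ and $z$, a smooth family of rotations $R^i_{s,z}\in SO(2)$, $s\in[0,1]$, with $R^i_{0,z}=\mathrm{id}$, $R^i_{1,z}(l_i(z))=l_i'(z)$, and $R^i_{s,z}(l_i(z))\neq l(z)$ for all $s$. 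Given a smooth cutoff $\beta:[0,\infty)\to[0,1]$ equal to $1$ on $[0,r_1]$ and to $0$ outside $[0,r_2]$, the fiber-preserving map $(w,z)\mapsto(R^i_{\beta(|w|),z}w,z)$ sends the straightened line $l_i(z)\cap D_{r_3}$ to a smooth embedded curve in $D\times\{z\}$ that equals $l_i'(z)$ on $D_{r_1}$ and equals $l_i(z)$ outside $D_{r_2}$. Composing with the inverse of the straightening diffeomorphism and pushing forward by $F$ produces a hypersurface in $M$ agreeing with $\partial K_i$ off a compact subset of $\mathrm{im}(F)$, and $K_i'$ is defined to equal $K_i$ outside $F(D_{r_2}\times Z)$ and to be bounded by this new hypersurface inside, with interior determined by continuity from the portion outside $F(D_{r_2}\times Z)$.

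The main obstacle will be verifying that this really produces a compact codimension-$0$ submanifold with boundary. Embeddedness of the interpolated hypersurface is automatic once $r_3$ is chosen small enough, since applying a family of rotations to a line yields embedded curves and the additional straightening is a controlled perturbation. Consistency of the interior assignment across the annulus is precisely where the non-crossing hypothesis does its real work: because $R^i_{s,z}(l_i(z))$ never crosses $l(z)$ during the rotation, the co-orientation of the rotated oriented line (which tags one half-plane as "interior") varies continuously in $s$ and agrees at $s=0$ with that of $l_i(z)$, so the declared interior of $K_i'$ matches that of $K_i$ on $\partial F(D_{r_2}\times Z)$. Finally, restricting $F$ to the disk of radius $r_1$ yields the required equality $F^{-1}(\partial K_i')\cap(D\times\{z\})=l_i'(z)$ for every $z\in Z$ and $i\in\{1,2\}$.
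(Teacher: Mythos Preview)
Your two-step strategy (straighten the boundary to its tangent line, then rotate that line to $l_i'(z)$ via a radius-dependent twist) is exactly the paper's approach, and the use of the non-crossing hypothesis to keep the interior assignment consistent is correct.

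There is, however, a genuine slip in the bookkeeping. You build the twisted curve $\gamma_z := T_z(l_i(z))$ with $T_z(w)=R^i_{\beta(|w|),z}w$, and then apply $\Phi_z^{-1}$ to it. On the outer annulus this does what you want: $\gamma_z=l_i(z)$ there and $\Phi_z^{-1}(l_i(z))=C_i(z)$, so the new hypersurface agrees with $\partial K_i$. But on $D_{r_1}$ you get $\Phi_z^{-1}(l_i'(z))$, and nothing in your construction forces $\Phi_z$ to fix $l_i'(z)$; your straightening diffeomorphism was only required to send $C_i(z)$ to $l_i(z)$. So the final claim that $F^{-1}(\partial K_i')\cap(D_{r_1}\times\{z\})=l_i'(z)$ does not follow.

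The fix is immediate once you drop the inversion. The composite $T_z\circ\Phi_z$ is a compactly supported diffeomorphism of $D$, and applying it to $C_i(z)$ gives a curve equal to $T_z(l_i(z))=l_i'(z)$ on $D_{r_1}$ and equal to $C_i(z)$ outside the support; this is exactly the new boundary you want. Equivalently (and this is how the paper organizes it), perform two successive compactly supported modifications: first replace $K_i$ by a domain whose boundary is literally $l_i(z)$ on a small disk (the paper does this by writing $C_i(z)$ as a graph $y_z=f_z(x_z)$ and multiplying $f_z$ by a radial cutoff), and only then apply the twist $w\mapsto R^i_{\rho(|w|),z}w$ to rotate to $l_i'(z)$. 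Either route closes the gap; the rest of your argument is fine.
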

\begin{proof}

Let us first deal with the case $l_i'(z)=l_i(z)$ (we will do $i=1$ and $i=2$ seperately in what follows, so fix one of them). For every $z\in Z$, introduce coordinates $(x_z, y_z)$ at $D\times\{z\}$, which are just rotated versions of the standard coordinates on $\mathbb{R}^2$, such that the $x_z$-axis is $l_i(z)$, where $x_z$ is increasing in the positive direction. First, we restrict the domain of $F$ such that the submanifold $F^{-1}(\partial K_i)\cap (D\times\{z\})$ is the graph of a function of $x_z$, for every $z\in Z$. This is possible for example because of a quantitative version of the inverse function theorem, see Supplement 2.5A of \cite{Marsden}. Let us denote the domain of $F$ again by $D\times Z$, with $D$ having radius $r$.

Let $D'$ be a smaller disk with radius $r'<r$. Now, by compactly supported modifications, we will change $K_i$ to $K_i'$ so that $F^{-1}(\partial K_1')\cap D\times\{z\}$ agrees with $l_i(z)$ inside $D'$, for every $z\in Z$. We fix a non-decreasing smooth function $\rho:[0,r]\to[0,1]$ which is equal to $0$ on $[0,r']$, and to $1$ in a neighborhood of $r$. Let each $F^{-1}(\partial K_i)\cap D\times\{z\}$ be the graph of function $f_z$, i.e. $y_z=f_z(x_z)$. We modify these functions as follows: $$f'_z(x_z):=\rho\left(\sqrt{f_z(x_z)^2+x_z^2}\right)f_z(x_z).$$ We define our new domain $K_i$ as the union of the undersets of the graphs of $f'_z(x_z)$. This finishes the argument for $l_i'(z)=l_i(z)$ case.

The general case follows easily from here as there is a canonical way to rotate $l_i(z)$ to $l'_i(z)$ without crossing $l(z)$. We again take a smaller disk with radius $r'<r$, and consider the $\rho$ above (abusing notation). We then make a compactly supported modification to the $K_i$ we obtained in the last paragraph in the following way. From $l_i(z)$ we can define a new curve in $D\times\{z\}$ by rotating each point $w$ on $l_i(z)$ by $\rho(\abs{w})\cdot \theta_z$, where $ \theta_z$ is the angle between $l_i(z)$ and $l'_i(z)$. We continuously vary the half-plane $F^{-1}(K_i)\cap (D\times\{z\})$ along this isotopy, and define the new domain as the union over $z\in Z$. Restricting the domain to the disk with radius $r'$ finishes the proof.

\end{proof}

\begin{remark}Note that we have not changed $F$ in this process, only restricted its domain. We achieved what we wanted by changing the domains. This is allowed because of the flexibility in choosing the approximating domains for our given compact set.
\end{remark}

\subsubsection{Tangentialization}\label{c5sstangentialization}

The last ingredient in the proof is a procedure we call tangentialization. Assume that $X$ and $Y$ admit a sequence of approximating domains with barriers. We want to construct mixing regions for $X$ and $Y$ which can be rearranged to mixing regions for $X\cap Y$ and $X\cup Y$. See Figure \ref{c5ftangent} for a simplified depiction - we will have to be a lot more careful. 

\begin{figure}
\includegraphics[scale=0.12]{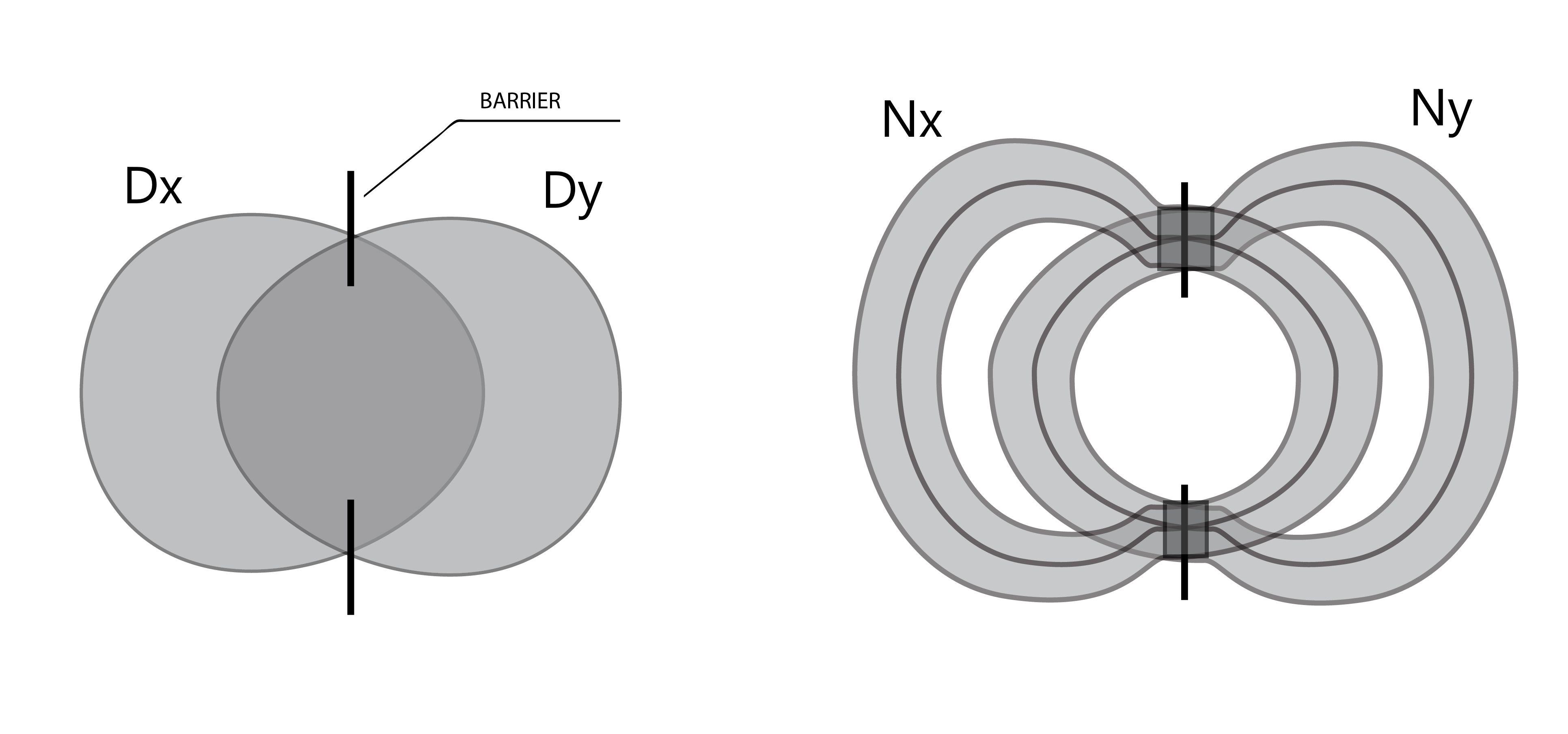}
\caption{A simplified depiction of the tangentialization process. On the left we see a member of the approximating domains with their barrier, and on the right the mixing regions that are compatible with the barrier. Note that all the labels have an $i$ superscript which we dropped from the picture.}
\label{c5ftangent}
\end{figure}

\begin{definition}\label{c5dfriendly}
Let $Z\times [-\epsilon,\epsilon]\to M$ be a barrier. We call an embedding $Z\times [-\epsilon',\epsilon']\times [-\delta,\delta]\to M$, with $\delta>0$,  a \textbf{thickening} of the barrier if the map $Z\times [-\min{(\epsilon, \epsilon')},\min{(\epsilon, \epsilon')}]\times \{0\}\to M$ is the restriction of the barrier. 

Let us call the image of the thickening $P$. A subset $A$ of $M$ is called \textbf{barrier-friendly}  for some thickening if the preimage of $A\cap P$ in $Z\times [-\epsilon',\epsilon']\times [-\delta,\delta]$ is of the form $Z\times S$, where $S$ is a subset of $[-\epsilon',\epsilon']\times [-\delta,\delta]$.
\end{definition}

Let $D_X^i$ and $D_Y^i$ be a sequence of approximating domains with barriers $Z_i\times [-\epsilon_i,\epsilon_i]\to M$. Let us call the barriers $B^i$. We will now use Proposition \ref{c3pbarrierneighborhood}. Take a sufficiently small thickening $Z_i\times [-\epsilon',\epsilon']\times [-\delta,\delta]\to M$, so that $[-\epsilon',\epsilon']\times [-\delta,\delta]\times\{z\}$ is transverse to both $D_X^i$ and $D_Y^i$ for all $z\in Z$. The barrier gives us the  $l(z)$ in the statement, and we choose $l_j'(z)$, $j\in\{1,2\}$, as independent of $z$, which are graphs of non-zero linear functions $[-\delta,\delta]\to [-\epsilon'/2,\epsilon'/2]$ that are horizontal reflections of each other. Hence, using compactly supported modifications and restricting the domain of the thickening, we can assume that $D_X^i$ and $D_Y^i$ are barrier friendly for some thickening and the subset of the square look as in the left picture of Figure \ref{c5fzoom}, because of the outward pointing condition. From now on the thickenings of $B^i$ are fixed, and when we say a subset is barrier friendly, we refer to these thickenings. 

We use the same rectangle $[-\epsilon',\epsilon']\times [-\delta,\delta]$ in our thickenings for every $i\geq 1$. We do this only to not clutter up the exposition with even more notation, there is nothing essential about this uniformity. 

We will abuse notation a little bit and denote the axis $[-\epsilon',\epsilon']\times \{0\}$ the $\epsilon'$-axis, and similarly $\{0\}\times [-\delta,\delta]$ the $\delta$-axis. In the pictures, the former is the vertical axis, whereas the latter is the horizontal one. We stress that the subsets of $[-\epsilon',\epsilon']\times [-\delta,\delta]$ directly give subsets of $M$ (by taking their product with $Z^i$ and using the embeddings), the pictures drawn inside $[-\epsilon',\epsilon']\times [-\delta,\delta]$ are real pictures, not impressions of a complicated reality.

\begin{definition}\label{c5dcompp}
Let $B_i$ as above, for $i\geq 1$. We say that the boundary accelerators $(f_i^X,N_X^i,\Delta_i^X)$ and $(f_i^Y,N_Y^i,\Delta^Y_i)$ are \textbf{compatible with barriers} if, for every $i\geq 0$:\begin{itemize}
\item $\Delta^X_i=\Delta^Y_i$
\item $N_X^i$ and $N_Y^i$ are barrier-friendly for $B_i$ (for the thickening $Z_i\times [-\epsilon',\epsilon']\times [-\delta,\delta]\to M$ that was fixed earlier), with the subsets of the square as described in the right picture of Figure \ref{c5fzoom}. More precisely, we take a curve in $[-\epsilon',\epsilon']\times [-\delta,\delta]$ that is the graph of a non-decreasing smooth function $[-\delta,\delta]\to [-\epsilon'/2,\epsilon'/2]$ that is equal to $0$ exactly for $[-\delta/10,\delta/10]$, and agrees with the linear functions from the left picture of Figure \ref{c5fzoom} near the boundary of $[-\delta,\delta]$. We take one of the subsets as the tubular $\kappa$-neighborhood (for $\kappa$ a sufficiently small positive real number and using the standard metric) of this curve, and the other subset is obtained by reflecting along the $\epsilon'$-axis. 
\item $N_X^i$ and $N_Y^i$ do not intersect outside of the image of the thickening.
\item $f_i^X=f_i^Y$ along the barrier friendly subset obtained from the rectangle that is the product of $[-\delta/10,\delta/10]$ on the $\delta$-axis and $[-\kappa,\kappa]\subset [-\epsilon',\epsilon']$. Moreover, $f_i^X\neq f_i^Y$ everywhere else on $N_X^i\cap N_Y^i$. 
\item $f_i^X$ is compatible with the barrier $Z_i\times [-\kappa,\kappa]\to M$ (obtained by restricting $B_i$) as in Definition \ref{c5dcompatible}, for every $i\geq 1$. The rest of $B_i$ will not play a role from now on.
\end{itemize}
\end{definition}

Let us call the barrier friendly subset obtained from the rectangle that is the product of $[-\delta/10,\delta/10]$ on the $\delta$-axis and $[-\kappa,\kappa]\subset [-\epsilon',\epsilon']$ the \textbf{plaster}. 

\begin{figure}
\includegraphics[scale=0.3]{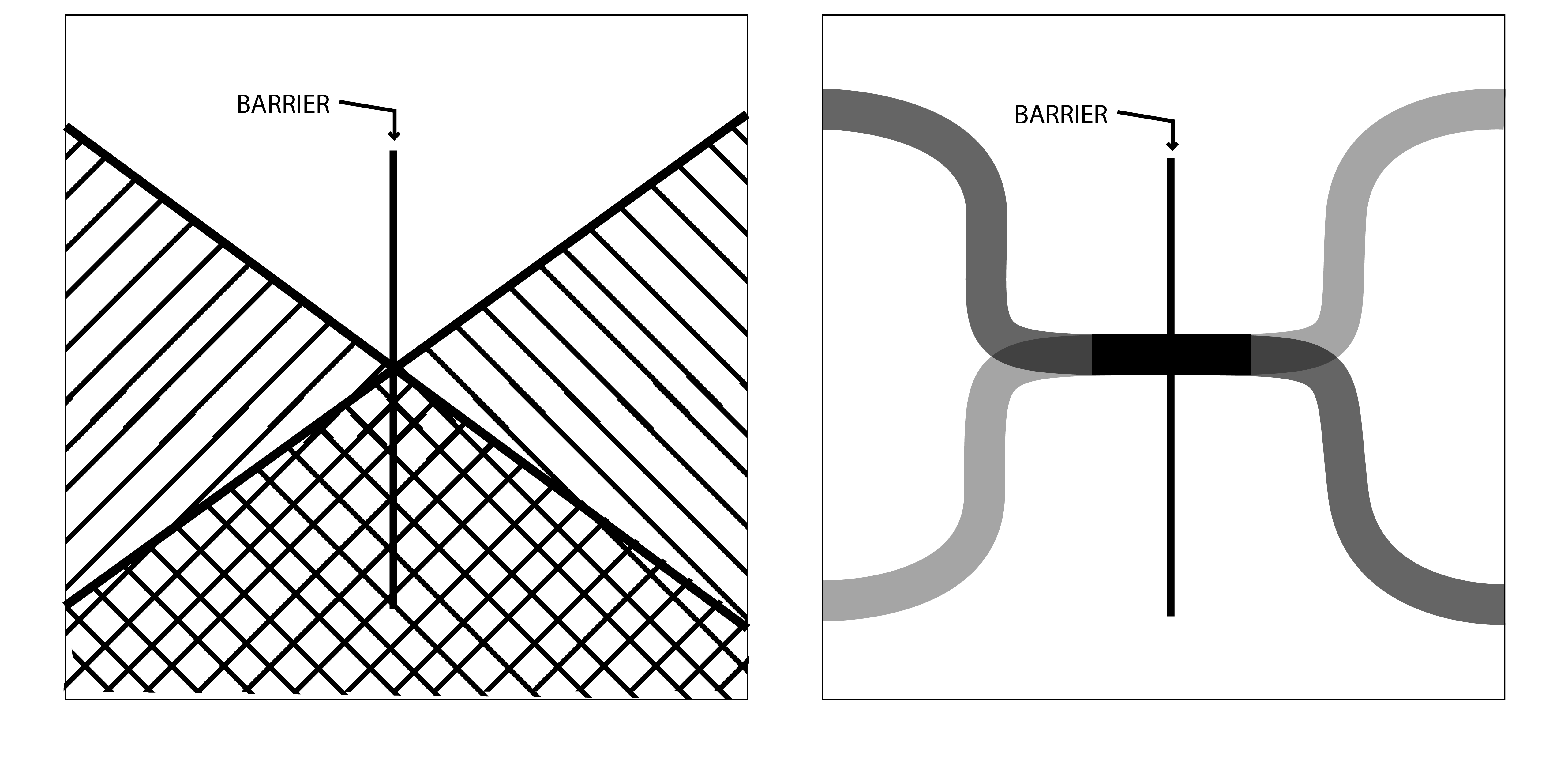}
\caption{The left picture shows the barrier compatible approximating domains after making the original ones barrier friendly by compactly supported perturbations. The right one shows the barrier compatible mixing regions. The plaster is seen as the rectangle in black contained in the intersection of the slices of the mixing regions.}
\label{c5fzoom}
\end{figure}

We now define a standard region of equality (\textbf{SRoE}) for $N_X^i$ and $N_Y^i$ as in Definition \ref{c5dcompp} as follows. This will take a couple of steps. Note that $fill(\partial N_X^{i-})\cap fill(\partial N_Y^{i-})$ and $fill(\partial N_X^{i+})\cap fill(\partial N_Y^{i+})$ are both compact domains. A \textbf{minimum SRoE} is a barrier friendly compact domain contained inside $fill(\partial N_X^{i-})\cap fill(\partial N_Y^{i-})$ whose complement is a collar neighborhood of the boundary of $fill(\partial N_X^{i-})\cap fill(\partial N_Y^{i-})$. A \textbf{maximum SRoE} is similarly defined using $fill(\partial N_X^{i+})\cap fill(\partial N_Y^{i+})$. Finally, let us call the barrier friendly subset of $M$ given by the subset $[-\delta/20,\delta/20]\times [-\epsilon',\epsilon']$ the \textbf{bridge}. An SRoE is defined as a subset that can be represented as the union of a minimum SRoE, a maximum SRoE, the bridge, and the plaster such that the subset of the rectangle $[-\epsilon',\epsilon']\times [-\delta,\delta]$ corresponding to the union of the minimum SRoE, the maximum SRoE, the bridge (which is barrier friendly) is the closure of an open subset of the square that deformation retracts to $\epsilon'$-axis, see the left picture in Figure \ref{c5fblack} for an example. For getting an idea of the overall shape of a SRoE the reader should look at the right picture of Figure \ref{c5fblack}.

\begin{proposition}\label{c5pfinalfiller}
We can find $h_i^X$ and $h_i^Y$ as in Proposition \ref{c5pextension} such that
\begin{itemize}
\item The corresponding boundary accelerators are compatible with barriers as Definition \ref{c5dcompp}.
\item The locus where $h_i^X=h_i^Y$ is satisfied is a topological submanifold with boundary of codimension $0$.
\item $min(h_i^X, h_i^Y)\leq min(h_{i+1}^X, h_{i+1}^Y)$ on $fill(\partial N_X^{(i+1)-})\cap fill(\partial N_Y^{(i+1)-})$, for every $i\geq 1$.
\end{itemize}
\end{proposition}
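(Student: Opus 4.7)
The plan is to mirror the strategy of the non-intersecting boundaries case (the unnamed proposition just before Proposition~\ref{c5pblow}) but adapt each step to accommodate the barriers. I would proceed in five stages, handling all the geometric rigidity near the barrier first, and only then recycling the soft arguments from the non-intersecting case.

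First, starting from the given approximating domains $D_X^i$ and $D_Y^i$ with barriers $Z_i\times[-\epsilon_i,\epsilon_i]\to M$, I would choose a thickening as in Definition~\ref{c5dfriendly} and apply Proposition~\ref{c3pbarrierneighborhood}. The data input to that proposition is produced by letting $l(z)$ be the barrier direction and $l_1'(z), l_2'(z)$ be the two fixed oblique straight-line graphs from the left picture of Figure~\ref{c5fzoom}; the outward-pointing hypothesis on the barrier is exactly what makes these lines rotatable to $l_1(z),l_2(z)$ without crossing $l(z)$. By compactly supported modifications, the new approximating domains are barrier-friendly with the prescribed profile in the slice rectangle. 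Then I would inflate them to barrier-friendly mixing regions $N_X^i, N_Y^i$ whose slices have the $\kappa$-tubular S-shape required in the second bullet of Definition~\ref{c5dcompp}, taking care to ensure $\Delta_i^X=\Delta_i^Y=:\Delta_i$ and that $N_X^i\cap N_Y^i$ lies inside the thickening image.

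Second, I would build the excitation functions $f_i^X$ on $N_X^i$ and $f_i^Y$ on $N_Y^i$. The crucial move is that on the plaster — the barrier-friendly product of $[-\delta/10,\delta/10]$ and $[-\kappa,\kappa]\subset[-\epsilon',\epsilon']$ — I declare $f_i^X$ to be a strictly increasing function of the $\epsilon'$-coordinate alone; then $f_i^X$ is constant along each slice $Z_i\times\{a\}$, so Lemma~\ref{c5lcompatible} gives barrier-compatibility on the restricted barrier $Z_i\times[-\kappa,\kappa]\to M$. I set $f_i^Y$ equal to $f_i^X$ on the plaster and then extend each one to a smooth function going from $0$ to $\Delta_i$ on its full mixing region, with no boundary critical points, choosing the extensions so that $f_i^X\neq f_i^Y$ everywhere else on $N_X^i\cap N_Y^i$. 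The geometry of the S-shaped slices of $N_X^i$ and $N_Y^i$ makes it possible to separate the values off the plaster by using opposite signs of slope in the transverse direction. This step is the main obstacle: the barrier-compatibility requirement is rigid, and the equal-on-the-plaster-but-nowhere-else condition has to be engineered by hand — routine partitions of unity are insufficient because they will usually produce equality on a larger set than the plaster.

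Third, I would extend $f_i^X, f_i^Y$ to smooth functions on all of $M$, negative on the interior of the inner fillers and larger than $\Delta_i$ on the interior of the outer fillers, so that the equality locus on $M$ is a prescribed SRoE — a union of a minimum SRoE inside $fill(\partial N_X^{i-})\cap fill(\partial N_Y^{i-})$, a maximum SRoE inside $fill(\partial N_X^{i+})\cap fill(\partial N_Y^{i+})$, the bridge, and the plaster. This is a direct analogue of Step~(1) in the proof of the non-intersecting-boundaries proposition, the extra input being that the bridge has to connect to the plaster in a barrier-friendly way — which is automatic since the bridge is by definition barrier-friendly. The monotonicity condition $\min(h_i^X,h_i^Y)\leq \min(h_{i+1}^X,h_{i+1}^Y)$ on $fill(\partial N_X^{(i+1)-})\cap fill(\partial N_Y^{(i+1)-})$ is arranged inductively in $i$ while choosing the values on the fillers, exactly as in the unnamed proposition before Proposition~\ref{c5pblow}.

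Fourth, I would Morsify the pair $(h_i^X, h_i^Y)$ by applying Lemma~\ref{c5lmorsesimul} inside $fill(\partial N_X^{i-})\cup fill(\partial N_Y^{i-})$ and analogously inside the outer filler union, with $V$ a slightly shrunken SRoE. By hypothesis of that lemma, the region of equality is preserved as a codimension-$0$ topological submanifold with boundary, which secures the second bullet of the proposition. Finally, I would perform the flattening step from Step~(3) of the proof of Proposition~\ref{c5pextension}: multiply by $\tilde w\circ h_i$ in a neighborhood of each filler to rescale by a small $\epsilon$, invoke Lemma~\ref{c5lflat} to force all non-constant one-periodic orbits into the mixing region interiors and to make the critical points on the fillers non-degenerate as time-$1$ orbits. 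Since the rescaling is done on the fillers — disjoint from the plaster and the bridge — the barrier-compatibility and equality-on-the-SRoE conditions established in stages two and three survive, and the resulting $h_i^X, h_i^Y$ meet all three required bullets.
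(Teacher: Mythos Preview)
Your proposal is correct and follows essentially the same five-stage strategy as the paper's own proof. Two small points deserve tightening. First, Lemma~\ref{c5lmorsesimul} as stated assumes $\partial D'\cap\partial D''=\emptyset$, which fails here since the boundaries of the inner fillers of $X$ and $Y$ meet inside the thickening; the paper explicitly notes that the \emph{proof} of that lemma (rather than the lemma itself) extends to this situation, using the barrier-friendly structure to handle the collar neighborhoods near the intersection. Second, your claim in the final stage that the fillers are disjoint from the bridge is not quite accurate --- the bridge spans the full $\epsilon'$-range and therefore passes through both filler regions --- but the flattening nonetheless preserves the region of equality, since the rescaling multiplier depends only on the function value, which already coincides for $h_i^X$ and $h_i^Y$ on the SRoE.
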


\begin{proof}We first construct the boundary accelerators that are compatible with the barriers. As explained before we can assume that we have approximating domains $D_X^i$ and $D_Y^i$ and thickenings $Z_i\times [-\epsilon',\epsilon']\times [-\delta,\delta]\to M$ so that the approximating domains are barrier friendly and, in the thickening, look like the left picture of  Figure \ref{c5fzoom}. Now we can do more compactly supported modifications to $D_X^i$ and $D_Y^i$ inside the thickening, and obtain mixing regions that satisfy the second bullet point of Definition \ref{c5dcompp} as $\kappa$-tubular neighborhoods of $\partial D_X^i$ and $\partial D_Y^i$ for a Riemannian metric on $M$ that is the product of the standard metric on $[-\epsilon',\epsilon']\times [-\delta,\delta]$ and some Riemannian metric on $Z_i$ inside the thickening. We construct the excitation functions with the desired properties so that, inside the thickening, they are lifts of functions on the square. 

Next, we extend the excitation functions to smooth functions $\tilde{h}_i^X$ and $\tilde{h}_i^Y$ on $M$ as in the first step of the proof of the Proposition \ref{c5pextension}, so that they are equal along a standard region of equality. We fix an arbitrary SRoE, and define the extensions on it first using the special form of our subsets in the thickening (e.g. we can choose the extensions to be constant along the maximum and minimum SRoE). Then we use the Whitney extension theorem (as in the first paragraph of the proof of Lemma \ref{c3lcontractible}) and Lemma \ref{c5lwhitney} to construct the rest of the desired extensions. Note that the connected components of the complement of our SRoE contain exactly one connected component of $fill(\partial N_Y^{i-}) - fill(\partial N_X^{i-})$ ($X$-dominated) or $fill(\partial N_X^{i-}) - fill(\partial N_Y^{i-})$ ($Y$-dominated). We require that $\tilde{h}_i^X\geq \tilde{h}_i^Y$ on $X$ dominated components, and $\tilde{h}_i^Y\geq \tilde{h}_i^X$ on the $Y$-dominated ones (where the inequalities are strict in the complement of the SRoE of course).

Now, we want to apply a simultaneous Morsification procedure so that the second bullet point of the statement we are proving is still satisfied (the SRoE property is not stipulated anymore) using the argument in the proof of Lemma \ref{c5lmorsesimul}. We first modify $\tilde{h}_i^X$, and $\tilde{h}_i^Y$ in $fill(\partial N_X^{i-})$, and $fill(\partial N_Y^{i-})$, respectively, and then do the outer fillers. Note that in a neighborhood of the boundaries of the mixing regions there are no critical points, and we do not change our functions at this step in such neighborhoods (which we choose). Even though the boundaries of the domains intersect, the proof of Lemma \ref{c5lmorsesimul} extends to this case (also see the proof of Proposition \ref{c5pmixnond}). 

The final step is to flatten the functions as in the Step (3) of \ref{c5pextension} in a compatible way so that the region of equality does not change. The third bullet can easily be satisfied during the procedure, and hence we have our construction.
\end{proof}

\begin{figure}
\centering
\includegraphics[scale=0.5]{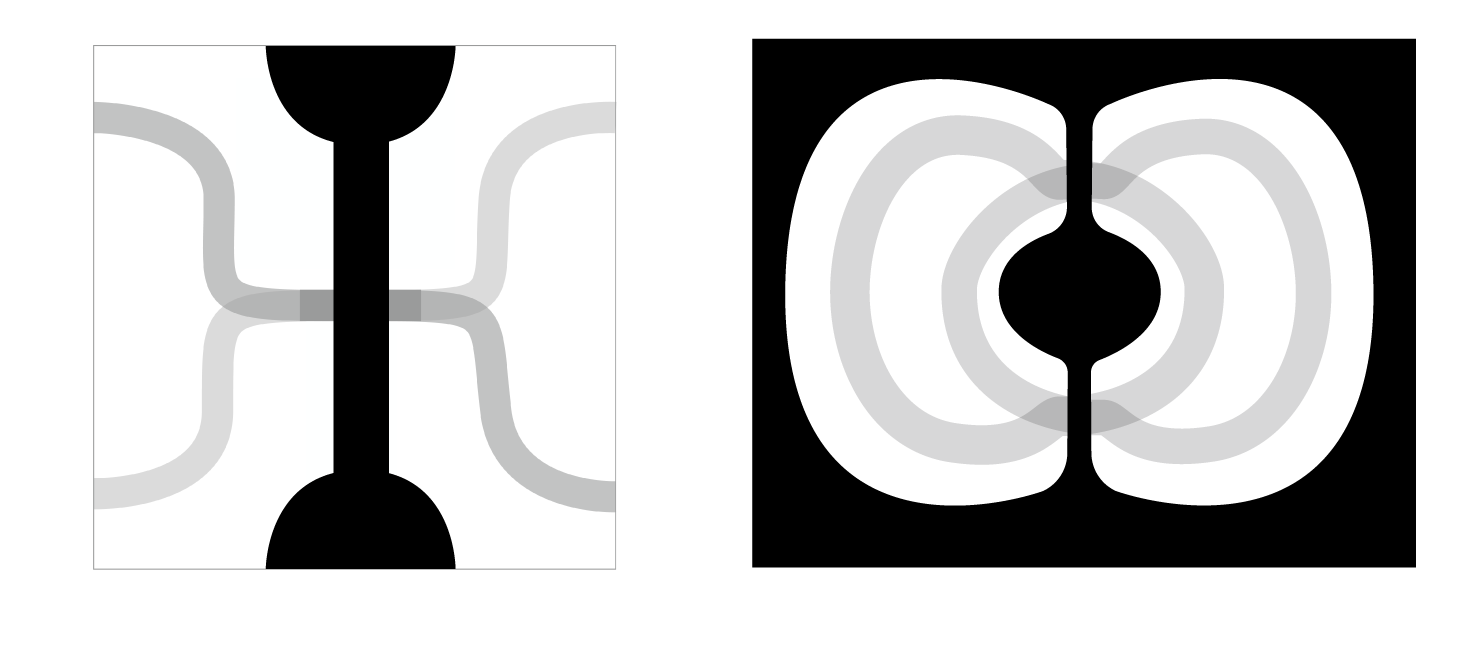}
\caption{On the left we see a SRoE near the barrier drawn in the square of the thickening. This is a picture that represents an actual possibility. On the right we see a simplified depiction of an SRoE as a whole.}
\label{c5fblack}
\end{figure}

Final step is to make the Hamiltonians non-degenerate as in Proposition \ref{c5pfinal}. Note that in Section \ref{c5snon} we did not need a simultaneous perturbation statement in the mixing regions as they were disjoint, but here we do. This is the content of the next proposition. Also note that, in this step, we need to give up the barrier compatibility of the functions (see Section \ref{c5snond} for an explanation).

\begin{proposition}\label{c5pmixnond}
Let $i\geq 1$ and $h_i^X$ and $h_i^Y$ be as in Proposition \ref{c5pfinalfiller}, and assume that we are given a positive real number $c<\delta/10$ (see the definition of the plaster right after Definition \ref{c5dcompatible}). We can find $H_i^X:M\times S^1\to\mathbb{R}$ and $H_i^Y:M\times S^1\to\mathbb{R}$ such that
\begin{itemize}
\item They both satisfy the conditions in Proposition \ref{c5pfinal} (with $n=2$ and for any given $\tau>0$ and Riemannian metric).
\item The regions where $H_i^X(\cdot, t)<H_i^Y(\cdot, t)$ and $H_i^Y(\cdot, t)<H_i^X(\cdot, t)$ are independent of $t\in S^1$. 
\item The locus where $H_i^X=H_i^Y$ is a topological submanifold with boundary of codimension $0$.
\item $H_i^X=H_i^Y$ inside the portion of the thickening given by the subset $[-\kappa,\kappa]\times [-c,c]$. This is a narrowing (in the horizontal direction of the figures) of the plaster.
\item Inside $N_X^i$ any path that goes from a point with $H_i^X<H_i^Y$ to a point with $H_i^X>H_i^Y$ needs to cross the barrier $B_i$. The same statement holds for $N_Y^i$ as well.
\end{itemize}
\end{proposition}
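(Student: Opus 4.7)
The plan is to construct $H_i^X$ and $H_i^Y$ by adding two layers of perturbation to $h_i^X, h_i^Y$: an asymmetric layer that treats each Hamiltonian independently in regions where $h_i^X \neq h_i^Y$, and a symmetric layer supported inside the standard region of equality (SRoE), which modifies both functions by the same amount so as not to destroy the equality structure. The key observation enabling the second layer is that on the SRoE we have $h_i^X = h_i^Y$, and hence $X_{h_i^X} = X_{h_i^Y}$, so non-constant $1$-periodic orbits contained entirely in the SRoE are shared between the two flows.

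First, partition the non-constant $1$-periodic orbits of $X_{h_i^X}$ (all of which lie in $int(N_X^i)$) into two classes: those meeting the complement of the SRoE and those lying entirely in the SRoE. Do the same for $h_i^Y$. Choose open sets $U_X \subset int(N_X^i)$ and $U_Y \subset int(N_Y^i)$ whose closures are disjoint from the SRoE and which meet every orbit of the first class for $h_i^X$ and $h_i^Y$ respectively. Apply Lemma \ref{c5lnond} to obtain time-dependent perturbations $\delta_X, \delta_Y \colon M \times S^1 \to \mathbb{R}$ supported in $U_X \times S^1$ and $U_Y \times S^1$ that make every orbit meeting these sets non-degenerate. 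By compactness, $|h_i^X - h_i^Y|$ has a positive infimum $\eta$ on $\overline{U_X} \cup \overline{U_Y}$; take $\|\delta_X\|_{C^0}, \|\delta_Y\|_{C^0} < \eta$ so that signs are preserved pointwise. Next, choose an open set $W$ containing the remaining shared SRoE-orbits, with $\overline{W}$ contained in $int(N_X^i) \cap int(N_Y^i)$ and disjoint from $\overline{U_X} \cup \overline{U_Y}$, and apply Lemma \ref{c5lnond} to the common function $h_i^X|_W = h_i^Y|_W$ to find a single perturbation $p$ supported in $W \times S^1$ making these orbits non-degenerate. Set $H_i^X := h_i^X + \delta_X + p$ and $H_i^Y := h_i^Y + \delta_Y + p$, with all perturbations small enough in $C^n$-norm to satisfy the quantitative bound $\tau$.

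Verification is then routine: values stay in $(0,\Delta_i)$ on $int(N_K^i)$ and agree with $h_i^K$ on the fillers since all supports are in the appropriate interiors; every non-constant orbit of $X_{H_i^X}$ meets $U_X$ or $W$, giving non-degeneracy, and likewise for $H_i^Y$; outside $W$ the sign of $H_i^X - H_i^Y$ equals that of $h_i^X - h_i^Y$ pointwise and time-independently by the choice $\|\delta_\bullet\|_{C^0} < \eta$, while on $W$ the two functions coincide, so the equality locus is exactly the SRoE, a codimension-$0$ topological submanifold with boundary, and contains the narrowed plaster $P_c$. Property (5) is inherited from the geometric arrangement set up in Proposition \ref{c5pfinalfiller}: the SRoE separates the two strict-sign components of each mixing region across the barrier $B_i$, and our sign-preserving modifications leave this intact. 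The main obstacle is the treatment of the shared SRoE-orbits: any perturbation of them must be genuinely symmetric between the two Hamiltonians in order to maintain $H_i^X = H_i^Y$ on $P_c$, and it is precisely the coincidence of $X_{h_i^X}$ and $X_{h_i^Y}$ on the SRoE that lets one perturbation $p$ do double duty for both Hamiltonians simultaneously.
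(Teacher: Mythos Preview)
Your two–layer strategy is the right idea and is close in spirit to the paper's argument, but there is a genuine gap in the non-degeneracy step for the symmetric layer.

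The perturbation $p$ you produce by ``applying Lemma \ref{c5lnond} to the common function $h_i^X|_W = h_i^Y|_W$'' is not well-posed: Lemma \ref{c5lnond} takes as input a function on all of $M\times S^1$, and it guarantees that \emph{all} one-periodic orbits of the perturbed Hamiltonian intersecting $W$ are non-degenerate. If you apply it with $H=h_i^X$, you learn that orbits of $h_i^X+p$ meeting $W$ are non-degenerate; you do \emph{not} learn this for orbits of $h_i^Y+p$. The two Hamiltonians agree on $W$ (and on the SRoE), but an orbit of $h_i^Y+p$ that merely intersects $W$ may well leave the equality region, where the two flows diverge, and there is no reason such an orbit should coincide with any orbit of $h_i^X+p$. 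So the claim that ``one perturbation $p$ does double duty'' is exactly the step that fails. There is also a secondary order-of-operations issue: since you add $p$ \emph{after} $\delta_X$, orbits of $H_i^X$ that meet both $U_X$ and $W$ are covered by neither invocation of Lemma \ref{c5lnond}.

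The paper repairs both problems by reversing the order and shrinking the equality region. It first perturbs symmetrically inside the plaster, obtaining $\tilde h^X,\tilde h^Y$ (still equal wherever $h^X=h^Y$), and then performs the asymmetric perturbation \emph{last}, separately for $X$ and $Y$, on $N_X\setminus(T\cup Q_X)$ and $N_Y\setminus(T\cup Q_Y)$. Being last, this step makes every orbit of $H^X$ (resp.\ $H^Y$) that escapes $T$ non-degenerate. The remaining orbits are confined to $T$, which lies in the interior of the region where $\tilde h^X=\tilde h^Y$; for those orbits the two flows genuinely coincide in a neighborhood, so non-degeneracy established for $\tilde h^X$ automatically transfers to $\tilde h^Y$. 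Note this forces the final equality locus to shrink to $T\cup(P\cap Q_X)$ rather than the full SRoE---your attempt to preserve the entire SRoE as the equality locus is precisely what prevents the argument from closing.
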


\begin{proof}
We drop the $i$ from the notation. Also note that we are doing nothing outside of $N_X$ to $h^X$, and similarly for $N_Y$ and $h^Y$. Hence, we will think of $h^X$, and $h^Y$ as functions defined on $N_X$, and $N_Y$, respectively. Let $P$ denote the plaster and let $T$ be the subset of $P$ given by the subset $[-\kappa,\kappa]\times [-c,c]$ as in the statement. Let $Q_X$ be a collar neighborhood of the boundary of $N_X$ (inside $N_X$) which does not intersect any $1$-periodic orbit of $h_X$. Similarly, we take $Q_Y$. Moreover, assume that $Q_X$ and $Q_Y$ are both given by $([-\kappa,-\kappa+a]\cup[\kappa-a,\kappa])\times [-\delta/10,\delta/10]$ inside the plaster for some small $a>0$.

Now we use the strategy of the proof of Lemma \ref{c5lmorsesimul}. We start by applying Lemma \ref{c5lnond} in the interior of $[-\kappa+a,\kappa-a]\times [-\delta/10,\delta/10]$ (i.e. the complement of $Q_X$ inside the interior of the plaster) and changing $h^X$ and $h^Y$ simultaneously to $\tilde{h}^X$ and $\tilde{h}^Y$. Note that the new functions are $S^1$-dependent.

Then, we go to $N_X-(T\cup Q_X)$ to modify $\tilde{h}^X$ there. We again consider its three kinds of connected components: \begin{enumerate}
\item $\tilde{h}_Y$ is defined somewhere and $\tilde{h}_X\leq \tilde{h}_Y$
\item $\tilde{h}_Y$ is defined somewhere and $\tilde{h}_Y\leq \tilde{h}_X$ 
\item $\tilde{h}_Y$ is not defined
\end{enumerate}

We first make the inequality strict everywhere in the union of type (1) components, using Lemma \ref{c5lwhitney}. Note that after this step, $1$-periodic orbits of the modified $\tilde{h}^X$ that lie entirely inside $T$ are still non-degenerate. Then, we apply Lemma \ref{c5lnond} in type (1) components so that the new functions are smaller, and in type (2) components so that the new functions are larger. In type (3) components we apply Lemma \ref{c5lnond} without having to be careful about whether the new function is smaller or larger. The resuting function is called $H^X$ and has non-degenerate $1$-periodic orbits. This is because an orbit either intersects $N_X-(T\cup Q_X)$, or is contained inside $T$.

Now, we consider $N_Y-(T\cup Q_Y)$ and modify $\tilde{h}^Y$. Note that $\tilde{h}^X$ is already modified to $H^X$, but this was done in such a way that $\tilde{h}_X\leq \tilde{h}_Y$ at a point if and only if $H_X(\cdot, t) \leq \tilde{h}_Y$ (similarly, with the $\geq$ inequality), where whether the latter inequality holds is independent of $t$. Hence, we can simply repeat the procedure to obtain $H^Y$.

The region of equality for $H^X$ and $H^Y$ is now easily seen to be the union of $T$ and $P\cap Q_X$. It is straightforward to see that we can satisfy all of the requirements in the statement by this procedure.
\end{proof}

\begin{proof}[Proof of Theorem \ref{c5tmv}]
We first construct a sequence of approximating domains and thickenings of the barriers as in the paragraph after Definition \ref{c5dfriendly}. We note that the main operation here is compactly supported modifications to domains as in Section \ref{c5ssneighborhoods}. 

Then, we choose $h_i^X:M\to \mathbb{R}$ and $h_i^Y: M\to\mathbb{R}$ as in Proposition \ref{c5pfinalfiller}. 

Finally, we apply Proposition \ref{c5pmixnond} separately for every $i\geq 1$ to obtain $H_i^X:M\times S^1\to\mathbb{R}$ and $H_i^Y:M\times S^1\to\mathbb{R}$. We choose the $\tau$ in the first bullet point so that the Lemma \ref{c5lalmostcompatible} applies with the thickening there being the one given in the fourth bullet point of Proposition \ref{c5pmixnond} (in particular, we have different $\tau$'s for each $i\geq 1$). 

This finishes the proof of Theorem \ref{c5tmv} by the same statement with Proposition \ref{c5pblow} as used in proving Theorem \ref{c5tbasic}. The main difference here is that the restrictions on the $1$-periodic orbits condition of Proposition \ref{c5pmax} is more serious since the mixing regions are not disjoint. To see why this condition is true, first of all, note that all $1$-periodic orbits are contained in the mixing regions. Moreover, the last two bullet points of Proposition \ref{c5pmixnond} are satisfied, which implies the desired property using Lemma \ref{c5lalmostcompatible} (recall how $\tau$ was chosen in the previous paragraph).
\end{proof}

\subsection{Non-degeneracy}\label{c5snond}

This subsection is a long remark on why we could not restrict ourselves to barrier compatible Hamiltonians in constructing our cofinal sequences in the previous section as a result of the non-degeneracy requirement of the one periodic orbits. We start with an elementary lemma.

\begin{lemma}
The Hamiltonian vector field $X_h$ of a function $h:M\to \mathbb{R}$ is tangent to a submanifold of codimension one $X\subset M$ if and only if it is constant along the characteristic leaves of $X$
\end{lemma}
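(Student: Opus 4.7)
The plan is to reduce the statement to pointwise linear algebra using coisotropy of hypersurfaces. Since $X\subset M$ has codimension one, each tangent space $T_pX$ is automatically coisotropic, i.e.\ the symplectic orthogonal $(T_pX)^{\omega}\subset T_pX$ is a one-dimensional line. This line is, by definition, the characteristic direction of $X$ at $p$, and the collection of these lines integrates to the characteristic foliation of $X$. The key linear algebra fact I will use is that in a symplectic vector space, $(W^\omega)^\omega=W$ for any subspace $W$; applied to $W=(T_pX)^\omega$, this says that a vector $v\in T_pM$ lies in $T_pX$ if and only if $\omega(v,w)=0$ for every $w$ in the characteristic line at $p$.

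Now apply this criterion to $v=X_h(p)$. By the defining relation $\omega(X_h,\cdot)=dh$, we have
\begin{equation*}
\omega(X_h(p),w)=dh_p(w)\quad\text{for all }w\in T_pM.
\end{equation*}
Therefore $X_h(p)\in T_pX$ if and only if $dh_p(w)=0$ for every $w$ in the characteristic line at $p$. Since this must hold at every $p\in X$, we conclude that $X_h$ is everywhere tangent to $X$ if and only if $dh|_{\mathrm{char}}=0$, which is precisely the statement that $h$ is constant along each characteristic leaf of $X$.

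There is essentially no obstacle here; the only subtle point to highlight carefully is that coisotropy of a codimension-one submanifold is automatic (a one-dimensional subspace of a symplectic vector space is always isotropic, hence its annihilator is coisotropic, which amounts to saying $(T_pX)^\omega\subset T_pX$), and that the characteristic leaves are the integral curves of this one-dimensional distribution, so ``$dh$ vanishes on the characteristic line at every point'' is equivalent to ``$h$ is locally constant along each leaf.'' Both directions of the iff are absorbed in a single application of $(W^\omega)^\omega=W$, so the argument will be a short paragraph.
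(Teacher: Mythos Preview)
Your proof is correct and is essentially the same as the paper's: both reduce to the observation that $\omega(X_h,w)=dh(w)$ for $w$ in the characteristic direction, so tangency of $X_h$ to $X$ is equivalent to $dh$ vanishing along the characteristic foliation. The paper packages this by choosing a local defining function $f$ and using the identity $df(X_h)=\{f,h\}=-dh(X_f)$, whereas you work intrinsically with $(T_pX)^\omega$ and the double-orthogonal relation, but the content is identical.
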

\begin{proof}
Let $X\subset\{f=0\}$ for some function $f$ with $df\neq 0$ along $X$. Note that $X_f$ is tangent to the characteristic foliation of $X$. We know that $h$ is compatible with $X$ iff $X_h\cdot(f)=0$ along $X$. Moreover, $df(X_h)=\{f,h\}=-dh(X_f)=-X_f(h)$. The claim follows.
\end{proof}

Using the embedding of the barrier, we can construct compatible Hamiltonians with the stronger property that they are constant along the rank 2 coisotropics making up the barrier (as in Lemma \ref{c5lcompatible}). The definition of compatibility does not impose this a priori, but the lemma above shows that we may be forced to it nevertheless as there might be characteristic lines of $B$, which are dense inside $Z\times \{a\}$ for almost all $a$'s

The upshot for us is that we may not have a single compatible $M\times S^1\to\mathbb{R}$ with non-degenerate periodic orbits. The problematic orbits are the ones that lie inside the barrier. In fact if $dim(M)\geq 8$, one can show by a Jacobian computation that in the scenerio described above with the dense characteristic lines we can never make those orbits non-degenerate for barrier compatible Hamiltonians. Fortunately, we can be a little more flexible as in Lemma \ref{c5lalmostcompatible}.

\subsection{Instances of barriers}\label{c5sinstances}

First, note that the outward pointing condition of the barriers can be relaxed to a more cohomological condition. Namely:\begin{proposition}
Assume that we have a sequence of approximating domains $D_X^i$ and $D_Y^i$, and barriers $Z^i\times [-\epsilon_i,\epsilon_i]\to M$ such that \begin{itemize}
\item $\partial D_X^i\pitchfork \partial D_Y^i=Z^i\times\{0\}$
\item The vector field $\partial_{\epsilon_i}$ has winding number 0 with respect to the homotopy class of of trivializations of the normal bundle of $Z_i$ induced by $D_X^i$ and $D_Y^i$.
\end{itemize}
Then there exists a sequence of approximating domains with barriers for $X$ and $Y$.
\end{proposition}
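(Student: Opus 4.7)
The plan is to deform each pair $D_X^i, D_Y^i$ by compactly supported modifications in a neighborhood of $Z^i$ so that the deformed domains still approximate $X$ and $Y$, and so that the same barrier embedding $Z^i \times [-\epsilon_i,\epsilon_i] \to M$ now points strictly out of both. Once that is achieved, the hypothesis of Theorem \ref{c5tmv} applies to the deformed sequence and the conclusion follows. I work one $i$ at a time and drop it from the notation.

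Since $\partial D_X \pitchfork \partial D_Y = Z$, the rank--$2$ normal bundle $\nu(Z) \to Z$ is spanned by the outward conormals $n_X, n_Y$ to $D_X, D_Y$, which is exactly the homotopy class of trivializations referenced in the hypothesis. Identifying each fiber of $\nu(Z)$ with $\mathbb{R}^2$ via this frame, let $\mathcal{Q} \subset \nu(Z) \setminus 0$ be the open subbundle whose fibers are the strictly outward wedge $\{a n_X + b n_Y : a,b > 0\}$. The hypothesis that $\partial_{\epsilon}$ has winding number $0$ says precisely that $\partial_{\epsilon}$, as a non-vanishing section of $\nu(Z)$, is homotopic through non-vanishing sections to some section of $\mathcal{Q}$.

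Next, I would convert this null-homotopy into continuous families of oriented lines $l_X'(z), l_Y'(z) \subset \nu_z(Z)$, rotated from the tangent lines $l_X(z), l_Y(z)$ of $\partial D_X, \partial D_Y$ at $z$, such that for every $z$ the barrier line $l(z) := \mathbb{R}\cdot \partial_{\epsilon}(z)$ lies in the strictly outward wedge cut out by the new half-spaces, and such that each rotation $l_X(z) \rightsquigarrow l_X'(z)$ and $l_Y(z) \rightsquigarrow l_Y'(z)$ is carried out without sweeping across $l(z)$. Concretely: pick any fixed reference section $\sigma$ of $\mathcal{Q}$, use the null-homotopy to continuously rotate $l(z)$ to $\sigma$, and apply the same fiberwise angular rotation to $l_X(z)$ and $l_Y(z)$. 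The winding number $0$ hypothesis is exactly what makes this simultaneous and continuous over all of $Z$; without it there would be an angular monodromy obstructing the choice of consistent rotation angles.

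Then I would invoke Proposition \ref{c3pbarrierneighborhood} with $K_1 = D_X$, $K_2 = D_Y$, the line field $l$ above, and the targets $l_1' = l_X'$, $l_2' = l_Y'$. The non-crossing condition in the hypothesis of that proposition is precisely what was arranged in the previous step, and it yields compactly supported modifications $\widetilde{D}_X, \widetilde{D}_Y$ of $D_X, D_Y$ inside an arbitrarily small disk-bundle neighborhood of $Z$ whose tangent lines along $Z$ are $l_X'(z), l_Y'(z)$; by construction the barrier direction $\partial_{\epsilon}$ then points strictly out of both modified domains. Performing this for every $i$ with neighborhoods shrinking fast enough, we can arrange $\widetilde{D}_X^{i+1} \subset \operatorname{int} \widetilde{D}_X^i$ and $\bigcap_i \widetilde{D}_X^i = X$, and likewise for $Y$, so that $\{\widetilde{D}_X^i\}, \{\widetilde{D}_Y^i\}$ is a sequence of approximating domains with barriers in the sense of Theorem \ref{c5tmv}. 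The main technical step, and the one where the hypothesis is actually used, is the continuous and simultaneous construction of the two rotations over $Z^i$; everything else is an application of tools already developed in Section \ref{c5ssneighborhoods}.
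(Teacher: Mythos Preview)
Your overall strategy is exactly the paper's: deform each pair of approximating domains by compactly supported modifications near $Z^i$, using the rotation technique of Proposition~\ref{c3pbarrierneighborhood}, so that the given barrier direction ends up pointing strictly outward. You also correctly locate where the winding-number-zero hypothesis enters, namely in producing a continuous global choice of rotation angle over $Z$.

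There is, however, a gap in your direct invocation of Proposition~\ref{c3pbarrierneighborhood}. That proposition requires the line field $l(z)$ to already point out of both $K_1$ and $K_2$; you take $l(z)=\mathbb{R}\cdot\partial_\epsilon(z)$, and the whole point here is that $\partial_\epsilon$ does \emph{not} yet point outward, so the hypothesis is simply not met. Relatedly, your non-crossing claim fails for the construction you describe: if at some $z$ the direction $\partial_\epsilon(z)$ lies, say, out of $D_Y$ but into $D_X$, then applying the same angular rotation to $l_X$ and $l_Y$ until $\partial_\epsilon$ lands in the new outward wedge necessarily sweeps $l_X$ across $\partial_\epsilon$. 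The remedy is what the paper indicates by ``same argument as in the proof'': bypass the statement of the proposition and reuse its cutoff--rotation construction directly. In that proof the only role of $l$ is to single out a continuous choice of rotation angle $\theta_z$; here the winding-zero hypothesis hands you a continuous lift $\theta:Z\to\mathbb{R}$ outright, after which the same formula (rotate each point $w$ on the straightened boundary by $\rho(|w|)\cdot\theta_z$) produces the desired compactly supported modification of each domain.
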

\begin{proof}
This follows from the same argument as in the proof of Proposition \ref{c3pbarrierneighborhood} using compactly supported modifications to the domains. The details are omitted.
\end{proof}

When $dim(M)=2$ the barrier condition can be satisfied only when the boundaries of the approximating domains do not intersect. For $dim(M)=4$, the cohomological condition becomes of importance. 

\begin{lemma}\label{c5ltorus}
Consider the standard neighborhood of a Lagrangian torus $T^2\times\mathbb{R}_p^2,\omega=dq_1\wedge dp_1+dq_2\wedge dp_2$, where we think of $T^2=\mathbb{R}^2_q/\sim$. If $T^2\to T^2\times \mathbb{R}^2$ is a Lagrangian section, which is nowhere zero, then the map $T^2\to\mathbb{R}^2-\{0\}\to S^1$ is nullhomotopic.
\end{lemma}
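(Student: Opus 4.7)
My plan is to reinterpret the Lagrangian section as a closed $1$-form, and then argue that the induced map to $S^1$ must be null-homotopic for cohomological reasons.

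A Lagrangian section $s:T^2\to T^2\times\mathbb{R}^2$ with $s(q)=(q,(f_1(q),f_2(q)))$ gives rise to the $1$-form $\alpha:=f_1\,dq_1+f_2\,dq_2$ on $T^2$. A straightforward computation of $s^{*}(dq_1\wedge dp_1+dq_2\wedge dp_2)$ on tangent vectors to the graph shows that $s$ is Lagrangian if and only if $\partial_{q_2}f_1=\partial_{q_1}f_2$, i.e.\ $d\alpha=0$. The nowhere-zero hypothesis is the statement that $\alpha$ has no zeros, and the map $T^2\to\mathbb{R}^2\setminus\{0\}\to S^1$ in the lemma is $q\mapsto\alpha(q)/|\alpha(q)|$, whose homotopy class lies in $[T^2,S^1]=H^1(T^2,\mathbb{Z})=\mathbb{Z}^2$ and is recorded by the pair of winding numbers $(W_1,W_2)$ around the two generating cycles.

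Apply the Hodge decomposition to write $\alpha=\alpha_H+dh$, where $\alpha_H=a\,dq_1+b\,dq_2$ is the harmonic (constant) part and $h\in C^\infty(T^2)$. The first easy reduction is to rule out $\alpha_H=0$: if it vanished, then $\alpha=dh$, but $h$ on the compact manifold $T^2$ attains a maximum, and at such a point $dh=0$, contradicting the nowhere-zero assumption. So one may assume $[\alpha]\neq 0$.

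The main step is to show $(W_1,W_2)=(0,0)$ when $\alpha_H\neq 0$. The strategy is to deform $\alpha$ through nowhere-vanishing $1$-forms to the constant $1$-form $\alpha_H$, which defines the constant map $q\mapsto\alpha_H/|\alpha_H|$ and hence has winding zero. The natural candidate is the rescaling family $\alpha_\lambda:=\lambda\alpha_H+dh$ for $\lambda\in[1,\infty)$: since $\alpha_\lambda/\lambda=\alpha_H+dh/\lambda\to\alpha_H$ uniformly on $T^2$, for all $\lambda>\max_{T^2}|\nabla h|/|\alpha_H|$ the form $\alpha_\lambda$ is nowhere zero, and its $S^1$-class is $C^0$-close to (hence homotopic to) that of the constant $\alpha_H$, so the winding is zero. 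The winding is integer-valued and locally constant in $\lambda$ on the (open) set where $\alpha_\lambda$ is nowhere zero, so to transport it back to $\lambda=1$ one must analyze when $\alpha_\lambda$ can vanish, i.e.\ when $\nabla h(p)=-\lambda\alpha_H$ for some $p\in T^2$ and $\lambda\in[1,\infty)$.

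The hard part will be precisely this last step: bypassing the potentially bad values of $\lambda$. The key structural input is that $\nabla h(T^2)\subset\mathbb{R}^2$ is the image of a gradient on a closed surface, which forces the algebraic count of preimages above every regular value to vanish (the map $T^2\to S^2$ obtained from $\nabla h$ factors through $\mathbb{R}^2$ and is therefore null-homotopic, giving $\int_{T^2}\det\operatorname{Hess}(h)=0$). Using this, together with perturbing the straight-line rescaling through (not-necessarily-closed) nowhere-zero $1$-forms around any finite set of bad $\lambda$ values, one concludes that the winding at $\lambda=1$ agrees with that at $\lambda\to\infty$, namely zero, finishing the proof.
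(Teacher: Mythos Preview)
Your setup is correct and matches the paper's starting point: the Lagrangian section is a closed nowhere-vanishing $1$-form $\alpha$, and the Hodge decomposition $\alpha=\alpha_H+dh$ with $\alpha_H\neq 0$ is the right first move. The problem is entirely in your last paragraph, where the argument becomes circular.

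Concretely: the family $\alpha_\lambda=\lambda\alpha_H+dh$ gives a map $F:T^2\times[1,\Lambda]\to\mathbb{R}^2$, nonvanishing on the boundary, and the difference $(W_1(1),W_2(1))-(W_1(\Lambda),W_2(\Lambda))\in H^1(T^2;\mathbb{Z})$ is (Poincar\'e dual to) the class $[F^{-1}(0)]\in H_1(T^2)$. Projecting to $T^2$, this is $[(\nabla h)^{-1}(R)]$ where $R=\{-\lambda\alpha_H:\lambda\in[1,\Lambda]\}$. The fact you invoke, $\deg(\nabla h)=0$, says that for each regular value $v$ the signed count $\#(\nabla h)^{-1}(v)$ vanishes; equivalently, at each bad $\lambda$ the zeros of $\alpha_\lambda$ have indices summing to zero. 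But this does \emph{not} imply $[(\nabla h)^{-1}(R)]=0$ in $H_1(T^2)$. Cancelling index-$\pm 1$ zeros in pairs requires choosing connecting arcs in $T^2$, and different choices yield nowhere-zero perturbations lying in different classes of $[T^2,S^1]$; nothing forces the perturbed form to be homotopic to both $\alpha_{\lambda_0-\epsilon}$ and $\alpha_{\lambda_0+\epsilon}$. Said differently, the obstruction to homotoping $F$ rel boundary into $\mathbb{R}^2\setminus\{0\}$ is exactly $[F^{-1}(0)]\in H_1(T^2)$, which is what you are trying to show vanishes. Note also that closedness of $\alpha$ has been completely absorbed into the decomposition $\alpha=\alpha_H+dh$; your remaining argument would apply verbatim to any nowhere-zero form of this shape, and yet the statement fails for non-closed forms on $T^2$ (e.g.\ $\cos q_2\,dq_1+\sin q_2\,dq_2$), so something essential is missing.

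The paper takes a genuinely different route: it uses that a closed nowhere-zero $1$-form defines a foliation with trivial holonomy (hence no Reeb components), invokes Tischler's theorem to approximate $\alpha$ by $d\theta$ for a fibration $\theta:T^2\to S^1$, and then reduces to showing that any essential embedded loop on $T^2$ has zero winding in the standard trivialisation, which follows since such loops are isotopic to linear ones. This argument exploits the \emph{global} consequence of closedness (the transverse measure/fibration structure), which is exactly what your local degree count cannot see.
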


\begin{proof}
Such Lagrangian sections correspond to closed $1$-forms on $T^2$. Any nowhere vanishing $1$-form $\alpha$ on $T^2$ defines a map $T^2\to\mathbb{R}^2-\{0\}\to S^1$, and we can talk about its homotopy class $h_{\alpha}$. Notice that $h_{\alpha}$ only depends on the cooriented foliaton given by $\alpha$. More precisely, we fix an orientation of $T^2$ and hence a coorientation of the foliation induces an orientation. We also fix a trivialization of $TT^2$ given by the coordinates we used in the statement of the Lemma. Then to any embedded loop $S^1\to T^2$ we can assign a number that is the winding of the oriented line field given by the foliation with respect to the trivialization of the tangent bundle. This number is the same for homotopic loops, and the assigment determines the homotopy class $h_{\alpha}$ in question. In particular, if we can show that, for $d\alpha=0$, the number associated to two non-homotopic non-contractible embedded loops are $0$, we will be done.

By an elementary result of Tischler (\cite{Law} Theorem 29, which follows from the proof of \cite{Ti} Theorem 1), we can find a submersion $\theta:T^2\to S^1$ such that the foliation given by the fibers is arbitrarily close to the foliation defined by $\alpha$. Hence, we are reduced to showing the statement for $\alpha=d\theta$. Notice that we can find an embedded loop that is transverse to all the fibers of $\theta$. If we can show that the winding number of the fiber loops and the transverse loop are both zero, we will be done.

First note that any homotopically non-trivial embedded loop on our torus can be isotoped through embedded loops into a linear loop. This can be shown by unfolding the given loop to $\mathbb{R}^2$. We draw the straight line between its endpoints, and by a small isotopy make our curve transverse to the straight line. Then we cancel intersections between the two curves by isotoping our (curvy) curve along ribbons, using the Schoenflies theorem. We finish by Schoenflies theorem again. This shows that the winding number of the tangent lines of any homotopically non-trivial embedded loop is zero. Applying this statement to the fiber and transverse loops finishes the proof.
\end{proof}

\begin{remark}
Note that if the section is not required to be Lagrangian, meaning that $\alpha$ is not necessarily closed, we can realize all homotopy classes of maps $T^2\to S^1$ by inserting Reeb components. Our proof above is basically showing that when $\alpha$ is closed there can be no Reeb components in the foliation. 
\end{remark}

\begin{remark}
Right before the revision after the referee report is submitted, we realized that Lemma \ref{c5ltorus} (in fact for any $T^n$, $n>0$) was already proved by Laudenbach in \cite{Lau}.
\end{remark}

\begin{corollary}\label{c5ctorus}
Let $D$ and $D'$ be two domains with transversely intersecting boundaries along a disjoint union of Lagrangian tori $L$. Then, $L$ can be extended to an outward pointing barrier if and only if its Lagrangian framing (see \cite{Gompf} for the simple definition) agrees with the framing obtained by the outward pointing normal vectors of $D$ and $D'$. 
\end{corollary}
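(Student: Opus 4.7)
The plan is to work inside a Weinstein tubular neighborhood of each component $L_0 \cong T^2$ of $L$, identified symplectomorphically with $(T^*T^2, \omega_{\mathrm{can}})$ so that $L_0$ is the zero section. The Lagrangian framing of $NL_0$ is then the canonical trivialization $NL_0 \cong L_0 \times \mathbb{R}^2$ coming from this identification, and the outward framing is the pair $(n_D, n_{D'})$ of outward unit normals to $\partial D, \partial D'$ along $L_0$, which span $NL_0$ pointwise by transversality. Any barrier extending $L_0$ corresponds in this picture to a smooth $1$-parameter family $\alpha_a$ of Lagrangian sections with $\alpha_0 = 0$, i.e.\ a smooth family of closed $1$-forms on $T^2$; its direction is the closed nowhere-zero $1$-form $\alpha := \partial_a \alpha_a|_{a=0}$, and ``outward pointing'' translates to $\alpha(p) \in W(p) := \{v \in N_pL_0 : \langle v, n_D(p)\rangle > 0,\ \langle v, n_{D'}(p)\rangle > 0\}$ for every $p \in L_0$.

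For the $(\Rightarrow)$ direction, suppose such a barrier exists, producing an $\alpha$ as above. By Lemma \ref{c5ltorus} the direction map $T^2 \to \mathbb{R}^2 \setminus \{0\} \to S^1$ of $\alpha$ in the Lagrangian framing is nullhomotopic. In the outward framing the wedge $W$ becomes the constant positive cone, so the direction map of $\alpha$ in that framing factors through a proper open arc of $S^1$ and is also nullhomotopic. Since these two direction maps differ by the change-of-framing map $A \colon T^2 \to GL_2^+(\mathbb{R}) \simeq S^1$, we obtain that $A$ is nullhomotopic, i.e.\ the two framings agree.

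For the $(\Leftarrow)$ direction, suppose the framings agree. The goal is to produce a closed nowhere-zero $1$-form $\alpha$ on $T^2$ whose direction lies pointwise in $W$; granted such an $\alpha$, the map $(p,a) \mapsto \mathrm{graph}(a\alpha)(p)$ is, for small $\epsilon$, an embedding $L_0 \times [-\epsilon,\epsilon] \to M$ by Lagrangian tori with direction $\alpha$, which is the desired outward pointing barrier. The plan is to build $\alpha$ using the homotopy of framings provided by the nullhomotopy of $A$: lifting $A$ to $\psi \colon T^2 \to \mathbb{R}$ lets us write the wedge $W(p)$ in Lagrangian coordinates as the rotation by $\psi(p)$ of a fixed open convex cone. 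I then look for $\alpha$ in the form $\alpha(p) = r(p)(\cos(\psi(p)+\theta(p)), \sin(\psi(p)+\theta(p)))$ with $\theta$ taking values strictly inside the wedge's opening angle and $r > 0$; closedness of $\alpha$ becomes a transport equation on $T^2$ for $\log r$ along the vector field perpendicular to $\alpha$, and solvability reduces to cancellation of certain line integrals, which the freedom in $\theta$ lets us arrange.

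The main obstacle is precisely the $(\Leftarrow)$ direction: the wedge field $W$ may rotate substantially over $T^2$ while having zero net winding, so no literally constant $1$-form on $T^2$ need lie in $W$ pointwise, and one must genuinely exploit the infinite-dimensional freedom in the exact-form part of the Hodge decomposition. The topological input that the two framings agree (rather than merely that the wedge is nonempty) is precisely what ensures the transport equation is solvable, in the same way that Lemma \ref{c5ltorus} via Tischler's theorem constrains the direction of any closed nowhere-zero $1$-form to be nullhomotopic in the $(\Rightarrow)$ direction.
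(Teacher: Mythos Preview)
Your $(\Rightarrow)$ argument is correct and is exactly the intended application of Lemma~\ref{c5ltorus}; the paper states the corollary immediately after that lemma with no further proof, so on this direction you match the paper.

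For $(\Leftarrow)$ you have correctly reduced the problem to producing a closed nowhere-zero $1$-form $\alpha$ on $T^2$ with $\alpha(p)\in W(p)$ for every $p$, but what follows is a plan, not a proof. Writing $\alpha=r\,e^{i(\psi+\theta)}$, closedness does become the transport equation $V\cdot\nabla(\log r)=g$ with $V=(\sin(\psi+\theta),-\cos(\psi+\theta))$ and $g=-(\cos(\psi+\theta),\sin(\psi+\theta))\cdot\nabla(\psi+\theta)$. For this linear first-order PDE to admit a global smooth solution on $T^2$ one needs $g$ to integrate to zero along every closed orbit of $V$ (more generally, against every $V$-invariant probability measure), and you give no argument for this. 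The vector field $V$ is coupled to the unknown $\theta$, its dynamics on $T^2$ may be complicated, and the available freedom $|\theta|<\delta$ is governed by the wedge half-width $\delta$, which can be arbitrarily small (the hypersurfaces $\partial D,\partial D'$ may meet at angles close to $\pi$) while the lift $\psi$ can range over an arbitrarily large interval. The assertion that ``the freedom in $\theta$ lets us arrange'' solvability is precisely the missing content. Since the paper itself supplies no details for this direction there is nothing to compare against, but as written your $(\Leftarrow)$ has a genuine gap.
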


\subsection{Involutive systems}\label{c5sinvolutive}

In this section, we use Section \ref{c3smultiple}. Recall the following definition from the introduction.

\begin{definition}  We say that compact subsets $K_1,K_2,\ldots K_n$ of $M$ satisfy \textbf{descent}, if $SC_M(K_1,\ldots K_n)$ is acyclic.
\end{definition} 

\subsubsection{A slight generalization of the main theorem}\label{c5ssslight}

\begin{theorem}\label{c5tmv2}
Let $f_i^X:M\to\mathbb{R}$ and $f_i^Y:M\to\mathbb{R}$ be smooth functions such that \begin{enumerate}
\item $(f_i^X)^{-1}((-\infty,0])$ and $(f_i^Y)^{-1}((-\infty,0])$ approximate $X$ and $Y$ respectively
\item Let $f:=(f_i^X,f_i^Y):M\to\mathbb{R}^2$. There exists a smooth curve $C$ passing through the origin once and intersecting only the first and third quadrants (see the left picture of Figure \ref{c5finv}) such that \begin{align}\{f_i^X,f_i^Y\}\mid_{f^{-1}(C)}=0.\end{align} Note that this condition is automatically satisfied if $f^{-1}(C)=\varnothing$.
\end{enumerate}
Then $X$ and $Y$ satisfy descent.
\end{theorem}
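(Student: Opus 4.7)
The plan is to adapt the strategy of Theorem \ref{c5tmv}, replacing the role of the barrier $B_i$ by the \emph{involutive hypersurface} $f_i^{-1}(C) \subset M$ (where $f_i := (f_i^X, f_i^Y): M \to \mathbb{R}^2$). Concretely, I will construct acceleration data with $H_i^X, H_i^Y: M \times S^1 \to \mathbb{R}$ cofinal for $X$ and $Y$, together with $\min(H_i^X, H_i^Y)$ and $\max(H_i^X, H_i^Y)$ cofinal for $X \cup Y$ and $X \cap Y$, such that for each $i$ the pair satisfies all three hypotheses of Proposition \ref{c5pmax}. This yields an acyclic $2$-cube slice for each $i$; then, via Proposition \ref{Floertheoreticprop} assemble a compatible $3$-ray (Definition \ref{c5dcompatible3ray}) all of whose slices are acyclic, and conclude $SC_M(X,Y)$ is acyclic by Lemma \ref{c2lacycliccube}, which is exactly descent for $X, Y$.

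The construction of $(H_i^X, H_i^Y)$ is the core step. Working near $C \subset \mathbb{R}^2$, I choose a tubular neighborhood $V$ of $C$ disjoint from the two coordinate axes (except at the origin), and pick smooth functions $\tilde g_i^X, \tilde g_i^Y : V \to \mathbb{R}$ that (a) agree exactly on a thickening of $C$ within $V$, playing the role of the plaster of Definition \ref{c5dcompp}; (b) satisfy $\tilde g_i^X < \tilde g_i^Y$ on the component of $V \smallsetminus C$ lying on one side, and $>$ on the other; (c) have sign structure compatible with cofinality for $X$ and $Y$ under pullback by $f_i$. On $f_i^{-1}(V)$ I set $H_i^X := \tilde g_i^X \circ f_i$ and $H_i^Y := \tilde g_i^Y \circ f_i$. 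Outside $f_i^{-1}(V)$ (where $f_i^X \ne f_i^Y$ in sign and the approximating domains behave as in the boundary-accelerator setup of Section \ref{c5sboundary}), I interpolate using the methods of Propositions \ref{c5pextension}--\ref{c5pfinalfiller}, arranging also the max/min compatibility needed for a valid cofinal sequence for $X \cup Y$ and $X \cap Y$ via the first bullet of Lemma \ref{lemma12}.

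The crucial computation is that, near $f_i^{-1}(C)$, the Hamiltonian vector field decomposes as
\[
X_{H_i^X} \;=\; (\partial_1 \tilde g_i^X) X_{f_i^X} + (\partial_2 \tilde g_i^X) X_{f_i^Y},
\]
so if $\phi$ is a local defining function for $C \subset \mathbb{R}^2$, then
\[
X_{H_i^X}(\phi \circ f_i) \;=\; \bigl(\partial_2 \phi \cdot \partial_1 \tilde g_i^X - \partial_1 \phi \cdot \partial_2 \tilde g_i^X\bigr)\,\{f_i^X, f_i^Y\},
\]
which vanishes on $f_i^{-1}(C)$ by hypothesis (2). Hence $X_{H_i^X}$ (and symmetrically $X_{H_i^Y}$) is tangent to $f_i^{-1}(C)$; any integral curve meeting $f_i^{-1}(C)$ while staying in $f_i^{-1}(V)$ must remain on $f_i^{-1}(C)$. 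This is the analog of barrier-compatibility. Conditions (1) and (2) of Proposition \ref{c5pmax} for $H_i^X, H_i^Y$ are then inherited from the corresponding statements for $\tilde g_i^X, \tilde g_i^Y$ on $V$ via the second bullet of Lemma \ref{lemma12}, applied to the restriction of $f_i$ to a compact subset.

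The principal obstacle, exactly as in the discussion of Section \ref{c5snond}, is non-degeneracy of the one-periodic orbits. Hamiltonians of the form $\tilde g_i^{X,Y} \circ f_i$ generically produce entire families of degenerate orbits in the fibers of $f_i$. I circumvent this by first building the pair $(H_i^X, H_i^Y)$ as above with exact factorization through $f_i$ on $f_i^{-1}(V)$, then perturbing outside a smaller neighborhood of $f_i^{-1}(C)$ via a simultaneous version of Lemma \ref{c5lnond} (as in Proposition \ref{c5pmixnond}) to achieve non-degeneracy while preserving the region of equality structure. The perturbation is chosen sufficiently small in $C^2$-norm that an Arzelà--Ascoli/Gronwall argument, in the spirit of Lemma \ref{c5lalmostcompatible}, guarantees no time-$1$ trajectory can cross $f_i^{-1}(C)$. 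This verifies hypothesis (3) of Proposition \ref{c5pmax} for the perturbed pair: every one-periodic orbit graph meeting $\{H_i^X < H_i^Y\}$ is confined to its side of $f_i^{-1}(C)$. With all three hypotheses of Proposition \ref{c5pmax} in place, the acyclicity of each $2$-cube slice follows, and descent is established as described in the first paragraph.
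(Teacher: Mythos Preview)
Your proposal is correct and follows essentially the same route as the paper's own argument: replace the barrier by $f_i^{-1}(C)$, build the excitation functions as compositions $\tilde g\circ f_i$ so that their Hamiltonian flows preserve $f_i^{-1}(C)$ via the Poisson-bracket hypothesis, invoke the second bullet of Lemma~\ref{lemma12} for conditions (1)--(2) of Proposition~\ref{c5pmax}, then perturb to non-degeneracy and control orbit crossings by a Gronwall/Arzel\`a--Ascoli argument. The paper packages this last step as the standalone Lemma~\ref{c5lalmostgeneral} (a direct generalization of Lemma~\ref{c5lalmostcompatible} to closed invariant sets rather than smooth hypersurfaces), and it carries out the tangentialization and mixing-region construction entirely in the target plane of $f_i$ rather than splitting into ``inside $f_i^{-1}(V)$'' and ``outside''; but these are expository differences, not mathematical ones.
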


\begin{remark}
By Sard's theorem, it does not make a difference whether we assume that $(f_i^X)^{-1}((-\infty,0])$ and $(f_i^Y)^{-1}((-\infty,0])$ are domains or not.
\end{remark}

The proof of this version requires very little modification compared to the proof of Theorem \ref{c5tmv}, see also Remark \ref{c5rmultiple} and Remark \ref{c5rmv}. Of course, $f^{-1}(C)$ plays the role of a barrier. It is a little more general in that it admits a map $f^{-1}(C)\to \mathbb{R}$ with coisotropic fibres, but the fibres are possibly singular. We draw the pictures that we were drawing in the $[-\epsilon',\epsilon']\times[-\delta,\delta]$ rectangle before, for the manipulations near the barrier, in the target plane of the map $f$ near the origin (Figure \ref{c5finvolutive}). In this framework, we can see the entirety of $M$ and the subsets in our pictures, which is convenient. We make the subsets tangential by making the subsets inside $\mathbb{R}^2$ tangential near the origin, tangent direction being transverse to $C$. We construct the excitation functions as functions of $f_i^X$ and $f_i^Y$ (we are still using Definition \ref{c5dboundaryacc} as is). Such functions are all compatible with $f^{-1}(C)$ (in the sense that their Hamiltonian flow leaves $f^{-1}(C)$, and in fact $f^{-1}(c)$, for every $c\in C$, invariant), because of the following lemma (we are using $k=2$ only here).

\begin{lemma}
Let $f_1,\ldots f_k:M\to\mathbb{R}$, and $g_1,g_2:\mathbb{R}^k\to \mathbb{R}$ be smooth functions. Assume that $\{f_i,f_j\}=0$ at $x\in M$, for all $i,j$. Then the functions $G_i:M\to\mathbb{R}$, $i=1,2$, defined by $x\mapsto g_i(f_1(x),\ldots f_k(x))$ also satisfy $\{G_1,G_2\}=0$ at $x\in M$.
\end{lemma}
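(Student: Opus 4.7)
The plan is to reduce this to a direct application of the Leibniz rule (chain rule) for the Poisson bracket, applied twice.

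First, I would recall that for any smooth function $g:\mathbb{R}^k\to\mathbb{R}$ and any smooth $F:M\to\mathbb{R}$, the Poisson bracket satisfies
\[
\{F,\, g(f_1,\ldots,f_k)\}(x) \;=\; \sum_{i=1}^k \frac{\partial g}{\partial y_i}\bigl(f_1(x),\ldots,f_k(x)\bigr)\cdot \{F,f_i\}(x),
\]
for every $x\in M$. This is immediate from $X_{g(f_1,\ldots,f_k)} = \sum_i (\partial_i g)(f_1,\ldots,f_k) X_{f_i}$, which in turn follows from the identity $d(g\circ f) = \sum_i (\partial_i g)\circ f \cdot df_i$ and the definition $\omega(X_H,\cdot)=dH$. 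Equivalently, one can just apply the derivation property of $\{F,-\}$ together with the chain rule.

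Next, I would apply this identity twice. Setting $F=G_1 = g_1(f_1,\ldots,f_k)$ in the displayed formula with $g=g_2$ gives
\[
\{G_1,G_2\}(x) \;=\; \sum_{j=1}^k \frac{\partial g_2}{\partial y_j}(f(x))\cdot \{G_1,f_j\}(x).
\]
Applying the same identity to each $\{G_1,f_j\} = -\{f_j,G_1\}$ with $g=g_1$ yields
\[
\{G_1,f_j\}(x) \;=\; -\sum_{i=1}^k \frac{\partial g_1}{\partial y_i}(f(x))\cdot \{f_j,f_i\}(x) \;=\; \sum_{i=1}^k \frac{\partial g_1}{\partial y_i}(f(x))\cdot \{f_i,f_j\}(x).
\]
Combining the two displays gives the punchline:
\[
\{G_1,G_2\}(x) \;=\; \sum_{i,j=1}^k \frac{\partial g_1}{\partial y_i}(f(x))\,\frac{\partial g_2}{\partial y_j}(f(x))\cdot \{f_i,f_j\}(x),
\]
which vanishes at $x$ by the hypothesis that $\{f_i,f_j\}(x)=0$ for all $i,j$.

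There is no real obstacle here; the lemma is purely formal and follows by a two-line computation once the chain rule for Poisson brackets is written down. The only care needed is to treat the bracket pointwise at $x$ (the hypothesis is only assumed at the single point $x$), but since the chain rule above is itself a pointwise identity, this causes no issue.
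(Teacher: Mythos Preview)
Your proof is correct and is essentially the same as the paper's: both use the chain rule to write $X_{G_i}(x)$ (equivalently $dG_i(x)$) as an $\mathbb{R}$-linear combination of the $X_{f_j}(x)$, so that $\{G_1,G_2\}(x)=\omega(X_{G_1},X_{G_2})(x)$ becomes a linear combination of the $\{f_i,f_j\}(x)$, which vanish by hypothesis. The paper states this in two sentences without writing out the double sum, but the content is identical.
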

\begin{proof}
We have that $\{h,h'\}=\omega(X_{h},X_{h'})$. Moreover, $dG_i$ is a $C^{\infty}$ linear combination of $df_1,\ldots df_k$, and hence $X_{G_i}(x)$ is an $\mathbb{R}$-linear combination of $X_{f_1}(x),\ldots X_{f_k}(x)$. This finishes the proof.
\end{proof}

The condition that the excitation functions should have no critical points at the boundary of the mixing regions can be satisfied using Sard's lemma (this implies that the mixing regions are compact domains, which is not very important). Propositions \ref{c5pfinalfiller} and \ref{c5pmixnond} are proved using the same techniques. Note that we can draw an actual picture of an SRoE in the target plane of the map $f$ (see the right picture in Figure \ref{c5finv}). There are two places in which we need a new input. The first one is that we need a generalization of what we had before in Lemma \ref{c5lalmostcompatible}. Of course there was nothing essential about the various smoothness assumptions there. A replacement Lemma \ref{c5lalmostgeneral} is provided below. The second one is that the regions of equality (as in the second bullet point of \ref{c5pfinalfiller} and the third bullet point of \ref{c5pmixnond}) are not necessarily topological submanifolds with boundary of codimension $0$ anymore. Yet, they are preimages of subsets of the plane which are of this class under $f$. Therefore, we can now use the second bullet point of Lemma \ref{lemma12}.

\begin{lemma}\label{c5lalmostgeneral}
Let $S$ and $T$ be closed subsets of $M$ that are disjoint. Let $V$ be a smooth vector field which leaves $S$ invariant under its flow. Let us also fix a Riemannian metric $g$ on $M$. Then, there exists a $\delta>0$ with the property that for every smooth $[0,1]_t$ dependent vector field $V'$ such that $$\sup_{M\times [0,1]}(|V(x)-V'_t(x)|_g+|\nabla(V(x)-V'_t(x))|_g+|\frac{d}{dt}V'_t(x))|_g)<\delta,$$ no point in $T$ can intersect $S$ within time $1$ under the flow of $V'$.
\end{lemma}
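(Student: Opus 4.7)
The plan is to argue by contradiction, combining Arzel\`a--Ascoli with continuous dependence on vector fields. Suppose the conclusion fails: then one obtains sequences $\delta_n\searrow 0$, smooth $[0,1]_t$-dependent vector fields $V'_n$ satisfying the displayed bound with $\delta_n$ in place of $\delta$, points $x_n\in T$, and times $t_n\in[0,1]$ such that the $V'_n$-trajectory $\gamma_n(t)$ starting at $x_n$ satisfies $\gamma_n(t_n)\in S$. The bound $|V-V'_n|_g<\delta_n$ together with the boundedness of $|V|_g$ on the compact manifold $M$ yields a uniform bound on $|V'_n|_g$, hence each $\gamma_n$ is uniformly Lipschitz in $t$. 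Compactness of $M$ and $[0,1]$ lets me pass to a subsequence along which $x_n\to x_\infty\in T$ and $t_n\to t_\infty\in[0,1]$, and then Arzel\`a--Ascoli extracts a further subsequence such that $\gamma_n\to\gamma_\infty$ uniformly on $[0,1]$ for some continuous curve $\gamma_\infty$ with $\gamma_\infty(0)=x_\infty$.

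The next step is to identify $\gamma_\infty$ with the integral curve of $V$ through $x_\infty$. Writing the defining equation in a fixed finite atlas as $\gamma_n(t)=x_n+\int_0^t V'_n(s,\gamma_n(s))\,ds$, and using the uniform bound $|V'_n-V|_g<\delta_n\to 0$ together with uniform convergence of $\gamma_n$, the right-hand side converges to $x_\infty+\int_0^t V(\gamma_\infty(s))\,ds$. By uniqueness of integral curves of the smooth vector field $V$, this identifies $\gamma_\infty(t)=\phi^V_t(x_\infty)$.

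The contradiction now comes from the invariance hypothesis. The flow $\phi^V_t$ is defined for all $t$ by compactness of $M$ and is a diffeomorphism, so $\phi^V_t(S)\subseteq S$ forces $\phi^V_t(M\setminus S)\subseteq M\setminus S$; since $x_\infty\in T\subseteq M\setminus S$, the limit curve $\gamma_\infty$ never enters $S$. On the other hand, uniform convergence $\gamma_n\to\gamma_\infty$ combined with $t_n\to t_\infty$ gives $\gamma_n(t_n)\to\gamma_\infty(t_\infty)$, and closedness of $S$ together with $\gamma_n(t_n)\in S$ yields $\gamma_\infty(t_\infty)\in S$, the contradiction. The only nontrivial point is the passage to the limit in the integral equation, which is routine. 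The $|\nabla(V-V'_t)|_g$ half of the $C^1$-smallness guarantees uniform Lipschitz bounds on the $V'_n$ in the space variable, which ensures that each $\gamma_n$ exists on all of $[0,1]$ and makes the Gronwall-style comparison quantitative; the $|\tfrac{d}{dt}V'_t|_g$ bound in the hypothesis does not seem to be needed for this lemma and will not be used.
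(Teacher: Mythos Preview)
Your argument is correct and follows the same overall architecture as the paper: argue by contradiction, use Arzel\`a--Ascoli to extract a uniformly convergent subsequence of trajectories, identify the limit curve as the $V$-integral curve through the limit initial point, and contradict invariance of $S$.

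The difference lies in how the limit curve is identified. You pass to the limit directly in the integral form of the ODE, using only that $V'_n\to V$ in $C^0$ and that $\gamma_n\to\gamma_\infty$ uniformly; this is clean and, as you note, shows the $\nabla$- and $\tfrac{d}{dt}$-bounds in the hypothesis are not actually needed. The paper instead proves $\gamma_n\to\psi$ via two Gronwall estimates: one comparing $\phi_i^t(x_j)$ to $\phi_i^t(\gamma(0))$ using a uniform Lipschitz bound on the $V^i$ (from the $\nabla$-hypothesis), and a second comparing $\phi_i^t(\gamma(0))$ to $\phi^t(\gamma(0))$ by interpolating between $V$ and $V^i$ on $M\times[0,1]_t\times[0,\delta_i]_s$ and bounding the covariant derivative of the interpolated field (which uses both the $\nabla$- and $\tfrac{d}{dt}$-bounds). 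Your route is more elementary and yields a slightly stronger statement; the paper's route is more quantitative (it would give explicit estimates on how far trajectories can drift) but here that is not needed. Your remark that flows exist for all time on the closed manifold $M$ regardless of Lipschitz bounds is also correct, so even your stated use of the $\nabla$-bound is dispensable.
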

\begin{proof}
Assume otherwise: that for a decreasing sequence of positive real numbers $\delta_i\to 0$, there exist smooth $[0,1]$-dependent vector fields $V^i$, such that, for every $i\geq 1$, \begin{itemize}
\item $\sup_{M\times [0,1]}(|V(x)-V^i_t(x)|_g+|\nabla(V(x)-V^i_t(x))|_g+|\frac{d}{dt}V^i_t(x))|_g)<\delta_i.$
\item There exists points $x_i\in T$ and times $0< t_i\leq 1$ such that $\phi_i^{t_i}(x_i)\in S$, where $\phi_i^t$ is the flow of $V^i$.
\end{itemize}
Let us note the following inequalities that follow: \begin{align}\label{c5eczero}\sup_{M\times [0,1]}(|V(x)-V^i_t(x)|_g)<\delta_i,
\end{align} 
\begin{align} \sup_{M}(|\nabla V(x)|_{g})<C,\end{align} and 
\begin{align}\label{c5econe} \sup_{M\times [0,1]}(|\tilde{\nabla} V^i_t(x)|_{\tilde{g}})<C,\end{align} where $\tilde{g}$ is the Riemannian metric $g+dt^2$ on $M\times [0,1]$, $\tilde{\nabla}$ its covariant derivative operator, and $C>0$ is some constant (independent of $i$).

By Arzela-Ascoli theorem, we can assume that the paths $\gamma_i:=\phi_i^{t}(x_i):[0,1]\to M$ converge $C^0$-wise. Let $\gamma:[0,1]\to M$ be the limit path. Let us also define $\phi^t$ be the flow of $V$, and $\psi:=\phi^t(\gamma(0)):[0,1]\to M$. Note that $\gamma(0)$ is in $T$, because $T$ is closed. We claim that $\gamma$ is an integral curve of $V$, i.e. we want to show that $\gamma=\psi$.

%

Let us define $\gamma_{i,j}:=\phi_i^{t}(x_j):[0,1]\to M$ for every $i,j\geq 1$. Let us also define $\psi_i:=\phi_i^{t}(\gamma(0)):[0,1]\to M$. It follows from the Gronwall estimates (\cite{Kun}, Theorem 2 and 4) and (\ref{c5econe}) that, for every $\epsilon>0$, there exists a $j_0\geq 1$ such that for every $i\geq 1$ and $j\geq j_0$, $$\sup_{t\in[0,1]}d(\gamma_{i,j}(t),\psi_i(t))<\epsilon.$$ Note that in order to apply the estimates of \cite{Kun}, we are employing the standard trick of turning a (possibly) time dependent vector field $W_t$ on $M$ to a time independent vector field $W_t+\partial_t$ on $M\times [0,1]_t$, and use the product Riemannian metric $\tilde{g}$.

With a similar strategy we show now that for every $\epsilon>0$, there exists a $i_0\geq 1$ such that for every $i\geq i_0$, $$\sup_{t\in[0,1]}d(\psi_{i}(t),\psi(t))<\epsilon.$$ Consider the vector field $$\hat{V}^i(x,t,s):=\left(1-\frac{s}{\delta_i}\right) V(x)+\frac{s}{\delta_i}V_t^i(x)+\frac{\partial}{\partial t}$$ on $M\times [0,1]_t\times [0,\delta_i]_s$. We use the product Riemannian metric on $M\times [0,1]_t\times [0,\delta_i]_s$, called $\hat{g}$. Note that
\begin{align}\sup_{M\times [0,1]}(|\hat{\nabla} \hat{V}^i_t(x)|_{\hat{g}}<2C+1,\end{align} using all three of (\ref{c5eczero})-(\ref{c5econe}). By applying the Gronwall estimate for the integral curves of $\hat{V}_i$ starting at $(\gamma(0),0,0)$ and $(\gamma(0),0,\delta_i)$, we obtain the desired result.

Now, an application of triangle inequality shows that $$\gamma_{i}=\gamma_{i,i}\to \psi,$$  $C^0$-wise. This shows that $\gamma=\psi$ by uniqueness of limits.

Let us show that this is a contradiction. Consider the sequence of times $t_i\in [0,1]$ as in the second bullet point above. By passing to a subsequence, we can assume that $t_i\to t^*$, for some $0<t^*\leq 1$. We see that $\gamma_i(t_i)$ then has to converge to $\gamma(t^*)$ by the triangle inequality and the equicontinuity of $\gamma_i$. 

Finally, because $S$ is closed, $\gamma(t^*)$ is in $S$. It gives the desired contradiction using that $S$ is invariant under $\phi^t$, as $\gamma$ starts at $T$ but intersects $S$.
\end{proof}

\begin{figure}\label{c5finv}
\includegraphics[scale=0.3]{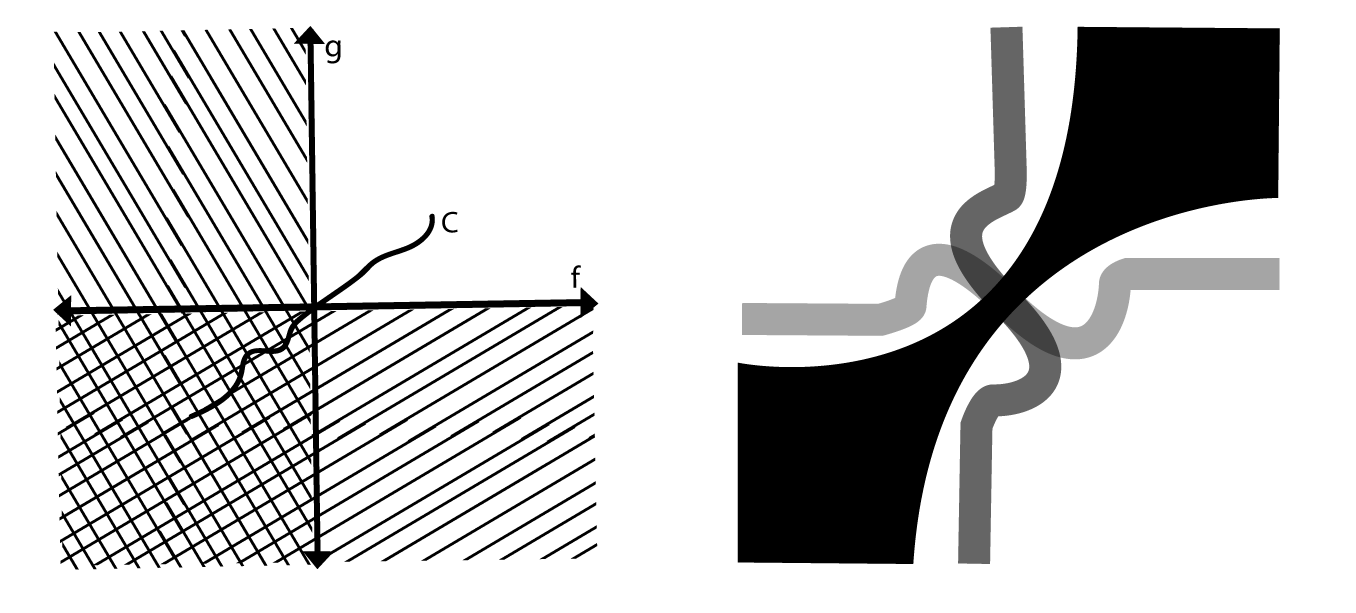}
\caption{On the left we see the approximating sets and the barrier in the in the target plane of the map $f$. On the right we see an SRoE for the functions before the non-degeneracy requirements are satisfied.}
\label{c5finvolutive}
\end{figure}

\subsubsection{Descent for symplectic manifolds with involutive structure}\label{c5ssdescent}

\begin{definition} An \textbf{involutive map} is a smooth map $\pi: M\to B$ to a smooth manifold $B$, such that for any $f,g\in C^{\infty}(B)$, we have $\{f\circ\pi,g\circ\pi\}=0$
\end{definition}

\begin{theorem}
Let $X_1,\ldots X_n$ be closed subsets of $B$. Then $\pi^{-1}(X_1),\ldots \pi^{-1}(X_n)$ satisfy descent.
\end{theorem}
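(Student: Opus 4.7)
The plan is to reduce the $n>2$ case to the pair case via Appendix \ref{appb}, and then prove the pair case by applying Theorem \ref{c5tmv2} to functions pulled back from the base, so that the involutivity hypothesis makes the Poisson bracket condition trivially satisfied.

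First, by the reduction established in Appendix \ref{appb}, descent for $\pi^{-1}(X_1),\ldots,\pi^{-1}(X_n)$ follows from descent for a finite collection of pairs among the family of intersections $\bigcap_{i\in I}\pi^{-1}(X_i)$. Since $\pi^{-1}$ commutes with intersections and finite intersections of closed sets are closed, every such pair has the form $(\pi^{-1}(Y_1),\pi^{-1}(Y_2))$ for some closed $Y_1,Y_2\subseteq B$. It therefore suffices to establish descent for a single such pair.

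Fix closed $Y_1,Y_2\subseteq B$. Since $M$ is compact, $\pi(M)\subseteq B$ is compact, so for each closed $Y\subseteq B$ standard partition of unity / smoothing arguments produce a sequence of smooth functions $g_i^Y:B\to\mathbb{R}$ whose sublevel sets $(g_i^Y)^{-1}((-\infty,0])$ form a decreasing family approximating $Y$ in any chosen compact neighborhood of $\pi(M)$. Setting $f_i^{Y_j}:=g_i^{Y_j}\circ\pi:M\to\mathbb{R}$ for $j=1,2$, the sublevel sets $(f_i^{Y_j})^{-1}((-\infty,0])=\pi^{-1}\bigl((g_i^{Y_j})^{-1}((-\infty,0])\bigr)$ are automatically compact inside $M$ and approximate $\pi^{-1}(Y_j)$, which verifies hypothesis (1) of Theorem \ref{c5tmv2}. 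For hypothesis (2), involutivity of $\pi$ gives $\{f_i^{Y_1},f_i^{Y_2}\}=\{g_i^{Y_1}\circ\pi,\,g_i^{Y_2}\circ\pi\}\equiv 0$ on all of $M$, so any smooth curve $C$ through the origin meeting only the first and third quadrants (say the diagonal) works, and the Poisson bracket vanishes on $f^{-1}(C)$ for free. Applying Theorem \ref{c5tmv2} then yields descent for the pair.

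The main obstacle, as I see it, is essentially bookkeeping rather than substance: one must verify that the reduction of Appendix \ref{appb} interacts correctly with the closed-under-intersection structure of preimages, and one must arrange the approximating functions $g_i^Y$ on the possibly non-compact base $B$ so that their pullbacks are genuine approximating families for the compact set $\pi^{-1}(Y)$. Once these pullback families are in place, the hard Poisson-bracket hypothesis in Theorem \ref{c5tmv2} is free from the definition of an involutive map, which is precisely why the theorem follows as a corollary rather than requiring new Floer-theoretic input.
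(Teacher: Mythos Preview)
Your proposal is correct and follows exactly the paper's approach: reduce to pairs via Appendix \ref{appb}, then apply Theorem \ref{c5tmv2} to functions pulled back from $B$, where involutivity makes the Poisson bracket condition automatic. One small correction: Appendix \ref{appb} requires descent for all pairs in the class $\mathcal{K}$ closed under \emph{both} intersections and unions, not just intersections; but since $\pi^{-1}$ commutes with unions as well and finite unions of closed sets are closed, your conclusion that every relevant pair is of the form $(\pi^{-1}(Y_1),\pi^{-1}(Y_2))$ with $Y_1,Y_2$ closed still holds.
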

\begin{proof}
It suffices to show this for $n=2$ (see Appendix B for the easy inductive argument). In that case, we have already proved a stronger version in Theorem \ref{c5tmv2}, as we can use functions on $B$ to get the sequences of functions in Theorem \ref{c5tmv2}. 
\end{proof}


\appendix
\section{Cubical diagrams from simplicial ones}\label{appa}

We show that $n$-cube families of Hamiltonians give $n$-cubes using Pardon's results on simplex families. The main challenge here is to show that the signs work out correctly.

Let $Cube=[0,1]^n$, with an ordering of its coordinates. We can cover it by $n!$ simplices, one for each permutation $(i_1,\ldots,i_n)$ of $(1,\ldots, n)$. We can think of such a permutation as a path that starts at $(0,\ldots ,0)$ and takes a unit step in the positive $i_k$ direction at time $k=1,\ldots ,n$, and ends up at $(1,\ldots ,1)$. The corresponding simplex $\Delta^n\to Cube$ is the linear map that sends the $ith$ vertex of the simplex to the $i$th vertex we encounter on this path. Note also that the function $f$ (as in \ref{c3emorse}) is tangent to all the faces of all these simplices.

Now let $H$ be a $Cube$ family of Hamiltonians. Let $F$ be a $k$-dimensional face of the $Cube$. $F$ itself is a cube with an induced ordering of its coordinates. By the above procedure it can be covered by $k!$ simplices. We get a map $C_{\nu_{in}}\to C_{\nu_{ter}}$ for each of these simplices by restricting the family of Hamiltonians. We define   \begin{align}
f_F=\sum_{k!\text{ simplices}} (-1)^{sign} f_{(j_1,\ldots ,j_{k})},
\end{align}where $(j_1,\ldots ,j_{k})$ is the permutation corresponding to the simplex, and sign is given by its signature. We claim that these define a cubical diagram. We need to show that the Equation (\ref{c2ecube}) from  subsubsection \ref{c2sscubes}:
\begin{align}
\sum_{F'>F'' \text{is a bdry of }F}(-1)^{*_{F',F}} f_{F''}f_{F'}=0,
\end{align} is satisfied for each face $F$. Recall that $*_{F',F}=\#_v1+\#_v01$ for $v=\nu_{ter}F'-\nu_{in}F'$ considered as a vector inside $F$. Without loss of generality, we will show the one for the top dimensional face.   

By Pardon (Equation (7.6.5)), we get $n!$ equations of the form below for the top dimensional face of each of the simplices in the cover. 
\begin{align}
\sum_k(-1)^{k+1}g_{(1,\ldots , k)}g_{(k,\ldots n)}+\sum_k(-1)^kg_{(1,\ldots,\hat{k} , \ldots, n)}=0
\end{align}
We add all of these equations up after multiplying them with $(-1)^{sign}$, where sign is again the signature of the permutation corresponding to the simplex. The second group of terms cancel out because the signature of a permutation changes after one transposition. Using the description of the signature of a permutation via the number of inversions, we see that we get exactly the equation we wanted from the first group of terms.

We also need that $Slit^n$ and $Triangle^n$ families of Hamiltonians give rise to $n$-slits and $n$-triangles. A similar argument to the one for $Cube^n$ families (as above) works for $Triangle^n$ as well, as it also admits a nice cover by simplices. For $Slit^n$ the equation that corresponds to the top dimensional face requires a separate treatment but this is easy, and we omit it.

\section{Descent for multiple subsets}\label{appb}

Let $K_1,\ldots ,K_n$ be compact subsets of $M$. Let $\mathcal{K}$ be smallest set of subsets of $M$, which is closed under intersection and union, and contains $K_1,\ldots ,K_n$.

Assume that for any $X,Y\in \mathcal{K}$, $SH_M(X,Y)=0$. Then, we want to show that for any $X_1,\ldots , X_l\in \mathcal{K}$, $SH_M(X_1,\ldots , X_l)=0$. We do this by induction. Assume that it holds for $l-1\geq 2$.

By the descent for two subsets we have that the natural map \begin{align}\label{abe} \xymatrix{SC_M(X_1\cup\ldots\cup X_l)\ar[d] \\ *+[l]{\makebox[.1\textwidth]{$cone(SC_M(X_1)\oplus SC_M(X_2\cup\ldots\cup X_l)\to SC_M((X_2\cap X_1)\cup\ldots\cup (X_l\cap X_1)))$}}}\end{align} is a quasi-isomorphism.

We also have homotopy commutative diagrams:
\begin{align}
\xymatrix{
SC_M(X_2\cup\ldots\cup X_l) \ar[d]\ar[r]  &SC_M((X_2\cap X_1)\cup\ldots\cup (X_l\cap X_1)))\ar[d] \\ \bigoplus_{0\neq I\subset \{2,\ldots ,l\}} SC_M\left(\bigcap_{i\in I} X_i\right) \ar[r] &\bigoplus_{0\neq I\subset \{2,\ldots ,l\}} SC_M\left(\bigcap_{i\in I} (X_i\cap X_1\right),}
\end{align} and
\begin{align}
\xymatrix{
SC_M(X_1) \ar[dr]\ar[r]  &SC_M((X_2\cap X_1)\cup\ldots\cup (X_l\cap X_1)))\ar[d] \\ &\bigoplus_{0\neq I\subset \{2,\ldots ,l\}} SC_M\left(\bigcap_{i\in I} (X_i\cap X_1\right),}
\end{align}

In these two diagrams, by the direct sum we mean the homotopy limit of the corresponding homotopy coherent diagram (it is a chain complex with that underlying module, and differential can explicitly written using the same techniques as in Section \ref{c2}). By the induction hypothesis all the vertical arrows are quasi-isomorphisms.

By piecing together these diagrams, we see that the cone in (\ref{abe}) is quasi-isomorphic to $\bigoplus_{0\neq I\subset \{1,\ldots ,l\}} SC_M\left(\bigcap_{i\in I} X_i\right)$ in a way that is compatible with the maps that they receive from $SC_M(X_1\cup\ldots\cup X_l)$. This finishes the proof.

\section{Analysis of the local model of the Morse pair near the corners}\label{appc}

Consider $\mathbb{R}^n_{\geq 0}$ as a manifold with corners with coordinates $(x_1,\ldots x_n)$, and the function $$h(x_1,\ldots ,x_n):=x_1^2+\ldots +x_k^2-x_{k+1}^2-\ldots -x_n^2.$$ Let $V$ be the gradient vector field of $h$ with respect to the flat metric.

Let us first prove that $h^{-1}(w)\subset \mathbb{R}^n_{\geq 0}$ is a $p$-submanifold, for $w>0$.  Let $(a_1,\ldots ,a_n)\in h^{-1}(w)$. At least one of $a_1,\ldots a_k$ is not zero. Without loss of generality, let $a_1\neq 0$. Then, $(h-w,x_2,\ldots x_n)$ is a local manifold with corners chart at $(a_1,\ldots ,a_n)$. Since in this chart $h^{-1}(w)$ is given by one of these coordinates being equal $0$, we have the desired statement. The case $w<0$ is similarly handled.

Let us define the radial functions $r_1:=\sqrt{x_1^2+\ldots +x_k^2}$ and $r_2:=\sqrt{x_{k+1}^2+\ldots +x_n^2}$; and also the radial vector fields $\widehat{r_1}=x_1\partial_{x_1}+\ldots +x_k\partial_{x_k}$ and $\widehat{r_2}=x_{k+1}\partial_{x_{k+1}}+\ldots +x_n\partial_{x_n}$. Then, one can easily compute that $$h=r_1^2-r_2^2,$$ and on $\mathbb{R}^n_{\geq 0}-\{0\}$, $$-\frac{V(x_1,\ldots ,x_n))}{\abs{V(x_1,\ldots ,x_n)}^2}=\frac{\widehat{r_2}-\widehat{r_1}}{2(r_1^2+r_2^2)}.$$

Let us call this vector field $W$. Let us also consider the map $$r: \mathbb{R}^n_{\geq 0}\to \mathbb{R}^2_{\geq 0},$$ given by $(x_1,\ldots ,x_n)\mapsto (r_1,r_2)$. Polar coordinates on $\mathbb{R}^k_{\geq 0}$ and $\mathbb{R}^{n-k}_{\geq 0}$ provides us with a map
\begin{align}\label{appce}
\xymatrix{
\pi: \mathbb{R}^2_{\geq 0}\times \mathbb{S}^{k-1}\times \mathbb{S}^{n-k-1} \ar[dr]\ar[r]   &\mathbb{R}^n_{\geq 0}\ar[d] \\ &\mathbb{R}^2_{\geq 0}.}
\end{align} We use the notation $\mathbb{S}^m\in \mathbb{R}^m_{\geq 0}$ to denote the intersection of the unit sphere in $\mathbb{R}^m$ with $\mathbb{R}^m_{\geq 0}$, for any $m\geq 1$. Note that $ \mathbb{R}^2_{\geq 0}\times \mathbb{S}^{k-1}\times \mathbb{S}^{n-k-1}$ is the iterated blow-up $$[\mathbb{R}^n_{\geq 0}; \{x_{1}=\ldots =x_k=0\}, \{x_{k+1}=\ldots =x_n=0\}],$$and $\pi$ is the blow-down map.

The vector field $W$ has a canonical lift $\tilde{W}$ to $(\mathbb{R}^2_{\geq 0}-\{0\})\times \mathbb{S}^{k-1}\times \mathbb{S}^{n-k-1}$. The flow of $\tilde{W}$ preserves the $\mathbb{S}^{k-1}\times \mathbb{S}^{n-k-1}$ coordinates, and on $\mathbb{R}^2_{\geq 0}-\{0\}$, it is given by the vector field $$\frac{r_2\partial_{r_2}-r_1\partial_{r_1}}{2(r_1^2+r_2^2)}.$$ It is easy to check that if $t\mapsto (r_1(t),r_2(t))$ is an integral curve, then $r_1(t)r_2(t)$ is constant, and expectedly $\frac{d}{dt}(r_1(t)^2-r_2(t)^2)=-1$. This determines the flow completely. Also note that the flow (resp. reverse flow) can be extended to continuous maps $\{r_1^2-r_2^2=w\}\to \{r_1^2-r_2^2=0\}$ for every $w>0$ (resp. $w<0$). Finally, notice that the maps \begin{align}\label{appccc}\{r_1^2-r_2^2=w\}\to \{r_1^2-r_2^2=w'\},\end{align} for every $w>0$ and $w'<0$, obtained by composition of the maps in the previous sentence, are diffeomorphisms.

From this discussion it follows that for every $w>0$, we have continous maps $h^{-1}(w)\to h^{-1}(0)$ extending the map defined by the time-$w$ flow of $W$ defined on $h^{-1}(w)-h^{-1}(w)\cap\{x_{k+1}=\ldots =x_n=0\}$. Similarly, for $w<0$, we have $h^{-1}(w)\to h^{-1}(0)$ extending the map defined via the inverse flow.

Going back to (\ref{appce}), first we claim that for $w>0$, $\pi^{-1}(h^{-1}(w))$ is the blow-up of $h^{-1}(w)$ at $h^{-1}(w)\cap \{x_{k+1}=\ldots =x_n=0\},$ and the restriction of $\pi$: $$\pi^{-1}(h^{-1}(w))\to h^{-1}(w)$$ is the blow-down map. This immediately follows because $h^{-1}(w)$ is a $p$-submanifold that transversely intersects the blow-up locus $\{x_{k+1}=\ldots =x_n=0\}$, and it does not intersect the other blow-up locus $\{x_{1}=\ldots =x_k=0\}$.

Similarly, for $w<0$, $\pi^{-1}(h^{-1}(w))$ is the blow-up of $h^{-1}(w)$ at $h^{-1}(w)\cap \{x_{1}=\ldots =x_k=0\},$ and the restriction of $\pi$ is the blow-down map. 

The upshot of the discussion is that, for $w'<0<w$, we have the following commutative diagram

\begin{align}\label{appcee}
\xymatrix{
\pi^{-1}(h^{-1}(w))\ar[rr]\ar[d] &&\pi^{-1}(h^{-1}(w'))\ar[d] \\ h^{-1}(w)\ar[dr] && h^{-1}(w')\ar[dl] \\&h^{-1}(0)&,}
\end{align}where the horizontal arrow is the diffeomorphism from \ref{appccc} (hence we can identify its source and target with each other and think of them as one manifold with corners), the vertical arrows are the blow-down maps that we just discussed, and the diagonal arrows are the continuous maps from the two paragraphs above.

\bibliographystyle{plain}
\bibliography{mv-bib,thesisbib}

\end{document}